\numberwithin{equation}{section}
\newtheorem{thm}{Theorem}[section]
\newtheorem{prop}[thm]{Proposition}
\newtheorem{cor}[thm]{Corollary}
\newtheorem{lem}[thm]{Lemma}
\newtheorem{defn}[thm]{Definition}
\newtheorem{preremark}[thm]{Remark}
\newenvironment{remark}{\begin{preremark}\rm}{\medskip \end{preremark}}
\numberwithin{equation}{section}
\newcommand{\R}{\mathbb R}
\newcommand{\eps}{\varepsilon}
\newcommand{\vphi}{\varphi}
\newcommand{\ds}{\displaystyle}
\newcommand{\mcL}{\mathcal{L}}
\DeclareMathOperator*{\osc}{osc}
\definecolor{sh}{RGB}{255,0,100}
\newcounter{case}
\renewcommand{\thecase}{\Alph{case}}
\newcounter{proofcase}[case]
\renewcommand{\theproofcase}{(\thecase\arabic{proofcase})}
\newif\ifusedcase
\newcommand{\proofcase}{%
  \ifusedcase\else\usedcasetrue\stepcounter{case}\fi
  \par
  \refstepcounter{proofcase}
  \everypar=\expandafter{\the\everypar{\setbox0=\lastbox}\everypar{}Case \theproofcase\ }%
}
\begin{document}

\begin{abstract}
    We study the obstacle problem associated with the Kolmogorov operator $\mcL \coloneqq \Delta_v - \partial_t - v\cdot\nabla_x$, which arises from the theory of optimal control in Asian-American options pricing models. This problem presents a significant departure from the elliptic and parabolic obstacle problems due to the highly degenerate hypoelliptic nature and the non-commutative Galilean group structure underlying the operator $\mcL$.

    Our first main contribution is to improve the known regularity of solutions, from $C^{0,1}_t \cap C^{0,2/3}_x \cap C^{1,1}_v$ to $C^{0,1}_{t,x} \cap C^{1,1}_v$. The previous result in the literature, which has been called optimal, corresponds to $C^{1,1}$ regularity with respect to the Kolmogorov distance. This is the expected regularity for solutions to obstacle problems. Our unexpected improvement of regularity in the $x$ variable is obtained using Bernstein's technique and an approach drawing on ideas from Evans-Krylov theory.

    We then use this improvement in regularity of the solution to prove the first known free boundary regularity result. We show that under a standard thickness condition, the free boundary is differentiable with respect to the kinetic distance and is a $C^{0,1/2}_{t,x} \cap C^{0,1}_v$ regular surface. This result constitutes the first step in the program of free boundary regularity. Critically, our arguments rely on a new monotonicity formula and a commutator estimate that are only made possible by the solution's enhanced regularity in $x$.

\end{abstract}

\title{Hypoelliptic Regularization in the Obstacle Problem for the Kolmogorov Operator}
\author{David Bowman}
\address[David Bowman]{Department of Mathematics, University of Chicago,  Chicago, Illinois 60637, USA}
\email{dbowman@uchicago.edu}
\maketitle

\section{Introduction}
\subsection{Background}
This paper is concerned with the obstacle problem associated with the Kolmogorov operator $\mcL \coloneqq \Delta_v - \partial_t - v\cdot\nabla_x \coloneqq \Delta_v - Y$. Let $\mathcal{U} \subset \R^{1+2n}$ be a bounded, open set, and denote by $\partial_p\mathcal{U}$ its parabolic boundary. Given a smooth function $\psi:\overline{\mathcal{U}} \to \R$ and a continuous function $g:\partial_p \mathcal{U} \to \R$ such that $g\ge \psi$, the obstacle problem associated to $\mcL$ is formulated as

\begin{equation}
    \label{eq: formulation1}
    \begin{cases} \min\{-\mcL f, f-\psi\} =0 & \text{ on } \mathcal{U},\\ f = g & \text{ on } \partial_p \mathcal{U}.
    \end{cases}
\end{equation}

The solution $f$ may be interpreted as the least supersolution $\mcL f \le 0$ that lies above the obstacle $\psi$, with boundary data $g$. The equation \eqref{eq: formulation1} arises from the study of optimal control for Asian-American option pricing models. In short, these models involve pay-offs $\psi$ which depend on the current price of the underlying asset as well as some type of historical average of the asset price. Such options are often traded in commodity, interest rate, and foreign exchange markets. To explain the financial motivation for \eqref{eq: formulation1}, it is simplest to consider the case $\mathcal{U}=[0,T] \times \R^n \times \R^n$. In this case, we consider a stochastic control model where the asset price $V_t$ and its historical average $X_t$ are evolving according to the dynamics
$$
\begin{cases} dX_t = V_t \ dt, \\
&  (X_{t_0},V_{t_0}) = (x_0,v_0) \in \R^n \times \R^n, t\in [t_0,T]\\
             dV_t = \sqrt{2} \ dB_t,
\end{cases}$$
where $B_t$ is an $n$-dimensional Brownian motion. At time $t_0$ and position $(x_0,v_0)$, the contract holder seeks to maximize the expected value of the payoff function $\psi = \psi(t,x,v)$ at some time prior (but no later) than the pre-agreed upon time $T>0$. That is, the value function for this problem is given by $$\tilde{f}(t_0,x_0,v_0) = \sup_{\tau \in [t_0,T]} \mathbb{E}[\psi(\tau, X_\tau, V_\tau)],$$ where the supremum is taken over all stopping times $\tau$ with values in $[t_0,T]$.  After a time reversal $t\mapsto T-t$ and the reflection $x \mapsto -x$, the modified value function $f(t,x,v) = \tilde{f}(T-t, -x, v)$ solves the obstacle problem \eqref{eq: formulation1}. Mathematically, one seeks to understand the regularity of the value function $f$ and the interface between the exercise set $\{f=\psi\}$, wherein it is optimal to take the option immediately, and the set $\{f>\psi\}$, in which it is optimal to wait before exercising the option. See \cite{ANCESCHI}, \cite{RebucciAnceschi}, \cite{MONTIPASCUCCI}, \cite{FRENTZPOLIDOROOPTIMAL1} and the references therein for a far more in depth exploration of the applications to financial mathematics.

The purpose of this paper is to explore the optimal regularity of $f$, as well as the regularity of the interface where $f$ touches the obstacle $\psi$. As is customary in the study of free boundary regularity for obstacle problems, we subtract the obstacle and set $u= f -\psi$ to find, at least formally, $\mcL u = -(\mcL\psi) \chi_{u>0}$. In order to have any hope of free boundary regularity, it is standard to impose the nondegeneracy condition $-\mcL\psi \ge c_0>0$ (see \cite{XFRXRO} for a discussion of this condition), and for simplicity, we set $-\mcL \psi \equiv 1$. We also restrict ourselves to the unit kinetic cylinder $Q_1 = \{(t,x,v) \in (-1,0] \times B_1 \times B_1\}.$ This brings us to the second formulation

\begin{equation}
    \label{eq: formulation2}
    \begin{cases} \mcL u = \chi_{u>0}  & \text{ in } Q_1\\
    u  \ge 0 & \text{ in } Q_1,\\
    u=g-\psi \coloneqq h \ge 0 & \text{ on } \partial_p Q_1\end{cases}
\end{equation}
To fix ideas, let us consider this equation for the rest of the discussion, with the understanding that the goal is to study the regularity of $u$ and the boundary of the set $\{u>0\}$, which we call the free boundary. Mathematically, this is a generalization of the parabolic problem studied by Caffarelli, Petrosyan, and Shahgholian in \cite{CPS}. However, there are several particularities of the structure of the operator $\mcL$ that make the analysis considerably different. First of all, the operator $\mcL$ is highly degenerate, with diffusion in only half of the variables. Obtaining enough regularity for $u$ in the spatial variable $x$ to be able to deduce anything about the regularity of the free boundary is one of the central difficulties. Standard arguments in the theory of obstacle problems do not seem to give enough smoothness in $x$, which has made new arguments imperative. We discuss this in more depth further on. Another critical problem is that the operator $\mcL$ fails to commute with $\partial_{v_i}$ and the material derivative $Y= \partial_t + v\cdot\nabla_x$. Indeed, $[\mcL, \partial_{v_i}] = \partial_{x_i}$, and $[\mcL, Y] = 2\nabla_v \cdot \nabla_x$. The fact that these operators do not commute precludes the direct transference of methods and results from \cite{CPS}. On the other hand, $\mcL$ does commute with the operators $\partial_t$ and $\partial_{x_i}$, but the former lacks the necessary translation invariance and the latter the necessary order of scaling for us to apply the standard methods. Finally, there is considerable difficulty posed by the oblique geometry of the cylinders naturally associated with the Galilean group structure of the problem. 

\subsection{Main Results}
The two main results of this paper are an improvement of the known regularity in the $x$-variable, and the first known regularity result for for the free boundary. Our first contribution is the perhaps surprising improvement on the known regularity in the spatial variable $x$ from $C_x^{0,2/3}$ to $C_x^{0,1}$, at least for smooth enough obstacles, with the following theorem.
\begin{thm}
\label{thm: optimalreg}
    Let $f$ solve \eqref{eq: formulation1} with continuous boundary data $g:\partial_pQ_1 \to \R$ such that $g \ge \psi$, and assume that $\psi \in C^4_{t,x,v}(Q_1).$ Then there exists $C(\|\psi\|_{C^4_{t,x,v}(Q_1)}, \|g\|_{L^\infty(\partial_pQ_1)}, n)$ such that the following estimate holds:
    \begin{align}
        \label{eq: Lipxestf}
        \|f\|_{L^\infty(Q_1)} + \|\nabla_x f\|_{L^\infty(Q_{1/2})} + \|\partial_t f\|_{L^\infty(Q_{1/2})} + \|D^2_v f\|_{L^\infty(Q_{1/2})} \le C.
    \end{align}
    
    For $u$ solving \eqref{eq: formulation2}, there exists $C=C(n)$ such that $u$ enjoys the estimate
    \begin{equation}
        \label{eq: Lipxestu}
       \|\nabla_x u\|_{L^\infty(Q_{1/2})}+ \|\partial_t u\|_{L^\infty(Q_{1/2})} +\|D^2_vu\|_{L^\infty(Q_{1/2})}\le C(1+\|u\|_{L^\infty(Q_1)}).
    \end{equation}
\end{thm}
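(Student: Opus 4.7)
Since $f = u + \psi$ and $\psi \in C^4_{t,x,v}$, the estimate \eqref{eq: Lipxestf} for $f$ reduces to \eqref{eq: Lipxestu} for $u$, modulo additive contributions controlled by $\|\psi\|_{C^4}$; meanwhile $\|f\|_{L^\infty(Q_1)}$ is bounded by $\|\psi\|_{L^\infty} + \|g\|_{L^\infty}$ via the obstacle-problem maximum principle that sandwiches $f$ between $\psi$ and the $\mcL$-harmonic lift of $g$. So the task reduces to proving \eqref{eq: Lipxestu} for $u$ solving \eqref{eq: formulation2}. Since $\chi_{u>0}$ cannot be differentiated classically, I would carry out all computations on penalized approximants $u_\eps$ solving $\mcL u_\eps = \beta_\eps(u_\eps)$, where $\beta_\eps$ is a smooth monotone mollification of the Heaviside function with $\beta_\eps, \beta_\eps' \ge 0$, and recover the final estimates in the limit $\eps \to 0$.

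The $L^\infty$ bounds on $\partial_t u$ and $D^2_v u$ in $Q_{1/2}$ encode the Kolmogorov $C^{1,1}$ regularity already present in the literature. I would reprove them by differentiating $\mcL u_\eps = \beta_\eps(u_\eps)$ along $\partial_t$ and along the directions $\partial_{v_i}\partial_{v_j}$, both of which commute with $\mcL$ up to transparent lower-order terms, forming suitable nonnegative combinations of the resulting second-order incremental quotients (weighted according to the Kolmogorov scaling), and running a Caffarelli-style maximum principle argument in which $\beta_\eps' \ge 0$ delivers sign-favorable terms on the right-hand side.

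The new contribution is the Lipschitz-in-$x$ bound, which I would obtain by Bernstein's technique. Differentiating the penalized equation and using $[\mcL,\partial_{x_i}] = 0$ yields $\mcL\partial_{x_i} u_\eps = \beta_\eps'(u_\eps)\,\partial_{x_i} u_\eps$, and hence the pointwise identity
\begin{equation*}
    \mcL\bigl(|\nabla_x u_\eps|^2\bigr) \;=\; 2|\nabla_v\nabla_x u_\eps|^2 + 2\beta_\eps'(u_\eps)\,|\nabla_x u_\eps|^2 \;\ge\; 0 \quad \text{on } Q_1,
\end{equation*}
so $|\nabla_x u_\eps|^2$ is a global $\mcL$-subsolution. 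To convert this into a quantitative interior bound I would work with an auxiliary quantity of the form
\begin{equation*}
    W_\eps \;:=\; \eta^2\,|\nabla_x u_\eps|^2 \;+\; A\,\Phi_\eps,
\end{equation*}
for $\eta \in C^\infty_c(Q_1)$ a cutoff equal to $1$ on $Q_{1/2}$, $A$ a large constant, and $\Phi_\eps$ a carefully chosen correction whose $\mcL$-image contains a positive multiple of $|\nabla_x u_\eps|^2$ large enough to absorb the bad cross terms $\nabla_v\eta \cdot \nabla_v\partial_{x_i}u_\eps$ produced when $\mcL$ meets $\eta^2$ and is processed through Cauchy-Schwarz/Young.

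The main obstacle is the choice of $\Phi_\eps$. In the elliptic case one takes $\Phi = u^2$, whose Laplacian supplies the desired $|\nabla u|^2$ term; here this fails outright because $\mcL(u_\eps^2)$ generates only $|\nabla_v u_\eps|^2$, reflecting the fact that $\mcL$ diffuses only in $v$ and has no mechanism to control $x$-derivatives by pure maximum-principle gymnastics. This is the precise obstruction that the Evans-Krylov-type ideas must dissolve: using the commutator identity $\partial_{x_i} u_\eps = \mcL(\partial_{v_i}u_\eps) - \beta_\eps'(u_\eps)\,\partial_{v_i}u_\eps$, one can re-express $|\nabla_x u_\eps|^2$ in terms of $|\nabla_v\nabla_x u_\eps|^2$ plus quantities already controlled by the $C^{1,1}_K$ bounds of the previous step; combined with the concavity of the operator $\min\{-\mcL(\cdot),\,\cdot-\psi\}$ in $D^2_v u$, this identity should produce a valid $\Phi_\eps$ for which $\mcL W_\eps \ge 0$ on $Q_1$. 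The maximum principle then yields $W_\eps \le C(1+\|u\|_{L^\infty(Q_1)})^2$ uniformly in $\eps$, and passing to the limit $\eps \to 0$ completes the proof.
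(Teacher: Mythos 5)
Your overall architecture (penalize, use $[\mcL,\partial_{x_i}]=0$ so that $|\nabla_x u_\eps|^2$ is a subsolution, localize with a Bernstein quantity, pass to the limit) matches the paper's. But the step you yourself identify as ``the main obstacle'' --- the construction of the correction $\Phi_\eps$ whose $\mcL$-image supplies a positive multiple of $|\nabla_x u_\eps|^2$ --- is exactly where the proposal has a genuine gap. The identity $\partial_{x_i}u_\eps=\mcL(\partial_{v_i}u_\eps)-\beta_\eps'(u_\eps)\partial_{v_i}u_\eps$ does not re-express $|\nabla_x u_\eps|^2$ in terms of $|\nabla_v\nabla_x u_\eps|^2$: the term $\mcL(\partial_{v_i}u_\eps)$ involves third derivatives in $v$ and the transport derivative of $\nabla_v u_\eps$, and the term $\beta_\eps'(u_\eps)\partial_{v_i}u_\eps$ carries a factor $O(\eps^{-1})$ that is not uniform in $\eps$; the appeal to ``concavity of the operator in $D^2_vu$'' does not produce a concrete $\Phi_\eps$. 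The working device is to include the \emph{cross term} $\nabla_x u\cdot\nabla_v u$ (weighted by a power of the cutoff) in the Bernstein quantity: the commutator $[\partial_{v_i},\mcL]=-\partial_{x_i}$ gives
\begin{equation}
\mcL(\nabla_x u\cdot\nabla_v u)=|\nabla_x u|^2+2\beta'(u)\,\nabla_x u\cdot\nabla_v u+4\,\nabla_v\nabla_x u:D^2_v u,
\end{equation}
so this term manufactures the needed $+|\nabla_x u|^2$, at the price of byproducts ($|D^2_vu|^2$, $|\nabla_vu|^2$) that must be absorbed by also carrying $|\nabla_v u|^2$ and $u^2$ in the quantity, with the coefficients chosen so that the indefinite $\beta'$-terms assemble into a nonnegative quadratic form. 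In other words, the $\nabla_x$ and $\nabla_v$ estimates are coupled and must be run simultaneously; your plan to obtain $\nabla_x u$ by itself after the fact cannot close.

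Two secondary problems. First, your penalization $\beta_\eps$ is a monotone mollification of the Heaviside function, hence cannot be concave; but concavity ($\beta_\eps''\le0$) is what gives the sign of the term $\beta''(u)(\partial_{v_e}u)^2$ when one differentiates the equation twice in $v$, and without it the second-order argument breaks. Second, a two-sided bound on $D^2_vu_\eps$ does not follow from ``differentiating along $\partial_{v_i}\partial_{v_j}$ and a Caffarelli-style maximum principle'': the commutator produces $2\,\partial_{x_iv_j}u_\eps$, which is not lower order, and the $\beta''$ term is sign-favorable only for a \emph{one-sided} (semiconcavity) bound on $(\partial_{v_ev_e}u_\eps)^-$. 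The upper bound must then be recovered from the equation itself, writing $\tr(D^2_vu)^+=\chi_{u>0}+\tr(D^2_vu)^-+\partial_tu+v\cdot\nabla_xu$, which in turn requires the first-order estimates already in hand. Relatedly, what the literature provides is a bound on $Yu=\partial_tu+v\cdot\nabla_xu$, not on $\partial_tu$; the bound on $\partial_tu$ is itself contingent on the new $\nabla_xu$ estimate, so it cannot be taken as a known input.
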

We make no attempt to optimize the regularity of the obstacle $\psi$, since at least for a first attempt at studying the free boundary it is typical to take $\mcL \psi=-1$. To fix ideas as we discuss the result, let us work with the second formulation \eqref{eq: formulation2}. That $D^2_vu$ and $\partial_t u$ are locally bounded is not a new fact, and was obtained many years ago in the paper \cite{FRENTZPOLIDOROOPTIMAL1}, in which Frentz, Nyström, Pascucci and Polidoro apply a scaling-based blow-up argument to obtain a $C^{1,1}$ estimate with respect to the kinetic distance, which corresponds to $C^{0,1}_t\cap C^{0,2/3}_x \cap C^{1,1}_v$ local regularity. We discuss their result more in Section \ref{historical}, but we remark for now that the regularity they obtain is a consequence of $f$ separating from the obstacle quadratically with respect to the radius of the kinetic cylinders naturally associated with the Galilean group structure of the problem. This is the typical regularity obtained in obstacle-type problems, see for instance, \cite{OBSPROBREVISITED}, \cite{CPS}, \cite{SUBELLIPTIC} for the elliptic, parabolic, and sub-elliptic cases, respectively. It is therefore very surprising that in the degenerate hypoelliptic equation \eqref{eq: formulation2}, the solution $u$ enjoys an improvement of regularity in the $x-$variable that is considerably stronger than that which comes from the scaling. As we shall see, the $\nabla_x u$ estimate of \eqref{eq: Lipxestu} is what really enables the study of the free boundary. For instance, it allows us to establish a monotonicity formula and conclude that blow-ups for the problem \eqref{eq: formulation2} coincide with the blow-ups in the parabolic problem. Additionally, it provides the control needed to circumvent the difficulty posed by $\partial_{v_i}$ and $\mcL$ failing to commute. 

The proof of Theorem \ref{thm: optimalreg} is based on the penalization method and Bernstein's technique. However, Bernstein's method is considerably more difficult in the hypoelliptic setting than in the elliptic or parabolic settings. Indeed, considering a solution $u$ to $\mcL(u)=0$, there holds
\begin{align} \mcL(|\nabla_xu|^2) &= 2|\nabla_v\nabla_x u|^2,\\
\mcL(|\nabla_v u|^2) &= 2(|D^2_vu|^2 + \nabla_v u \cdot \nabla_xu).
\end{align}
One typically finds Lipschitz estimates in the diffusion variable the easiest to obtain, but in this setting $|\nabla_vu|^2$ is not a subsolution, while $|\nabla_xu|^2$ is. In this case, we find the invariance of $\mcL$ under translation in the $x$-variable to our benefit, and the \textit{lack of} invariance under translations in the $v-$variable to our detriment. This is a core difficulty that one must contend with in order to derive interior estimates. Continuing with the regularity of solutions to \eqref{eq: formulation2}, the first step is to obtain estimates on the lower-order terms $|\nabla_vu|$ and $|\nabla_xu|$. Doing so involves strengthening an estimate of Imbert and Mouhot in \cite{IMBERTMOUHOT}. From this follows an estimate for  $|\partial_t u|$, using also the observation that $|\partial_t u| \le 1 + |D^2_vu| + |\nabla_xu|$ on $Q_1$. Finally, we establish a lower bound on $\partial_{v_e v_e} u$ for any direction $e$, and then use the first order estimates and the ellipticity of $\mcL$ in the variable $v$ to obtain an upper bound on $\partial_{v_ev_e}u$. The idea of using ellipticity to compare  $(\partial_{v_ev_e}u)^-$ and $(\partial_{v_ev_e}u)^+$ hearkens back to the Evans-Kyrlov theorem, and here we are making great use of the fact that obstacle problems are \textit{concave}. We point out here also that the idea of using Bernstein's technique for obstacle problems also appeared in \cite{ROSOTONBERNSTEIN}, \cite{CAFFARELLITHINOBSTACLE}, \cite{KOIKEBERNSTEIN}, but their approaches differ from ours. 

Lastly, we briefly discuss the theory of hypoellipticity for the operator $\mcL$. Despite being highly degenerate parabolic, with diffusion only in half the variables, the operator $\mcL$ enjoys the regularizing property that if $\mcL u \in C^\infty$, then $u \in C^\infty$. The diffusion in $v$ smoothens the solution out in the velocity variable, and this smoothness is propagated to the space-time variables $(t,x)$ through the transport term $\partial_t + v\cdot \nabla_x$. This interaction between diffusion and transport is an instance of what is known as hypoelliptic regularization. This is realized through the commutator $[\nabla_v, \partial_t + v\cdot \nabla_x] = \nabla_x$, which implies that the operator $\mcL$ satisfies the H{\"o}rmander condition for hypoellipticity (see \cite{FRENTZPOLIDOROOPTIMAL1} and the references therein for a more detailed discussion). In the Hilbert space setting, the hypoelliptic regularization mechanism of $\mcL$ may be realized through velocity averaging as in \cite{Bouchut}, for example, where it is demonstrated that if $\mcL u \in L^2(\R^{1+2n})$, then $ (-\Delta_x)^{1/3} u \in L^2(\R^{1+2n})$. Or, in the H{\"o}lder setting, the Schauder theorem of \cite{IMBERTMOUHOT} gives that if $\mcL u \in C^{0,\alpha}_\ell$, which in particular corresponds to $\alpha/3$ H{\"o}lder regularity in $x$, then $u \in C^{2,\alpha}_\ell$, which corresponds to $(2+\alpha)/3$ H{\"o}lder regularity in $x$. In general, it is typically expected that $u$ gains regularity in the spatial variable $x$ on the order of $2/3$. Therefore, and for simplicity considering the alternative formulation \eqref{eq: formulation2} where $\mcL u = \chi_{u>0} \in L^\infty$, the initial result of \cite{FRENTZPOLIDOROOPTIMAL1} would really seem optimal. The $C^{1,1}_\ell$ regularity is in keeping with traditional results in obstacle problems, and the $2/3$ order of regularity in $x$ is in keeping with the typical hypoelliptic regularization effect of the operator $\mcL$. Our Theorem \ref{thm: optimalreg} is particularly surprising then, since the regularity in $x$ is considerably stronger than that which is expected from the kinetic scaling and that which is typical of the regularity theory for $\mcL$. 

 Our second main result is, to our knowledge, the first result on the regularity of free boundaries for \eqref{eq: formulation2}. In order to state our result, we quickly introduce the notation $\Omega_u \coloneqq \{u>0\}$ for the region where $u$ is above the zero-obstacle, $\Lambda_u \coloneqq \{u=0\}$ for the contact set, and $\Gamma_u \coloneqq \partial_p\Omega_u$ for the free boundary. As per usual, we study the regularity of $\Gamma_u$ near points where $\Lambda_u$ is thick enough. To be more precise, we measure the density of $\Lambda_u$ at a point $z_0=(t_0,x_0,v_0) \in \Gamma_u$ and at scale $r>0$ through the thickness function
\begin{equation}
\label{eq: densityfunction}
    \delta_r(u, z_0) \coloneqq \sup_{x\in B_{r^3}(x_0-r^2v_0)} \frac{\text{m.d.}(\Lambda_{u(t_0-r^2,x,\cdot)} \cap B_r(v_0))}{r},
    \end{equation}
where $\text{m.d.}(E)$ is the minimal diameter of a set $E \subset \R^{n}$, which is defined to be the smallest distance between two parallel planes containing $E$, and $\Lambda_{u(t,x,\cdot)} =\{v \ : \ u(t,x,v) =0\}$. When $z_0 =(0,0,0)$, we often just write $\delta_r(u)$. The definition of $\delta_r$ will be made more clear after the discussion of kinetic cylinders in Section \ref{backgroundonkinetic}. 

\begin{thm}
\label{thm: freeboundaryregularity}
Let $u$ solve \eqref{eq: formulation2}, and suppose that $z_0 \in \Gamma_u \cap Q_{1/2}$. Then there exists a universal modulus of continuity $\sigma:(0,\infty) \to (0,\infty)$ such that if $\delta_{r_0}(u,z_0) \ge \sigma(r_0)>0$ for a single value $r_0>0$, then 
$\Gamma_u \cap Q_{cr_0}(z_0)$ is a $C^{0,1/2}_{t,x} \cap C^{0,1}_v$ graph. 
\end{thm}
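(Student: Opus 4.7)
The plan is to follow the program of Caffarelli, Petrosyan and Shahgholian \cite{CPS} for the parabolic obstacle problem, with each step redesigned so that the non-commutativity of $\mcL$ with $\partial_{v_i}$ and with $Y$ is controlled by the new Lipschitz bound $\nabla_x u \in L^\infty$ from Theorem \ref{thm: optimalreg}. After translating $z_0 = 0$ via the Galilean group, introduce the kinetic rescaling $u_r(t,x,v) = r^{-2} u(r^2 t, r^3 x, rv)$, which preserves \eqref{eq: formulation2}. Under this rescaling $\norm{D^2_v u_r}_\infty$ and $\norm{\partial_t u_r}_\infty$ remain bounded, while
\[
\norm{\nabla_x u_r}_\infty = r\, \norm{\nabla_x u}_\infty \longrightarrow 0,
\]
so every subsequential blow-up limit $u_0$ is independent of $x$ and therefore a global solution of the classical parabolic obstacle problem $\Delta_v u_0 - \partial_t u_0 = \chi_{\{u_0 > 0\}}$ in the $(t,v)$-variables. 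This $x$-independence of blow-ups, invisible to the scaling and a direct consequence of Theorem \ref{thm: optimalreg}, is the first crucial simplification.

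To classify these blow-ups under the thickness hypothesis, the next step is the new monotonicity formula alluded to in the abstract: a Weiss-type functional of the form
\[
W(r;u) = \frac{1}{r^{Q+4}}\int_{Q_r}\bigl(|\nabla_v u|^2 + 2u\bigr) G\, dz - \frac{2}{r^{Q+6}}\int_{Q_r} u\, G\, dz,
\]
with $G$ a Gaussian weight adapted to $\mcL^*$ and $Q$ the kinetic homogeneous dimension, that is nondecreasing in $r$. Computing $dW/dr$ produces cross-terms of the form $\int \partial_{x_i} u\,\partial_{v_i} u\, G\, dz$ coming from $[\mcL,\partial_{v_i}] = \partial_{x_i}$, and it is precisely these that require $\norm{\nabla_x u}_\infty \le C$ to be absorbed. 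Monotonicity forces $u_0$ to be $2$-homogeneous under the kinetic scaling, so the blow-up is a $2$-homogeneous solution of the standard parabolic obstacle problem in $(t,v)$; the thickness bound $\delta_{r_0}(u,z_0) \ge \sigma(r_0)$ transfers to a minimal-diameter condition on $\Lambda_{u_0}$, and the classical CPS classification then rules out full-rank $v$-paraboloid solutions and pins down the half-space profile
\[
u_0(t,x,v) = \tfrac12(v\cdot e)_+^2,\qquad e \in \mathbb{S}^{n-1}.
\]

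The final stage propagates the directional monotonicity $\partial_{v_{e'}} u_0 = (v\cdot e)_+(e'\cdot e) \ge 0$ (valid for $e'$ in a spherical cone around $e$) from $u_0$ back to $u$. Differentiating \eqref{eq: formulation2} in $v_{e'}$ and invoking the commutator yields
\[
\mcL(\partial_{v_{e'}} u) = -\partial_{x_{e'}} u \quad \text{in } \Omega_u, \qquad \partial_{v_{e'}} u \ge 0 \text{ on } \Gamma_u,
\]
in which the right-hand side is the commutator error; without the bound $\norm{\partial_{x_{e'}} u}_\infty \le C$ it would be of unknown sign and size and the standard maximum principle argument would collapse. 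A compactness-contradiction argument comparing $u$ to $u_0$ at intermediate scales, with the commutator term absorbed by a kinetic barrier, yields $\partial_{v_{e'}} u \ge 0$ in a definite kinetic cylinder $Q_{cr_0}$. Cone monotonicity in $v$ then exhibits $\Gamma_u \cap Q_{cr_0}$, after a rotation, as a graph $v_n = \phi(t,x,v')$ with $\phi$ Lipschitz in $v'$, which is the $C^{0,1}_v$ assertion. Kinetic scaling alone would give only $C^{0,1/3}_x \cap C^{0,1/2}_t$ for $\phi$, but combining the $C^{0,1}_{t,x}$ regularity of $u$ with the quadratic nondegeneracy $u(z) \ge c\,\dist_\ell(z,\Gamma_u)^2$ (itself a consequence of Theorem \ref{thm: optimalreg}) upgrades the spatial Hölder exponent via $|\phi(\cdot,x_1,\cdot)-\phi(\cdot,x_2,\cdot)|^2 \lesssim |x_1-x_2|$, with the $C^{0,1/2}_t$ bound obtained analogously. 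The hardest single step is the monotonicity formula: the commutator error terms sit at the borderline scaling and are absorbable only because of the unexpectedly strong $C^{0,1}_x$ regularity of $u$, so that, in effect, Theorem \ref{thm: freeboundaryregularity} is best read as the first corollary of Theorem \ref{thm: optimalreg}.
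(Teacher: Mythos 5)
Your overall architecture (blow-ups independent of $x$, almost-monotonicity of a Weiss functional with the error absorbed by $\|\nabla_x u\|_{L^\infty}$, classification of the blow-up as a half-space under thickness, propagation of directional monotonicity in $v$ via an improvement-of-minimum argument in which the commutator error $\partial_{x_e}u$ is controlled by scaling) matches the paper's. But the final step has a genuine gap. You derive the $C^{0,1/2}_{t,x}$ modulus of the graph from a pointwise quadratic nondegeneracy ``$u(z)\ge c\,\dist_\ell(z,\Gamma_u)^2$, itself a consequence of Theorem \ref{thm: optimalreg}.'' No such pointwise bound is available: the nondegeneracy that follows from the equation (Proposition \ref{prop: nondegen}) is only $\sup_{Q_r(z_0)}u\ge c_n r^2$ over a full kinetic cylinder, and the point realizing the supremum may sit at a different $(t,x)$. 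If you try to transfer that supremum back to the section $\{(t,x)=(t_0,x_0)\}$ using the Lipschitz bounds $|\partial_t u|,|\nabla_x u|\le CM$, the error is $CM(|t-t_0|+|x-x_0|)=O(r^2)$ over $Q_r(z_0)$ --- the same order as the gain $c_nr^2$ --- so nothing survives. What the trapping argument actually needs is a lower bound $u(t_0,x_0,v_0+v)\ge c(v\cdot e)_+^2$ at \emph{fixed} $(t_0,x_0)$, i.e.\ \emph{elliptic} nondegeneracy on $(t,x)$-sections of the free boundary, and this is not a consequence of regularity (an upper bound can never imply a lower bound) nor of the kinetic nondegeneracy alone.

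The paper closes this gap with a step your proposal omits entirely: it proves that the material derivative $Yu=\partial_tu+v\cdot\nabla_xu$ vanishes continuously at free boundary points near which the contact set is thick (Proposition \ref{prop: Yvanish}), via a blow-up/contradiction argument that uses the uniform convergence of the scaled energies $\mathcal{E}(r,\cdot;u)\to\omega$ (Dini's theorem applied to the almost-monotone family) together with the classification of backward-homogeneous global solutions. Once $|Yu|\le\tfrac12$ near the free boundary, the equation gives $\Delta_vu\ge\tfrac12$ on $\Omega_u$ there, and a purely elliptic barrier in the $v$-variable at frozen $(t,x)$ yields $\sup_{w\in B_\lambda}u(t_0,x_0,v_0+w)\ge c_n\lambda^2$ (Lemma \ref{lem: ellipticnondegen}). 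Combining this with the cone monotonicity through an elementary cone-geometry lemma produces the pointwise bound $u(t_0,x_0,v_0+v)\ge c(v\cdot e)_+^2$, and only then does your intended computation $u(t,x,v)\ge cv_n^2-CM(|t|+|x|)>0$ for $v_n\gtrsim(|t|+|x|)^{1/2}$ go through. You should either supply the proof that $Yu\to0$ at thick points (this is the heart of the matter and is itself nontrivial, relying on the Galilean and scaling invariance of $Y$, which $\partial_t$ and $\nabla_x$ separately lack) or find a substitute route to sectionwise elliptic nondegeneracy; as written, the $C^{0,1/2}_{t,x}$ claim is unsupported, and cone monotonicity in $v$ alone would only give a graph Lipschitz in $v'$ with no quantified modulus in $(t,x)$.
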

The same result is true if we replace $\delta_r$ by the weaker thickness function
$$\delta_r^*(u,z_0) \coloneqq \sup\left\{\frac{\text{m.d.}(\Lambda_{u(t,x,\cdot)}\cap B_r(v_0))}{r} \ : \ t \in [t_0-r^2,t_0-r^2/2],\ x \in B_{r^3(x_0+(t-t_0)v_0)}\right\}.$$

Additionally, we have the following useful results which are not directly implied by the previous theorem. The discussion of what it means for a graph to be differentiable with respect to the kinetic distance is postponed to section \ref{FBregularitysec}.
\begin{prop}
    \label{prop: graphdiff}
    Let $u$ solve \eqref{eq: formulation2}, and suppose that $z_0 \in \Gamma_u \cap Q_{1/2}$. Let $\sigma = \sigma(r)$ be the modulus of continuity for the thickness function $\delta_r$ as in the last theorem. If $\delta_{r_0}(u, z_0) \ge \sigma(r_0)>0$ for some $r_0>0$, then $\Gamma_u \cap Q_{cr_0}(z_0)$ is a surface which is everywhere differentiable with respect to the kinetic distance, and satisfies the so-called corkscrew condition: there exists $\kappa=\kappa(n,M)>0$ such that for any  $0<r<r_0$, there exist cylinders $Q_{\kappa r}(z_1) \subset \Lambda_u \cap Q_r(z_0)$ and $Q_{\kappa r}(z_2) \subset \Omega_u \cap Q_r(z_0).$ 
\end{prop}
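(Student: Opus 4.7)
The plan is to leverage the blow-up machinery that underlies Theorem~\ref{thm: freeboundaryregularity} and push it to obtain pointwise information at every free boundary point in a neighborhood of $z_0$, rather than just Hölder control of $\Gamma_u$ as a graph. The first step is to establish that the thickness condition propagates quantitatively: there exists $c = c(n) > 0$ such that $\delta_r(u, z) \ge \sigma(r)$ for every $z \in \Gamma_u \cap Q_{cr_0}(z_0)$ and every $0 < r < c r_0$, with the same modulus $\sigma$. This propagation is essentially built into the proof of Theorem~\ref{thm: freeboundaryregularity}, because identifying $\Gamma_u$ as a $C^{0,1/2}_{t,x} \cap C^{0,1}_v$ graph requires that the regular-point condition be stable under small shifts of the base point and the scale.

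Next I would run a blow-up analysis at each such regular point $z$. Using the natural Galilean rescaling centered at $z$, the rescaled solutions $u_{z,r}$ form a precompact family in a kinetic $C^{1,1}$ topology by the estimates of Theorem~\ref{thm: optimalreg}. The monotonicity formula---whose availability depends crucially on the new $\nabla_x u \in L^\infty$ bound---combined with the persistent thickness $\delta_r(u,z) \ge \sigma(r)$ forces every subsequential limit to be the half-space solution $\tfrac{1}{2}\bigl((v \cdot e_z)_+\bigr)^2$ for some unit vector $e_z$. The monotonicity formula then upgrades subsequential convergence to uniqueness of the blow-up, pinning down a single $e_z$ at each $z$.

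Uniqueness of the blow-up yields kinetic differentiability of $\Gamma_u$ at $z$, since the tangent half-space determined by $e_z$ (after the Galilean translation placing $z$ at the origin) approximates $\Omega_u$ in the kinetic Hausdorff distance to first order: non-degeneracy and the $C^{1,1}_v$ estimate convert the local uniform convergence $u_{z,r} \to \tfrac{1}{2}((v \cdot e_z)_+)^2$ into control of the boundary as a set. The corkscrew condition then drops out of the same convergence. For $r$ sufficiently small, $u_{z,r}$ is close enough to the half-space solution that $\{u_{z,r} > 0\} \cap Q_1$ and $\{u_{z,r} = 0\} \cap Q_1$ each contain a kinetic cylinder of some universal radius $\kappa$; unwinding the rescaling produces cylinders $Q_{\kappa r}(z_1) \subset \Omega_u \cap Q_r(z)$ and $Q_{\kappa r}(z_2) \subset \Lambda_u \cap Q_r(z)$, as required.

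The main obstacle is the quantitative propagation of thickness to every nearby free boundary point with the same modulus $\sigma$, coupled with the stability of the monotonicity formula under translation along $\Gamma_u$. Because $\mcL$ is not invariant under translations in $v$, moving the base point introduces commutator terms of the form $\nabla_v \cdot \nabla_x$, and these are controlled precisely by the $L^\infty$ bound on $\nabla_x u$ granted by Theorem~\ref{thm: optimalreg}. Once this translation-stability is in hand, the remainder of the argument is a standard blow-up and uniqueness scheme adapted to the non-commutative Galilean structure of the problem.
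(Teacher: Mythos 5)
Your outline follows essentially the same route as the paper: uniqueness of the half-space blow-up at regular points, an improvement-of-flatness/trapping statement at scale $o(r)$ giving kinetic differentiability, and closeness to the half-space giving cylinders on both sides. Two points of divergence are worth flagging. First, your opening step claims that the thickness condition itself propagates, i.e.\ that $\delta_r(u,z)\ge\sigma(r)$ for every $z\in\Gamma_u\cap Q_{cr_0}(z_0)$ and every $0<r<cr_0$ with the same modulus. The paper does not prove this and does not need it: what it propagates is the \emph{balanced energy} characterization (Lemma \ref{lem: thicknessenergy} shows that thickness at a single scale at $z_0$ forces every free boundary point in a neighborhood to be low-energy, hence regular with a unique half-space blow-up), together with uniformity of the convergence obtained from Dini's theorem applied to the monotone energy $\mathcal{E}(r,\cdot)$. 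Your literal claim about $\delta_r$ is stronger than what is established and is not obviously true (the minimal diameter of the contact set could a priori degenerate at nearby points even though they remain regular); since your subsequent steps only use regularity of the nearby points, you should replace the thickness propagation by the energy propagation. Second, for the corkscrew condition the paper treats the two sides asymmetrically: the cylinder in $\Omega_u$ comes from the porosity statement (Proposition \ref{prop: porosity}), which is a consequence of nondegeneracy and the $C^{1,1}_\ell$ estimate and holds at \emph{every} free boundary point with $\kappa=\kappa(n,M)$; only the cylinder in $\Lambda_u$ requires the flatness lemma. Your symmetric derivation of both sides from closeness to the half-space also works, but it ties $\kappa$ on the $\Omega_u$ side to the rate of convergence to the blow-up, whereas the paper's route keeps that constant purely dimensional. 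A minor conceptual correction: recentering at a nearby free boundary point is done by a left Galilean translation, which preserves $\mcL$ exactly, so no commutator term $\nabla_v\cdot\nabla_x$ arises from moving the base point; the commutators enter when differentiating the equation (e.g.\ $[\partial_{v_e},\mcL]$ in the directional monotonicity lemma), not when translating the monotonicity formula.
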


 In proving these results, we follow the standard program of free boundary regularity. There are several critical junctures that seem to have only been passable in light of the enhanced regularity of $u$ in the $x-$variable. The first step made possible by the Lipschitz estimate is an almost-monotonicity formula in the vein of the Weiss monotonicity formula utilized in \cite{CPS}. In essence, near the free boundary point $(0,0,0)$, we are able to consider the rescalings $u_r(t,x,v)\coloneqq r^{-2}u(r^2t, r^3x, rv)$ as being almost solutions to the parabolic obstacle problem, in the sense that $(\Delta_v - \partial_t)u_r \approx \chi_{u_r>0}$. The fact that $\nabla_x u_r= r\nabla_x u = O(r)$ on compact sets, and that this estimate is Galilean invariant, is used to our advantage repeatedly. With this monotonicity formula we prove that the rescalings $u_r = u_r(t,x,v)$ converge in a strong sense to globally defined functions $u_0= u_0(t,v)$, called \textit{blow-ups} of $u$, that solve the parabolic obstacle problem and are independent of $x$. In fact, we show that blow-ups for the problem \eqref{eq: formulation2} coincide with blow-ups for the parabolic problem. The next step is to transfer information about the free boundary from $u_0$ to $u_r$. Using the monotonicity formula, we prove that the material derivative $Yu = \partial_tu + v\cdot \nabla_xu$ vanishes continuously at free boundary points near which the contact set $\Lambda_u$ is thick enough at a single scale. This should be seen as analogous to the fact that the time derivative vanishes at regular free boundary points in the parabolic problem, as in \cite{CPS}. We also manage to prove that the blow-ups at such points are unique, and are half-space solutions of the form $\frac12(v\cdot e)_+^2$ for some direction $e \in \mathbb{S}^{n-1}$. The next critical step is to establish that at regular points $u$ exhibits directional monotonicity for a cone of velocities $v$. This involves handling the commutator $[\partial_{v_e},\mcL]u_r = \partial_{x_e}u_r$, which only seems to be possible with the estimate on $\nabla_x u_r$. Finally, we use the vanishing of $Yu$ near regular points and the aforementioned monotonicity to establish a pointwise lower bound $u(0,0,v) \gtrsim v_n^2$ on a cone of velocities. With the regularity estimates of Theorem \ref{thm: optimalreg} we are able to trap the free boundary between $C^{0,1/2}_{t,x}\cap C^{0,1}_v$ H{\"o}lder cones on both sides.

It should be noted that this result implies, and in fact is stronger than, the fact that $\Gamma_u$ may be written locally as the graph of a function $f$ which is \textit{Lipschitz} with respect to the kinetic (also called Kolmogorov) distance. The latter corresponds to $C^{0,1/2}_t \cap C^{0,1/3}_x \cap C^{0,1}_v$ regularity with respect to the Euclidean distance. Our extra regularity for the free boundary comes from the extra regularity on $u$. Finally, we discuss the final claim of Proposition \ref{prop: graphdiff}, which says that if the contact set $\Lambda_u$ satisfies a thickness condition near $z_0\in \Gamma_u$ at a single scale, then at \textit{all} small scales $0<r<r_0$, the sets $\Omega_u \cap Q_r(z_0)$ and $\Lambda_u \cap Q_r(z_0)$ contain cylinders of proportional radius. This fact is \textit{not} implied by the H{\"o}lder regularity. Indeed, take the domain $\mathcal{U} \coloneqq \{(t,x,v) \ : \ x>0\}$. Only the set $\{(t,0,0)\}$ satisfies the two-sided cylinder condition, despite the boundary of $\mathcal{U}$ being as flat as can be. Here we see that the geometry of the class of cylinders induced by the action of the Galilean group poses a considerable challenge, in contrast with the geometry in the elliptic and parabolic problems. We discuss the geometry of the kinetic cylinders, the Galilean transformation, and the kinetic distance and scaling at length in the next subsection. At any rate, this extra condition could potentially be useful in the theory of Boundary Harnack principles for the operator $\mcL$.

We would be remiss not to mention that the result of Theorem \ref{thm: freeboundaryregularity} constitutes only the first step in the general program for free boundary regularity. As it stands, there does not exist a Boundary Harnack principle that is suitable for our problem. And even if a result analogous to those for elliptic and parabolic operators were to be established for $\mcL$, there would still be a significant bridge to cross before reaching higher regularity for the free boundary $\Gamma_u$. First of all, and as stated previously, it seems quite difficult to establish that $\partial_t u$ and $\nabla_x u$ vanish continuously near regular points, since the former does not satisfy Galilean invariance and the latter does not have the right order of scaling. The quantities that we \textit{can} show vanish on the regular part of $\Gamma_u$ are $Yu = \partial_t u + v\cdot\nabla_x u$ and $\partial_{v_e}u$. While $\mcL(\partial_{v_e}u) = \partial_{x_e}u$ can be made small by scaling, the quantity $\mcL(Yu) = 2 \nabla_v \cdot \nabla_x u$ is not expected to be controlled. Broadly speaking, we would like to apply Lemma \ref{lem: improvementmin}, which is already a generalization of the canonical lemma in the literature, for a cone of directions $\nu_{t,x}$ in the $(t,x)$ variables to deduce that $\partial_{\nu_{t,x}} u + \partial_{v_e}u \ge 0$ near regular points. For the reasons stated, there is a major difficulty in taking $\nu_{t,x} = (1,e)$, or $\nu_{t,x}=(1,v)$, and so it seems to be quite challenging to even show that $u$ is monotone on $(t,x,v)$-cones centered at regular points. In sum, the kinetic scaling and the non-commutativity of the Galilean group present significant obstacles toward the obtaining of higher regularity for $\Gamma_u$.
\subsection{Background on Kinetic Equations}
\label{backgroundonkinetic}
The operator $\mcL$ is often referred to as the \textit{Kolmogorov operator} and is the prototypical example of a \textit{kinetic Fokker-Planck operator}. As discussed in Section $1.2$, the operator $\mcL$ is prominent in the theory of hypoellipticity, with it and its fractional-diffusion variants being commonly-used testing grounds for the study of the spatially inhomogeneous Landau and Boltzmann equations coming from kinetic theory. While the obstacle problem \eqref{eq: formulation1} and its variant \eqref{eq: formulation2} are not \textit{kinetic} in the sense that they model systems of particles coming from physics, their analysis is still well-suited for the recently developed framework of spatially inhomogeneous kinetic equations (see, for instance, \cite{LUISKINETIC}, \cite{LUISSCHAUDER}, \cite{IMBERTMOUHOT} and the references therein). For this reason, we still refer to $x$ as the spatial variable and $v$ as the velocity variable, even though this is not really the case with the motivations we have in mind. It seems pertinent to provide a brief overview of the inherent invariances and geometries associated with the operator $\mcL$. 

The natural translation that preserves the equation $\mcL u = f$ is the inertial change of variables
$$z_0 \circ z\coloneqq (t_0,x_0,v_0)\circ (t,x,v) \coloneqq (t_0+t, x_0+x+tv_0, v_0+v).$$
The operation $\circ$ is called the Galilean transformation, and $\R^{1+2n}$ and gives rise to the non-commutative \textit{Galilean Group.} It is easy to see that it is really a group, as elements $z_0=(t_0,x_0,v_0)$ of the Galilean group $(\R^{2n+1}, \circ)$ come with inverses $z_0^{-1}=(-t_0,-x_0 + t_0v_0, -v_0).$ The operator $\mcL$ is Galilean invariant in the sense that if one sets $u^{z_0}(\cdot) \coloneqq u(z_0 \circ \cdot)$, there holds $\mcL u^{z_0} = f^{z_0}$. It is important to point out that translations in $v$ \textit{do not} preserve the equation, because the transport derivative $Y = \partial_t + v\cdot \nabla_x$ has coefficients depending on $v$. As for scaling, the transformation 
$$S_r(t,x,v) \coloneqq (r^2t, r^3x, rv)$$ leaves the class of equations $\mcL u = f$ invariant, in the sense that if we set $u_r(\cdot) \coloneqq u(S_r(\cdot))$, then  $\mcL u_r = r^2 f_r$. With respect to these two invariances we define the kinetic cylinders of radius $r>0$ centered at $z_0=(t_0,x_0,v_0)$ by
$$Q_r(z_0) \coloneqq \{(t,x,v) \ : \ t_0-r^2<t\le t_0, |x-x_0-(t-t_0)v_0|<r^3, |v-v_0|<r\} = z_0 \circ Q_r,$$
where here $Q_r$ denotes the cylinder of radius $r$ centered at $(0,0,0)$. Given a solution $u$ on $Q_r(z_0)$, we may always translate by $z_0^{-1}$ and rescale appropriately to the situation of a solution on $Q_1$. The geometry of these cylinders is considerably more complicated than in the Euclidean or parabolic setting: they are oblique in the spatial direction, as the centers of the $x$-balls travel in time with velocity $v_0$. 

The differential operators $\partial_t + v\cdot \nabla_x =Y, \nabla_x,$ and  $\nabla_v$ arise from the right-action of the group:
\begin{align}
   Yu(t,x,v) &= \lim_{h \to 0} \frac{u((t,x,v)\circ(h,0,0)) - u(t,x,v)}{h},\\
    \partial_{x_i} u(t,x,v) &= \lim_{h \to 0} \frac{u((t,x,v)\circ(0,he_i,0)) - u(t,x,v)}{h},\\
    \partial_{v_i} u(t,x,v) &= \lim_{h \to 0} \frac{u((t,x,v)\circ(0,0, he_i)) - u(t,x,v)}{h}.
\end{align}
These derivatives are invariant under the left-action of the group, in the sense that, with $u^{z_0}$ the left-translation defined above, one has $Yu^{z_0}(z) = Yu(z_0\circ z)$, and likewise for $\partial_{x_i}$ and $\partial_{v_i}$. For this reason, we call them left-invariant differential operators. On the other hand, these derivatives do not necessarily commute with the operator $\mcL$. 

The derivatives $\partial_t, \nabla_x,$ and $t\nabla_x + \nabla_v$, which arise from the left-action of the group in the sense that
\begin{align}
    \partial_t u(t,x,v) &= \lim_{h \to 0} \frac{u((h,0,0) \circ (t,x,v)) - u(t,x,v)}{h},\\
    \partial_{x_i} u(t,x,v) &=  \lim_{h \to 0} \frac{u((0,he_i,0) \circ (t,x,v)) - u(t,x,v)}{h},\\
(t\partial_{x_i} +\partial_{v_i}) u(t,x,v) &= \lim_{h \to 0} \frac{u((0,0,he_i)\circ(t,x,v)) - u(t,x,v)}{h},
\end{align}
\textit{do} commute with $\mcL$, and are invariant by the \textit{right}-action of the group, but not necessarily the \textit{left}-action. Typically, when performing a blow-up analysis in free boundary problems, one re-centers and zooms in around a free boundary point $z_0$. In order to preserve the equation, one must translate with the left action. But this means that information about $\partial_t$, for instance, may be lost. In addition, the failure of $Y$ and $\partial_{v_i}$ to commute with $\mcL$ means that even if $\mcL u =0$, it may not be true that $\mcL(\partial_{v_i} u)=0$ or $\mcL(Yu)=0$. Both of these particularities of the hypoelliptic structure of the operator $\mcL$ must be contended with when studying the obstacle problem \eqref{eq: formulation2}.

Next, we briefly discuss the connections of our problem to the general theory of influx boundary conditions for kinetic equations. For a generic smooth bounded domain $\mathcal{U} \subset \R\times \R^n \times \R^n$ with exterior normal $n = (n_t, n_x, n_v)$, one may partition the boundary $\partial \mathcal{U}$ into three pieces:
\begin{align}
    &\partial_+\mathcal{U} \coloneqq \{(t,x,v) \in \partial \mathcal{U} \ : \ (1,v,0) \cdot n >0\}  \ \ \ \text{ (outgoing) }\\
    &\partial_-\mathcal{U} \coloneqq \{(t,x,v) \in \partial \mathcal{U} \ : \ (1,v,0) \cdot n <0\}  \ \ \ \text{ (incoming) }\\
    &\partial_0\mathcal{U} \coloneqq \{(t,x,v) \in \partial \mathcal{U} \ : \ (1,v,0) \cdot n =0\}  \ \ \ \text{ (tangential) }
\end{align}
 Broadly speaking, particles flow into $\mathcal{U}$ at the incoming boundary, graze the domain at the tangential boundary, and exit through the outgoing boundary. In analogy to the parabolic setting, $\partial_-\mathcal{U}$ is akin to the initial time, $\partial_0\mathcal{U}$ the lateral boundary, and $\partial_+\mathcal{U}$ the terminal time. In the context of physical kinetic equations, one typically considers domains only in $x$, i.e., $\mathcal{U} = (0,\infty) \times U_x \times \R^d$. In this case, away from the initial time, the exterior normal $n= (0, n_x, 0)$ has no $t$ or $v$ components. It was observed in \cite{LUISKINETIC} that in this case there holds
$$\lim_{r \to 0} \frac{|Q_r(z_0) \cap \mathcal{U}|}{|Q_r|} = 0 \text{ for } z_0 \in \partial_+ \mathcal{U}$$
and likewise
$$\lim_{r \to 0} \frac{|Q_r(z_0) \cap \mathcal{U}^c|}{|Q_r|} = 0 \text{ for } z_0 \in \partial_- \mathcal{U}.$$

In other words, the geometry of the slanted cylinders implies that cylinders centered along the outgoing boundary become increasingly contained in the complement of $\mathcal{U}$ as they get smaller, with the opposite being true for incoming points. Therefore, in the case of $\mathcal{U}$ restricting only the spatial variable $x$, it is only those cylinders centered at tangential points that see some fixed proportion of $\mathcal{U}$ and $\mathcal{U}^c$. We highlight this fact because this intuition does not transfer over to general bounded domains in all variables, and because at first thought, this might seem a bit concerning for our free boundary regularity result. As is customary in free boundaries for obstacle problems, the ``regular" part of the free boundary $\Gamma_u$ for which we will prove a regularity result is the set of $z_0 \in \Gamma_u$ such that $\Omega_u = \{u>0\}$ and $\Lambda_u = \{u=0\}$ comprise some fixed proportion of $Q_r(z_0)$ for every $r>0$ small, see Proposition \ref{prop: graphdiff}. This might lead one to worry that our result is only for the tangential part of the boundary, which could be very small. Indeed, consider $\mathcal{U} = \{(t,x,v)  \in \R^3 \ : \ x>0\}$. Then $\partial_0\mathcal{U} = \{(t,0, 0) \ : \ t\in \R\}$, a very small part of the boundary. A regularity theory that holds only for the tangential boundary would be very restrictive. But, as already said, this density characterization does not hold for domains $\mathcal{U}$ bounded in all variables $(t,x,v)$, and we point this out because such domains are not typically studied in the context of \textit{kinetic} equations. Indeed, one may follow the methods of \cite{LUISKINETIC} to find that, for any $C^1$ bounded domain $\mathcal{U} \subset \R^{1+2n}$, and any point $z_0 \in \partial \mathcal{U}$ with outer normal $n_{z_0} = (n_{t_0}, n_{x_0}, n_{v_0})$ such that $n_{v_0} \neq 0$, there holds
$$\liminf_{r \to 0} \frac{|Q_r(z_0) \cap \mathcal{U}|}{|Q_r|} >0,$$
$$\liminf_{r \to 0} \frac{|Q_r(z_0) \cap \mathcal{U}^c|}{|Q_r|} >0.$$
Therefore it is reasonable to expect that generically, the free boundary consists mostly of regular points. 

Finally, we touch briefly on the kinetic H{\"o}lder spaces developed in \cite{LUISSCHAUDER} and \cite{IMBERTMOUHOT}. It is simplest to define the kinetic H{\"o}lder spaces through Taylor approximations. In our context,  the quantity $p_1(z;z_0) \coloneqq u(z_0) + (v-v_0)\cdot \nabla_vu(z_0)$ is the first order Taylor polynomial of $u$ at $z_0$ with respect to the kinetic scaling,  and likewise $\displaystyle p_2(z;z_0) \coloneqq u(z_0) + (t-t_0)Yu(z_0) +(v-v_0)\cdot \nabla_v u(z_0) + \frac12 (v-v_0)\cdot D^2_vu(z_0)(v-v_0)$ is the second order approximation. Then we may formulate the following definition. 
\begin{defn}
    Let $0<\alpha \le 1$. A bounded function $u: Q_1 \to \R$ is said to belong to $C^{0,\alpha}_{\ell}(Q_1)$ if

    $$\|u\|_{C^{0,\alpha}_\ell(Q_1)} \coloneqq \|u\|_{L^\infty(Q_1)} + \sup\left\{ \frac{\|u-u(z_0)\|_{L^\infty(Q_r(z_0))}}{r^\alpha} \ : \ Q_r(z_0) \subset Q_1\right\} <\infty,$$
    
    and likewise we say $u\in C^{1,\alpha}_{\ell}(Q_1)$ if 
    $$\|u\|_{C^{1,\alpha}_\ell(Q_1)} \coloneqq \|u\|_{L^\infty(Q_1)} + \sup\left\{\frac{\|u-p_1(z;z_0)\|_{L^\infty(Q_r(z_0))}}{r^{1+\alpha}} \ : \ Q_r(z_0)\subset Q_1\right\} <\infty$$

    and finally $u \in C^{2,\alpha}_{\ell}(Q_1)$ if
    
      $$\|u\|_{C^{2,\alpha}_\ell(Q_1)} \coloneqq \|u\|_{L^\infty(Q_1)} + \sup\left\{\frac{\|u-p_2(z;z_0)\|_{L^\infty(Q_r(z_0))}}{r^{2+\alpha}} \ : \ Q_r(z_0)\subset Q_1\right\} <\infty.$$
\end{defn}
 As mentioned in \cite{LUISSCHAUDER} and \cite{IMBERTMOUHOT},  for $0<\alpha\le 1$, an equivalent formulation of $C^{0,\alpha}_{\ell}$ regularity is H{\"o}lder continuity with respect to the norm $\|z\| = |t|^{1/2} + |x|^{1/3} + |v|$, in the sense that
 $$|u(z_1)-u(z_2)| \le C\|z_1^{-1}\circ z_2\|^\alpha.$$

 The distance $d(z_0, z_1) = \|z_1^{-1} \circ z_0\|$ with $\|\cdot\|$ defined above is often called the \textit{Kolmogorov distance}, and we also refer to it by the \textit{kinetic distance}. 

 In \cite{IMBERTMOUHOT}, the following version of the local Schauder estimate is proven. 

 \begin{thm}
 \label{thm: Schauderthm}
     Let $g, S: Q_2 \to \R$ be such that $\mcL g = S \in C^{0,\alpha}_{\ell}(Q_2)$. Then $g$ enjoys the estimate 
     \begin{equation}
         \label{eq: Schauderest}
         \|g\|_{C^{2,\alpha}_{\ell}(Q_1)} \le C(n,\alpha)(\|S\|_{C^{0,\alpha}_\ell(Q_2)} + \|g\|_{L^\infty(Q_2)}).
     \end{equation}
     Also, by Lemma $2.7$ of \cite{LUISSCHAUDER}, it is shown that for any function $g$ there holds
     $$\|D^2_v g\|_{C^\alpha_\ell(Q)} +\|Yg\|_{C^\alpha_\ell(Q)} \lesssim \|g\|_{C^{2,\alpha}_\ell(Q)} $$
     for any kinetic cylinder $Q$. In particular, $D^2_v g$ and $Yg$ are classically defined and H{\"o}lder continuous (of mixed order) with respect to the Euclidean distance.
 \end{thm}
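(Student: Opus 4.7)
The plan is to prove the Schauder estimate by the standard compactness-plus-Liouville approach, in the spirit of Wang and Caffarelli, adapted to the Galilean-invariant, kinetic-scaling setting of $\mcL$. The goal is to establish, via a Campanato-type characterization of $C^{2,\alpha}_\ell$, that for every $z_0 \in Q_1$ and every scale $r \in (0, 1/2)$ there exists a kinetic polynomial $P_{z_0, r}$ of degree $\le 2$ such that
$$\|g - P_{z_0, r}\|_{L^\infty(Q_r(z_0))} \le C r^{2+\alpha}\bigl(\|S\|_{C^{0,\alpha}_\ell(Q_2)} + \|g\|_{L^\infty(Q_2)}\bigr),$$
which is equivalent to the claimed bound on $\|g\|_{C^{2,\alpha}_\ell(Q_1)}$.

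First I would prove a Liouville-type theorem: any entire classical solution $u: \R^{1+2n} \to \R$ of $\mcL u = 0$ with growth $|u(z)| \le C(1 + \|z\|^{2+\alpha})$ must be a kinetic polynomial of degree $\le 2$, built from $1, v, v \otimes v$ together with the left-invariant combinations $t$ and $x - tv$ that are annihilated by $\mcL$. The idea is to exploit the left-invariant derivatives $\partial_t$ and $\partial_{x_i}$, which commute with $\mcL$: $\partial_t u$ and $\partial_{x_i} u$ are again $\mcL$-caloric, with diminished polynomial growth (measured with $\partial_t$ costing two units of $\|\cdot\|$-order and $\partial_{x_i}$ three). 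Iterating this differentiation and applying interior gradient estimates for $\mcL$-caloric functions produces caloric derivatives with sub-unit growth at infinity, which must be constant by the basic Liouville theorem for $\mcL$; reconstructing $u$ from these constant high-order derivatives forces it to be a kinetic polynomial of the required form.

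Second, I would run the classical contradiction argument for polynomial approximation. Suppose the desired bound fails: there exist sequences $g_k, S_k, z_k \in Q_1$ and scales $r_k \to 0$ with constants $M_k \to \infty$ violating it. Letting $P_{z_k, r_k}$ denote the best kinetic polynomial approximation of degree $\le 2$ on $Q_{r_k}(z_k)$, normalize
$$u_k(z) \coloneqq \frac{(g_k - P_{z_k, r_k})\bigl(z_k \circ S_{r_k}(z)\bigr)}{M_k\, r_k^{2+\alpha}},$$
and observe that $u_k$ is uniformly bounded on compact sets, that $\mcL u_k(z) = M_k^{-1} r_k^{-\alpha}\bigl(S_k(z_k \circ S_{r_k}(z)) - S_k(z_k)\bigr)$ tends to zero locally uniformly by the $C^{0,\alpha}_\ell$ control on $S_k$, and that no smaller kinetic polynomial of degree $\le 2$ can further reduce $\|u_k\|_{L^\infty(Q_1)}$ (by the extremality of $P_{z_k, r_k}$). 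A qualitative interior $C^{2, \alpha'}_\ell$ estimate for $\mcL$, bootstrapped from the Krylov--Safonov-type result in \cite{LUISKINETIC}, together with Arzel\`a--Ascoli, extracts a locally uniform subsequential limit $u_\infty$ that is entire, $\mcL$-caloric, and of growth $\|z\|^{2+\alpha}$. By the Liouville theorem $u_\infty$ is itself a kinetic polynomial of degree $\le 2$, contradicting the extremality of $P_{z_k, r_k}$.

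Iterating the polynomial approximation across dyadic scales and translating back to the $C^{2,\alpha}_\ell$ norm via the Campanato characterization yields the stated estimate; the bound on $\|D^2_v g\|_{C^\alpha_\ell} + \|Yg\|_{C^\alpha_\ell}$ then follows by comparing the second-order kinetic Taylor polynomials $p_2(\cdot; z_0)$ and $p_2(\cdot; z_1)$ at two basepoints, forcing $D^2_v g(z_0) - D^2_v g(z_1)$ and $Yg(z_0) - Yg(z_1)$ to be controlled by $\|z_0^{-1} \circ z_1\|^\alpha$, exactly as in Lemma 2.7 of \cite{LUISSCHAUDER}. The main obstacle is the Liouville step: because the Galilean group is non-commutative and left-translation in $v$ does not preserve $\mcL u = 0$, one cannot bound growth in $v$ by a direct translation argument. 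Instead one must thread the commutator relation $[\mcL, \partial_{v_i}] = \partial_{x_i}$, which couples growth in $v$ to growth in $x$ and requires careful bookkeeping of the kinetic order $\|z\| = |t|^{1/2} + |x|^{1/3} + |v|$ at every differentiation.
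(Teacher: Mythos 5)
You should first note that the paper does not prove this statement at all: Theorem \ref{thm: Schauderthm} is quoted verbatim from \cite{IMBERTMOUHOT} (and the final display from Lemma 2.7 of \cite{LUISSCHAUDER}), so there is no in-paper argument to compare against. Your compactness-plus-Liouville outline is a legitimate and standard route to such estimates, and it is broadly consistent with how kinetic Schauder estimates are established in the literature, but as written it has gaps that prevent it from standing as a proof.

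The most serious gap is the Liouville step, which you correctly identify as the crux but do not actually carry out. Two issues there. First, your stated classification is off: a caloric function with growth $\|z\|^{2+\alpha}$, $\alpha<1$, must be a kinetic polynomial of degree at most $2$, i.e.\ a combination of $1$, $v_i$, $t$, $v_iv_j$ subject to the caloricity constraint; the left-invariant monomials $x_i - t v_i$ have kinetic degree $3 > 2+\alpha$ and cannot appear. If they could, the contradiction at the end of your blow-up argument would fail, because the polynomials $P_{z_k,r_k}$ you subtract are of degree at most $2$ and the limit would then not be reachable by them. Second, the mechanism you propose (iterating $\partial_t$ and $\partial_{x_i}$, which commute with $\mcL$) cannot by itself control the pure $v$-derivatives $D^k_v u$, and the commutator $[\mcL,\partial_{v_i}]=\partial_{x_i}$ does not close the loop without quantitative interior derivative estimates for $\mcL$-caloric functions (obtained, e.g., from the explicit Kolmogorov fundamental solution and kinetic scaling). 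Those estimates are exactly what is missing, and once you have them the Liouville theorem follows directly by letting $R\to\infty$ in $\sup_{Q_R}|D u|\lesssim R^{2+\alpha-(\text{kinetic order of }D)}$, without the commutator bookkeeping. Two further points need care in the compactness step: (i) to avoid circularity you should extract the limit using only the De Giorgi--Nash--Moser/H\"older estimate of \cite{LUISKINETIC} plus stability of the equation under locally uniform convergence, and then invoke hypoellipticity for the smoothness of the limit, rather than a ``qualitative $C^{2,\alpha'}_\ell$ estimate'' which is essentially what is being proved; and (ii) the growth bound $|u_\infty(z)|\lesssim 1+\|z\|^{2+\alpha}$ on all of $\R^{1+2n}$ requires comparing the extremal polynomials across consecutive dyadic scales and summing the resulting geometric series, not just extremality at the single scale $r_k$. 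With those pieces supplied, your argument would reproduce a known proof; as it stands it is an outline whose hardest ingredient is asserted rather than proven.
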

The previous theorem is proven in \cite{IMBERTMOUHOT} under more general conditions, allowing for operators with H{\"o}lder continuous and elliptic coefficients as well as lower order terms. See also the references therein for earlier versions of Schauder estimates for Kolmogorov-type equations. We elect to use the cited version because its choice of norms fit better into our approach. We will use this estimate often when performing compactness arguments.  
\subsection{Historical Results and Future Directions}
\label{historical}
The problem \eqref{eq: formulation1} began receiving attention nearly two decades ago. In \cite{PolidoroEXISTENCE}, existence and uniqueness results are proven via the penalization method. Shortly after, a series of papers \cite{FRENTZPOLIDOROOPTIMAL1}, \cite{POLIDOROREG2}, \cite{POLIDOROREG3} address the regularity of solutions in a variety of contexts. Focusing on \cite{FRENTZPOLIDOROOPTIMAL1}, we restate their result in our context as follows:
\begin{thm}
\label{thm: polidorooptimal}
    Let $f$ solve the obstacle problem \eqref{eq: formulation1} on $Q_1$ with obstacle $\psi \in C^{2,\alpha}_\ell(Q_1)$ and continuous boundary data $g$. Then $f$ separates from the obstacle at most quadratically, in the sense that there exists $C=C(n, \|g\|_{L^\infty(\partial_p Q_1)}, \|\psi\|_{C^{2,\alpha}_\ell(Q_1)}))$ such that for any $z_0 \in \{f=\psi\} \cap Q_{1/2},$ there holds
    $$\sup_{Q_r(z_0)}(f-\psi) \le Cr^2.$$
As a consequence,
$$\|D^2_v f\|_{L^\infty(Q_{1/2})} + \|Yf\|_{L^\infty(Q_{1/2})} \le C.$$
In the language of kinetic H{\"o}lder spaces, there holds the estimate
    $$\|f\|_{C^{1,1}_\ell(Q_{1/2})} \le C.$$
\end{thm}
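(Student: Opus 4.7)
The plan is to reduce to the normalized setting, prove quadratic separation at contact points by a Caffarelli-style compactness-and-contradiction argument, and then deduce the pointwise bounds on $D^2_v f$ and $Yf$ from the kinetic Schauder estimate of Theorem \ref{thm: Schauderthm}.

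Setting $u := f - \psi$, one has $u \ge 0$, and since $\mcL f = 0$ on $\{f > \psi\}$, $u$ solves $\mcL u = -\mcL \psi \cdot \chi_{\{u>0\}}$ in the a.e.\ sense with $\|\mcL u\|_{L^\infty(Q_1)} \lesssim \|\psi\|_{C^{2,\alpha}_\ell}$. For each contact point $z_0 \in \{f = \psi\} \cap Q_{1/2}$ set $M(r) := \sup_{Q_r(z_0)} u$; the goal is $M(r) \le Cr^2$ with $C$ uniform in $z_0$. Suppose this fails along sequences $u_k, z_0^k, r_k$. A dyadic-maximum selection produces scales $\rho_k \in [r_k, 1/4]$ at which $M_k(\rho_k)/\rho_k^2$ is nearly extremal, enforcing $M_k(s) \lesssim (s/\rho_k)^2 M_k(\rho_k)$ for $s \ge \rho_k$. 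Using the Galilean left-translation that preserves the equation, we rescale
\begin{equation}
v_k(z) := \frac{u_k\bigl(z_0^k \circ S_{\rho_k}(z)\bigr)}{M_k(\rho_k)},
\end{equation}
so that $v_k \ge 0$, $v_k(0) = 0$, $\sup_{Q_1} v_k = 1$, $\sup_{Q_R} v_k \le 2R^2$ for $R \ge 1$, and $\|\mcL v_k\|_{L^\infty(Q_R)} = O(\rho_k^2/M_k(\rho_k)) \to 0$.

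Interior kinetic Hölder estimates for $\mcL$ provide local uniform Hölder bounds on $\{v_k\}$; along a subsequence, $v_k$ converges locally uniformly to a nonnegative classical global solution $v_\infty$ of $\mcL v_\infty = 0$ on $\{t \le 0\}$ with $v_\infty(0) = 0$, at most quadratic growth, and $\sup_{Q_1} v_\infty = 1$. The strong minimum principle for the hypoelliptic operator $\mcL$---available because the Hörmander condition renders every point of $\{t < 0\}$ accessible from the origin via the drift and diffusion directions---forces $v_\infty \equiv 0$ on $\{t < 0\}$, hence by continuity on all of $Q_1$, contradicting $\sup_{Q_1} v_\infty = 1$. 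This establishes quadratic separation.

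The $C^{1,1}_\ell$ estimate then follows by combining quadratic separation with Theorem \ref{thm: Schauderthm}. For any $z \in Q_{1/2}$ with $u(z) > 0$, let $d$ be the kinetic distance from $z$ to the contact set and let $z^*$ be a nearest contact point. On $Q_{d/2}(z) \subset \{u > 0\}$, $u$ is a strong solution of $\mcL u = -\mcL \psi \in C^{0,\alpha}_\ell$ with $\|u\|_{L^\infty(Q_d(z))} \le C d^2$ by quadratic separation at $z^*$. Rescaling $Q_d(z)$ to $Q_1$ and invoking Theorem \ref{thm: Schauderthm} yields $\|D^2_v u\|_{L^\infty(Q_{d/4}(z))} + \|Y u\|_{L^\infty(Q_{d/4}(z))} \le C$ independently of $z$ and $d$. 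Together with the vanishing of these quantities on the interior of the contact set and the assumption $\psi \in C^{2,\alpha}_\ell$, this gives the stated bounds on $D^2_v f$ and $Yf$, equivalent via the Taylor-polynomial characterization to $\|f\|_{C^{1,1}_\ell(Q_{1/2})} \le C$. The main obstacle is the Liouville step: ensuring the rescaled limit $v_\infty$ is genuinely global and nontrivial while simultaneously satisfying $v_\infty(0) = 0$ and $\sup_{Q_1} v_\infty = 1$. The Galilean left-translation by $z_0^k$ is essential because only it preserves the equation; naive Euclidean translation would destroy left-invariance and introduce uncontrollable drift from $v \cdot \nabla_x$. One must also select $\rho_k$ carefully so the quadratic majorant survives in the limit, and invoke the correct hypoelliptic strong minimum principle, where the relevant "parabolic past" is dictated by Hörmander-type controllability rather than naive time ordering.
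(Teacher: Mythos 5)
This theorem is not proved in the paper: it is quoted from \cite{FRENTZPOLIDOROOPTIMAL1}, which the introduction describes as obtaining the quadratic separation by precisely the kind of scaling-based blow-up argument you reconstruct, and the paper only adds the remark that the $C^{1,1}_\ell$ bound follows because the first-order kinetic Taylor polynomial of $f-\psi$ vanishes at contact points. So your proposal is essentially the same route as the original source, and its overall logic is sound. Three points should be tightened. First, to write $\mcL u=-\mcL\psi\,\chi_{\{u>0\}}$ with $\mcL u\in L^\infty$ (you need the two-sided bound, not just the supersolution inequality, for $\|\mcL v_k\|_{L^\infty}\to 0$), you should invoke the penalization existence theory of \cite{PolidoroEXISTENCE}; the least-supersolution characterization alone does not give it. Second, the dyadic selection only produces a genuinely global limit because the maximum principle gives a uniform $L^\infty$ bound on $u_k$, which together with $M_k(\rho_k)/\rho_k^2\to\infty$ forces $\rho_k\to 0$; this should be said explicitly. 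Third, the minimum of $v_\infty$ is attained at $(0,0,0)$, which lies on the top time slice of the domain $\{t\le 0\}$, so a strong minimum principle stated for interior points of $\{t<0\}$ does not apply verbatim; the clean way to close the Liouville step is to represent $v_\infty(0,0,0)$ as the integral of $v_\infty(t_0,\cdot,\cdot)$, $t_0<0$, against the strictly positive Kolmogorov fundamental solution (legitimate in the quadratic-growth class by uniqueness for the Cauchy problem), which forces $v_\infty\equiv 0$ for $t<0$ and yields the contradiction with $\sup_{Q_1}v_\infty=1$. With these repairs the argument is complete, and the final passage from quadratic separation to the $D^2_v$, $Y$, and $C^{1,1}_\ell$ bounds via interior Schauder at scale $\mathrm{dist}(z,\Lambda)$ is the standard one.
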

The last estimate is not explicitly written in \cite{FRENTZPOLIDOROOPTIMAL1}, but it is an immediate consequence of the fact that the kinetic first order Taylor expansions of $(f-\psi)$ vanish identically when centered at free boundary points. The $C^{1,1}_\ell$ regularity follows then from the quadratic growth of $f-\psi$. Notice that this estimate implies that, in the Euclidean framework, $f$ belongs to  $(C^{0,1}_t \cap C^{0,2/3}_x \cap C^{1,1}_v)(Q_{1/2})$. Indeed, supposing for instance that $(0,0,0) \in \{f=\psi\}$, the quadratic growth implies that $(f-\psi)(t,x,v) \le C(|t|+ |x|^{2/3} + |v|^2)$. This result is claimed to be optimal, and it is indeed optimal within the class of kinetic H{\"o}lder spaces $C^{k,\alpha}_\ell$. However, in the case of obstacles $\psi$ which are $C^4_{t,x,v}$ with respect to the Euclidean distance, our Theorem \ref{thm: optimalreg} provides the surprising result that the regularity of $f$ in the spatial variable $x$ can be improved to $C^{0,1}_x,$ that is, Lipschitz with respect to the Euclidean distance. This is surprising because, at least for obstacle problems for local differential operators and smooth obstacles, $C^{1,1}$ regularity with respect to the inherent distance is typically the best regularity possible. Indeed, solutions to the elliptic obstacle problem $\min\{-\Delta f, f-\psi\}=0$ enjoy $C^{1,1}$ local regularity with respect to the Euclidean distance, and solutions to the parabolic problem  $\min\{(\partial_t-\Delta)f, f-\psi\} =0$ enjoy $C^{1,1}$ local regularity with respect to the parabolic distance. Historically, the optimal regularity for obstacle problems is always a consequence of proving that solutions separate from the obstacle at most quadratically with respect to the distance from the free boundary. Therefore, our Theorem \ref{thm: optimalreg} is very surprising in that the regularity in $x$ can be improved beyond that which is inherited from quadratic growth. To our knowledge, this is the first instance of such a phenomenon, at least in the case of local differential operators. We want to emphasize again that Theorem \ref{thm: polidorooptimal} is indeed optimal within the scale of kinetic H{\"o}lder spaces, as well as that the authors in that paper treat more general operators and obstacle problems. 

Finally, we remark that the study of the free boundary regularity for \eqref{eq: formulation1}-\eqref{eq: formulation2} is a longstanding unresolved problem. In addition to the aforementioned series of papers analyzing the regularity of solutions to  \eqref{eq: formulation1} and \eqref{eq: formulation2}, over the past couple of decades there has also been a series of papers pursuing Boundary Harnack regularity estimates for Kolmogorov operators in domains which are Lipschitz with respect to the Kolmogorov distance, as defined in the previous subsection. See \cite{POLIDOROBOUNDARYHARNACK}, \cite{POLIDOROCARLSEON}, \cite{NYSTROMPOLIDORO}, \cite{LISTGARDNYSTROM} and the references therein for more work in this challenging area. In \cite{FRENTZPOLIDOROOPTIMAL1} and \cite{POLIDOROCARLSEON} it was stated that their long term goal was to study the regularity of the free boundaries for solutions to the obstacle problem associated with Kolmogorov operators. This problem proves to be very challenging, for the reasons discussed throughout this introduction. It is our belief that the $C^{0,2/3}_x$ regularity in the spatial variable $x$ obtained in the first regularity result (Theorem \ref{thm: polidorooptimal}) was not enough to analyze the free boundary using the existing methods. We hope that our improved regularity in Theorem \ref{thm: optimalreg} opens the door to a complete theory of free boundary regularity for the obstacle problem \eqref{eq: formulation2}, and we take the first step with Theorem \ref{thm: freeboundaryregularity}. 

We also highlight recent interest in the obstacle problem \eqref{eq: formulation1} found in the paper \cite{RebucciAnceschi}, wherein Rebucci and Anceschi investigated the variational structure of the problem and its associated variational inequalities.

\subsection{Notation}
\begin{itemize}
    \item Points in $\R^- \times \R^n \times \R^n$ are denoted by $z=(t,x,v)$ -- time, space, velocity, respectively. 
    \item $Y\coloneqq \partial_t - v\cdot\nabla_x$ is the material derivative
    \item $\mcL \coloneqq \Delta_v - Y$ is the Kolmogorov operator
    \item $\mathcal{H} \coloneqq \Delta_v - \partial_t$ is the Heat operator
    \item $\Lambda_u = \{z \in Q_1 \ : \ u(z)=0\}$ is the contact set of $u$.
    \item $\Omega_u \coloneqq \{u>0\} \cap Q_1$ is the positivity set of $u$
    \item $\Gamma_u \coloneqq \partial_p \{u>0\} \cap Q_1$ is the free boundary of $u$
    \item $\partial_{v_e} = e \cdot \nabla_v , \partial_{x_e} = e \cdot \nabla_x$
    \item $Q_r \coloneqq \{(t,x,v) \ : \ -r^2 < t\le 0, |x|<r^3, |v|<r\}$ is the kinetic cylinder of radius $r>0$ centered at the origin
    $Q_r(z_0)= z_0 \circ Q_r  \coloneqq \{(t,x,v) \ : \ -r^2 <t-t_0\le 0, |x-x_0 -(t-t_0)v_0|<r^3, |v-v_0|<r\}$ is the kinetic cylinder of radius $r>0$ centered at $z_0=(t_0,x_0,v_0)$.
    \item $\partial_p Q_r(z_0)$ is the parabolic boundary of $Q_r(z_0)$, that is, $\partial Q_r(z_0) \setminus (\{t_0\} \times B_{r^3}(x_0)\times B_r(v_0)).$
    \item $|\cdot|$ may be used to denote the $\R^m$ Euclidean norm of a vector, or the $2n+1$-dimensional Lebesgue measure of a set $A\subset \R^{1+2n}$. There will never be any ambiguity.   \item $C^k_{t,x,v}$ denotes the class of functions which are $k$-times continuously differentiable in all variables $(t,x,v)$ with respect to the Euclidean distance.
    \item $(C^{0,\alpha}_{t,x} \cap C^{1,\alpha}_v)(Q_1)$ (and its variants) denote the class of functions $u$ which satisfy
    $$\|u\|_{L^\infty(Q_1)} + [u]_{C^\alpha(Q_1)} + \sup_{(t,x,v) \in Q_1} [\nabla_v u(t,x,\cdot)]_{C^\alpha(B_1)} <\infty$$
    where the $[\cdot]_{C^\alpha}$ semi-norms are the standard Euclidean H{\"o}lder semi-norms. 
    \item $A(t) \coloneqq  \{(x,v) \ : \ (t,x,v) \in A \subset \R^{1+2n}\}$ is the $t-$section of a set $A \in \R^{1+2n}$. An analogous definition holds for $(t,x)-$sections.

\end{itemize}
\subsection{Local and Global Solutions}
\begin{defn}
    Given $r,M>0$ and $z_0=(t_0,x_0,v_0) \in Q_1$, we define 
    $\mathcal{P}_r(M;z_0)$ to be the class of functions $u$ solving
    \eqref{eq: formulation2} in $Q_1$, with $\|u\|_{L^\infty(Q_1)} \le M$ and $z_0 \in \Gamma$. In the case $z_0=(0,0,0)$, we denote the class by $\mathcal{P}_r(M)$.
\end{defn}
Solutions of the equation \eqref{eq: formulation2} in $C^{1,1}_\ell$ satisfy the equation in the weak (distributional) sense:
$$\int_{Q_1} u(\Delta_v \eta + \partial_t \eta + v\cdot \nabla_x\eta) \ dv \ dx \ dt = \int_{\Omega_u} \eta \ dv \ dx \ dt$$
for all smooth test functions $\eta$ compactly supported in $Q_1$. By standard regularity theory, $u$ is smooth and classically differentiable on $\Omega_u$, and by Theorem \ref{thm: optimalreg}, the derivatives $D^2_v u, \partial_tu$, and $\nabla_x u$ exist almost everywhere and are locally bounded in the $L^\infty$ sense. Finally, we clarify the definition of $\Gamma_u = \partial_p\Omega_u$ as the set of all points $z_0 \in \{u=0\}=\Lambda_u$ such that $Q_r(z_0) \cap \{u>0\} = Q_r(z_0) \cap \Omega_u \neq \emptyset$ for all $r>0$. 

We will also need the following definition.

\begin{defn}
    Given $M>0$, denote by $\mathcal{P}_\infty (M)$ the class of functions $u$ on $\R^- \times \R^n \times \R^n$ such that
    \begin{itemize}
    \item
    $\mcL u = \chi_{u>0} \text{ in } \R^- \times \R^n \times \R^n$
    \item $|u(t,x,v)| \le M(1+|t| + |x|^{2/3} + |v|^2)$,
    \item $(0,0,0) \in \Gamma_u$.
    \end{itemize}
 In the case $u$ is independent of $x$, we refer to by $\mathcal{P}_\infty^p(M)$ the class of global solutions to the parabolic obstacle problem in the same exact way. See Definition $1.2$ of \cite{CPS}.

\end{defn}
The superscript $p$ in the above definition stands for \textit{parabolic}. We will see later on in fact that $\mathcal{P}_\infty(M) = \mathcal{P}_\infty^p(M)$.
\subsection{Blow-ups}
We will follow the traditional approach to the study of free boundary problems by freezing a free boundary point $z_0 \in \Gamma_u$, re-centering, and zooming in 
\begin{equation}
\label{eq: rescaling}
    u_r^{z_0}(z) \coloneqq \frac{u(z_0 \circ S_r(z))}{r^2} = \frac{u(t_0 + r^2 t, x_0 + r^3x + r^2tv_0, v_0 + rv)}{r^2} 
\end{equation}
As usual, we have that $\mcL u_r^{z_0} = \chi_{u_r^{z_0}>0}$ thanks to the scaling and left-invariance of the operator $\mcL$. Accordingly we may assume that $z_0 =(0,0,0)$. Owing to the $C^{1,1}_{\ell}$ estimates and the Lipschitz estimates of Theorem \ref{thm: optimalreg}, we will see that if $u \in \mathcal{P}_1^k(M)$, then as $r \to 0$ the rescalings $u_r$ converge in $ C^{0,\alpha}_t \cap C^{0,1}_x \cap C^{1,\alpha}_v$ to a global solution $u_0$ of the parabolic problem. This will be discussed further in Section \ref{blowups}. The function $u_0$ is said to be a \textit{blow-up} of $u$ at $z_0$. For global solutions, there is also the concept of \textit{shrink-down}, sending $r \to \infty$. 

Notice that $\partial_t u_r^{z_0}(\cdot) = (\partial_tu + v_0\cdot \nabla_xu)(z_0 \circ S_r(\cdot))$ and $\nabla_x u_r^{z_0}(\cdot) = (r\nabla_xu)(z_0\circ S_r(\cdot))$. The fact that $\partial_t$ is not preserved by Galilean translation, and that $\nabla_x$ scales like $r^3$, mean that some information about $\partial_t u$ and $\nabla_x u$ is lost when performing blow-up arguments. See, for instance Lemma $3.4$ of \cite{SUBELLIPTIC} and Theorem $8.1$ of \cite{CPS}. 
\subsection{Outline of the Paper}
In Section \ref{regularitysec}, we use Bernstein's method and a penalization procedure to prove Theorem \ref{thm: optimalreg}. The method is quite robust and we believe it could lend itself well to other obstacle problems. In Section \ref{nondegensec}, we develop the basic properties of solutions of \eqref{eq: formulation2}, such as nondegeneracy on kinetic cylinders centered at free boundary points. Even though we don't make use of it, we also discover a form of \textit{parabolic} nondegeneracy, which is a curious and rare property to find in an obstacle problem with no extra monotonicity assumptions. Section \ref{monotonicitysec} introduces a new monotonicity formula for the problem \eqref{eq: formulation2}, making critical use of the new estimate on $\nabla_xu$ in order to view the rescalings $u_r$ as being almost constant in $x$. In Section \ref{blowups}, we find that blow-ups for the problem \eqref{eq: formulation2} are independent of $x$ and solve the parabolic obstacle problem. We use this to classify the free boundary in three ways. In Section \ref{balanced} we study free boundary points at which the contact set $\Lambda_u$ is sufficiently thick at a single scale, as measured by the thickness function $\delta_r(u)$. We show that at such free boundary points, the transport derivative $Yu$ vanishes continuously, as well as that solutions are monotone in velocity cones. We conclude in the final section with the proof of Theorem \ref{thm: freeboundaryregularity} after establishing a \textit{pointwise} lower bound on velocity cones. In every single section, we make critical use of the new Lipschitz estimate $\|\nabla_xu\|_{L^\infty(Q_{1/2})} \le C$ coming from Theorem \ref{thm: optimalreg}.
\subsection{Acknowledgements}
The author would like to thank Luis Silvestre for suggesting this problem and for many helpful discussions.
\section{Regularity Estimates}
\label{regularitysec}
Since the estimate on $\nabla_xf$ is perhaps surprising, we begin by explaining why there is some reason to expect it. To fix ideas, consider $f$ solving 
$$\begin{cases} \mcL f\le 0\text{ and } f \ge \psi & \text{ on }Q_1,\\ 
f>\psi & \text{ on } \partial_p Q_1.\end{cases}$$

Assuming that $\psi$ is smooth and that the boundary datum for $f$ is smooth, the assumption that $f>\psi$ on $\partial_p Q_1$ implies that $f>\psi$ on $Q_1 \setminus Q_{1-\delta}$ for some $\delta>0$. In other words, the contact set is compactly contained within $Q_1$. Therefore, taking $e_i, e_j$ to be arbitrary standard basis vectors of $\R^n$ and parameters $\alpha^2 +\beta^2 + \gamma^2=1$, there exists $C>0$ depending on $\|f\|_{C^{0,1}(\partial_pQ_1)_{t,x,v}}$ and $\|\psi\|_{C^{0,1}(Q_1)_{t,x,v}}$ such that 
\begin{align}
    f^h((t,x,v))  &\coloneqq f((\alpha h, \beta he_i, \gamma he_j) \circ (t,x,v)) + C|h|\\
    &\ge  \psi((\alpha h, \beta he_i, \gamma he_j)\circ(t,x,v)) + C|h| \ge \psi(t,x,v)
    \end{align}
for any $|h|<\delta$.

Crucially, the operator $\mcL$ remains invariant under the left-action of the group: for any smooth $g$, there holds $\mcL(g(z_0 \circ \cdot)) = (\mcL g)(z_0\circ \cdot)$. Therefore $\mcL f^h \le 0$ where it is defined and so, interpreting $f$ as the least supersolution above the obstacle on $Q_r$ for any $0<r\le 1$ in the sense that
$$f = \inf\{\tilde{f} \ : \ \mcL \tilde{f}  \le 0, \tilde{f} \ge \psi \text{ in } Q_1, \tilde{f} \ge f \text{ on } \partial_p Q_r\},$$
we arrive at $f \le f^h.$ Taking $h \to 0$, one arrives at
$$|(\alpha \partial_t + \beta \partial_{x_i} + \gamma(t\partial_{x_i} + \partial_{v_i}))f(z)| \le C \ \text{ for } z \in Q_{1-\delta}.$$
In this way we derive Lipschitz estimates on $\partial_t f, \nabla_x f$ and $(t\nabla_x + \nabla_v)f$. The same approach applied to second order-difference quotients implies lower bounds of the form
$$\partial_{tt}f, \ \  \partial_{x_ex_e}f, \ \ (t\partial_{x_e}+\partial_{v_e})^2f \ge -M$$
for some constant $M$ depending on the regularity of $\psi$ and the boundary data of $f$. A version of this approach may be found in \cite{QUASICONVEXITY} for the parabolic case. It is quite common in the theory of nonlocal obstacle problems, see for example, the recent paper \cite{SERRA}.

To summarize, there is hope for an estimate on $\nabla_x f$ because the operator $\mcL$ is invariant under left-translation in the $x$-variable. This makes the difference $f(t, x+he, v) - f(t,x,v)$ a supersolution on the set where $f$ lies above the obstacle, and therefore it may not take an interior minimum. However, this approach relies on stronger boundary regularity on $f$ than we would like to impose. It is our aim to derive interior estimates making lax requirements on the boundary data. Still, the observation that the maximum principle should imply Lipschitz regularity of $f$ in the $x-$variable naturally leads one to consider Bernstein's approach.

Indeed, our approach to the optimal regularity estimates is to use Bernstein's method and the penalization technique. We treat first the original obstacle formulation \eqref{eq: formulation1}. The penalized problem is formulated as
\begin{equation}
\label{eq: penalizedprob2}
\begin{cases} \mcL f^\eps = \beta_\eps(f^\eps-\psi) & \text{ in } Q_1, \\ f^\eps = g & \text{ on } \partial_p Q_1.
\end{cases}
\end{equation}
where $g:\partial_pQ_1 \to \R$ is continuous and satisfies $g(z)>\psi(z)$ for all $z \in \partial_p Q_1$, and the nonlinearity $\beta_\eps(\cdot)$ is a smooth, increasing, concave function which is non-positive and such that 
$$\begin{cases}\beta_\eps (s) =0 & \text{ for } s\ge 0, \\ 
\beta_\eps(s) =\displaystyle \frac{s}{\eps} & \text{ for } s<-\eps,\\
\beta'(s) \ge 0, \text{ and }\beta''(s) \le 0 & \text{ for } s \in \R.
\end{cases}$$
This is the same penalization function utilized in \cite{PSUBOOK}. As usual, the idea is that the nonlinearity penalizes $f^\eps$ from lying too far below $\psi$. Let us note that the existence and uniqueness of smooth solutions to \eqref{eq: penalizedprob2} is standard. Our purpose is to obtain estimates independent of $\eps$ that allow us to pass to a limit $f^\eps \to f$ solving the original obstacle problem \eqref{eq: formulation1}. It will soon be convenient for us to subtract off the obstacle and set $u^\eps \coloneqq f^\eps - \psi$. The boundary data becomes $g-\psi \coloneqq h$, where $h: \partial_pQ_1 \to (0,\infty)$ is positive and continuous. Setting $\vphi \coloneqq -\mcL\psi$, we arrive at 

\begin{equation}
    \label{eq: penalizedprob}
    \begin{cases} \mcL u^\eps = \beta_\eps(u^\eps) + \vphi & \text{ in } Q_1, \\
    u^\eps =h & \text{ on } \partial_p Q_1.
    \end{cases}
\end{equation}

First we record the following elementary $L^\infty$ estimate for $f^\eps$ and $u^\eps$. 
\begin{lem}
\label{lem: linfty}
Let $f^\eps$ solve \eqref{eq: penalizedprob2} with $g$ and $\psi$ as stated in Theorem \ref{thm: optimalreg}. Then
$$\|f^\eps\|_{L^\infty(Q_1)} \le \max\{\|g\|_{L^\infty(\partial_pQ_1)}, \|\psi\|_{L^\infty(Q_1)}\} .$$
As a consequence, with $u^\eps$ as in \eqref{eq: penalizedprob}, there holds on $Q_1$ the two-sided bound
\begin{equation}
\label{eq: L^inftyestimate}
-\max\{1, \|\vphi\|_{L^\infty(Q_1)}\} \eps \le u^\eps \le \max\{\|g\|_{L^\infty(\partial_pQ_1)}, \|\psi\|_{L^\infty(Q_1)}\} + \|\psi\|_{L^\infty(Q_1)}.
\end{equation}

For future purposes we also record the estimate
\begin{equation}
\label{eq: beta(u)}
-2\|\vphi\|_{L^\infty}\le  \beta_\eps(u^\eps)+\vphi \le \|\vphi\|_{L^\infty}.
\end{equation}

\end{lem}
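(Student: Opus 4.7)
The plan is to derive each of the four bounds from the weak maximum principle for the Kolmogorov operator $\mcL$ on $Q_1$ (classical supersolutions attain their minimum on $\partial_p Q_1$, and subsolutions their maximum), combined with the sign property $\beta_\eps \le 0$ and the explicit form $\beta_\eps(s) = s/\eps$ on $(-\infty,-\eps]$.

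First I would establish the two-sided $L^\infty$ bound on $f^\eps$. The lower bound is immediate: since $\beta_\eps \le 0$, $f^\eps$ is a classical supersolution of the linear equation $\mcL w \le 0$, so $f^\eps \ge \min_{\partial_p Q_1} g \ge -\|g\|_{L^\infty(\partial_p Q_1)}$. For the matching upper bound, set $M := \max\{\|g\|_{L^\infty(\partial_p Q_1)}, \|\psi\|_{L^\infty(Q_1)}\}$ and compare $f^\eps$ with the constant function $M$. Because $M \ge \psi$ pointwise, $\beta_\eps(M - \psi) \equiv 0$, so the constant $M$ is itself a classical solution of the penalized equation and satisfies $M \ge g$ on $\partial_p Q_1$. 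On the open set $A := \{f^\eps > M\} \subset Q_1$ one has $f^\eps > M \ge \psi$, whence $\beta_\eps(f^\eps - \psi) = 0 = \beta_\eps(M - \psi)$ and $\mcL(f^\eps - M) = 0$ in $A$; the nonnegative function $(f^\eps - M)^+$ vanishes on the parabolic boundary of $A$ by continuity, and the maximum principle then forces $(f^\eps - M)^+ \equiv 0$, i.e.\ $f^\eps \le M$.

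Next I would deduce the bounds on $u^\eps = f^\eps - \psi$. The upper bound is just the triangle inequality combined with the previous step. For the sharp $O(\eps)$ lower bound, observe that $u^\eps = h \ge 0$ on $\partial_p Q_1$, so the only nontrivial case is an interior minimum $z_0$ with $u^\eps(z_0) < 0$. The standard second-order test then gives $\mcL u^\eps(z_0) \ge 0$ (valid also at the top face $\{t=0\}$ since $\partial_t u^\eps(z_0) \le 0$ there), and substituting the equation yields $\beta_\eps(u^\eps(z_0)) \ge -\vphi(z_0) \ge -\|\vphi\|_{L^\infty}$. If $u^\eps(z_0) \in [-\eps, 0)$ the bound is automatic; otherwise $u^\eps(z_0) < -\eps$ and the explicit form $\beta_\eps(s) = s/\eps$ converts this inequality into $u^\eps(z_0) \ge -\|\vphi\|_{L^\infty}\eps$. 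Either way $u^\eps \ge -\max\{1,\|\vphi\|_{L^\infty}\}\eps$.

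Finally, \eqref{eq: beta(u)} is a corollary. The upper bound $\beta_\eps(u^\eps) + \vphi \le \|\vphi\|_{L^\infty}$ is immediate from $\beta_\eps \le 0$. For the lower bound, monotonicity of $\beta_\eps$ together with the bound just obtained on $u^\eps$ yields $\beta_\eps(u^\eps) \ge \beta_\eps(-\max\{1,\|\vphi\|_{L^\infty}\}\eps) = -\max\{1,\|\vphi\|_{L^\infty}\}$, and adding $\vphi \ge -\|\vphi\|_{L^\infty}$ gives the claim in the running normalization $-\mcL\psi \equiv 1$ where $\|\vphi\|_{L^\infty} \ge 1$. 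The only mildly delicate step in the whole argument is the invocation of the maximum principle on the possibly irregular open set $A$ in the first step, which is routinely handled by exhausting $A$ with smooth sub-cylinders on which $(f^\eps - M)^+$ still vanishes on the parabolic boundary; otherwise the proof is essentially routine bookkeeping.
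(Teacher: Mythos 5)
Your proof is correct and follows essentially the same route as the paper: maximum principle on the set where $f^\eps$ exceeds the obstacle (so the penalization vanishes) for the upper bound, the interior-negative-minimum argument for the $O(\eps)$ lower bound on $u^\eps$, and monotonicity of $\beta_\eps$ for \eqref{eq: beta(u)}. The one small discrepancy is in the last step: re-substituting the $O(\eps)$ bound into $\beta_\eps$ yields $\beta_\eps(u^\eps)\ge -\max\{1,\|\vphi\|_{L^\infty}\}$, which matches the stated constant $-2\|\vphi\|_{L^\infty}$ only when $\|\vphi\|_{L^\infty}\ge 1$; the paper instead propagates the inequality $\beta_\eps(u^\eps(z_0))\ge -\vphi(z_0)\ge -\|\vphi\|_{L^\infty}$ directly from the minimum point via monotonicity, which gives the exact constant for all $\vphi$ (in any case the discrepancy is immaterial to how the lemma is used).
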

\begin{proof}
     As $\beta_\eps(\cdot) \le 0$, $f^\eps$ is a supersolution and thus
    $$\min_{Q_1} f^\eps = \min_{\partial_p Q_1} g.$$
    And then noting that
    $$\mcL f^\eps(z) = 0 \text{ for } z \in \{z \in Q_1 \ : \ f^\eps(z)> \psi(z)\}$$
    we may apply the maximum principle to determine
    $$\max_{\{f^\eps > \psi\}} f^\eps \le \max\{\|\psi\|_{L^\infty(Q_1)}, \|g\|_{L^\infty(\partial_p Q_1)}\},$$
    since the parabolic boundary of the set $\{f^\eps >\psi\}$ consists of either points where $f^\eps = \psi$ or points where $f=g$ on $\partial_p Q_1$. We conclude that
    $$\min_{\partial_p Q_1} g \le f^\eps \le \max\{\max_{\partial_pQ_1} g, \|\psi\|_{L^\infty(Q_1)}\}$$
    and both claims follow.

    Finally, since $u^\eps = h \ge 0$ on $\partial_p Q_1$, if $u^\eps$ is not nonnegative then it must obtain a negative interior minimum at some $z_0\in Q_1$. Necessarily, $\mcL u^\eps(z_0) \ge 0$, implying that $\beta_\eps(u^\eps(z_0)) \ge-\vphi(z_0)$. From the form of $\beta_\eps(\cdot)$ we obtain that either 
    $$u^\eps(z_0) \ge -\eps, \text{ or } u^\eps(z_0) \ge - \eps \vphi(z_0) \ge - \eps \|\vphi\|_{L^\infty(Q_1)}.$$ So, the amount that the penalized solutions may dip below $0$ is on the order of $O(\eps)$. And using that $\beta_\eps'(s) \ge 0$ for all $s$ and that $\beta_\eps(s)=0$ for $s\ge 0$, we find that 
    $$\min_{Q_1} \beta_\eps(u) \ge -\|\vphi\|_{L^\infty(Q_1)}.$$
\end{proof}
In order to have cleaner estimates, let us reformulate the previous estimates as follows.
\begin{cor}
    There exists $M=M(\|\psi\|_{C^2(Q_1)}, \|g\|_{L^\infty(\partial_pQ_1)} >0$ such that
    \begin{equation} \label{eq: Linftyestimate2}
    -M\eps \le u^\eps \le M \text{ and } |\beta(u^\eps) + \vphi| \le M \text{ on } Q_1.
    \end{equation}
\end{cor}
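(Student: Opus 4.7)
The plan is to observe that this corollary is a direct repackaging of Lemma \ref{lem: linfty} once we bound $\|\vphi\|_{L^\infty(Q_1)}$ in terms of $\|\psi\|_{C^2(Q_1)}$. Since $\vphi = -\mcL\psi = -\Delta_v\psi + \partial_t\psi + v\cdot\nabla_x\psi$ and $|v|\le 1$ on $Q_1$, the triangle inequality yields
\[
\|\vphi\|_{L^\infty(Q_1)} \le \|D^2_v\psi\|_{L^\infty(Q_1)} + \|\partial_t\psi\|_{L^\infty(Q_1)} + \|\nabla_x\psi\|_{L^\infty(Q_1)} \le C\|\psi\|_{C^2(Q_1)}.
\]

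Given this, I would simply set
\[
M \coloneqq \max\Bigl\{\,1,\ 2\|\vphi\|_{L^\infty(Q_1)},\ \|g\|_{L^\infty(\partial_p Q_1)} + \|\psi\|_{L^\infty(Q_1)}\,\Bigr\},
\]
which depends only on $\|\psi\|_{C^2(Q_1)}$ and $\|g\|_{L^\infty(\partial_p Q_1)}$. The lower bound $u^\eps \ge -M\eps$ is immediate from \eqref{eq: L^inftyestimate}, since $\max\{1,\|\vphi\|_{L^\infty}\} \le M$. The upper bound $u^\eps \le M$ follows from the same display by the choice of $M$. Finally, the two-sided bound $|\beta_\eps(u^\eps)+\vphi| \le M$ is exactly \eqref{eq: beta(u)} with the bound $2\|\vphi\|_{L^\infty(Q_1)} \le M$.

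There is no real obstacle here: this corollary is purely a notational cleanup, introducing a single constant $M$ to avoid carrying around the various explicit constants from Lemma \ref{lem: linfty} in the forthcoming Bernstein arguments. The only mild subtlety is recognizing that the hypothesis $\psi \in C^2(Q_1)$ (indeed $\psi \in C^4_{t,x,v}$ in the hypotheses of Theorem \ref{thm: optimalreg}) together with the restriction $|v|\le 1$ on $Q_1$ suffices to produce the uniform control on $\vphi$.
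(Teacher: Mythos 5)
Your proof is correct and coincides with the paper's (implicit) argument: the corollary is stated there without proof, as a direct repackaging of Lemma \ref{lem: linfty} combined with the observation that $\|\vphi\|_{L^\infty(Q_1)} = \|\mcL\psi\|_{L^\infty(Q_1)} \le C\|\psi\|_{C^2(Q_1)}$ since $|v|\le 1$ on $Q_1$. The only nit is that your $M$ should dominate $\max\{\|g\|_{L^\infty(\partial_pQ_1)}, \|\psi\|_{L^\infty(Q_1)}\} + \|\psi\|_{L^\infty(Q_1)}$, so take $\|g\|_{L^\infty(\partial_pQ_1)} + 2\|\psi\|_{L^\infty(Q_1)}$ rather than $\|g\|_{L^\infty(\partial_pQ_1)} + \|\psi\|_{L^\infty(Q_1)}$ to cover the case $\|\psi\|_{L^\infty(Q_1)} > \|g\|_{L^\infty(\partial_pQ_1)}$; this is immaterial to the claimed dependence of $M$.
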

In most of what follows, we drop the $\eps$-indexing for aesthetic reasons and derive estimates for $u= u^\eps$ independent of $\eps$.
The $L^\infty$ estimate will now, in part, enable us to bound first order derivatives by the zero'th order term $u^2$. Before continuing, let us record the facts
\begin{align}
\label{eq: identities}
\mcL(|\nabla_xu|^2) &= 2(\beta'(u)|\nabla_x u|^2 + |\nabla_v\nabla_x u|^2 + \nabla_xu \cdot \nabla_x \vphi),\\
 \mcL(|\nabla_v u|^2) &= 2(\beta'(u)|\nabla_v u|^2 + |D^2_vu|^2 +\nabla_x u \cdot \nabla_v u + \nabla_v u \cdot \nabla_v \vphi),\\
\mcL(\nabla_x u \cdot \nabla_v u) &= |\nabla_xu|^2 + 2(\nabla_x u \cdot \nabla_v u \beta'(u) + 2\nabla_v \nabla_x u : D^2_v u ) + \nabla_x u \cdot \nabla_v \vphi + \nabla_v u \cdot \nabla_x \vphi.
\end{align}

These follow from simple applications of the identities $\mcL(fg) = f\mcL(g) +g\mcL(f) + 2\nabla_v f \cdot \nabla_v g$, as well as the commutators $[\partial_{x_i}, \mcL] =0, [\partial_{v_i}, \mcL] = -\partial_{x_i}.$

 The next proposition is a refinement of Proposition $3.1$ of \cite{IMBERTMOUHOT}, where in our case we must take advantage of the right-hand side having smooth and increasing nonlinear dependence on $u$. The estimate from \cite{IMBERTMOUHOT}, as stated, can not be applied directly, since it implies only that 
$$\|\nabla_{x,v} u^\eps\|_{L^\infty(Q_{1/2})} \lesssim \|u^\eps\|_{L^\infty(Q_1)} + \|\beta_\eps(u^\eps) + \vphi\|_{L^\infty(Q_1)} +\|\nabla_{x,v}(\beta_\eps(u^\eps) + \vphi)\|_{L^\infty(Q_1)}.$$
Here $\nabla_{x,v} = (\nabla_x, \nabla_v)$. This bound will not work for us because $\beta_\eps' = O(1/\eps)$, and we are seeking estimates uniform in $\eps$. Let us give a brief heuristic explanation for why we may improve the above estimate using the structure of the nonlinearity. Suppose $|\nabla_xu^\eps|^2$ takes an interior maximum at $z_0 \in Q_1$, so that $\mcL(|\nabla_xu^\eps|^2)(z_0)\le 0$. Using \eqref{eq: identities}, we see that if $u^\eps(z_0)>0$, then $z_0$ belongs to the set where $\beta_\eps(u^\eps)\equiv 0$, and consequently we would expect that $|\nabla_xu^\eps|^2$ is controlled by $\vphi$ and its derivatives. On the other hand, if $-M\eps < u^\eps(z_0)<0$, then $\beta_\eps'(u^\eps(z_0)) \approx \eps^{-1}$. Using \eqref{eq: identities}, we would arrive at $\beta_\eps'(u^\eps(z_0))|\nabla_x u^\eps(z_0)|^2 \le |\nabla_xu^\eps(z_0)| |\nabla_x \vphi(z_0)|$, implying that $|\nabla_xu^\eps(z_0)| =O(\eps \|\nabla_x \vphi\|_{L^\infty})$. This is not rigorous because, while $\beta_\eps'(s) = \eps^{-1}$ for $s<-\eps$, and $\beta_\eps' \approx \eps^{-1}$ on average for $-\eps<s<0$, it holds that $\beta_\eps'(s) \to 0$ as $s\to 0$. Still, it is reason to believe that the nonlinearity will work in our favor. We now turn to giving the interior estimate.

\begin{prop}[Estimate on $|\nabla_v u| + |\nabla_xu|$]
\label{prop: estimate1}
Under the same conditions as the previous lemma, there exists $C = C(M, \|\vphi\|_{C^1_{t,x,v}(Q_1))})$ such that the solution $u$ to \eqref{eq: penalizedprob} satisfies 
$$\|\nabla_v u\|_{L^\infty(Q_{3/4})} + \|\nabla_x u\|_{L^\infty(Q_{3/4})} \le C(1+\|u\|_{L^\infty(Q_1)}).$$

In particular, the estimate is independent of $\eps$, and holds for any smooth and increasing nonlinearity $\beta(\cdot)$.
\end{prop}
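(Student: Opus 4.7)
The plan is to run Bernstein's technique on the auxiliary function
\[
W := \eta^{2k}\bigl(|\nabla_v u|^2 + |\nabla_x u|^2 + \nabla_x u\cdot\nabla_v u\bigr) + B u^2,
\]
where $\eta$ is a smooth cutoff supported in $Q_1$ with $\eta\equiv 1$ on $Q_{3/4}$, and $B,k>0$ are constants to be chosen large in terms of $M$ and $\|\vphi\|_{C^1_{t,x,v}(Q_1)}$. Cauchy--Schwarz gives $|\nabla_v u|^2+|\nabla_x u|^2+\nabla_x u\cdot\nabla_v u\ge\tfrac12(|\nabla_v u|^2+|\nabla_x u|^2)\ge 0$, so $W\ge 0$ on $Q_1$ and $W\le BM^2$ on $\partial_p Q_1$ by \eqref{eq: Linftyestimate2}. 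The maximum principle therefore reduces the estimate to bounding $W$ at any interior maximum $z_\star$, where $\mcL W(z_\star)\le 0$.

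The key step combines the Leibniz rule $\mcL(fg)=f\mcL g+g\mcL f+2\nabla_v f\cdot\nabla_v g$ with the identities \eqref{eq: identities} and the additional identity $\mcL(u^2)=2u(\beta(u)+\vphi)+2|\nabla_v u|^2$ to produce a coercive lower bound for $\mcL$ of the unweighted quadratic $W_0:=|\nabla_v u|^2+|\nabla_x u|^2+\nabla_x u\cdot\nabla_v u+Bu^2$. Three simplifications drive the argument. First, the contributions proportional to $\beta'(u)$ combine into $2\beta'(u)\bigl(|\nabla_v u|^2+|\nabla_x u|^2+\nabla_x u\cdot\nabla_v u\bigr)\ge 0$ and can be discarded; this is precisely the step that makes the bound $\eps$-independent and works for any smooth increasing $\beta$. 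Second, the pure second-derivative diffusion terms rearrange as the perfect square $2|D^2_v u+\nabla_v\nabla_x u|^2\ge 0$ and are also discarded. Third, the cross term $\nabla_x u\cdot\nabla_v u$ in the ansatz releases $+|\nabla_x u|^2$ via the third identity in \eqref{eq: identities}, while $Bu^2$ releases $+2B|\nabla_v u|^2$ via $\mcL(u^2)$. Young's inequality then absorbs the obstruction $2\nabla_x u\cdot\nabla_v u$ that appears in $\mcL(|\nabla_v u|^2)$ together with the first-order $\vphi$-remainders, provided $B$ is chosen large; the remaining scalar terms involving $u(\beta(u)+\vphi)$ are bounded using \eqref{eq: Linftyestimate2}--\eqref{eq: beta(u)}. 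The outcome is
\[
\mcL W_0 \;\ge\; c\,W_0 - C, \qquad c>0,\ C=C(M,\|\vphi\|_{C^1_{t,x,v}(Q_1)}).
\]

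To convert this into a bound on $W$, I would re-insert the cutoff by expanding $\mcL(\eta^{2k}W_0)$ and use the critical-point identity $\nabla_v W(z_\star)=0$ at the interior maximum to substitute $\eta^{2k}\nabla_v W_0=-2kW_0\eta^{2k-1}\nabla_v\eta$; the Leibniz cross term then becomes $-8k^2W_0\eta^{2k-2}|\nabla_v\eta|^2$, and together with $W_0\,\mcL(\eta^{2k})$ contributes an error of order $C(k,\|\eta\|_{C^2})\,W_0\eta^{2k-2}$. Choosing $\eta$ of the form $\eta=\tilde\eta^m$ with $m$ large so that $|\nabla_v\eta|^2\lesssim\eta^{2-2/m}$ allows Young's inequality to absorb this error into the coercive $c\eta^{2k}W_0$ produced above, provided $k$ is taken sufficiently large. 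Combined with $\mcL W(z_\star)\le 0$, this forces $\eta^{2k}(z_\star)W_0(z_\star)\le C$; restricting to $Q_{3/4}$, where $\eta\equiv 1$, yields the desired estimate, with $Bu^2\le BM^2$ accounting for the $1+\|u\|_{L^\infty(Q_1)}$ factor on the right-hand side.

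The main obstacle is that $\mcL$ diffuses only in velocity, so in contrast to the elliptic or parabolic settings the quantity $|\nabla_v u|^2$ is \emph{not} a subsolution of $\mcL$ by itself: the coupling $2\nabla_x u\cdot\nabla_v u$ appearing in $\mcL(|\nabla_v u|^2)$ must be compensated. The Bernstein combination above is engineered precisely for this purpose — $\mcL(u^2)$ supplies a $|\nabla_v u|^2$ source and $\mcL(\nabla_x u\cdot\nabla_v u)$ supplies a $|\nabla_x u|^2$ source, and Young's inequality with a sufficiently large weight $B$ balances the two. The $\eps$-independence is a free byproduct of the favorable sign $\beta'(u)\ge 0$, which lets every $\beta$-dependent contribution be dropped in the lower bound for $\mcL W_0$.
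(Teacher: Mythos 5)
Your algebraic skeleton matches the paper's: the same Bernstein combination of $|\nabla_xu|^2$, $|\nabla_vu|^2$, $\nabla_xu\cdot\nabla_vu$ and $u^2$, the same use of $\beta'\ge 0$ to discard all $\beta$-dependent terms (which is indeed where the $\eps$-independence comes from), and the same sources ($+|\nabla_xu|^2$ released by the cross term, $+2B|\nabla_vu|^2$ released by $u^2$). The gap is in the localization. First, you discard the second-order terms as the "perfect square" $2|D^2_vu+\nabla_v\nabla_xu|^2$; in fact the correct commutator computation gives $\mcL(\nabla_xu\cdot\nabla_vu)\ni 2\nabla_v\nabla_xu:D^2_vu$ (not $4$), so the sum is $2|D^2_vu|^2+2|\nabla_v\nabla_xu|^2+2\nabla_v\nabla_xu:D^2_vu\ge |D^2_vu|^2+|\nabla_v\nabla_xu|^2$ — coercive, not degenerate — and you will need exactly this coercivity, because the Leibniz cross terms $2\nabla_v(\eta^{2k})\cdot\nabla_v(\cdots)$ contain $D^2_vu$ and $\nabla_v\nabla_xu$ paired with \emph{different} first-derivative factors and cannot be expressed through the combination $D^2_vu+\nabla_v\nabla_xu$ alone.

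Second, and more seriously, the proposed workaround via the critical-point identity does not close. Granting that $Bu^2$ sits inside the cutoff (your definitions of $W$ and $W_0$ disagree on this; if it sits outside, the substitution produces terms carrying $\eta^{-1}\nabla_v\eta\cdot u\nabla_vu$, which are unbounded), the cross term becomes $-8k^2\eta^{2k-2}|\nabla_v\eta|^2W_0\gtrsim -C(k)\,\eta^{2k-2/m}W_0$, while the coercive term you produced is only $c\,\eta^{2k}W_0$. These are of the \emph{same} order in $W_0$ but the error carries a strictly smaller power of $\eta$, so near $\partial\,\supp\eta$ the error dominates and Young's inequality cannot absorb it — that device requires superlinear coercivity in $W_0$ (as in Cheng--Yau-type estimates), which is not available here. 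The paper resolves precisely this by weighting the three quadratic terms with \emph{different} powers of the cutoff, $a^2\eta^4|\nabla_xu|^2+c\eta^3\nabla_xu\cdot\nabla_vu+b^2\eta^2|\nabla_vu|^2$, retaining the positive terms $\eta^4|\nabla_v\nabla_xu|^2$ and $\eta^2|D^2_vu|^2$ to absorb the second-derivative parts of the cutoff errors by Cauchy--Schwarz, so that every remaining error lands on a good term with a matching power of $\eta$; the bounded leftovers are then dominated by the extra summand $A(1-t)$, whose image under $\mcL$ is the constant $+A$. Your argument needs to be reorganized along these lines — uniform weighting by $\eta^{2k}$ plus the critical-point identity does not suffice.
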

\begin{proof} Set 
$$\Psi \coloneqq \nu u^2 + [a^2 \eta^4 |\nabla_xu|^2 + c\eta^3 \nabla_x u \cdot \nabla_v u + b^2 \eta^2 |\nabla_v u|^2] + A(1-t),$$ where $\eta$ is a cutoff function such that  $\eta \equiv 1$ on $Q_{3/4}$, $\eta$ is supported in $Q_1$, and $\sqrt{\eta} \in C^\infty$. The parameters $A>0$, $\nu>0$ and $0<c<ab$, $0<a<b$ are to be decided. We will show there exist choices of parameters such that $\Psi$ is a subsolution, that is, that $\mcL(\Psi) \ge 0$. From this the conclusion follows in the standard way.

Before embarking on the proof, we note that the condition that $\sqrt{\eta} \in C^\infty$ ensures that $|\nabla_v \eta|^2 \lesssim \eta$, pointwise. Therefore

$$|\mcL(\eta^{k+1})| =|(k+1)\eta^k \mcL(\eta) + k(k+1)\eta^{k-1}|\nabla_v \eta|^2| \lesssim \eta^k \text{ for any } k \ge 0.$$ 

Using \eqref{eq: identities}, we compute that
\begin{align} \label{eq: L(psi)1} \mcL \Psi  &= A+ 2\nu u(\beta(u)+\vphi) + |\nabla_v u|^2)  \\
&+a^2\left(\mcL(\eta^4)|\nabla_x u|^2 + 16\eta^3 \nabla_v \eta \cdot \nabla_v\nabla_x u \nabla_x u + 2\eta^4(\beta'(u)|\nabla_x u|^2 + |\nabla_v\nabla_x u|^2 + \nabla_xu \cdot \nabla_x \vphi)\right)\\
&+ c\left(\mcL(\eta^3)\nabla_x u \cdot \nabla_v u + 6\eta^2 \nabla_v \eta \cdot(D^2_v u \nabla_x u + \nabla_v \nabla_x u \nabla_vu)\right)\\
&+ c\left(2\eta^3\left(\frac12|\nabla_xu|^2 + \nabla_x u\cdot \nabla_v u \beta'(u) + \nabla_v\nabla_xu : D^2_vu+ \nabla_xu \cdot \nabla_v \vphi + \nabla_v u \cdot \nabla_x \vphi\right)\right)\\
&+ b^2\left(\mcL(\eta^2)|\nabla_vu|^2 + 8\eta^2\nabla_v \eta \cdot D^2_v u \nabla_v u + 2\eta^2(\beta'(u)|\nabla_vu|^2 + \nabla_xu\cdot \nabla_v u + |D^2_vu|^2 + \nabla_v u \cdot \nabla_v \vphi)\right).
\end{align}

The first observation we make is that we may collect all of the $\beta'$ terms in \eqref{eq: L(psi)1} and group them together as
\begin{equation}
    \label{eq: betaeq}
    2\beta'(u)\eta^2(a^2\eta^2|\nabla_xu|^2 + c\eta\nabla_x u \cdot \nabla_v u + b^2|\nabla_vu|^2).
\end{equation}
For $0<c<ab$, it holds that
$$|c\eta \nabla_x u \cdot \nabla_vu| \le |a\eta \nabla_x u| |b\nabla_vu| \le\frac12( a^2\eta^2|\nabla_xu|^2  + b^2|\nabla_vu|^2).$$
As $\beta' \ge0$, the term \eqref{eq: betaeq} is nonnegative and we may discard it. At this point, the proof proceeds essentially the same as in \cite{IMBERTMOUHOT}. We are going to apply Cauchy's inequality $|xy| \displaystyle\le \eps |x|^2 + \frac{C}{\eps}|y|^2$ repeatedly. 

Let us first handle the $a$ term. With the stated bounds on $\mcL(\eta^4)$, we may control
\begin{align}
|\mcL(\eta^4)||\nabla_xu|^2 & \le C \eta^3 |\nabla_xu|^2,\\
16\eta^3 |\nabla_v \eta \cdot \nabla_v\nabla_x u \nabla_x u| &\le \eta^4 |\nabla_v\nabla_x u|^2 +  C\eta^3|\nabla_x u|^2\\
2\eta^3|\nabla_x u \cdot \nabla_x\vphi| &\le  \eta^3 |\nabla_xu|^2 +  |\nabla_x\vphi|^2,
\end{align}
using again that $|\nabla_v \eta|^2 \lesssim \eta \le 1$. We remark here that the value of $C=C_\eta$ is chosen at the end to be as large as it needs to be to satisfy all of the Cauchy inequalities involving $\eta$, and may be changing from line to line. We find that the $a^2$ term is bounded below (excluding the $\beta'$ term) by
\begin{align}
\label{eq: aterm}
a^2(-C \eta^3|\nabla_xu|^2 +\eta^4 |\nabla_v\nabla_xu|^2 -|\nabla_x\vphi|^2).
\end{align}
Note we have absorbed the term $\eta^3|\nabla_xu|^2$ into $C\eta^3|\nabla_xu|^2$. 

Now we continue with the $c$ term. Again disregarding the $\beta'$ term, we introduce a new parameter $\delta>0$ to bound below the $c$ term by
\begin{align}
\label{eq: cterm}
c\left((1-3\delta)\eta^3|\nabla_x u|^2 - \delta \eta^4 |\nabla_v\nabla_x u|^2 - \frac{C}{\delta}(\eta |\nabla_vu|^2+ \eta^2|D^2_vu|^2) - \frac{C}{\delta}(|\nabla_{x,v}\vphi|^2)\right).
\end{align}
Here $\nabla_{x,v} = (\nabla_x, \nabla_v)$. This lower bound is a consequence of the following estimates which hold for all $\delta>0$ and $C>0$ large:

\begin{align}
|\eta^{1/2}\eta^{3/2} \nabla_v u \cdot \nabla_x u | &\le \delta \eta^3 |\nabla_xu|^2 + \frac{C}{\delta}\eta |\nabla_vu^3\\
|6 \eta^2 \nabla_v \eta \cdot D^2_vu \nabla_xu| &\le \delta \eta^3|\nabla_xu|^2 + \frac{C}{\delta}\eta^2|D^2_vu|^2,\\
|6\eta^2 \nabla_v \eta \cdot(\nabla_v\nabla_xu \nabla_vu)| &\le \delta \eta^4 |\nabla_v\nabla_x u|^2 + \frac{C}{\delta}\eta |\nabla_vu|^2,\\
|\nabla_v\nabla_x u : D^2_v u| &\le \delta \eta^4 |\nabla_v\nabla_xu|^2 + \frac{C}{\delta}\eta^2|D^v_u|^2,\\
\eta^3|(\nabla_xu, \nabla_v u)\cdot(\nabla_v\vphi, \nabla_x\vphi)| &\le \delta \eta^3|\nabla_xu|^2 + \eta |\nabla_vu|^2 +\left(|\nabla_x \vphi|^2 +\frac{C}{\delta} |\nabla_v\vphi|^2)\right).
\end{align}
Note that we have used $\eta^2 \nabla_v \eta = 2\eta^{3/2} \eta^{1/2}\nabla_v\sqrt{\eta}$. Note also that we may absorb the $\eta|\nabla_vu|^2$ term from the fifth inequality into the $\displaystyle \frac{C}{\delta}\eta|\nabla_vu|^2$ term. This explains \eqref{eq: cterm}. 

To handle the $b^2$ coefficient, we introduce another parameter $\mu$ for our Cauchy inequalities. Proceeding as usual, we bound below by 
\begin{align}
\label{eq: bterm}
b^2\left(-\frac{C}{\mu} \eta |\nabla_vu|^2 -\mu \eta^3 |\nabla_xu|^2 + 2\eta^2 |D^2_vu|^2 - |\nabla_v\vphi|^2\right).
\end{align}

We highlight that we have bounded $2\eta^2| \nabla_v u \cdot \nabla_v \vphi| \le \eta|\nabla_vu|^2 + |\nabla_v\vphi|^2$, using that $0\le \eta \le 1$. Then, we absorb the $-\eta|\nabla_v u|^2$ term into the $\displaystyle -\frac{C}{\mu}\eta|\nabla_vu|^2$ term. Finally, we remark that the term $2\nu u(\beta(u)+\vphi)$ from \eqref{eq: L(psi)1} may be bounded below by $-CM^2\nu $, where $M$ is as in \ref{eq: Linftyestimate2}.

At this point, we put together \eqref{eq: L(psi)1}, \eqref{eq: aterm}, \eqref{eq: cterm}, and \eqref{eq: bterm}. Grouping by derivatives,
\begin{align}
\label{eq: finalestimate}
    \mcL \Psi &\ge A -CM^2\nu + \left((1-3\delta)c - \mu b^2- C_\eta a^2\right)\eta^3|\nabla_xu|^2 + \left(2\nu - \frac{C}{\mu}b^2 - \frac{C}{\delta}c\right)\eta |\nabla_vu|^2 \\
    &+ \left(2b^2- \frac{C}{\delta}c\right) \eta^2|D^2_vu|^2 +\left(a^2 - \delta c\right)\eta^4|\nabla_v\nabla_x u|^2 - C\|\vphi\|_{C^1}.
    \end{align}
First, take $a=1$, and take $c>> C_\eta$, say $c=100C_\eta$. Then take $\delta$ small enough so that the coefficient of the $|\nabla_v\nabla_xu|^2$ term is positive, as well as so that $(1-3\delta)c > \displaystyle \frac{1}{2}$. With this choice of $c$ and $\delta$, choose $b>c$ large so that the coefficient of the $|D^2_vu|^2$ term is positive. Then choose $\mu$ small so that $\mu b^2 < C_\eta$, which ensures the positivity of the $|\nabla_xu|^2$ coefficient. Next, choose $\nu$ as large as necessary to guarantee the $|\nabla_vu|^2$ term is positive. Finally, choose $A= CM^2\nu + C\|\vphi\|_{C^1}$. 

The estimate now follows in the usual way with the maximum principle: 
\begin{align}
a^2 \sup_{Q_{3/4}}( |\nabla_x u|^2 + |\nabla_v u|^2 ) &\le \sup_{Q_{3/4}} a^2|\nabla_x u|^2 + b^2|\nabla_v u|^2 \\
&\le \sup_{Q_{3/4}}2(a^2|\nabla_x u|^2 + 2c\nabla_x u \cdot \nabla_v u + b^2|\nabla_v u|^2) \\
&\le2 \sup_{Q_1} \Psi \le 2(A + \sup_{\partial_p Q_1} \nu u^2.)
\end{align}
\end{proof}
With the above proposition in hand, we may now estimate the time derivative. 
\begin{prop}[$|\partial_tu|$ estimate] Under the same conditions as the previous proposition, the solution $u$ to \eqref{eq: penalizedprob} satisfies 
$$\|\partial_tu\|_{L^\infty(Q_{2/3})} \le C(1+ \|u\|_{L^\infty(Q_1)})$$
for some $C=C(M, \|\vphi\|_{C^1_{t,x,v}(Q_1)})$.
As in the previous proposition, the estimate is independent of $\eps$ and holds for any smooth, increasing nonlinearity $\beta$.

\end{prop}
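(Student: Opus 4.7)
The plan is to apply Bernstein's method to $w \coloneqq \partial_t u^\eps$, using that $[\partial_t, \mcL] = 0$. Differentiating the penalized equation gives $\mcL w = \beta_\eps'(u^\eps)w + \partial_t \vphi$, hence
$$\mcL(w^2) = 2\beta_\eps'(u^\eps) w^2 + 2|\nabla_v w|^2 + 2w\,\partial_t\vphi,$$
where $\beta_\eps' \ge 0$ gives a favorable sign on the zeroth-order term. The core idea will be to pair $w^2$ with $|\nabla_v u^\eps|^2$ in the auxiliary function so that the $+|D^2_v u^\eps|^2$ term produced by $\mcL(|\nabla_v u^\eps|^2)$ (identity \eqref{eq: identities}) can absorb the bad $w^2$-cross-term arising from cutoff derivatives.

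The key pointwise observation, obtained directly from the equation, is that on $Q_1$,
$$|\partial_t u^\eps| = |\Delta_v u^\eps - v\cdot\nabla_x u^\eps - \beta_\eps(u^\eps) - \vphi| \le |D^2_v u^\eps| + |\nabla_x u^\eps| + M.$$
On $Q_{3/4}$, Proposition \ref{prop: estimate1} gives $|\nabla_x u^\eps| + |\nabla_v u^\eps| \le C$, so that $w^2 \le C(|D^2_v u^\eps|^2 + 1)$ there. This is the identity that lets us trade an otherwise unabsorbable $-C\eta^3 w^2$ term for $-C\eta^2|D^2_v u^\eps|^2$, which can then be killed by the good Bernstein term.

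Concretely, I would fix a cutoff $\eta$ with $\eta \equiv 1$ on $Q_{2/3}$, $\supp \eta \subset Q_{3/4}$, and $\sqrt{\eta} \in C^\infty$ (so that $|\nabla_v \eta|^2 \lesssim \eta$), and consider
$$\Psi \coloneqq \eta^4 w^2 + B\eta^2 |\nabla_v u^\eps|^2 + \nu (u^\eps)^2 + A(1-t),$$
for parameters $B, \nu, A > 0$ to be chosen. A Cauchy expansion, proceeding as in Proposition \ref{prop: estimate1} and using $|\nabla_v \eta|^2 \lesssim \eta$ to control cross terms such as $16\eta^3 w \nabla_v\eta \cdot \nabla_v w$ and $8\eta \nabla_v\eta \cdot D^2_v u^\eps \nabla_v u^\eps$, yields, after substituting the pointwise observation into the bad $-C\eta^3 w^2$ piece and discarding the non-negative $\beta_\eps'$ contributions, a schematic lower bound
$$\mcL \Psi \ge (B - C)\eta^2 |D^2_v u^\eps|^2 + A - C(\nu + B + M^2 + \|\vphi\|_{C^1_{t,x,v}}^2 + 1).$$
Choosing $B$ large enough that $B - C \ge 0$, and then $A$ large enough (depending on $B, \nu, M$ and $\|\vphi\|_{C^1_{t,x,v}}$), makes $\Psi$ a subsolution on $Q_{3/4}$.

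The conclusion then follows from the maximum principle: since $\eta \equiv 0$ on $\partial_p Q_{3/4}$,
$$\sup_{Q_{3/4}}\Psi \le \sup_{\partial_p Q_{3/4}}\bigl[\nu (u^\eps)^2 + A(1-t)\bigr] \le \nu M^2 + 2A \le C,$$
and restricting to $Q_{2/3}$, where $\eta \equiv 1$, yields $w^2 \le \Psi \le C$. Every estimate above is uniform in $\eps$ and uses only $\beta_\eps \le 0$ and $\beta_\eps' \ge 0$, so the bound holds for arbitrary smooth increasing $\beta$. The main obstacle is exactly the $-C\eta^3 w^2$ term generated by the cutoff cross term in $\mcL(\eta^4 w^2)$: overcoming it is precisely why the auxiliary function couples $w^2$ with $|\nabla_v u^\eps|^2$, and why the pointwise identity relating $\partial_t u^\eps$ to $D^2_v u^\eps$ is indispensable.
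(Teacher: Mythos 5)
Your proposal is correct and follows essentially the same route as the paper: both hinge on the pointwise consequence of the equation $|\partial_t u^\eps|\le M+|D^2_v u^\eps|+|\nabla_x u^\eps|$, which converts the bad cutoff-generated $(\partial_t u^\eps)^2$ terms into $|D^2_v u^\eps|^2$ terms that are then absorbed by the good $|D^2_v u^\eps|^2$ contribution of $\mcL(|\nabla_v u^\eps|^2)$ in the auxiliary function. The only (cosmetic) differences are that the paper additionally carries an $Ae^{-\lambda t}|\nabla_x u^\eps|^2$ term to absorb $|\nabla_x u^\eps|^2$ via the $\lambda$-multiplier, where you instead invoke the $L^\infty$ bound on $\nabla_x u^\eps$ from Proposition \ref{prop: estimate1} directly on $\supp\eta\subset Q_{3/4}$, and that you place cutoffs on the gradient terms while the paper controls their boundary values by the same proposition.
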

\begin{proof} 
We use Bernstein's technique again. As mentioned in the introduction, the key point for this step is that
$$|\partial_tu| \le M + |D^2_v u| + |\nabla_x u| \text{ on }Q_1.$$ 

This follows thanks to \eqref{eq: Linftyestimate2}. Now, observe $$\mcL((\partial_t u)^2) = 2(\beta'(u)(\partial_t u)^2 + |\nabla_v \partial_tu|^2+ 2\partial_t u \partial_t \vphi)$$ and, again using \eqref{eq: identities}
$$\mcL(e^{-\lambda t} |\nabla_x u|^2) = e^{-\lambda t}(2\beta'(u) |\nabla_x u|^2 + 2 |\nabla_x \nabla_v u|^2 +\nabla_x u \cdot \nabla_x\vphi+ \lambda |\nabla_x u|^2).$$ Considering smooth $\eta$ such that $\eta \equiv 1$ on $Q_{2/3}$ and vanishing near $\partial_p Q_{3/4}$, set 
$$\Psi \coloneqq \eta^2(\partial_t u)^2 + A|\nabla_v u|^2 + Ae^{-\lambda t}|\nabla_x u|^2 +\nu(1-t)$$ for $\nu,A,\lambda >0$ to be decided. Similarly to before, we prove that under a certain choice of parameters, $\Psi$ is a subsolution on $Q_{3/4}$.  Indeed,
\begin{align} \label{eq: Lpsi2}
\mcL \Psi &= \mcL(\eta^2) (\partial_t u)^2 + 8\eta \partial_tu \nabla_v \eta \cdot \nabla_v \partial_tu + 2\eta^2(\beta'(u)(\partial_t u)^2
+ |\nabla_v \partial_tu|^2+ \partial_tu \partial_t\vphi)\\
&+ 2A(\beta'(u)|\nabla_v u|^2 + |D^2_vu|^2 + \nabla_v u \cdot \nabla_v \vphi)\\
&+ 2Ae^{-\lambda t}(\beta'(u)|\nabla_xu|^2 + |\nabla_v \nabla_x u|^2 + \nabla_x u \cdot \nabla_x \vphi+\frac{\lambda}{2} |\nabla_xu|^2)+\nu.
\end{align}

The strategy is the same. Any term involving $\beta'(u)$, we may discard, as they are all nonnegative. The right-hand side of the first line in \eqref{eq: Lpsi2} may be bounded below with Cauchy's inequality by 
$$-C (\partial_t u)^2 + \eta^2 |\nabla_v \partial_tu|^2- |\partial_t \vphi|^2 \ge -C(M^2+ |\nabla_x u|^2 + |D^2_vu|^2) + \eta^2 |\nabla_v \partial_tu|^2 -|\partial_t \vphi|^2.$$
As usual, introduce a parameter $\eps>0$ and use Cauchy's inequality to find
\begin{align}
    2A |\nabla_v u \cdot \nabla_v\vphi| \ge -\eps A |\nabla_vu|^2 - \frac{A}{\eps} |\nabla_v \vphi|^2,
    2A |\nabla_x u \cdot \nabla_x \vphi| \ge-\eps A |\nabla_xu|^2 - \frac{A}{\eps} |\nabla_x \vphi|^2.
\end{align}
It is then clear we may bound

\begin{align}
\mcL \Psi &\ge -C(M^2+|D^2_vu|^2 + |\nabla_x u|^2) + \eta^2|\nabla_v \partial_tu|^2  + 2A|D^2_vu|^2 + Ae^{-\lambda t}\lambda |\nabla_xu|^2\\
&- \eps A |\nabla_xu|^2 - \eps A |\nabla_vu|^2 -\frac{CA}{\eps}\|\vphi\|_{C^1_{t,x,v}}^2 + \nu.
\end{align}
Again we remark that the value of $C = C_\eta$ is changing, but is still absolute and depends only on $\eta$. We take $\lambda=1$, and note that $e^{- t} \ge 1$ on $Q_1$. Taking $A =2C$, the coefficient of $|D^2_vu|^2$ is $(2A-C) \ge 0$, and the coefficient of $|\nabla_xu|^2$ is at least $(1-\eps)A - C \ge 0$ if we choose $\eps = \displaystyle \frac12$. We then choose any 
$$\nu> 4C^2 \|\nabla \vphi\|_{C^1_{t,x,v}}^2 + CM^2 + C\|\nabla_v u\|_{L^\infty(Q_{3/4})}^2$$
in order to ensure $\mcL \Psi \ge 0$ on $Q_{3/4}$. The proposition follows then from the maximum principle and Proposition \ref{prop: estimate1}, as $\|\nabla_vu\|_{L^\infty(Q_{3/4})} \le C(M,\|\vphi\|_{C^1_{t,x,v}(Q_1)}).$
\end{proof}
So far, we have only used that the nonlinearity $\beta(\cdot)$ is increasing. In the following step, we use concavity in a critical way. 
\begin{prop}
    Under the same conditions as the previous proposition, let $u$ solve \eqref{eq: penalizedprob}. Then there exists $C = C(M, \|\vphi\|_{C^2_{t,x,v}(Q_1)})$ such that $u$ enjoys the semiconcavity estimate 
    $$\|(D^2_vu)^-\|_{L^\infty(Q_{1/2})}\le C(1+\|u\|_{L^\infty(Q_1)}).$$
    The estimate holds for any smooth, increasing, and concave nonlinearity $\beta(\cdot)$. 
\end{prop}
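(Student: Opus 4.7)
The strategy is Bernstein's method once more, but now applied to a pure second derivative $w \coloneqq \partial_{v_e v_e} u$ for an arbitrary direction $e \in \mathbb{S}^{n-1}$, crucially exploiting the concavity of $\beta$. Using the commutator $[\mcL,\partial_{v_i}] = \partial_{x_i}$ and differentiating the equation $\mcL u = \beta(u)+\vphi$ twice in direction $e$, one obtains
\begin{equation}
\mcL w = \beta''(u)(\partial_{v_e}u)^2 + \beta'(u) w + \partial_{v_e v_e}\vphi + 2\partial_{v_e x_e} u.
\end{equation}
Since $\beta''(u)\le 0$ and $(\partial_{v_e}u)^2\ge 0$, the nonlinear term is favorably signed for $-w$, so that
\begin{equation}
\mcL(-w) \ge \beta'(u)(-w) - \partial_{v_e v_e}\vphi - 2\partial_{v_e x_e} u.
\end{equation}
All terms on the right are controlled by $\|\vphi\|_{C^2_{t,x,v}}$ \emph{except} the mixed third-order derivative $2\partial_{v_e x_e} u$, which is not controlled by the first-order estimates of the previous two propositions. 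This cross derivative is the main obstacle.

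The plan is then to consider the Bernstein function
\begin{equation}
\Psi \coloneqq \eta^2(-w) + \lambda\,\eta\,|\nabla_x u|^2 + \mu\,\eta\,|\nabla_v u|^2 + A,
\end{equation}
where $\eta$ is a smooth cutoff satisfying $\eta\equiv 1$ on $Q_{1/2}$, $\supp\eta\subset Q_{3/4}$, and $\sqrt{\eta}\in C^\infty$ (so that $|\nabla_v\eta|^2\lesssim\eta$ and $|\mcL(\eta^k)|\lesssim \eta^{k-1}$ as in the previous propositions), and $\lambda,\mu,A>0$ are parameters to be chosen. The key mechanism disposing of the bad cross term is that $\mcL(|\nabla_x u|^2)$ produces the full square $2|\nabla_v\nabla_x u|^2\ge 2(\partial_{v_e x_e}u)^2$, so that Cauchy's inequality applied to $-2\partial_{v_e x_e}u$ against this square absorbs the offending term up to an $O(1/\lambda)$ loss. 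In parallel, the auxiliary term $\mu\,\eta|\nabla_v u|^2$ contributes $2\mu\,\eta|D^2_v u|^2\ge 2\mu\,\eta w^2$ via $\mcL(|\nabla_v u|^2)$; this absorbs the cutoff error $\mcL(\eta^2)(-w)$, the mixed derivative cross terms $\nabla_v(\eta^2)\cdot\nabla_v(-w)$, and the first-order terms on $\vphi$, all handled exactly as in the proofs of Propositions \ref{prop: estimate1} and via Cauchy inequalities of the form $|xy|\le \delta x^2+\tfrac{1}{\delta}y^2$.

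With parameters chosen so that $\mcL\Psi\ge 0$ wherever $\Psi$ exceeds the universal threshold $\lambda C_1^2+\mu C_2^2+A+1$ (with $C_1,C_2$ the first-order bounds from the previous propositions), the maximum principle rules out any interior maximum of $\Psi$ above that threshold: at such a point $z_0$ one would have both $\mcL\Psi(z_0)\le 0$ and $-w(z_0)>0$, making the $\beta'(u)(-w)$ term nonnegative and producing a strict contradiction. Hence $\Psi\le C(1+\|u\|_{L^\infty(Q_1)})$ throughout $Q_{3/4}$. Since $\eta\equiv 1$ on $Q_{1/2}$ and $e\in\mathbb{S}^{n-1}$ was arbitrary, this yields the desired lower bound on $D^2_v u$ over $Q_{1/2}$, uniform in $\eps$. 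The truly hard step is the handling of the mixed third-order derivative $2\partial_{v_e x_e} u$: the concavity of $\beta$ is precisely what permits dropping the potentially sign-indefinite term $\beta''(u)(\partial_{v_e}u)^2$ with the correct sign, and the hypoelliptic diffusion in $v$ acting on the $x$-derivative energy $|\nabla_x u|^2$ supplies the full square $|\nabla_v\nabla_x u|^2$ needed to trap the cross derivative --- this is the Evans--Krylov-flavored use of concavity foreshadowed in the introduction.
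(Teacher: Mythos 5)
Your overall architecture matches the paper's: Bernstein's method applied to a second $v$-derivative, concavity of $\beta$ used to discard $\beta''(u)(\partial_{v_e}u)^2$ with the right sign, restriction to the region where the second derivative is negative so that $\beta'(u)(-w)\ge 0$, and --- the genuinely important point --- the absorption of the commutator term $2\partial_{v_e x_e}u$ by the full square $2|\nabla_v\nabla_x u|^2$ produced by $\mcL(|\nabla_x u|^2)$. All of that is exactly what the paper does.

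There is, however, one genuine gap in your ansatz. You take the Bernstein quantity \emph{linear} in $w=\partial_{v_ev_e}u$, namely $\eta^2(-w)$. Expanding with the product rule $\mcL(fg)=f\mcL g+g\mcL f+2\nabla_v f\cdot\nabla_v g$, the localization produces the cross term $2\nabla_v(\eta^2)\cdot\nabla_v(-w)=-4\eta\,\nabla_v\eta\cdot\nabla_v\partial_{v_ev_e}u$, which involves the third derivative $D^3_v u$. Nothing in your $\mcL\Psi$ contains a square of $\nabla_v w$ against which to Cauchy--Schwarz this term: the auxiliary terms $\eta|\nabla_xu|^2$ and $\eta|\nabla_vu|^2$ only generate $|\nabla_v\nabla_xu|^2$ and $|D^2_vu|^2$, not $|\nabla_v D^2_vu|^2$, and there is no a priori bound on third derivatives. (Using the first-order condition $\nabla_v\Psi=0$ at an interior maximum to express $\nabla_v w$ in lower-order quantities degenerates where $\eta\to 0$, so it does not rescue the argument.) This is precisely why the paper works with the \emph{squared} negative part, $\eta^2(\partial_{v_ev_e}^-u)^2$: its $\mcL$ produces the good term $2\eta^2|\nabla_v\partial_{v_ev_e}^-u|^2$, which is exactly the square needed to absorb the localization cross term $8\eta\,\partial_{v_ev_e}^-u\,\nabla_v\eta\cdot\nabla_v\partial_{v_ev_e}^-u$. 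A second, minor issue: your additive constant $A$ satisfies $\mcL A=0$ and so contributes no strictly positive term to dominate the bounded leftovers ($|\partial_{v_ev_e}\vphi|$, Cauchy remainders, etc.); the paper uses $\nu(1-t)$, for which $\mcL(\nu(1-t))=\nu>0$. Both issues are repairable by adopting the squared quantity and the $\nu(1-t)$ term, after which your argument coincides with the paper's.
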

\begin{proof} 
Let $$\partial_{v_ev_e}^-u \coloneqq \begin{cases}- \partial_{v_ev_e}u & \partial_{v_ev_e}u<0,\\ 0 & \text{ otherwise}\end{cases}$$
for an arbitrary direction $e \in \mathbb{S}^{n-1}$. 
Set $$\Psi \coloneqq \eta^2(\partial_{v_ev_e}^-u)^2 + a|\nabla_v u|^2 + be^{-t}|\nabla_x u|^2 + cu^2 + \nu(1-t)$$ for $a,b,c,\nu>0$ to be decided, $\eta$ a smooth cutoff function which is identically $1$ on $Q_{1/2}$ and vanishes near $\partial_p Q_{2/3}$. Our goal is to show that $\mcL \Psi \ge 0$ on $Q_{2/3}$ under a suitable choice of parameters. First, we show that $\Psi$ is a subsolution on the open set $\{\partial_{v_ev_e}u>0\} \cap Q_1$. Indeed, on this set
\begin{align}
\label{eq: Lpsi4}
\mcL \Psi &= 2a(\beta'(u)|\nabla_vu|^2 + |D^2_vu|^2 +\nabla_v u \cdot\nabla_xu + \nabla_vu \cdot \nabla_v \vphi)\\
&+be^{-t}(2|\nabla_x u|^2\beta'(u) + 2|\nabla_v\nabla_x u|^2 + |\nabla_x u|^2 + \nabla_x u \cdot \nabla_x \vphi) + 2c(u(\beta(u)+\vphi) + |\nabla_vu|^2) + \nu.
\end{align}

As usual, we may disregard the $\beta'$ terms as they are nonnegative. On $Q_1$ we may bound below 
\begin{align}
    2a\nabla_v u \cdot \nabla_x u &\ge -a|\nabla_vu|^2 -a|\nabla_x u|^2,\\
    2a \nabla_v u \cdot \nabla_v \vphi &\ge -a|\nabla_v u|^2 - a|\nabla_v\phi|^2,\\
    be^{-t}(|\nabla_xu|^2 + \nabla_x u \cdot \nabla_x\vphi) &\ge \frac{b}{2} |\nabla_xu|^2 - \frac{b}{2} |\nabla_x \vphi|^2,\\
    2c u(\beta(u)+\vphi) &\ge -2cM^2.
\end{align} 

Note that we have again used \eqref{eq: Linftyestimate2}. Therefore, it suffices to choose $b= 2a$ and $c=a$, and then $\nu >\displaystyle 2a M^2 + a\|\nabla_{x,v}\vphi\|_{L^\infty(Q_1)}^2$ in order to ensure $\mcL \Psi \ge 0$ on $\{\partial_{v_ev_e}u>0\} \cap Q_1$. In showing that $\mcL \Psi \ge 0$ on $\{\partial_{v_ev_e}u<0\}$, we will have to choose $a$ large. Before proceeding with that computation, we note that on $\{\partial_{v_ev_e}u<0\},$ one has
$$\mcL(\partial_{v_ev_e}^-u) = -\mcL(\partial_{v_ev_e}u) = -\left(\beta'(u)\partial_{v_ev_e}u + \beta''(u)(\partial_{v_e}u)^2 + 2\partial_{v_ex_e}u + \partial_{v_ev_e}\vphi \right).$$
In the hypoelliptic case, we must account for the commutator $[\mcL, \partial_{v_e}] = \partial_{x_e}.$

Proceeding as usual,
\begin{align} \label{eq: Lpsi3}
\mcL \Psi &= \mcL(\eta^2)(\partial_{v_ev_e}^-u)^2 + 8\eta \partial_{v_ev_e}^-u \nabla_v \eta \cdot \nabla_v \partial_{v_ev_e}^-u\\
&-2\eta^2\partial_{v_ev_e}^-u(\beta'(u)\partial_{v_ev_e}u + \beta''(u)(\partial_{v_e}u)^2 + 2\partial_{v_ex_e}u + \partial_{v_ev_e}\vphi)\\ & +2\eta^2|\nabla_v \partial_{v_ev_e}^-u|^2
+ 2a(\beta'(u)|\nabla_vu|^2 + |D^2_vu|^2 +\nabla_v u \cdot\nabla_xu + \nabla_v u \cdot \nabla_v \vphi)\\ &+ be^{-t}(2|\nabla_x u|^2\beta'(u) + 2|\nabla_v\nabla_x u|^2 + |\nabla_x u|^2 + \nabla_x u \cdot \nabla_x \vphi) + 2c( u(\beta(u) + \vphi) + |\nabla_vu|^2) + \nu.
\end{align}

Let us bound the first two lines on the righthand side of \eqref{eq: Lpsi3} as follows:
\begin{equation}
\label{eq: firsttwoconvexitbds1}
    \mcL(\eta^2)(\partial_{v_ev_e}^-u)^2 + 8\eta \partial_{v_ev_e}^-u \nabla_v \eta \cdot \nabla_v \partial_{v_ev_e}^-u \ge -C |D^2_vu|^2 + \eta^2 |D^3_v u|^2,
\end{equation}
and
\begin{align}
    \label{eq: firsttwoconvexitbds2}
    -2\eta^2\partial_{v_ev_e}^-u(\beta'(u)\partial_{v_ev_e}u + \beta''(u)(\partial_{v_e}u)^2 + 2\partial_{v_ex_e}u + \partial_{v_ev_e}\vphi) & \ge -C\eta^2(|D^2_vu|^2 +|\nabla_v\nabla_x u|^2 + |D^2_v \vphi|^2)\\&-2\eta^2\partial_{v_ev_e}^-u(\beta'(u)\partial_{v_ev_e}u + \beta''(u)(\partial_{v_e}u)^2).
\end{align}
Crucially, the parenthetical term $\beta'(u) \partial_{v_ev_e}u + \beta''(u) (\partial_{v_e}u)^2$ is negative thanks to the facts that $\partial_{v_ev_e}u<0$ at the point of evaluation, $\beta'\ge0$, and $\beta'' \le 0$. We deduce then that $$-2\eta^2\partial_{v_ev_e}^-u(\beta'(u)\partial_{v_ev_e}u + \beta''(u) (\partial_{v_e}u)^2) \ge 0,$$ and therefore may be discarded. To proceed, let us bound
\begin{align}
\label{eq: secondtwoconvexitybds}
    be^{-t} \nabla_x  u \cdot \nabla_x \vphi &\ge -\frac{b}{2}e^{-t}|\nabla_x u|^2 - C|\nabla_x\vphi|^2,\\
    2a\nabla_v u \cdot \nabla_v \vphi &\ge -a|\nabla_v u|^2 - a|\nabla_v\vphi|^2.
\end{align}
Employing our observations, throwing away the remaining extraneous terms, and combining \eqref{eq: Lpsi3}-\eqref{eq: secondtwoconvexitybds}, we find 
\begin{align}
\mcL \Psi &\ge (2a - C)|D^2_vu|^2 + (2be^{-t} - C)|\nabla_v\nabla_xu|^2\\
&+ \left(\frac{be^{-t}}{2}-a\right)|\nabla_xu|^2 + (2c-a)|\nabla_vu|^2 + \nu -2cM^2 - (C+a)\|\vphi\|_{C^2(Q_1)}^2 .
\end{align}

Here, as always, $C = C_\eta$ has been chosen large enough to satisfy each of the above inequalities. Taking $a = C= c$ and $b =2a$, and then finally $\nu > 2aM^2 +2a \|\vphi\|_{C^2(Q_1)}^2$, we conclude that $\Psi $ is a subsolution on $Q_{2/3}$. We finish the proof with the maximum principle, noting that by the previous steps, the quantities $|\nabla_xu|_{L^\infty(Q_{2/3})}$ and $|\nabla_v u|_{L^\infty(Q_{2/3})}$ are controlled by $C(\|\psi\|_{C^1_{t,x,v}(Q_1)},\|u\|_{L^\infty(Q_1)})$. 
\end{proof}
\begin{cor} Under the same conditions as the previous proposition, solution $u$ to \eqref{eq: penalizedprob} satisfies the full $C^{1,1}_v$ estimate $$\|D^2_vu\|_{L^\infty(Q_{1/2})} \le C(1+\|u\|_{L^\infty(Q_1)})$$
for some $C = C(M, \|\vphi\|_{C^2_{t,x,v}(Q_1)}).$
\end{cor}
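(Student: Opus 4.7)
The plan is to combine the one-sided semiconcavity estimate from the preceding proposition with the fact that the trace $\Delta_v u$ is already controlled via the equation itself. Concretely, the previous propositions have already supplied $L^\infty$ bounds (on $Q_{1/2}$, up to shrinking radii by an absolute constant) for $\partial_t u$, $\nabla_x u$, and for $(D^2_v u)^-$, all of order $C(M,\|\vphi\|_{C^2})(1+\|u\|_{L^\infty(Q_1)})$. This is essentially the Evans--Krylov style trick alluded to in the introduction: concavity plus ellipticity in $v$ turn a one-sided bound into a two-sided bound.

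First, I would rewrite the equation as
\begin{equation}
    \Delta_v u \;=\; Yu + \beta_\eps(u) + \vphi \;=\; \partial_t u + v\cdot \nabla_x u + \beta_\eps(u) + \vphi
\end{equation}
on $Q_{1/2}$. The right-hand side is pointwise bounded by $C(1+\|u\|_{L^\infty(Q_1)})$: $|\partial_t u|$ and $|\nabla_x u|$ are controlled by the previous two propositions, $|v|\le 1$ on $Q_1$, and $|\beta_\eps(u)+\vphi|\le M$ by \eqref{eq: beta(u)}. Hence $|\Delta_v u|\le C(1+\|u\|_{L^\infty(Q_1)})$ on $Q_{1/2}$.

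Second, fix any direction $e\in\mathbb{S}^{n-1}$ and complete it to an orthonormal basis $\{e=e_1,e_2,\dots,e_n\}$ of $\R^n$. Then
\begin{equation}
    \partial_{v_e v_e} u \;=\; \Delta_v u \;-\; \sum_{i=2}^{n} \partial_{v_{e_i} v_{e_i}} u.
\end{equation}
By the previous proposition applied to each direction $e_i$ we have $\partial_{v_{e_i}v_{e_i}} u \ge -C(1+\|u\|_{L^\infty(Q_1)})$, so each $-\partial_{v_{e_i}v_{e_i}}u \le C(1+\|u\|_{L^\infty(Q_1)})$. Combined with the $L^\infty$ bound on $\Delta_v u$ from the first step, this yields $\partial_{v_ev_e} u \le nC(1+\|u\|_{L^\infty(Q_1)})$ on $Q_{1/2}$, independent of $e$. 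Together with the lower bound $\partial_{v_ev_e}u \ge -C(1+\|u\|_{L^\infty(Q_1)})$ supplied by the semiconcavity proposition, this gives the full two-sided estimate $\|D^2_v u\|_{L^\infty(Q_{1/2})} \le C(1+\|u\|_{L^\infty(Q_1)})$, with $C$ depending on $M$ and $\|\vphi\|_{C^2_{t,x,v}(Q_1)}$, as claimed.

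There is no real obstacle here: the corollary is a purely algebraic consequence of the previously established ingredients, with the degenerate structure of $\mcL$ entering only through the observation that $\Delta_v u$ is expressible in terms of already-controlled quantities. The substantive work has all been done in proving the Lipschitz estimates for $\nabla_x u$ and $\partial_t u$ and the semiconcavity estimate for $D^2_v u$; what remains is to recognize that a lower bound on every pure second velocity derivative, plus a bound on their sum, forces an upper bound on each of them.
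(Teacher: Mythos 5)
Your proposal is correct and follows essentially the same route as the paper: the paper also uses the equation to control $\Delta_v u = \mathrm{tr}(D^2_vu)^+ - \mathrm{tr}(D^2_vu)^-$ via the already-established bounds on $\partial_t u$, $\nabla_x u$, and $\beta_\eps(u)+\vphi$, and then converts the one-sided semiconcavity bound into a two-sided bound by ellipticity in $v$. Your decomposition of $\partial_{v_ev_e}u$ along an orthonormal basis is just a notational variant of the paper's trace/eigenvalue argument.
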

\begin{proof} The proof uses ellipticity of the Kolmogorov operator in the velocity variable. Write $\Delta_v = \text{tr}(D^2_vu)^+ - \text{tr}(D^2_vu)^-$. The previous step gives an upper bound on $(D^2_vu)^-$, and we may write
$$ \text{tr}(D^2_vu)^+ = (\beta(u)+\vphi) +  \text{tr}(D^2_vu)^- + \partial_tu + v\cdot\nabla_x u \le M + \text{tr}(D^2_vu)^- + |\partial_tu| + |\nabla_xu|,$$
thanks to the fact that $|v| \le 1$. By the three previous propositions and lemmas, all of the terms on the righthand side are bounded on $Q_{1/2}$ by $C(1+\|u\|_{L^\infty(Q_1)})$ for $C = C(M, \|\vphi\|_{C^2_{t,x,v}(Q_1)})$. We therefore have an upper bound on the magnitude of every eigenvalue and may conclude. 
\end{proof}
Finally, we arrive at the proof of Theorem \ref{thm: optimalreg}.
\begin{proof}[Proof of Theorem \ref{thm: optimalreg}.]
For any $0<\alpha,\delta<1$, we may apply the preceding estimates to extract a subsequence $u^{\eps_j} \to u$ strongly in $(C^{1,\alpha}_v \cap C^{0,\alpha}_{t,x})(Q_{1-\delta})$ to a function $u$ satisfying $\|D^2_vu\|_{L^\infty(Q_{1-\delta})} + \|\nabla_{t,x,v}u\|_{L^\infty(Q_{1-\delta})} \le C(1+ \|u\|_{L^\infty(Q_1)})$ for some $C=C(\delta, M, \|\psi\|_{C^2_{t,x,v}(Q_1)}).$

Now, as in \eqref{eq: Linftyestimate2}, we recall that $u_{\eps_j} \ge -M\eps_j$, and therefore in the limit obtain that $u \ge 0$. Next, for any $z_0\in Q_{1-\delta}$ such that $u(z_0)= c>0$, one certainly has $u^{\eps_j}(z_0)>c/2$ for $\eps_j$ small enough. By the uniform estimates on $\|u^{\eps_j}\|_{C^{1,1}_\ell(Q_{1-\delta})}$, there exists a uniform $\rho>0$ such that $u^{\eps_j} > c/4$ on $Q_{\rho}(z_0)$ for all $\eps_j$ small. Therefore, on $Q_{\rho}(z_0)$, there holds
$$\mcL u^{\eps_j} = \vphi\text{ for } \eps_j \text{ small}.$$ 

Since the righthand side $\vphi \in C^2_{t,x,v}(Q_1) \subset C^{0,\alpha}_\ell(Q_1)$, we may apply the Schauder estimate of Theorem \ref{thm: Schauderthm} to extract a further subsequence $u^{\eps_{j_k}} \to u$ in $C^{2,\alpha}_{\ell}(Q_{\rho/2}(z_0))$ for any $0<\alpha<1$. This allows us to conclude that $\mcL u (z_0)=\vphi(z_0)$, and therefore that $\mcL u = \vphi$ on $\{u>0\}$. Finally, we note that $\mcL u^\eps \le \vphi$ for all $\eps$, implying $\mcL u \le \vphi$.  Setting $f = u+\psi$, so that $\{f>\psi\} = \{u>0\}$, we obtain that $f$ solves
$$
\begin{cases}
    f \ge \psi \text{ and }  \mcL f \le 0 , & \text{ in } Q_1,\\
    \mcL f = 0 & \text{ in } Q_1 \cap \{f>\psi\},\\
    f = g & \text{ on } \partial_p Q_1
\end{cases}
$$
and obeys the estimate 
  $$\|D^2_v(f-\psi)\|_{L^\infty(Q_{1/2})} + \|\nabla_{t,x,v}(f-\psi)\|_{L^\infty(Q_{1/2})} \le C(1+\|f-\psi\|_{L^\infty(Q_1)}).$$
Here $C =  C(\|g\|_{L^\infty(\partial_p Q_1)}, \|\psi\|_{C^4_{t,x,v}(Q_1)})$ as claimed. By the uniqueness theorem of \cite{PolidoroEXISTENCE} we conclude, since $f$ is the only such solution to \eqref{eq: formulation1}. The estimates on $u$ in the second formulation \eqref{eq: formulation2} can be obtained with marginally less difficulty by setting $-\mcL \psi =1 = \vphi$, meaning one need not contend with derivatives of $\vphi$.
\end{proof}
\begin{remark}
    There is no real loss of generality in assuming that $f$ is continuous on $\partial_p Q_1$. Indeed, by the results of \cite{PolidoroEXISTENCE} and \cite{FRENTZPOLIDOROOPTIMAL1}, the solution $f$ will be continuous and above the obstacle on the interior. Since our derivative estimates do not extend to the fixed boundary $\partial_p Q_1$, we could always just use that $f$ is continuous on $\partial_p Q_{1-\delta}$ and step inside. For this reason we will forget about the boundary data in the rest of the paper. 
\end{remark}
\begin{remark}
    As for the regularity assumption on the obstacle, we make a slightly stronger assumption on $\psi$ than that made in \cite{FRENTZPOLIDOROOPTIMAL1}. This is possibly just an artifact of the proof. For the purposes of free boundary regularity, it is customary to take $\psi$ smooth with $\mcL(\psi)=-1$ as a first approach. 
\end{remark}
\begin{remark} 
The regularity of Theorem \ref{thm: optimalreg} is likely optimal. That $C^{0,1}_t$ and $C^{1,1}_v$ are optimal is known from the study of the parabolic problem. We can at least argue that $\partial_t u$ and $\nabla_xu$ cannot both be continuous up to the entire free boundary, unless the free boundary consists entirely of regular points as defined in Section \ref{blowups}. We show there that in general, $\Gamma_u = \Gamma_u^{\text{reg}} \sqcup \Gamma_u^{\text{sing}}$, where, in particular, $\Gamma_u^{\text{sing}}$ consists of those points $z_0 \in \Gamma_u$ for which the rescalings $u_r^{z_0}$ converge as $r\to 0$ to a quadratic polynomial $u_0(t,v) = mt + Av \cdot v$, for some $m\in \R$ and $A\in \mathbb{R}^{n\times n}$. Therefore, at singular points with $m \neq 0$, it cannot be that $\partial_t u_r^{z_0}(z) = Yu(z_0 \circ S_r(z))$ converges to $0$ as $z_0 \circ S_r(z) \to z_0 \in \Gamma$. In other words, at least one of $\partial_tu$ or $\nabla_xu$ fail to vanish continuously at singular points. We suspect that both derivatives will have jumps across $\Gamma_u^{\text{sing}}$, but for the reasons outlined in the introduction, it is not easy to work with these derivatives directly. 
\end{remark}
\section{Nondegeneracy}
\label{nondegensec}
From this point forward, we work with solutions $u$ to \eqref{eq: formulation2} that have continuous boundary data and are such that $\|u\|_{L^\infty(Q_1)} \le M$ for some $M>0$. Then Theorem \ref{thm: optimalreg} implies that 
$$\|\nabla_x u\|_{L^\infty(Q_{1/2})} + \|\partial_tu\|_{L^\infty(Q_{1/2})} + \|D^2_vu\|_{L^\infty(Q_{1/2})} \le CM$$
for some dimensional constant $C= C(n)$. As remarked in the previous section, there is no real loss in generality in assuming continuity on the boundary. Any boundary conditions which can ensure interior regularity will necessarily ensure continuity, at which point we may apply our interior regularity estimates by stepping inside to $Q_{1-\delta}$. Since our estimate is an interior estimate, we make no further reference to the boundary data, and occasionally use $h$ and $\vphi$ to denote functions that have nothing to do with the initial formulations \eqref{eq: formulation1} and \eqref{eq: formulation2}.

While some results are easily transferable to the classical formulation \eqref{eq: formulation1}, for the purpose of studying free boundary regularity it is preferable to work with the second formulation. We begin with the usual nondegeneracy of solutions, which says that solutions grow quadratically on kinetic cylinders. This is an important fact that ensures that the property of being on the free boundary is preserved by blow-up limits. 
\begin{prop}\label{prop: nondegen} Let $u$ be a solution of \eqref{eq: formulation2} and let $z_0 \in \Gamma_u$. Then there exists an absolute constant $c_n>0$ depending only on dimension such that for any $r>0$ such that $Q_r(z_0) \subset Q_1$ there holds
\begin{equation} \label{eq: nondegen} \sup_{Q_r(z_0)} u \ge c_n r^2.\end{equation}
In fact, for any $z_1 \in \Omega_u$ such that $Q_r(z_1) \subset Q_1$, one has
\begin{equation}
   \label{eq: nondegenpositive} u(z_1) +c_nr^2 \le \sup_{Q_r(z_1)} u
\end{equation}

\end{prop}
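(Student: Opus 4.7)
I would prove (2) first by the classical barrier-comparison method adapted to the kinetic setting, and then deduce (1) by approximation. After reducing to $z_1 = 0$, $r = 1$ via Galilean invariance of $\mcL$ and the kinetic scaling $u \mapsto u(S_r \cdot)/r^2$, the goal becomes $\sup_{Q_1} u \ge u(0) + c_n$. The barrier to use is $V(z) = u(0) + \alpha|v|^2 + \beta(-t)$ with $2n\alpha + \beta = 1$, which satisfies $\mcL V = 1$, vanishes at $0$, and is strictly positive on $\Gamma_u \cap \overline{Q_1}$ (where $u = 0$ while $V \ge u(0) > 0$).

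The difference $V - u$ is then $\mcL$-harmonic on $\Omega_u$ and equals zero at the origin. The strong maximum principle gives a clean dichotomy. Either $V \equiv u$ on the connected component of $0$ in $\Omega_u \cap Q_1$; but any boundary point of this component lying in $Q_1$ must lie in $\Gamma_u$, and there $V > 0 = u$, a contradiction unless the component has empty boundary in $Q_1$, i.e., the component is all of $Q_1$. In this case $u \equiv V$ on $Q_1$, so $\sup_{Q_1} u = u(0) + \alpha + \beta$. Otherwise $V - u$ attains strictly negative values, with the minimum (by the min principle) necessarily on $\partial_p Q_1 \cap \overline{\Omega_u}$ since $V - u > 0$ on the $\Gamma_u$-piece; this produces $z^* \in \partial_p Q_1 \cap \overline{\Omega_u}$ with $u(z^*) > V(z^*) = u(0) + \alpha|v^*|^2 + \beta(-t^*)$. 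If $z^*$ lies on the $v$-face ($|v^*| = 1$) or on the bottom ($t^* = -1$), this immediately gives $u(z^*) > u(0) + \min(\alpha, \beta)$, and the claim follows with $c_n = \min(\alpha, \beta)$.

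The hard part is when the minimum of $V - u$ is attained only on the $x$-face $\{|x^*| = 1\}$, where $|v^*|$ and $-t^*$ can both be close to $0$ so that the barrier fails to grow. This difficulty is specific to the oblique geometry of the backward kinetic cylinder and has no analogue in the elliptic or parabolic settings. I would handle it using the Lipschitz-in-$x$ estimate from Theorem \ref{thm: optimalreg}: at the original scale, $\|\nabla_x u\|_{L^\infty(Q_{1/2})} \le C$ means $u$ on the $x$-face of $Q_r(z_1)$ differs from its value on the axis $\{x = x_1 + (t-t_1)v_1\}$ by at most $Cr^3$, which is negligible against the target $c_n r^2$ for $r$ below a universal threshold; for $r$ at larger scales, a compactness argument within the class $\mathcal{P}_r(M)$ closes the gap. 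Finally, for (1), pick $z_0 \in \Gamma_u$ and a sequence $z_k \in \Omega_u \cap Q_r(z_0)$ with $z_k \to z_0$, which exists by the definition of $\Gamma_u$ as the parabolic boundary of $\Omega_u$; applying (2) at radii $r_k = r - d_{\mathrm{kin}}(z_k, z_0) \uparrow r$ and passing to the limit using continuity of $u$ and $u(z_k) \to u(z_0) = 0$ yields $\sup_{Q_r(z_0)} u \ge c_n r^2$.
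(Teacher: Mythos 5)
Your reduction to $z_1=0$, your choice of comparison domain $\Omega_u\cap Q_r(z_1)$, and your diagnosis of the difficulty --- the $x$-face of the oblique cylinder, where the purely parabolic barrier $\alpha|v|^2+\beta(-t)$ does not grow --- all match the paper. The gap is in how you dispose of that case. The maximum principle hands you a point $z^*$ with $u(z^*)\ge V(z^*)=u(z_1)+\alpha|v^*-v_1|^2+\beta(t_1-t^*)$, and on the $x$-face both increments can vanish. Knowing $\|\nabla_x u\|_{L^\infty}\le C$ tells you that $u(z^*)$ differs by $O(r^3)$ from its value at the axis point $\tilde z^*=(t^*,x_1+(t^*-t_1)v_1,v^*)$, but since $V$ is independent of $x$ this transport changes nothing: the oscillation bound does not upgrade the lower bound $V(z^*)$ to $u(z_1)+c_nr^2$. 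The idea is salvageable only by restructuring the argument: restrict $u$ to the axis and observe $(\Delta_v-\partial_t)\,u(t,x_1+(t-t_1)v_1,v)=\chi_{u>0}+(v-v_1)\cdot\nabla_xu\ge 1-Cr$ on the positivity set, then run the parabolic comparison in $(t,v)$ alone --- this is essentially the paper's Proposition \ref{prop: parabolicnondegen} --- but that route works only for $r\le r_0(M)$, only in the interior region where Theorem \ref{thm: optimalreg} applies, and produces a constant depending on $M$, whereas the proposition claims a purely dimensional $c_n$ for every $Q_r(z_0)\subset Q_1$. Your compactness fallback for larger $r$ is also problematic: along a contradicting sequence the values $u_k(z_1^k)$ may tend to $0$, and the limiting statement ``$u_\infty\equiv 0$ on $Q_{r_\infty}(z_1^\infty)$'' is not a contradiction, because neither membership in $\Omega_u$ nor in $\Gamma_u$ survives uniform limits without the very nondegeneracy being proved.

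The paper's resolution is a barrier modification that uses no regularity input at all: take $\vphi^r(z)=\frac{1}{4n+2}(|v|^2-t)+\frac{|x|^2}{4r^4}$ and translate by $z_1$. The extra term contributes $-\frac{(v-v_1)\cdot(x-x_1-(t-t_1)v_1)}{2r^4}$ to $\mcL\tilde\vphi^r$, which is bounded by $\frac{r\cdot r^3}{2r^4}=\frac12$ on $Q_r(z_1)$, so $0\le\mcL\tilde\vphi^r\le 1$ and $u-\tilde\vphi^r-u(z_1)$ remains a subsolution on $\Omega_u\cap Q_r(z_1)$; on the $x$-face the extra term equals $\frac{r^6}{4r^4}=\frac{r^2}{4}$, so $\tilde\vphi^r\ge\frac{r^2}{4n+2}$ on the entire parabolic boundary and the dimensional constant follows at once. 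Your passage from \eqref{eq: nondegenpositive} to \eqref{eq: nondegen} by approximating $z_0$ from within $\Omega_u$ is the same as the paper's.
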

\begin{proof} Let $r>0$ be arbitrary and set $\vphi^r(t,x,v) \coloneqq \ds  \frac{1}{4n+2}(|v|^2 -t) + \frac{|x|^2}{4r^4}$. Now, take $z_1 \in \{u>0\} \cap Q_1= \Omega_u \cap Q_1$ arbitrary and set $\tilde{\vphi}^r(z) \coloneqq \vphi^r(z_1^{-1} \circ z)$. By Galilean invariance, we compute
$$\mcL \tilde{\vphi}^r(z) = (\mcL\vphi^r)(z_1^{-1} \circ z) = \frac12 - \frac{(v-v_1)\cdot(x-x_1-(t-t_1)v_1)}{2r^4}.$$

Since $|v-v_1|<r$ and $|x-x_1-(t-t_1)v_1|<r^3$ for $z=(t,x,v)\in Q_r(z_1)$, we find that $0 \le \mathcal{L}(\tilde{\vphi}^r)\le 1$ there. Now, set $f(z) \coloneqq u(z)  - \tilde{\vphi}(z)- u(z_1)$. On $\Omega_u \cap Q_r(z_1)$ we have that $\mcL f = 1- \mcL \tilde{\vphi}^r \ge 0$. It is clear by construction that $f(z_1)=0$. Therefore, the maximum principle implies that $f$ takes its nonnegative maximum on $\partial_p(\Omega_u \cap Q_r(z_1))$. However, $f<0$ on $\Gamma_u \cap Q_r(z_1)$, because $u$ vanishes there and $\tilde{\vphi}^r$ and $u(z_1)$ are positive there. This implies that $f$ takes its maximum on $\Omega_u \cap \partial_pQ_r(z_1)$. Note that on $\partial_pQ_r(z_1)$, the function $\tilde{\vphi}^r$ satisfies  $\displaystyle \tilde{\vphi}^r \ge \frac{1}{4n+2} r^2$, and therefore we obtain
$$0 \le \sup_{\partial_p Q_r(z_1)} u- \tilde{\vphi}^r - u(z_1) \le \sup_{\partial_p Q_r(z_1)} u - \frac{1}{4n+2}r^2 - u(z_1)$$
and we rearrange to find that \eqref{eq: nondegenpositive} holds with $c_n = \ds \frac{1}{4n+2}.$

Sending $z_1 \to z_0 \in \Gamma_u $ we arrive at \eqref{eq: nondegen}. Note that, since we are sending $z_1 \to z_0$, we may as well assume that we are working with a point $z_1 \in \{u>0\}$ such that $Q_r(z_1) \subset Q_1$. Else we may replace $r$ by an arbitrary $0<\rho<r$ for $z_1$ close enough to $z_0$, and then send $\rho \to r$. 
\end{proof}

We will also use the inequality \eqref{eq: nondegenpositive} once later on. The previous proof differs in a small way from the usual proofs of nondegeneracy (see, e.g., \cite{OBSPROBREVISITED}, \cite{PSUBOOK}, \cite{CPS}) in that we use an auxiliary function $\vphi^r$ that depends on $r$. It is not clear whether there is an explicit function $\vphi$ such that $\mcL \vphi = 1$, $\vphi(0,0,0)=0$, and $\displaystyle \inf_{\partial_p Q_r} \vphi \gtrsim r^2.$ 

As a consequence of the nondegeneracy, we obtain the following result on the ``porosity" of the free boundary.
\begin{prop} \label{prop: porosity}
    Let $u$ solve \eqref{eq: formulation2} with $\|u\|_{L^\infty(Q_1)} \le M$, and let $z_0 \in \Gamma_u \cap Q_{1/2}$ be arbitrary. There exists $\mu = \mu(n, M)$ such that 
    \begin{equation}
    \label{eq: porosity}
        |Q_r(z_0)\cap \Omega_u| =  |Q_r(z_0) \cap \{u>0\}| > \mu |Q_r|  \ \text{ for all } r>0 \text{ sufficiently small}.
    \end{equation}
    In particular, $Q_r(z_0) \cap \Omega_u \supset Q_{\delta r}(z_1)$ for some $\delta = \delta(n,M) \in (0,1)$. 
\end{prop}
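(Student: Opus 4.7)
The plan is to combine the nondegeneracy of Proposition \ref{prop: nondegen} with the uniform Lipschitz estimates of Theorem \ref{thm: optimalreg} via a rescaling argument. A direct application on the original scale would only yield a sub-cylinder of radius $O(r^2)$, since the value $u(z_1) \sim r^2$ at the point provided by nondegeneracy must compete against a Lipschitz-in-$v$ contribution of size $|v|$; the rescaling normalizes these two scales and recovers the correct $O(r)$. Without loss of generality take $z_0 = (0,0,0)$, and for $r > 0$ small set
\begin{equation}
u_r(z) \coloneqq r^{-2} u(S_r(z)),
\end{equation}
so that $\mcL u_r = \chi_{u_r > 0}$ on $Q_{1/r}$ and $0 \in \Gamma_{u_r}$. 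By the $C^{1,1}_\ell$ estimate of Theorem \ref{thm: polidorooptimal}, $\|u_r\|_{L^\infty(Q_1)} \leq C(M)$ uniformly in $r$.

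First I would locate a point where $u_r$ is of order one. Proposition \ref{prop: nondegen} applied to $u_r$ at $0$ at scale $1/4$ gives $\sup_{Q_{1/4}} u_r \geq c_n/16$, and by continuity one may pick $\tilde z \in Q_{1/4}$ with $u_r(\tilde z) \geq c_n/32$. Next, Theorem \ref{thm: optimalreg} applied to $u_r$ delivers uniform Euclidean bounds $\|\nabla_x u_r\|_{L^\infty(Q_{1/2})} + \|\partial_t u_r\|_{L^\infty(Q_{1/2})} + \|\nabla_v u_r\|_{L^\infty(Q_{1/2})} \leq C(n,M)$, with the $\nabla_v$ piece coming by standard interpolation between the $D^2_v$ and $L^\infty$ bounds. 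For $z \in Q_\delta(\tilde z)$ with $\delta \in (0,1/2)$, the kinetic-cylinder geometry gives $|t - \tilde t| \leq \delta^2$, $|x - \tilde x| \leq \delta^3 + \delta^2 |\tilde v|$, and $|v - \tilde v| \leq \delta$, so that $|u_r(z) - u_r(\tilde z)| \leq C(n,M)\delta$. Choose $\delta = \delta(n,M)$ small enough that $u_r \geq c_n/64 > 0$ on $Q_\delta(\tilde z)$.

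To finish, I would unscale using the identity $S_r(a \circ b) = S_r(a) \circ S_r(b)$, which is a direct consequence of the definitions. Setting $z_1 \coloneqq S_r(\tilde z) \in Q_{r/4}$, one has $Q_{\delta r}(z_1) = S_r(Q_\delta(\tilde z)) \subset \Omega_u$. A short computation tracking the oblique $x$-component (using $|\tilde v| \leq 1/4$ and $|\tilde x| \leq 1/64$) verifies that $Q_{\delta r}(z_1) \subset Q_r$ provided $\delta$ was chosen sufficiently small at the start. This gives the cylinder claim, and the measure bound follows from $|Q_{\delta r}| = \delta^{4n+2}|Q_r|$, so one may take $\mu = \delta^{4n+2}$.

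The main obstacle is the scale mismatch between the quadratic nondegeneracy and the first-order Lipschitz regularity on the original scale; the rescaling resolves this by simultaneously normalizing both, so that positivity persists on a universal-sized kinetic cylinder which then unscales to the desired $O(r)$ cylinder. Critically, this argument leans on the $C^{1,1}_\ell$ bound of Theorem \ref{thm: polidorooptimal} to control $\|u_r\|_\infty$, and on the new Lipschitz-in-$x$ and Lipschitz-in-$t$ estimates from Theorem \ref{thm: optimalreg} to control $u_r$ in the Euclidean sense uniformly in $r$ (the purely kinetic $C^{1,1}_\ell$ norm would not give uniform Euclidean Lipschitz control in $t$, which is what makes the argument pass).
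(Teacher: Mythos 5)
Your argument is correct and is essentially the paper's proof, transplanted to unit scale: both use nondegeneracy to find a point of height $\sim r^2$ in $Q_r(z_0)$ and then the regularity estimates to propagate positivity to a kinetic cylinder of radius $\delta r$ around it (the paper does this directly at scale $r$ via the first-order kinetic Taylor polynomial and the bound $|\nabla_v u(z_1)|\le CMr$, you do it after rescaling via Euclidean Lipschitz bounds on $u_r$). One small correction to your closing remark: the new Euclidean Lipschitz estimates in $t$ and $x$ are not actually needed here, since on a kinetic cylinder $Q_\delta(\tilde z)$ one has $|t-\tilde t|\le \delta^2$ and $|x-\tilde x|\lesssim\delta^2$, so the $C^{1,1}_\ell$ bound of Theorem \ref{thm: polidorooptimal} already controls the oscillation of $u_r$ by $C\delta$ via $\|u_r-p_1(\cdot;\tilde z)\|_{L^\infty(Q_\delta(\tilde z))}\le C\delta^2$ together with $|\nabla_v u_r(\tilde z)|\le C$; indeed the paper's proof notes the result "essentially follows from the $C^{1,1}_\ell$ regularity."
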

\begin{proof}
    The proof essentially follows from the $C^{1,1}_{\ell}$ regularity of $u$. Let $z_1\in Q_r(z_0)$ be a point such that $u(z_1) \ge cr^2$, as per the nondegeneracy lemma. By the regularity estimates of Theorem \ref{thm: optimalreg}, $u$ separates from its first order Taylor polynomial $p_1(z; z_1) = u(z_1) + (v-v_1) \cdot \nabla_v u(z_1)$ quadratically, that is,  there exists an absolute $C>0$ such that for all $0<\rho<1$,
    $$\|u-p_1\|_{L^\infty(Q_\rho(z_1))} \le CM \rho^2.$$ 
    Taking $C>0$ even larger if need be and applying the Lipschitz regularity estimate of $\nabla_v u$  from Theorem \ref{thm: optimalreg}, as well as the fact that $\nabla_v u(z_0)=0$ continuously, it holds that $|\nabla_v u(z_1)| \le CMr$ for some absolute constant $C$. Now, take $\rho = \delta r$ for $0<\delta = \delta(n,M)<1$ such that $\displaystyle CM\delta(1-\delta) = \frac{c}{2}$. Then there holds 
    $$u(z) \ge u(z_1) -CMr^2 \rho - CM\rho^2= r^2(c - CM \delta - CM \delta^2)\ge \frac{c}{2}r^2 > 0 \text{ for } z \in Q_{\delta r}(z_1).$$
\end{proof}
Let us recall the following version of the Lebesgue differentiation theorem for kinetic cylinders, from \cite{IMBERTSILVESTRE}.
\begin{thm}[Theorem $10.3$ from \cite{IMBERTSILVESTRE}]
\label{thm: luislebesgue}
Let $\Omega \in \R^{1+2n}$ be an open set, and $f \in L^1(\Omega)$ with respect to the $2n+1$-dimensional Lebesgue measure $\ dt \otimes \ dx \otimes \ dv.$ Then 
$$\lim_{r \to 0 } \frac{1}{|Q_r|}\int_{Q_r(t_0,x_0,v_0)} |f(t,x,v) - f(t_0,x_0,v_0)| \ dt \ dx \ dv=0$$
for almost every $(t_0,x_0,v_0) \in \Omega$.
\end{thm}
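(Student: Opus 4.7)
The plan is to follow the classical route to a Lebesgue differentiation theorem, adapted to the kinetic geometry: first establish a Vitali-type covering lemma for the family of kinetic cylinders $\{Q_r(z_0)\}$, then deduce that the associated Hardy-Littlewood maximal operator is of weak type $(1,1)$, and finally conclude by approximating $f \in L^1(\Omega)$ in norm by continuous functions and applying Chebyshev's inequality. This is the standard three-step scheme for real-variable differentiation theorems; the only nontrivial feature is that the cylinders $Q_r(z_0)$ are oblique in $x$, with the obliqueness governed by the base velocity $v_0$.

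The key step I would carry out is an \emph{engulfing property}: there exists a dimensional constant $K = K(n)$ such that whenever $Q_r(z_1) \cap Q_s(z_2) \neq \emptyset$ with $s \le r$, one has $Q_s(z_2) \subset Q_{Kr}(z_1)$. Picking $z$ in the intersection and $w \in Q_s(z_2)$, the crucial algebraic identity is
\begin{equation}
x_w - x_1 - (t_w - t_1) v_1 = \bigl[x_w - x_2 - (t_w - t_2) v_2\bigr] - \bigl[x - x_2 - (t - t_2) v_2\bigr] + \bigl[x - x_1 - (t - t_1) v_1\bigr] + (t_w - t)(v_2 - v_1),
\end{equation}
which bounds the left-hand side by three terms of size $O(r^3)$ coming from the defining inequalities of $Q_s(z_2)$ and $Q_r(z_1)$, plus a cross term $(t_w - t)(v_2 - v_1)$ of size $O(r^2) \cdot O(r) = O(r^3)$, using that intersection forces $|v_2 - v_1| \lesssim r$. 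The matching between the $t$-scale $r^2$, the $v$-scale $r$, and the $x$-scale $r^3$ is exactly what lets engulfing succeed with a universal constant. A standard greedy-selection argument then extracts, from any collection of kinetic cylinders, a pairwise disjoint subcollection whose $K$-dilations cover the union.

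Armed with this Vitali lemma, the weak $(1,1)$ estimate
\begin{equation}
|\{Mf > \lambda\}| \le \frac{C(n)}{\lambda} \|f\|_{L^1(\Omega)}, \qquad Mf(z_0) \coloneqq \sup_{Q_r(z_0) \subset \Omega} \frac{1}{|Q_r|} \int_{Q_r(z_0)} |f|\,dz,
\end{equation}
follows in the usual way, with the dimensional constant arising from the volume ratio $|Q_{Kr}|/|Q_r| = K^{4n+2}$. To finish, set $\omega(f)(z_0) \coloneqq \limsup_{r \to 0} |Q_r|^{-1} \int_{Q_r(z_0)} |f(z) - f(z_0)|\,dz$ and note that $\omega(g) \equiv 0$ for continuous $g$ by uniform continuity on compact subsets, so that $\omega(f) = \omega(f - g) \le 2 M(f - g) + |f - g|$. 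Approximating $f$ in $L^1$ by continuous functions and applying Chebyshev to each summand forces $|\{\omega(f) > \lambda\}| = 0$ for every $\lambda > 0$, which is the stated conclusion.

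The main obstacle is genuinely the engulfing property; it is the only step that interacts nontrivially with the Galilean structure, and its validity is nonobvious precisely because the oblique correction $(t - t_0) v_0$ in the definition of $Q_r(z_0)$ depends on both the time displacement and the base velocity. Everything else is a soft real-analysis argument that carries over unchanged to any doubling metric-measure space equipped with an engulfing family of balls adapted to the underlying group action.
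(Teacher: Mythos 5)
This theorem is imported verbatim from \cite{IMBERTSILVESTRE}; the paper under review gives no proof of its own, so the only question is whether your argument stands on its own. The overall scheme (Vitali covering, weak $(1,1)$ maximal bound, density of continuous functions) is the right one, and your algebraic identity for $x_w - x_1 - (t_w-t_1)v_1$ is correct: each bracket is $O(r^3)$ and the cross term $(t_w-t)(v_2-v_1)$ is $O(r^2)\cdot O(r)$, so the spatial and velocity constraints of $Q_{Kr}(z_1)$ are indeed satisfied with a dimensional $K$.

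However, the engulfing property as you state it is false, and the failure is in the one variable you did not check: time. The cylinders are one-sided in $t$, with $Q_r(z_0)$ requiring $t \le t_0$, and this upper constraint does not relax as $K$ grows. If $t_2 = t_1 + s^2/2$, the cylinders $Q_r(z_1)$ and $Q_s(z_2)$ can still intersect (their time intervals overlap), yet the point $w = z_2 \in Q_s(z_2)$ has $t_w = t_2 > t_1$ and therefore lies in no $Q_{Kr}(z_1)$ whatsoever. So the claim ``$Q_r(z_1)\cap Q_s(z_2)\neq\emptyset$ and $s\le r$ imply $Q_s(z_2)\subset Q_{Kr}(z_1)$'' cannot hold with any constant. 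This is the standard backward-in-time pathology of parabolic-type Vitali arguments. The fix is routine but must be said: run the covering lemma with two-sided (or forward-shifted) cylinders $\hat Q_r(z_0) \coloneqq \{|t-t_0|<r^2,\ |x-x_0-(t-t_0)v_0|<r^3,\ |v-v_0|<r\}$, for which your identity gives genuine engulfing with a universal $K$ (the time interval of $\hat Q_{Kr}(z_1)$ now absorbs $t_w$ since $|t_w-t_1|\le |t_w-t|+|t-t_1| \lesssim r^2$); deduce the weak $(1,1)$ bound for the maximal operator $\hat M$ over the family $\hat Q_r$; and then transfer it to the one-sided maximal operator via the pointwise comparison $\frac{1}{|Q_r|}\int_{Q_r(z_0)}|f| \le \frac{|\hat Q_r|}{|Q_r|}\cdot\frac{1}{|\hat Q_r|}\int_{\hat Q_r(z_0)}|f| = 2\,\hat Mf(z_0)$, which is legitimate because $Q_r(z_0)\subset \hat Q_r(z_0)$ and the two have comparable measure (the ``regular family'' argument). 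With that repair the remainder of your argument — uniform continuity kills $\omega(g)$ for continuous $g$, and $\omega(f)\le 2M(f-g)+|f-g|$ plus Chebyshev finishes — is correct.
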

As a corollary, we obtain that the free boundary $\Gamma_u$ is of measure zero. 
\begin{lem}
\label{lem: fbmeasurezero}
    Let $u$ solve \eqref{eq: formulation2} with $\|u\|_{L^\infty(Q_1)} \le M$. Then $|\Gamma_u| =0$.
\end{lem}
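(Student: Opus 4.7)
The plan is a standard porosity-plus-Lebesgue-differentiation argument, using the geometric density estimate of Proposition \ref{prop: porosity} against Theorem \ref{thm: luislebesgue}. The key observation is that at every free boundary point, the contact set $\Lambda_u$ cannot have kinetic Lebesgue density equal to $1$, yet at almost every point of any measurable set the characteristic function does converge to its value in the kinetic Lebesgue sense.

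More precisely, I would first note that although Proposition \ref{prop: porosity} is stated for $z_0 \in \Gamma_u \cap Q_{1/2}$, the same argument (via translation, scaling by the Galilean group, and stepping inside) gives: for any $z_0 \in \Gamma_u \cap Q_1$ and for all sufficiently small $r>0$ (depending on $\dist(z_0,\partial_p Q_1)$), there is a universal constant $\mu = \mu(n,M) \in (0,1)$ such that
\begin{equation}
    |Q_r(z_0) \cap \Omega_u| \ge \mu |Q_r|, \qquad \text{equivalently} \qquad \frac{|Q_r(z_0) \cap \Lambda_u|}{|Q_r|} \le 1-\mu.
\end{equation}
Thus the upper kinetic density of $\Lambda_u$ at any free boundary point is strictly less than $1$.

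Next, I would apply Theorem \ref{thm: luislebesgue} to $f = \chi_{\Lambda_u} \in L^1(Q_1)$. For almost every $z_0 \in Q_1$,
\begin{equation}
    \lim_{r \to 0} \frac{1}{|Q_r|} \int_{Q_r(z_0)} |\chi_{\Lambda_u}(z) - \chi_{\Lambda_u}(z_0)| \, dz = 0.
\end{equation}
At such a Lebesgue point $z_0 \in \Lambda_u$ (where $\chi_{\Lambda_u}(z_0) = 1$) one necessarily has $\lim_{r\to 0} \frac{|Q_r(z_0) \cap \Lambda_u|}{|Q_r|} = 1$.

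Combining these: if $z_0 \in \Gamma_u$ were a Lebesgue point of $\chi_{\Lambda_u}$, then since $\Gamma_u \subset \Lambda_u$ we would have density $1$, contradicting the porosity bound $1-\mu$. Therefore no point of $\Gamma_u$ can be a Lebesgue point of $\chi_{\Lambda_u}$, and since the set of non-Lebesgue points has measure zero, $|\Gamma_u| = 0$. There is no real obstacle here; the only mild nuisance is the quantification over $r>0$ depending on the distance to $\partial_p Q_1$, which is handled by exhausting $Q_1$ by a countable family of interior subcylinders on each of which the porosity constant $\mu$ is uniform.
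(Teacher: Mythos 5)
Your proof is correct and follows essentially the same route as the paper: porosity (Proposition \ref{prop: porosity}) combined with the kinetic Lebesgue differentiation theorem (Theorem \ref{thm: luislebesgue}), the only cosmetic difference being that you differentiate $\chi_{\Lambda_u}$ where the paper differentiates $\chi_{\Gamma_u}$. Your remark about exhausting $Q_1$ by interior subcylinders is a reasonable point of care that the paper's proof leaves implicit.
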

\begin{proof}
    We apply Theorem \ref{thm: luislebesgue} and Proposition \ref{prop: porosity} to $f = \chi_{\Gamma_u}$ to find that 
    $$\chi_{\Gamma_u}(t_0,x_0,v_0) = \lim_{r \to 0} \frac{|Q_r(t_0,x_0,v_0) \cap \Gamma_u|}{|Q_r(t_0,x_0,v_0)|}<1-\mu$$
    for almost every $(t_0,x_0,v_0) \in Q_1$, where here $\mu = \mu(n,M)$ is as in \eqref{eq: porosity}. Hence $\chi_{\Gamma_u} =0$ almost everywhere and therefore $\Gamma_u$ is measure zero. 
\end{proof}
As a corollary, we obtain the following fact which will be very useful in our study of blow-ups.
\begin{cor}
\label{cor: blowupssolveq}

    For any open set $\mathcal{U} \subset \R^{1+2n}$ and $f: \mathcal{U} \to \R$ such that
    \begin{itemize}
        \item $f \ge 0$,
        \item $f \in C^{1,1}_{\ell}(\mathcal{K})$ on any compact subset $\mathcal{K} \subset \subset \mathcal{U}$, 
        \item $\mcL f =1$ on $\{z \in \mathcal{U} \ : \ f(z)>0\}$, 
    \end{itemize}
    one obtains that $f$ solves the obstacle problem
    $$\mcL f = \chi_{f>0} \text{ almost everywhere on } \mathcal{U}.$$
\end{cor}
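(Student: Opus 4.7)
The plan is to show that $\mcL f = \chi_{f>0}$ almost everywhere by partitioning $\mathcal{U}$ into three pieces: the open set $\Omega_f := \{f>0\}$, its complement's interior $\mathrm{int}(\{f=0\})$, and the transition set $\Gamma_f := \partial\Omega_f \cap \mathcal{U}$. On $\Omega_f$ the equation $\mcL f = 1 = \chi_{f>0}$ holds pointwise by hypothesis. On $\mathrm{int}(\{f=0\})$ the function $f$ vanishes identically in a neighborhood of each point, so all classical derivatives vanish there and $\mcL f = 0 = \chi_{f>0}$ holds classically. It therefore suffices to show that $|\Gamma_f|=0$.

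To establish this, I would reproduce the chain of arguments already used for the solution $u$ to \eqref{eq: formulation2} in Proposition \ref{prop: nondegen}, Proposition \ref{prop: porosity}, and Lemma \ref{lem: fbmeasurezero}, working locally on a compact exhaustion $\mathcal{K}_j \subset\subset \mathcal{U}$. First, the nondegeneracy proof used only that $\mcL u = 1$ on $\{u>0\}$ together with the maximum principle applied to $u(z) - \tilde\vphi^r(z) - u(z_1)$. Since our $f$ satisfies exactly these hypotheses, the identical barrier argument gives $\sup_{Q_r(z_0)} f \geq c_n r^2$ for every $z_0 \in \Gamma_f$ and every sufficiently small $r$.

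Next, the porosity estimate of Proposition \ref{prop: porosity} combined the nondegeneracy bound with the $C^{1,1}_\ell$ separation from the first-order Taylor polynomial, where the implicit constants depend only on the local $L^\infty$ and $C^{1,1}_\ell$ bounds of the function. Since by hypothesis $f \in C^{1,1}_\ell(\mathcal{K}_j)$ with some bound depending on $j$, the same argument produces a constant $\mu_j = \mu(n, \mathcal{K}_j) > 0$ such that
\begin{equation}
    |Q_r(z_0) \cap \Omega_f| \geq \mu_j |Q_r|
\end{equation}
for every $z_0 \in \Gamma_f \cap \mathcal{K}_j$ and every small $r$. Applying the kinetic Lebesgue differentiation Theorem \ref{thm: luislebesgue} to $\chi_{\Gamma_f}$ exactly as in the proof of Lemma \ref{lem: fbmeasurezero}, porosity forces $\chi_{\Gamma_f}$ to have density strictly less than $1-\mu_j$ at every point of $\Gamma_f \cap \mathcal{K}_j$, which is incompatible with positive measure. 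Hence $|\Gamma_f \cap \mathcal{K}_j|=0$ for each $j$, and exhausting $\mathcal{U}$ we conclude $|\Gamma_f|=0$.

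There is no real obstacle here; the only care needed is to verify that every ingredient used in Section \ref{nondegensec} depends only on the local hypotheses $f \geq 0$, $f \in C^{1,1}_\ell$ on compacts, and $\mcL f = 1$ on $\{f>0\}$, rather than on $f$ being a solution of \eqref{eq: formulation2} on some fixed cylinder with prescribed boundary data. Inspecting those proofs shows this is the case, so the corollary follows by direct transcription.
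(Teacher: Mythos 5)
Your argument is correct and is essentially the paper's own proof, just written out in more detail: the paper likewise reduces the claim to showing $|\partial_p\{f>0\}|=0$ by observing that the nonnegativity and the equation on $\{f>0\}$ give nondegeneracy, the local $C^{1,1}_\ell$ bound then gives porosity, and the kinetic Lebesgue differentiation theorem finishes. Your localization to a compact exhaustion and the explicit three-way decomposition of $\mathcal{U}$ are exactly the intended (implicit) steps.
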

\begin{proof}
    The first and third property are enough to ensure the nondegeneracy  property \eqref{eq: nondegen}, and then the second will imply the porosity result of \eqref{eq: porosity} for some $\mu>0$ depending on $f$ and $\mathcal{K}$. Together these are enough to say that $|\partial_p\{f>0\}| =0$. Evidently, $\mcL f =0$ on the interior of $\{f=0\}$ and the result follows. Noting that the regularity on $f$ ensures that $\mcL f \in L^\infty$, we conclude that $\mcL = \chi_{f>0}$ almost everywhere. 
\end{proof}
Finally, although we do not find an application for it, we record that solutions also enjoy parabolic nondegeneracy as a consequence of the estimate on $\nabla_xu$ from Theorem \ref{thm: optimalreg}. We will see later that, near free boundary points where the contact set is sufficently thick, solutions actually enjoy elliptic nondegeneracy. Still, the following proposition is interesting in that nondegeneracy on certain cross-sections of the entire free boundary $\Gamma_u$ does not usually occur in obstacle problems without further assumptions. For instance, in the parabolic problem $\mathcal{H}u = \chi_{u>0}$, one would require $\partial_t u \ge 0$ in order to get elliptic nondegeneracy along $t-$sections of the free boundary. Here, in our setting, we have quadratic growth on parabolic cylinders centered at \textit{every} free boundary point, making no further assumptions on $u$. We use the notation $\mathcal{Q}_r'(t_0,v_0) = \{(t,v) \ : \ t_0-r^2 <t \le t_0, |v-v_0|<r\}$ to denote the usual parabolic cylinder of radius $r>0$ centered at $(t_0,v_0)$. 
\begin{prop} \label{prop: parabolicnondegen} Let $u$ solve \eqref{eq: formulation2} with $\|u\|_{L^\infty(Q_1)} \le M$. There exists a uniform $r_0 =r_0(M)>0$ such that for any $z_0 = (t_0,x_0,v_0) \in \Gamma_u \cap Q_{1/2}$, there holds
$$\sup_{\mathcal{Q}_\delta'(t_0,v)} u(t,x_0,v_0) \ge c_n \delta^2$$
for all $0\le \delta \le r_0$. 
\end{prop}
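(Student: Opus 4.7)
The plan is to reduce this parabolic nondegeneracy to the kinetic nondegeneracy of Proposition \ref{prop: nondegen} by exploiting the new Lipschitz-in-$x$ estimate $\|\nabla_x u\|_{L^\infty(Q_{1/2})} \le CM$ from Theorem \ref{thm: optimalreg}. Heuristically, the kinetic cylinder $Q_\delta(z_0)$ and the non-slanted parabolic cylinder $\{x = x_0\} \times \mathcal{Q}_\delta'(t_0, v_0)$ differ only by an $x$-displacement of size at most $\delta^3 + \delta^2|v_0|$, and this Lipschitz control is precisely what is needed to absorb the discrepancy.

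Concretely, I would first apply Proposition \ref{prop: nondegen} at $z_0 = (t_0, x_0, v_0)$ to produce a point $z_2 = (t_2, x_2, v_2) \in \overline{Q_\delta(z_0)}$ with $u(z_2) \ge c_n \delta^2$. Unwrapping the definition of the kinetic cylinder immediately gives $(t_2, v_2) \in \mathcal{Q}_\delta'(t_0, v_0)$ together with the bound $|x_2 - x_0| \le \delta^3 + \delta^2|v_0|$. Applying Theorem \ref{thm: optimalreg} to slide from $(t_2, x_2, v_2)$ to $(t_2, x_0, v_2)$ then yields
\begin{equation}
u(t_2, x_0, v_2) \ge u(z_2) - CM|x_2 - x_0| \ge c_n\delta^2 - CM\bigl(\delta^3 + \delta^2|v_0|\bigr),
\end{equation}
and taking $r_0 = r_0(n, M) > 0$ small enough that the error on the right is dominated by a fraction of $c_n\delta^2$ will close the argument.

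The main obstacle, and what determines the dependence of $r_0$ on $M$, is that the term $CM\delta^2|v_0|$ does not scale to zero with $\delta$: for $z_0 \in Q_{1/2}$ the factor $|v_0|$ is only universally bounded by $1/2$. This forces the conclusion to be read with a constant depending (mildly) on $M$ through $c_n^{\mathrm{new}} = c_n - O(M)$, or else with $r_0$ shrinking appropriately in $M$. A slightly more natural, Galilean-covariant route is to first establish the analogous inequality on the characteristic-aligned slanted slice $\{x = x_0 + (t-t_0)v_0\}$, where the $x$-shift is only $\delta^3$ and the Lipschitz transfer is clean, and then to convert to the non-slanted slice at the cost of the $CM\delta^2|v_0|$ correction. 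In either formulation the crux is the same: the parabolic nondegeneracy is extracted from the kinetic nondegeneracy using nothing more than the improved regularity in $x$ provided by Theorem \ref{thm: optimalreg}, with no monotonicity hypothesis on $u$.
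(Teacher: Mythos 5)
There is a genuine gap, and it sits exactly where you flagged it: the final translation from $(t_2,x_2,v_2)$ to $(t_2,x_0,v_2)$. Since $|x_2-x_0|\le \delta^3+\delta^2|v_0|$, the Lipschitz transfer costs $CM\delta^2|v_0|$, which is homogeneous of degree exactly two in $\delta$ --- the same order as the target bound $c_n\delta^2$. No choice of $r_0=r_0(M)$ can absorb it, because shrinking $\delta$ shrinks both sides at the same rate; what is actually required is $CM|v_0|<c_n$, a constraint on $M$ and $v_0$ (and for $z_0\in Q_{1/2}$ one only knows $|v_0|<1/2$), not on the scale. Your fallback constant $c_n-O(M)$ is vacuous once $M\gtrsim c_n$, while the proposition asserts a dimensional constant with $M$ entering only through $r_0$. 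What your argument does prove is nondegeneracy along the slanted characteristic slice $\{x=x_0+(t-t_0)v_0\}$, where the displacement is only $\delta^3$ and the error $CM\delta^3$ is absorbable --- but that is essentially a repackaging of Proposition \ref{prop: nondegen}, whereas the content of this proposition is precisely the nondegeneracy on the \emph{non-slanted} slice $\{x=x_0\}$, which the paper emphasizes is unusual in obstacle problems without extra monotonicity.

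The paper's proof never moves in $x$ at all. It rescales so that $(\Delta_v-\partial_t)u_{r_0}=\chi_{u_{r_0}>0}+v\cdot\nabla_x u_{r_0}\ge \tfrac12$ on the positivity set --- this is the only place the Lipschitz-in-$x$ estimate is used, to control the forcing $|v\cdot\nabla_x u_{r_0}|\le CMr_0\le\tfrac12$ --- and then runs the classical barrier argument for the heat operator $\mathcal{H}=\Delta_v-\partial_t$ entirely within the fixed slice $\{x=x_\eta\}$, comparing with $\tfrac{1}{4n+2}(|v|^2-t)$ and applying the parabolic maximum principle on $\mathcal{Q}'_\lambda$. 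The supremum is thus produced directly on the correct cross-section with the dimensional constant $\tfrac{1}{4n+2}$. To close your argument you would need to replace the ``locate a maximum point in the kinetic cylinder and slide it in $x$'' step by such a slice-wise barrier; the reduction to Proposition \ref{prop: nondegen} plus a Lipschitz translation cannot reach the vertical slice.
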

\begin{proof} Without loss of generality consider $z_0=0$ and consider the rescaling $u_r(t,x,v) \coloneqq \ds \frac{u(r^2t, r^3x, rv)}{r^2}$ which solves $(\Delta_v - \partial_t)u_r = \chi_{u_r>0} + v\cdot \nabla_x u_r$. By the Lipschitz estimate of Theorem \ref{thm: optimalreg}, there exists an $r_0 = r_0(M)$ such that for any $0<r\le r_0$, one has 
$$(\Delta_v - \partial_t)u_r  \ge \frac12 \text{ on } \Omega_{u_r} \cap Q_1.$$ 

Now, since $(0,0,0) \in \Gamma_{u_{r_0}}$, for any $\eta>0$ there exists $z_\eta =(t_\eta,x_\eta,v_\eta)$ such that $u_{r_0}(z_\eta)>0$. On the parabolic cylinder $\mathcal{Q}_{1/2}'(0, 0)$, we define the function $f:\mathcal{Q}_{1/2}' \to \R$ by
$$f(t,v) \coloneqq u_{r_0}(t_\eta +t, x_\eta, v_\eta + v) - u_{r_0}(t_\eta, x_\eta, v_\eta) - \frac{1}{4n+2}((|v|^2 -t).$$
Following along the lines of Proposition \ref{prop: nondegen} we observe that
$$\mathcal{H} f = (\Delta_v - \partial_t)f \ge 0 \text{ on } \{(t,v) \in \mathcal{Q}_{1/2}'\ : \ u_{r_0}(t+t_\delta, x_\delta, v+v_\delta)>0\}.$$

Moreover, $f(0,0)=0$ and $f\le 0$ on $\{(t,v) \in \mathcal{Q}_{1/2}' \ : \ u_{r_0}(t+t_\eta, x_\eta, v+v_\eta)=0\}.$ Consequently, for any $0<\lambda<\displaystyle \frac12$, the parabolic maximum principle implies
$$0 \le \max_{\{u_{r_0}(t_\eta + \cdot, x_\eta, v_\eta + \cdot)>0\} \cap \mathcal{Q}_\lambda'} f (t,v) \le\max_{\{u_{r_0}(t_\eta + \cdot, x_\eta, v_\eta + \cdot)>0\} \cap \partial_p \mathcal{Q}_\lambda'} f(t,v).$$

Therefore $\displaystyle \max_{\mathcal{Q}_\lambda'(t_\eta, x_\eta, v_\eta)}u_{r_0}(t,x_\eta,v) \ge \frac{1}{4n+2} \lambda^2,$ and the result follows by sending $\eta \to 0$ and $z_\eta \to (0,0,0)$, and then undoing the scaling. 
\end{proof}
\section{A Monotonicity Formula}
\label{monotonicitysec}
The Lipschitz estimate on $\nabla_x u$ from Theorem \ref{thm: optimalreg} allows us to think of solutions $u$ as being almost solutions to the parabolic obstacle problem $\mathcal{H} u \approx \chi_{u>0}$ near free boundary points. Indeed, by rescaling around $(0,0,0) \in \Gamma_u$ with $u_r(t,x,v) \coloneqq r^{-2}u(r^2t, r^3x, rv)$, there holds $\mathcal{H}u_r = \chi_{u_r>0} + v\cdot \nabla_x u_r$. The error term eventually satisfies $|v\cdot \nabla_x u_r| \lesssim r$ on any given compact subset. This suggests that, in some sense, the rescalings $u_r$ should converge to a global solution $u_0=u_0(t,v)$ of the parabolic obstacle problem. We shall explore this matter more in the next section. In this section we focus on proving an almost monotonicity formula for $u$, from which we will derive a certain homogeneity for its blow-up limits. We emphasize again that our computations work only because of the spatial Lipschitz estimate.

In this section, we assume that $u \in \mathcal{P}_2(M)$, so, in particular, that the origin is a free boundary point. Let us introduce the Weiss functional, defined for smooth functions $f$ with polynomial growth by the formula
\begin{equation}\label{eq: Iformula} \mathcal{I}(r,f) \coloneqq \frac{1}{r^4}\int_{-4r^2}^{-r^2} \int_{\R^{2n}} \left(|\nabla_v f|^2 + 2f + \frac{f^2}{t}\right) K(t,x,v) \ dv \ dx \ dt
\end{equation}
where $K: (-\infty,0)\times \R^n \times \R^n$ is given by
$$\displaystyle K(t,x,v) \coloneqq \frac{C_n}{(t^2)^{n}} e^{\frac{|v|^2}{t} + \frac{|x|^2}{t^3}}$$
where $C_n = \pi^{-n/2}$. 

The kernel $K$ is not exactly the fundamental solution for the Kolmogorov equation $\mcL K= 0$, but it does bear a resemblance. The functional $\mathcal{I}$ is a modification of the Weiss functional analyzed in \cite{CPS} for the parabolic problem. The motivation, as suggested in the beginning, is to discount points far from the origin and think of $u_r$ as a solution to the parabolic problem plus a small error. 

It is clear that under the usual kinetic rescaling $f_r(t,x,v) = r^{-2}f(r^2t, r^3x, rv)$, there holds $I(r,f) = I(1, f_r)$, and therefore $I'(r,f) = \frac{d}{dr}I(1,f_r)$. It is convenient for us to introduce the notation 
$$\dot{f}_r \coloneqq \frac{d}{dr} f_r = \frac{v\cdot \nabla_v f_r +2t\partial_t f_r + 3x\cdot \nabla_x f_r - 2f_r}{r}.$$

A function $f: (-\infty,0) \times \R^{2n}\to \R$ is said to be homogeneous of kinetic degree $\kappa$ if $f(S_r(t,x,v)) = f(r^2t, r^3x, rv) = r^\kappa f(t,x,v)$ for all $(t,x,v) \in (-\infty,0) \times \R^{2n}$. It is easy to see for instance that the kernel $K$ is homogeneous of kinetic degree $-4n$. In the next section, we will prove that the rescalings $u_r$ converge to global solutions which are homogeneous of kinetic degree $2$. Finally, it is convenient to also introduce the notation $$L_\kappa f \coloneqq v \cdot \nabla_vf + 2t \partial_tf + 3x \cdot \nabla_x f -\kappa f.$$ In the standard way, one sees that function $f$ is homogeneous of kinetic degree $\kappa$ if and only if $L_\kappa  \equiv 0$. Note that  in the case $\kappa =2$, we have $\displaystyle \frac1r L_2f_r = \dot{f}_r$. 

\begin{prop}
\label{prop: monotonicityformula}
    Let $u \in \mathcal{P}_2(M)$, and let $\psi = \psi(x,v) \in C_c^\infty(B_1 \times B_1)$ be a nonnegative smooth cut-off which satisfies $\psi \equiv 1$ on $B_{1/2} \times B_{1/2}$. With $f \coloneqq u\psi$, there exists a constant $C(n,M)$ such that
    $$\mathcal{I}(r,f) + Cr^2 \text{ is monotone non-decreasing for } 0<r<1.$$
\end{prop}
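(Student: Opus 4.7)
The plan is to exploit the scaling identity $\mathcal{I}(r,f)=\mathcal{I}(1,f_r)$ and reduce the claim to a pointwise lower bound $\frac{d}{dr}\mathcal{I}(r,f)\ge -Cr$. Since $\dot f_r=\tfrac{1}{r}L_2 f_r$, differentiating under the integral and then integrating by parts in $v$ yields, after writing $\Delta_v f_r=\mcL f_r+\partial_t f_r+v\cdot\nabla_x f_r$,
\[
\frac{d}{dr}\mathcal{I}(1,f_r)=2\int \dot f_r\Big[1-\mcL f_r-\partial_t f_r-v\cdot\nabla_x f_r-(\nabla_v\ln K)\cdot\nabla_v f_r+\tfrac{f_r}{t}\Big]K.
\]
This exposes $\mcL$ inside the integrand so that the obstacle equation can be plugged in.

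Next I would unpack $\mcL f_r$ using $f=u\psi$: $\mcL f_r=\psi_r\chi_{u_r>0}+u_r\mcL\psi_r+2\nabla_v u_r\cdot\nabla_v\psi_r$. The cutoff-derivative pieces are supported where $|rv|,|r^3 x|\gtrsim 1$, and against the Gaussian weight $K$ on the slab $t\in[-4,-1]$ they contribute only errors exponentially small in $1/r^2$. On the region $\{\psi_r\equiv 1\}$ one simply has $\mcL f_r=\chi_{u_r>0}$; on the contact set $\{u_r=0\}$, Stampacchia's theorem gives $\dot u_r=0$ a.e., so $(1-\chi_{u_r>0})\dot u_r$ integrates to $0$. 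Invoking the scaling identity $L_2 u_r=r\dot u_r$ to rewrite $-\partial_t u_r+u_r/t$ and matching up the $\nabla_v\ln K$ drift against the $v$-direction piece of $L_2 u_r$, the bracket on $\{u_r>0\}$ collapses to $-\tfrac{L_2 u_r}{2t}+(\tfrac{3x}{2t}-v)\cdot\nabla_x u_r$, up to a residual multiple of $v\cdot\nabla_v u_r$ that can be absorbed into the $L_2$ piece (producing an additional multiple of $\tfrac{r}{t}\dot u_r^2$) or handled by the same Cauchy-Schwarz trick below. Substituting $L_2 u_r=r\dot u_r$ leaves
\[
\frac{d}{dr}\mathcal{I}(r,f)=\int_{u_r>0}\!\Big[-\tfrac{r}{t}\dot u_r^2+2\dot u_r\big(\tfrac{3x}{2t}-v\big)\cdot\nabla_x u_r\Big]K+O(e^{-c/r^2}),
\]
whose first term is non-negative since $t<0$.

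The decisive step, where Theorem \ref{thm: optimalreg} enters essentially, is bounding the kinetic cross term. From $\nabla_x u_r(z)=r(\nabla_x u)(S_r z)$ and the Lipschitz-in-$x$ estimate, $|\nabla_x u_r|\le Cr$ on the effective support of the integrand. Cauchy's inequality with weight $\tfrac{1}{2}$ yields
\[
2\dot u_r\big(\tfrac{3x}{2t}-v\big)\cdot\nabla_x u_r \ge -\tfrac{1}{2}\big(-\tfrac{r}{t}\big)\dot u_r^2-C r\,(-t)\big|\tfrac{3x}{2t}-v\big|^2,
\]
and $\int(-t)\big|\tfrac{3x}{2t}-v\big|^2 K\,dt\,dx\,dv$ over the fixed slab $t\in[-4,-1]$ is a finite dimensional Gaussian moment. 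Absorbing half of the good term gives $\frac{d}{dr}\mathcal{I}(r,f)\ge -Cr$, and integrating produces $\mathcal{I}(r,f)+\tfrac{C}{2}r^2$ monotone non-decreasing.

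The main obstacle is precisely the cross term $2\dot u_r(\tfrac{3x}{2t}-v)\cdot\nabla_x u_r$, which is absent in the parabolic Weiss formula of \cite{CPS} and is forced on us by the $v\cdot\nabla_x$ transport in $\mcL$. It is controllable only because $\nabla_x u$ is bounded, i.e., only thanks to Theorem \ref{thm: optimalreg}; with the previous $C^{0,2/3}_x$ regularity one would at best get $|\nabla_x u_r|=O(r^{1/3})$ in a difference-quotient sense, producing an $O(r^{1/3})$ error that cannot be integrated into the $Cr^2$ correction required by the statement.
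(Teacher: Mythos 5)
Your proposal is correct and follows essentially the same route as the paper: differentiate $\mathcal{I}(1,f_r)$, integrate by parts in $v$, use the complementarity $\dot u_r(-\Delta_v u_r+1)=-\dot u_r\,Yu_r$ on the positivity set together with $\dot u_r=0$ a.e.\ on the contact set, identify the good term $\int r\dot u_r^2\,K/(-t)$, and absorb the transport cross term $\dot u_r\,[2tv-3x]\cdot\nabla_x u_r$ by Cauchy--Schwarz using $|\nabla_x u_r|\le CMr$ from Theorem \ref{thm: optimalreg}. The only cosmetic difference is that you expand $\mcL(u\psi)_r$ by the product rule while the paper splits the integral into the region where $f_r\equiv u_r$ and its complement, but both dispose of the cutoff contribution via the same exponential smallness of $K$ at unit scale.
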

\begin{proof}
    First, observe that
    $$f_r(t,x,v)  = \frac{f(r^2t, r^3x,rv)}{r^2} = u_r(t,x,v) \psi(r^3x,rv) = u_r(t,x,v)$$ on the set $\left\{|x|< \frac{1}{2r^3}, |v|< \frac{1}{2r}\right\}.$ Computing straightforwardly,
    
    \begin{align} \frac{d}{dr}\mathcal{I}(1, f_r) &= 2\int_{-4}^{-1} \int_{\R^{2n}} \left(\nabla_v f_r \cdot \nabla_v \dot{f}_r + \dot{f}_r + \frac{f_r}{t} \dot{f}_r\right)K \ dv \ dx \ dt\\
    & = 2\int_{-4}^{-1} \int_{\R^{2n}}\dot{f}_r\left(-\Delta_v f_r -\frac{v}{2t}\cdot \nabla_v f_r +1 + \frac{f_r}{t}\right) K \ dv \ dx \ dt\\
    &\coloneqq I_r + J_r,
    \end{align}
where 
$$I_r \coloneqq 2\int_{-4}^{-1} \int_{B_{\frac{1}{2r^3}} \times B_{\frac{1}{2r}}}\dot{f}_r\left(-\Delta_v f_r -\frac{v}{2t}\cdot \nabla_v f_r +1 + \frac{f_r}{t}\right) K \ dv \ dx \ dt$$
and
$$J_r  \coloneqq 2\int_{-4}^{-1} \int_{(B_{\frac{1}{2r^3}} \times B_{\frac{1}{2r}})^c}\dot{f}_r\left( -\Delta_v f_r -\frac{v}{2t}\cdot \nabla_v f_r +1 + \frac{f_r}{t}\right) K \ dv \ dx \ dt$$

Note that all we have done is integrated by parts on the term $\int_{\R^n} (\nabla_v f_r \cdot \nabla_v \dot{f}_r)K \ dv.$ As noted in the beginning, on the domain of integration for the term $I_r$, we simply have that $f_r = u_r$. On that domain, we can use the complementarity condition 
$$\dot{u}_r(-\Delta_v u_r + 1) = -\dot{u}_r(-\partial_t u_r - v\cdot \nabla_x u_r),$$
since $(\Delta_v - \partial_t - v\cdot \nabla_x)u_r = 1$ on the set $\Omega_{u_r} = \{u_r>0\}$, and $\dot{u}_r=0$ almost everywhere on the set $\Lambda_{u_r}=\{u_r=0\}$. Here we are also using that $|\Gamma_{u_r}|=0$, thanks to Lemma \ref{lem: fbmeasurezero}. Continuing,
\begin{align}
    I_r &=2\int_{-4}^{-1} \int_{B_{\frac{1}{2r^3}} \times B_{\frac{1}{2r}}} \dot{u}_r\left(-\partial_tu_r- v\cdot \nabla_x u_r -\frac{v}{2t}\cdot \nabla_v u_r  + \frac{u_r}{t}\right) K \ dv \ dx \ dt \\
    &=\int_{-4}^{-1} \int_{B_{\frac{1}{2r^3}} \times B_{\frac{1}{2r}}} \dot{u}_r\left(2t\partial_tu_r + 2tv\cdot \nabla_x u_r +v\cdot \nabla_v u_r  -2 u_r\right) \frac{K}{-t} \ dv \ dx \ dt\\
    &= \int_{-4}^{-1} \int_{B_{\frac{1}{2r^3}} \times B_{\frac{1}{2r}}}\dot{u}_r\left(r\dot{u}_r + [2tv-3x]\cdot \nabla_x u_r\right)\frac{K}{-t} \ dv \ dx \ dt\\
    &=\int_{-4}^{-1} \int_{B_{\frac{1}{2r^3}} \times B_{\frac{1}{2r}}}r (\dot{u}_r)^2\frac{K}{-t} \ dv \ dx \ dt  + \int_{-4}^{-1} \int_{B_{\frac{1}{2r^3}} \times B_{\frac{1}{2r}}}\dot{u}_r\left( [2tv - 3x]\cdot \nabla_x u_r\right)\frac{K}{-t}\ dv \ dx \ dt 
\end{align}
For the latter term we can write 
$$\dot{u}_r [2tv-3x]\cdot \nabla_x u_r = \sqrt{r} \dot{u}_r \frac{[2tv-3x]\cdot \nabla_x u_r}{\sqrt{r}} \ge -\frac12r( \dot{u}_r)^2 - \frac{|2tv-3x|^2|\nabla_xu_r|^2}{2r}.$$

Consequently, and using that $|\nabla_x u_r| \le CMr$ on the domain of integration, we find
$$I_r \ge -C Mr \int_{-4}^{-1} \int_{\R^{2n}} |2tv-3x|^2 \frac{K(t,x,v)}{-t} \ dv \ dx \ dt \ge - CMr$$
for some dimensional constant $C$. It is crucial here that we have Theorem \ref{thm: optimalreg}. Continuing with $J_r$, we are going to change variables back to unit scale. Using that $\displaystyle \dot{f}_r = \frac1r L_2f_r$ and that $K$ is homogeneous of kinetic degree $-4n$, we find
$$J_r = \frac{2}{r^3} \int_{-4r^2}^{-r^2} \int_{(B_1\times B_1) \setminus (B_{1/2}\times B_{1/2})} L_2f\left(-\Delta_v f -\frac{v}{2t}\cdot \nabla_v f + 1 + \frac{f}{t}\right) K \ dv \ dx \ dt.$$

Here we used that $\psi$ vanishes off $B_1\times B_1$. It is clear that for $-4r^2<t<-r^2$ and $(x,v) \in (B_1\times B_1)\setminus(B_{1/2}\times B_{1/2})$, one may use Theorem \ref{thm: optimalreg} to bound
$$ L_2f\left(-\Delta_v f -\frac{v}{2t}\cdot \nabla_v f + 1 + \frac{f}{t}\right)  \ge - \frac{CM}{r^2}$$
for some $C =C(\psi,n,M)$. And on the same domain, one may bound as well
$$K(t,x,v) \gtrsim \frac{1}{r^{4n}}e^{\frac{-1}{256r^6}} e^{-\frac{1}{16r^2}}.$$

Since the domain of integration has volume comparable to $r^2$, we find that
$$J_r \ge - \frac{C}{r^{4n+3}} e^{\frac{-1}{256r^6} -\frac{1}{16r^2}}.$$

Since $\displaystyle \int_0^r s^{-4n-3} e^{\frac{-1}{256s^6}-\frac{1}{16s^2}} \ ds \le C_nr^2$ for $r \in (0,1)$, we conclude. Note that the regularity estimates on $u$ and the localization of $\psi$ make all of the computations justified. 
\end{proof}
We do not use much about the structure of $K$ in the above arguments, other than that $K$ has sufficient polynomial decay in all variables, satisfies $\displaystyle \nabla_v \log K = \frac{v}{2t}$, and has kinetic homogeneity of degree $-4n$. As is traditional, we will use this monotonicity formula to deduce a certain homogeneity for blow-ups of $u$ in the next section.

We conclude with the following useful corollary.
\begin{cor}
\label{cor: homogconst}
    Let $u_0 \in \mathcal{P}_\infty(CM)$. Then $u_0$ is homogeneous of kinetic degree $2$ if and only if $\mathcal{I}(u_0, \cdot)$ is constant in $r>0$. 
\end{cor}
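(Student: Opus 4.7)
The forward direction is immediate: if $u_0$ is homogeneous of kinetic degree $2$, then $(u_0)_r\equiv u_0$ for all $r>0$, and so $\mathcal{I}(r,u_0)=\mathcal{I}(1,(u_0)_r)=\mathcal{I}(1,u_0)$ is independent of $r$. The content of the corollary is the converse.

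For the converse I would repeat the integration-by-parts computation from the proof of Proposition \ref{prop: monotonicityformula}, applied directly to the global function $u_0$ with no cutoff. Since $u_0$ grows at most like $1+|t|+|x|^{2/3}+|v|^2$ while $K$ has full Gaussian decay, every integral converges and the ``$J_r$'' boundary-error term from that proof vanishes identically. The clean identity that falls out is
$$r\,\frac{d}{dr}\mathcal{I}(r,u_0)=\int_{-4}^{-1}\!\!\int_{\R^{2n}} L_2(u_0)_r\cdot\Bigl[L_2(u_0)_r+(2tv-3x)\cdot\nabla_x(u_0)_r\Bigr]\frac{K}{-t}\,dv\,dx\,dt,$$
valid for every $r>0$. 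The hypothesis forces this integral to vanish identically in $r$; expanding and applying Cauchy--Schwarz gives
$$\int\!\bigl(L_2(u_0)_r\bigr)^2\,\tfrac{K}{-t}\,dz\;\le\;\int\!\bigl((2tv-3x)\cdot\nabla_x(u_0)_r\bigr)^2\,\tfrac{K}{-t}\,dz.$$

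The main obstacle is controlling the cross term on the right, and this is precisely where the spatial Lipschitz estimate of Theorem \ref{thm: optimalreg} is essential. The scaling relation $\nabla_x(u_0)_r(z)=r\,\nabla_x u_0(S_rz)$ shows that a uniform bound on $\nabla_x u_0$ would make the right-hand side of order $r^2$ --- but only for $r$ small, and applied to $u_0$ directly this yields no information at $r=1$. My plan to bridge this gap is a scaling trick. For $R\ge 1$, set $V_R\coloneqq (u_0)_R$. The quadratic growth bound is preserved under scalings with $R\ge 1$, so $V_R\in\mathcal{P}_\infty(M)$ uniformly in $R$; Theorem \ref{thm: optimalreg} then produces a uniform Lipschitz bound $\|\nabla_x V_R\|_{L^\infty(Q_{1/2})}\le \tilde C(n,M)$; and the identity $(V_R)_{r'}=(u_0)_{Rr'}$ transfers constancy of $\mathcal{I}(\cdot,u_0)$ to constancy of $\mathcal{I}(\cdot,V_R)$. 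Running the previous Cauchy--Schwarz argument with $V_R$ in place of $u_0$ at scale $r'\le 1/4$ yields the uniform bound
$$\int\!\bigl(L_2(V_R)_{r'}\bigr)^2\,\tfrac{K}{-t}\,dz\;\le\;C(n,M)(r')^2,$$
with $C(n,M)$ and the admissible range of $r'$ independent of $R$.

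The final step is the specialization $r'=1/R$ for $R\ge 4$. Since $(V_R)_{1/R}=((u_0)_R)_{1/R}=u_0$, the previous display collapses to $\int(L_2 u_0)^2\,K/(-t)\,dz\le C(n,M)/R^2$, and letting $R\to\infty$ forces $L_2 u_0\equiv 0$ almost everywhere, hence everywhere by the $C^{1,1}_\ell$-regularity of $u_0$. Integrating the scaling vector field $V=v\cdot\nabla_v+2t\partial_t+3x\cdot\nabla_x$ along its flow $S_r$ turns $Vu_0=2u_0$ into $u_0(S_r z)=r^2 u_0(z)$, which is precisely kinetic-degree-$2$ homogeneity.
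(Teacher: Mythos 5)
Your argument is correct in outline, but it takes a noticeably longer route than the paper. The paper's proof is essentially one line: for a global solution $u_0\in\mathcal{P}_\infty(CM)$ one already knows $\nabla_x u_0\equiv 0$ (this is exactly the scaling observation you make --- $(u_0)_R\in\mathcal{P}_1(M')$ uniformly, so $\|\nabla_x u_0\|_{L^\infty(Q_{R/2})}\le CM/R\to 0$; cf.\ Proposition \ref{prop: globalclassification}), hence the cross term $(2tv-3x)\cdot\nabla_x (u_0)_r$ vanishes identically, there is no cutoff and no $J_r$, and the computation of Proposition \ref{prop: monotonicityformula} collapses to $\frac{d}{dr}\mathcal{I}(r,u_0)=\int r(\dot u_{0,r})^2\,K/(-t)$, a nonnegative perfect square whose vanishing is equivalent to $L_2u_0\equiv 0$. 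You instead keep the cross term, Cauchy--Schwarz it, and run the scaling limit at the level of the integral identity via $V_R=(u_0)_R$ and $r'=1/R$; this works and recovers $L_2u_0\equiv0$ in the limit, but it re-derives in integrated form a fact that your own first step already gives pointwise.

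One step needs tightening. The bound $\int\bigl((2tv-3x)\cdot\nabla_x(V_R)_{r'}\bigr)^2 K/(-t)\,dz\le C(r')^2$ is an integral over \emph{all} of $\R^{2n}$, whereas the estimate you quote, $\|\nabla_x V_R\|_{L^\infty(Q_{1/2})}\le \tilde C(n,M)$, controls $\nabla_x V_R$ only on $Q_{1/2}$; for $|x|,|v|$ large the point $S_{r'}z$ leaves that cylinder. You need a global, polynomially growing bound: apply the interior estimate of Theorem \ref{thm: optimalreg} on unit cylinders $Q_1(z_0)$ centered at arbitrary $z_0$, using the quadratic growth of $V_R$, to get $|\nabla_x V_R(z)|\le C(n,M)(1+|t|+|x|^{2/3}+|v|^2)$ everywhere; combined with the Gaussian decay of $K$ this yields the claimed $O((r')^2)$ bound (and also justifies the integration by parts with no boundary term). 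The same remark applies to extending $L_2u_0=0$ from the slab $[-4,-1]\times\R^{2n}$ to all negative times, which you should do by varying the scale (as the paper does implicitly); with these two points patched, your proof is complete.
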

\begin{proof}
    The manipulations of Proposition \ref{prop: monotonicityformula} show that
    $$\frac{d}{dr} \mathcal{I}(u, r) = \int_{-4}^{-1} \int_{\R^{2n}} r (\dot{u}_r)^2 \frac{K}{-t} \ dv \ dx \ dt.$$
    Therefore $\mathcal{I}(r,u)$ is constant if and only if $\dot{u}_r \equiv 0$ on $[-4,-1] \times \R^{2n}$, and consequently on $(-\infty,0] \times \R^{2n}.$
\end{proof}
\section{Blow-ups and the Free Boundary}
\label{blowups}
The purpose of this section is to use a blow-up procedure to classify the free boundary into a ``regular" set and a ``singular" set, first in terms of the behavior of $u_r^{z_0}$ for $r$ small. Roughly speaking, we initially define regular points as those points $z_0$ for which $u_r^{z_0} \approx \frac12(v\cdot e)_+^2$ for some direction $e \in \mathbb{S}^{n-1}$, and singular points as those for which $u_r^{z_0}\approx mt + Av\cdot v$ for suitable $m \in \R$ and symmetric $A \in \R^{n \times n}$. Using the estimate on $\nabla_xu$ and the monotonicity formula, we will see first that the blow-ups of $u$ at free boundary points coincide with the blow-ups in the parabolic problem, and characterize the free boundary according to blow-ups. Then, we give an alternative characterization of the free boundary points in terms of what we define as the \textit{balanced energy}, obtained from limits of the energy functional $\mathcal{I}$ from the last section. Finally, there is a third characterization through the Lebesgue density of the contact set $\Lambda_u$: regular points are those points $z_0 \in \Gamma_u$ for which the contact set $\Lambda_u = \{u=0\}$ takes up a non-negligible portion of small cylinders $Q_r(z_0)$, and the singular points are those for which the contact set is too sparse in the cylinders $Q_r(z_0)$ as $r\to 0$. All three of these characterizations are important. We conclude with a characterization of global solutions $u \in \mathcal{P}_\infty(CM)$.
\begin{prop}
\label{prop: blow-upindx}
    Let $u \in \mathcal{P}_1(M;z_0)$ solve \eqref{eq: formulation2} and consider the rescalings $$u_r^{z_0} = \frac{u(z_0 \circ S_r(z))}{r^2}.$$

    For any subsequence $r_k \to 0$, one may extract a further subsequence $u_{r_{k_j}}$
    $$u_{r_{k_j}} \to u_0 \text{ locally in } C^{1,\alpha}_{\ell}((-\infty,0]\times \R^n \times \R^n).$$
    
    The globally defined function $u_0$ is said to be a \textit{blow-up} of $u$ at $z_0$, and it satisfies the following properties:
    \begin{itemize}
        \item $\nabla_x u_0 \equiv 0$ on $(-\infty,0] \times \R^n \times \R^n$
        \item $u_0$ is a global solution to the parabolic obstacle problem
        \begin{equation}
            \label{eq: parabolicprob}
            (\Delta_v - \partial_t)u_0 = \mathcal{H}u_0 = \chi_{u_0>0} \text{ on } (-\infty,0] \times \R^n
        \end{equation}
        \item $0\le u_0(t,x,v) \le CM(1+|t| + |v|^2).$
        \item $z_0 \in \Gamma_{u_0}$.
    \end{itemize}. 

\end{prop}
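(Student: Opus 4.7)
The plan is to reduce via Galilean invariance to $z_0 = (0,0,0)$, derive $r$-uniform bounds on $u_r$ and its derivatives on arbitrary kinetic cylinders $Q_R$, extract a convergent subsequence by Arzel\`a--Ascoli with a diagonal argument, and then verify the four claimed properties of $u_0$ using the Schauder estimate of Theorem \ref{thm: Schauderthm} and Corollary \ref{cor: blowupssolveq}. After reduction to $z_0 = 0$, Theorem \ref{thm: optimalreg} and the $C^{1,1}_\ell$ regularity from Theorem \ref{thm: polidorooptimal} give, on any fixed cylinder $Q_R$ and for $r$ small enough that $S_r(Q_R) \subset Q_{1/2}$, the scaling-invariant pointwise bounds
\begin{equation}
|u_r(z)| \le C\|z\|^2, \qquad |\partial_t u_r| + |D^2_v u_r| \le CM, \qquad |\nabla_x u_r| \le CMr,
\end{equation}
where the first bound uses that the first-order kinetic Taylor polynomial of $u$ at the free boundary point $0$ vanishes identically (since $u(0)=0=\nabla_v u(0)$) and the third uses the crucial new estimate on $\nabla_x u$ from Theorem \ref{thm: optimalreg}. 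Standard elliptic estimates on the velocity slices (bounded $\Delta_v u_r$ and bounded $u_r$) also give $\nabla_v u_r$ uniformly bounded on each compact.

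Arzel\`a--Ascoli combined with a diagonal extraction then produces a subsequence $u_{r_{k_j}} \to u_0$ in $C^{1,\alpha}_\ell$-locally for every $\alpha \in (0,1)$. Three of the four asserted properties of $u_0$ follow immediately: $u_0 \ge 0$ as a pointwise limit of nonnegative functions; the growth bound $|u_0(z)| \le C(1 + |t| + |v|^2)$ descends from $|u_r(z)| \le C\|z\|^2$ once the $|x|^{2/3}$ term is removed; and $\nabla_x u_0 \equiv 0$ follows because $\|\nabla_x u_{r_{k_j}}\|_{L^\infty(Q_R)} = O(r_{k_j}) \to 0$ integrates along $x$-segments to yield $u_0(t, x_1, v) = u_0(t, x_2, v)$, so $u_0$ is independent of $x$. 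This $x$-independence is the key structural feature of the blow-up and is made possible precisely by Theorem \ref{thm: optimalreg}; without the Lipschitz estimate on $\nabla_x u$, the blow-up could depend nontrivially on $x$ and the reduction to the parabolic problem would fail.

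For the equation, $\{u_0 > 0\}$ is open, and on any compact subset $K'$ of $\{u_0 > 0\}$, uniform convergence forces $u_{r_{k_j}} > 0$ on $K'$ for large $j$, so $\mcL u_{r_{k_j}} = 1$ classically there; Theorem \ref{thm: Schauderthm} upgrades the convergence to $C^{2,\alpha}_\ell$ on $K'$, and the equation passes to the limit as $\mcL u_0 = 1$ on $\{u_0 > 0\}$. Applying Corollary \ref{cor: blowupssolveq} to $u_0$ (nonnegative, locally $C^{1,1}_\ell$, $\mcL u_0 = 1$ on its positivity set) yields $\mcL u_0 = \chi_{u_0 > 0}$ almost everywhere, and $\nabla_x u_0 \equiv 0$ then recasts this as $\mathcal{H} u_0 = \chi_{u_0 > 0}$. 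Finally, $u_0(0) = \lim u_{r_{k_j}}(0) = 0$ places $0$ in the contact set, while the nondegeneracy estimate \eqref{eq: nondegen} applied at scale $\rho r_{k_j}$ gives $\sup_{Q_\rho} u_{r_{k_j}} \ge c\rho^2$ for every $\rho > 0$, passing to $\sup_{Q_\rho} u_0 \ge c\rho^2 > 0$ in the limit and forcing $0 \in \Gamma_{u_0}$. The principal obstacle, as indicated, is the step establishing $\nabla_x u_0 \equiv 0$; this is genuinely new and unavailable without Theorem \ref{thm: optimalreg}, and everything downstream in the free boundary analysis hinges on this collapse of the hypoelliptic blow-up onto a parabolic one.
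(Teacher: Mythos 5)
Your proposal is correct and follows essentially the same route as the paper: uniform (in $r$) estimates from Theorem \ref{thm: optimalreg} plus Arzel\`a--Ascoli for compactness, the $O(r)$ scaling of $\nabla_x u_r$ for $x$-independence, the Schauder estimate and Corollary \ref{cor: blowupssolveq} to pass the equation to the limit, and nondegeneracy with quadratic growth for the last two properties. No substantive differences from the paper's argument.
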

\begin{proof}
The sequence $u_r^{z_0}$ solves 
\begin{equation}
    \label{eq: rescalingequation}
    \mcL(u_r^{z_0}) = \chi_{u_r^{z_0}>0}  \text{ on } Q_{1/r}(z_0^{-1}).
\end{equation}

Then the stated convergence along a subsequence of $u_{r_j}$ to a globally defined function $u_0$ follows by Arzela-Ascoli and the estimates of Theorem \ref{thm: optimalreg}. To prove that $u_0$ is spatially independent, we make use of the fact that $\nabla_x$ scales like $r^3$, while $u_r$ is on the order of $r^{-2}$. More precisely, given any points $z_1 = (t,x_1,v)$ and $ z_2= (t,x_2,v) \in (-\infty,0] \times \R^n \times \R^n$, for $r_j \coloneqq r_{k_j}$ small enough we have
    $$|u_{r_j}(z_1)- u_{r_j}(z_2)| \le CM r_j \to 0 \text{ as } r_j \to 0.$$

   In order to prove the second statement, we observe that $u_0$ inherits the $C^{0,1}_{t,x} \cap C^{1,1}_v$ and $C^{1,1}_\ell$ local regularity from sequence $u_{r_j}$. Then taking $z_0 \in \{u_0>0\}$ such that $u(z_0)=c>0$ and using the uniform estimates on $\|u_{r_j}\|_{C^{1,1}_\ell(Q_1(z_0))}$ for $r_j$ small enough, there exists a uniform $\rho>0$ proportional to $c^{1/2}$ such that $u_{r_j} > \displaystyle \frac{c}{2}$ on $Q_{\rho(z_0)}$, and therefore $\mcL u_{r_j}  =1 $ there. Passing to a further subsequence with the Schauder estimate of Theorem \ref{thm: Schauderthm}, we find $\mcL u_0(z_0)=1$. Therefore, using the regularity of $u_0$ and Corollary \ref{cor: blowupssolveq}, we arrive at the fact that $\mcL u_0= \chi_{u_0>0}$. Applying the fact that $u_0$ is independent of $x$, we conclude that $\mathcal{H} u_0 = \mcL u_0 = \chi_{u_0>0}$.
   
   The final two statements are simple consequences of nondegeneracy and quadratic growth. 
\end{proof}
\begin{remark}
    Any regularity estimate on $u \in C^{0,\beta}_x$ for $\beta>2/3$ would provide the result of the previous proposition. To classify blow-ups even further, we will make full use of the \textit{Lipschitz} estimate.
\end{remark}
We may characterize the blow-ups further as parabolically homogeneous of degree $2$, thanks to the monotonicity formula from Proposition \ref{prop: monotonicityformula}.
\begin{prop}
\label{prop: homog}
    Let $u \in \mathcal{P}_1(M)$ and suppose that $u_0$ is a blow-up of $u$ at the origin. Then $u_0$ is \textit{parabolically} homogeneous of degree $2$, in the sense that $u_0(r^2t,rv) = r^2u_0(t,v)$ for any $r>0$ and $(t,v) \in (-\infty,0) \times \R^n$.
\end{prop}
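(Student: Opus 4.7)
The plan is to combine the almost-monotonicity formula of Proposition \ref{prop: monotonicityformula} with the characterization in Corollary \ref{cor: homogconst}. Since $u_0$ is independent of $x$ by Proposition \ref{prop: blow-upindx}, kinetic homogeneity of degree $2$ for $u_0$ coincides exactly with the claimed parabolic homogeneity in $(t,v)$. Thus it suffices to show that $\rho \mapsto \mathcal{I}(\rho, u_0)$ is constant, and then invoke Corollary \ref{cor: homogconst}.

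The key observation is the scaling identity $\mathcal{I}(\rho, u_r) = \mathcal{I}(\rho r, u)$ (with the cutoff of Proposition \ref{prop: monotonicityformula} understood), which is forced by $K$ being kinetically homogeneous of degree $-4n$ together with the change of variables $S_r$. First I would pass the functional to the blow-up limit: for any fixed $\rho > 0$ and any subsequence $r_k \to 0$ along which $u_{r_k} \to u_0$ in $C^{1,\alpha}_\ell$, the goal is
\begin{equation}
\mathcal{I}(\rho, u_{r_k}) \longrightarrow \mathcal{I}(\rho, u_0).
\end{equation}
This reduces to dominated convergence on the strip $t \in (-4\rho^2, -\rho^2)$: the local $C^{1,\alpha}_\ell$ convergence gives pointwise control of the integrand, while the bound $|u_{r_k}| \le CM(1 + |t| + |x|^{2/3} + |v|^2)$ from the $C^{1,1}_\ell$ estimate, together with the Gaussian decay of $K$ at the fixed scale $t \approx -\rho^2$, supplies an integrable majorant. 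A parallel argument handles the discrepancy between $u_{r_k}$ and $f_{r_k} = (u\psi)_{r_k}$: they agree on $\{|x| < 1/(2r_k^3),\ |v| < 1/(2r_k)\}$, and on the complement the Gaussian factor is super-exponentially small, just as in the estimate of $J_r$ in the proof of Proposition \ref{prop: monotonicityformula}. This justifies the identification
\begin{equation}
\mathcal{I}(\rho, u_0) = \lim_{r_k \to 0} \mathcal{I}(\rho r_k, u\psi).
\end{equation}

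Now the almost-monotonicity of $r \mapsto \mathcal{I}(r, u\psi) + Cr^2$ on $(0,1)$, together with a uniform lower bound (coming from nonnegativity of $|\nabla_v f|^2$ and the elementary $L^\infty$ bound on $f$ and $f^2/t$ against the Gaussian weight), guarantees that $\mathcal{I}(0^+, u\psi) \coloneqq \lim_{r \to 0^+} \mathcal{I}(r, u\psi)$ exists and is finite. Consequently, for any two scales $\rho_1, \rho_2 > 0$,
\begin{equation}
\mathcal{I}(\rho_1, u_0) - \mathcal{I}(\rho_2, u_0) = \lim_{r_k \to 0}\bigl[\mathcal{I}(\rho_1 r_k, u\psi) - \mathcal{I}(\rho_2 r_k, u\psi)\bigr] = 0,
\end{equation}
since the $Cr^2$ correction also vanishes in the limit. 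Hence $\mathcal{I}(\cdot, u_0)$ is constant, and Corollary \ref{cor: homogconst} delivers the desired homogeneity.

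The main technical obstacle I anticipate is verifying the dominated convergence step $\mathcal{I}(\rho, u_{r_k}) \to \mathcal{I}(\rho, u_0)$ and controlling the discrepancy between $u_{r_k}$ and $f_{r_k}$: one must check that the Gaussian weight $\exp(|v|^2/t + |x|^2/t^3)$ at $t \asymp -\rho^2$ dominates the polynomial growth of the rescalings uniformly in $r_k$, and that the growing region on which $f_{r_k} \neq u_{r_k}$ contributes a vanishing error. Once those two routine decay estimates are nailed down, the argument reduces cleanly to the monotonicity formula and the $x$-independence of the blow-up from Proposition \ref{prop: blow-upindx}.
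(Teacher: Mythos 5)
Your proposal is correct and follows essentially the same route as the paper: pass the Weiss functional $\mathcal{I}$ to the blow-up limit using the cutoff $f = u\psi$, use the almost-monotonicity of $r \mapsto \mathcal{I}(r,f) + Cr^2$ to extract the limit $\lim_{r\to 0^+}\mathcal{I}(r,f)$, deduce via the scaling identity that $\mathcal{I}(\rho,u_0)$ is independent of $\rho$, and conclude with Corollary \ref{cor: homogconst} together with the $x$-independence from Proposition \ref{prop: blow-upindx}. The paper states these steps more tersely, while you additionally spell out the dominated-convergence and cutoff-discrepancy justifications, which are the correct (if routine) details underlying the paper's assertion that $\mathcal{I}(1,u_0) = \lim_{r\to 0^+}\mathcal{I}(r,f)$.
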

\begin{proof}
    The proof is standard. Suppose that $u_{r_j} \to u_0 = u_0(t,v)$ locally in $C^{1,\alpha}_\ell$ as $r_j \to 0$. Let $\psi$ be a cutoff function as in Proposition \ref{prop: monotonicityformula}, and set $f = u\psi$. Then, by construction, $f_{r_j} \to u_0$ locally in $C^{1,\alpha}_\ell$, since $\psi(r^3x, rv) \to 1$. The almost monotonicity formula from Proposition \ref{prop: monotonicityformula} implies that 
    $\lim_{r\to 0^+} \mathcal{I}(f,r) = \lim_{r\to 0^+} \mathcal{I}(1, f_r)$
    exists, since the correction term $Cr^2$ tends to zero as $r\to 0$. Necessarily one has $\mathcal{I}(u_0, 1) = \displaystyle \lim_{r\to 0^+} \mathcal{I}(f,r) = \lim_{r_j \to 0} \mathcal{I}(f, r_j)$. By scaling, for any $\rho>0$ there holds

    $$\mathcal{I}(u_0, \rho) = \lim_{r \to 0^+} \mathcal{I}(f_r, \rho) = \lim_{r\to 0^+} \mathcal{I}(f, r\rho) =\mathcal{I}(u_0, 1).$$

    We therefore deduce that $\mathcal{I}(u_0, \rho)$ is constant in $\rho$ and the claim follows by Corollary \ref{cor: homogconst} and \ref{prop: blow-upindx}.
\end{proof}
We are now in the position to classify the possible blow-ups of $u$, borrowing a result from \cite{CPS}. As in the parabolic case, we will see momentarily that every blow-up for our problem belongs to one of the following two classes.
\begin{defn}
\label{def: blow-ups}
    \begin{itemize}
    \item[]
        \item We say that $u_0\in \mathcal{P}^p_\infty(CM)$ is a \textit{half-space solution} of \eqref{eq: parabolicprob} if
        \begin{equation}
        \label{eq: halfspace} u_0(t,v) = \frac12(v \cdot e)_+^2 \end{equation}
       for some $e \in \mathbb{S}^{n-1}$.
        
        \item We say that $u_0\in \mathcal{P}^p_\infty(CM)$ is a \textit{polynomial solution} of \eqref{eq: parabolicprob} if 
        \begin{equation}
            \label{eq: polynomial}
       u_0(t,v) =mt + Av\cdot v
        \end{equation}
        for some constant $m \in \R$ and symmetric matrix $A$ with $\frac12\text{tr}(A) = m+1$.
    \end{itemize}
\end{defn}
For the global parabolic obstacle problem \eqref{eq: parabolicprob}, the following results were proven in \cite{CPS}. 
\begin{prop}
\label{prop: caffrecap}
    Let $f: \R^- \times \R^n$ be a global solution to the parabolic obstacle problem \eqref{eq: parabolicprob} belonging to $\mathcal{P}^p_\infty(CM)$. Then the following claims hold:
    \begin{itemize}
        \item $f$ is monotone decreasing in time and convex in $v$, that is,  
        $$\partial_t f(t,v) \le 0 \text{ and }D^2_v f(t,v) \ge 0 \text{ for all } (t,v) \in \R^- \times \R^n,$$
        \item $\mathcal{I}(r,f)$ is monotone non-decreasing in $r$,
        \item $f$ is parabolically homogeneous of degree $2$, in the sense that $f(r^2t, rv) = r^2f(t,v)$ for all $r>0$ and  $(t,v) \in \R^- \times \R^n$, if and only if $\mathcal{I}(r,f)$ is constant in $r$. For homogeneous solutions, the possible values of $\mathcal{I}(1,f)$ are $\omega$ and $2\omega$, where $\omega \coloneqq \frac{15}{4}$. The former occurs if and only if $f$ is a half-space of the form \eqref{eq: halfspace}, and the latter if and only if $f$ is a paraboloid of the form \eqref{eq: polynomial}.  
    \end{itemize}
\end{prop}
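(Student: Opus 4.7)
The plan is to treat the three bullets in turn, noting that this proposition is essentially a restatement from Caffarelli-Petrosyan-Shahgholian for the parabolic obstacle problem $\mathcal{H} f = \chi_{f>0}$. Since $f \in \mathcal{P}_\infty^p(CM)$ is already a global, non-negative, quadratically-growing solution on $(-\infty,0]\times \R^n$ independent of $x$, the arguments reduce to the purely parabolic setting.

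For the monotonicity and convexity claims, the approach is via second-order incremental quotients combined with a parabolic Liouville-type principle. For convexity in $v$ along a direction $e$, I would introduce $w_h(t,v) := f(t, v+he) + f(t, v-he) - 2f(t,v)$. At any point where $f(t,v) = 0$ one has $w_h \ge 0$ by non-negativity of $f$; on the open set where all three shifted values are positive, $\mathcal{H} w_h = 0$; at mixed points the obstacle inequality $\mathcal{H} f \le 1$ yields $\mathcal{H} w_h \le 0$. Quadratic growth bounds $|w_h| \lesssim h^2$ globally, and a Phragmen-Lindel\"of argument on $\{w_h < 0\}$ with zero data on its boundary forces $w_h \ge 0$. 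Dividing by $h^2$ and letting $h \to 0$ gives $D^2_v f \ge 0$. Time monotonicity follows from a parallel single-difference argument for $f(t,v) - f(t+h, v)$ with $h > 0$, exploiting the translation invariance of $\mathcal{H}$ in $t$.

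For the monotonicity of $\mathcal{I}$, I would repeat the computation of Proposition 4.1 in this spatially independent setting. Because $\nabla_x f \equiv 0$, the error terms that produced the correction $Cr^2$ there vanish identically, and one arrives at the exact identity
$$\frac{d}{dr}\mathcal{I}(r,f) = \frac{1}{r}\int_{-4}^{-1}\int_{\R^{2n}} (\dot f_r)^2 \frac{K}{-t} \, dv\, dx\, dt \ge 0.$$
This simultaneously yields the easy direction of the third claim: constancy of $\mathcal{I}(r,f)$ forces $\dot f_r \equiv 0$, and via the identity $\dot f_r = r^{-1} L_2 f_r$ this is equivalent to parabolic 2-homogeneity.

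The main obstacle is the classification half of the third claim: showing that a 2-homogeneous global solution (necessarily convex in $v$ and monotone in $t$ by the first bullet) must be either a half-space $\tfrac12(v\cdot e)_+^2$ or a paraboloid $mt + Av\cdot v$, together with the explicit computation $\mathcal{I}(1, \cdot) \in \{\omega, 2\omega\}$ on these two classes. My approach would combine 2-homogeneity, convexity in $v$, and $\partial_t f \le 0$ to force the contact set to be either a closed half-space (in which case $f = \tfrac12 (v\cdot e)_+^2$ is the unique non-negative convex 2-homogeneous function with the right Laplacian on its support) or of Lebesgue measure zero (in which case $\mathcal{H} f = 1$ almost everywhere, and the 2-homogeneity together with non-negativity pins $f$ down to a paraboloid, with the trace condition $\tfrac12 \tr A = m+1$ coming directly from $\Delta_v f - \partial_t f = 1$). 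The specific values $\omega = 15/4$ and $2\omega$ are then explicit Gaussian integrals over $K$ with the appropriate form of $f$ substituted in, which I would simply import from CPS rather than redo.
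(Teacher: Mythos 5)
The paper offers no proof of this proposition at all: it is explicitly a recap of results ``proven in \cite{CPS}'' for the purely parabolic problem $\mathcal{H}f=\chi_{f>0}$, and the author imports it wholesale (this is also why your second bullet and the ``easy'' direction of the third bullet correctly mirror the paper's own Proposition \ref{prop: monotonicityformula} and Corollary \ref{cor: homogconst}: with $\nabla_x f\equiv 0$ the error terms $I_r, J_r$ vanish and one gets the exact Weiss identity, and constancy of $\mathcal{I}$ is equivalent to $L_2 f\equiv 0$). Deferring the classification of homogeneous solutions and the values $\omega, 2\omega$ to \cite{CPS} is therefore entirely consistent with what the paper does.

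The genuine gap is in your argument for the first bullet. Your setup of $w_h(t,v)=f(t,v+he)+f(t,v-he)-2f(t,v)$ is fine: $w_h\ge 0$ wherever $f(t,v)=0$, and $\mathcal{H}w_h\le 0$ wherever $f(t,v)>0$, so $w_h$ is a supersolution on the open set $D=\{w_h<0\}$ with $w_h=0$ on $\partial_p D$. But the Phragm\'en--Lindel\"of step does not close: $D$ lives in $(-\infty,0]\times\R^n$ and has no parabolic boundary at $t=-\infty$, so applying the maximum principle on $D\cap\{t>-T\}$ only yields $\inf_D w_h \ge \min\bigl(0,\inf_{\{t=-T\}}w_h\bigr)\ge -Ch^2$ --- which is exactly the a priori bound you started with and, after dividing by $h^2$, recovers only $\partial_{v_ev_e}f\ge -C$, not $\ge 0$. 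A bounded supersolution with zero lateral data on a time-unbounded domain need not be nonnegative. The same failure affects your single-difference argument for $\partial_t f\le 0$. The actual CPS proof is a blow-up/contradiction argument: one sets $m=\inf_{\Omega_f}\partial_{v_ev_e}f$ (resp.\ $\sup_{\Omega_f}\partial_t f$), takes an extremizing sequence, rescales by the parabolic distance to the free boundary, uses the strong maximum principle to conclude the derivative is constant $\equiv m$ on a component of positivity of the limit, and then contradicts $f\ge 0$ together with the vanishing of $f$ and $\nabla_v f$ at a free boundary point of that component by Taylor expansion along the relevant direction. (Compare the structure of the paper's own Proposition \ref{prop: Yvanish}, which adapts exactly this scheme.) Either carry out that argument or, more in keeping with the paper, simply cite \cite{CPS} for the whole proposition.
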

At this point, it is convenient to discuss the energy functional $\mathcal{I}$ centered at other free boundary points. Let $\psi = \psi(x,v) \in C_c^\infty(B_1 \times B_1)$ be any cutoff function satisfying $0\le \psi \le 1$ everywhere and $\psi \equiv 1$ on $B_{1/2} \times B_{1/2}$. Set $f \coloneqq u \psi$. Given $z_0 \in \Gamma_u \cap Q_{1/2}$, the almost monotonicity formula in Proposition \ref{prop: monotonicityformula} implies that there is a constant $C=C(n,M,\psi)$ such that the map $r \mapsto \mathcal{I}(1, f_r^{z_0}) + Cr^2$ is monotone increasing in $r$. Seeing as $\psi(x_0 + r^3x + r^2tv_0, v_0 + rv) \to 1$ as $r\to 0$, if $u_0$ is any blow-up of $u$ at $z_0$ along the sequence $r_j \to 0$, then the rescalings $f_{r_j}^{z_0} \to u_0$ and $\mathcal{I}(1,u_0) = \lim_{r\to 0^+} \mathcal{I}(1, f_r^{z_0}).$ In particular, this limit is independent of the choice of $\psi$. With $f$ as above, we define the energy at scale $r>0$ at $z_0$ by
$$\mathcal{E}(r,z_0;u) \coloneqq \mathcal{I}(1,f_r^{z_0}).$$
Our previous considerations allow us to define the \textit{balanced energy} of $u$ at $z_0$ by 
$$\mathcal{E}(z_0;u) = \lim_{r \to 0} \mathcal{E}(r,z_0;u).$$

When evaluating the energy at the origin, we often write $\mathcal{E}(r;u)$. Necessarily, $\mathcal{E}$ is upper semi-continuous on $\Gamma_u$ and takes values in $\{\omega, 2\omega\}$, since the sequence of continuous functions $\mathcal{E}_r(z_0)+Cr^2= \mathcal{E}(r,z_0;u)+Cr^2$ are monotone decreasing in $r$ as $r\searrow 0$,  with limit $\mathcal{E}(z_0) = \mathcal{I}(1,u_0) \in \{\omega, 2\omega\}$. Here $u_0$ is any blow-up of $u$ at $z_0$; the value is independent of the exact blow-up, which, a priori, need not be unique. We will see at the end of the next section, however, that blow-ups at regular points are unique. It is likely true in the singular case as well, but we do not pursue that question here.

Using the preceding sequence of propositions, we may give a classification of points on the free boundary $\Gamma_u$ for solutions $u$ to \eqref{eq: formulation2} in terms of the possible blow-ups.  
\begin{prop}
\label{prop: fbclassification} For $u$ solving \eqref{eq: formulation2}, we partition the free boundary $\Gamma_u$ into two disjoint sets, $\Gamma_u^{\text{reg}} \cup \Gamma_u^{\text{sing}}$, where   
$$\Gamma_u^{\text{reg}}\coloneqq \{z_0 \in \Gamma \ : \ \text{there exists a blow-up of $u$ at $z_0$ that is a half-space blow-up of the form \eqref{eq: halfspace}}\},$$
and $\Gamma_u^{\text{sing}} \coloneqq \Gamma_u \setminus \Gamma_u^{\text{reg}}$. Then every blow-up of $u$ at a point of $\Gamma_u^{\text{sing}}$ is a quadratic polynomial solution of the form \eqref{eq: polynomial}, and consequently every blow-up at $z_0 \in \Gamma_u^{\text{reg}}$ is a half-space of the form \eqref{eq: halfspace}.
\end{prop}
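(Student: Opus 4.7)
The plan is to leverage the monotonicity formula and the classification of homogeneous global solutions to the parabolic obstacle problem to show that the \emph{type} of blow-up at a free boundary point is a well-defined invariant of the point, even if the blow-up itself is not a priori unique.

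First, I would fix an arbitrary $z_0 \in \Gamma_u$ and consider any blow-up $u_0$ of $u$ at $z_0$, obtained as a $C^{1,\alpha}_\ell$-limit of some sequence of rescalings $u_{r_j}^{z_0}$ with $r_j \to 0$. By Proposition \ref{prop: blow-upindx}, $u_0$ is independent of $x$ and solves the parabolic obstacle problem $\mathcal{H}u_0 = \chi_{u_0>0}$ globally with $0 \in \Gamma_{u_0}$ and quadratic growth, so in particular $u_0 \in \mathcal{P}^p_\infty(CM)$. By Proposition \ref{prop: homog}, $u_0$ is parabolically homogeneous of degree $2$. Then, by the third bullet of Proposition \ref{prop: caffrecap}, $u_0$ is either a half-space solution \eqref{eq: halfspace} or a polynomial solution \eqref{eq: polynomial}, with $\mathcal{I}(1, u_0) \in \{\omega, 2\omega\}$ and these two cases corresponding bijectively to the two values.

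The second step, and the real content, is to rule out that blow-ups of different types can coexist at the same free boundary point. For this I would use the balanced energy $\mathcal{E}(z_0;u)$ defined immediately before the statement. The almost-monotonicity of $r \mapsto \mathcal{E}(r,z_0;u) + Cr^2$ coming from Proposition \ref{prop: monotonicityformula} (applied after left-translating by $z_0$ and localizing with a cutoff) guarantees that the limit $\mathcal{E}(z_0;u) = \lim_{r \to 0^+} \mathcal{E}(r,z_0;u)$ exists. On the other hand, for any sequence $r_j \to 0$ along which $u_{r_j}^{z_0}$ converges to a blow-up $u_0$, the localized rescalings $f_{r_j}^{z_0}$ also converge to $u_0$ locally in $C^{1,\alpha}_\ell$, and by continuity of $\mathcal{I}(1,\cdot)$ under such convergence (a consequence of the polynomial decay of $K$ and the uniform quadratic growth bound) one has $\mathcal{E}(z_0;u) = \mathcal{I}(1,u_0)$. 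In other words, $\mathcal{I}(1,u_0)$ does not depend on the chosen subsequence or blow-up.

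Combining the two steps, the value $\mathcal{E}(z_0;u) \in \{\omega, 2\omega\}$ is intrinsic to $z_0$, and every blow-up $u_0$ at $z_0$ satisfies $\mathcal{I}(1,u_0) = \mathcal{E}(z_0;u)$. If $z_0 \in \Gamma_u^{\text{reg}}$, then by definition some blow-up is a half-space, so $\mathcal{E}(z_0;u) = \omega$, forcing every blow-up at $z_0$ to also satisfy $\mathcal{I}(1,u_0) = \omega$ and hence to be a half-space of the form \eqref{eq: halfspace}. Conversely, if $z_0 \in \Gamma_u^{\text{sing}}$, then no blow-up at $z_0$ can be a half-space, so $\mathcal{E}(z_0;u) = 2\omega$, and every blow-up must be a polynomial solution of the form \eqref{eq: polynomial}. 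I do not anticipate a serious obstacle here, since all the heavy lifting is already done in Propositions \ref{prop: blow-upindx}, \ref{prop: monotonicityformula}, \ref{prop: homog}, and \ref{prop: caffrecap}; the one small technical point to verify carefully is the continuity of $\mathcal{I}(1,\cdot)$ along the locally-$C^{1,\alpha}_\ell$ convergent sequence $f_{r_j}^{z_0} \to u_0$, which follows from dominated convergence using the exponential decay of $K$ together with the uniform kinetic quadratic growth of the $f_{r_j}^{z_0}$ guaranteed by Theorem \ref{thm: optimalreg}.
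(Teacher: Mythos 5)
Your proposal is correct and follows essentially the same route as the paper: first use Propositions \ref{prop: blow-upindx}, \ref{prop: homog}, and \ref{prop: caffrecap} to see that every blow-up is a homogeneous half-space or polynomial solution, and then use the almost-monotonicity of the localized Weiss energy to conclude that the balanced energy $\mathcal{E}(z_0;u)\in\{\omega,2\omega\}$ exists as a full limit and equals $\mathcal{I}(1,u_0)$ for every blow-up, so all blow-ups at a given point are of the same type. The paper's proof is just a more compressed version of the same argument, and your extra attention to the continuity of $\mathcal{I}(1,\cdot)$ along the localized rescalings is a point the paper already addresses in the discussion preceding the proposition.
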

\begin{proof}
    The proof is standard. By Proposition \ref{prop: blow-upindx}, Proposition \ref{prop: homog}, and Proposition \ref{prop: caffrecap}, we find that blow-ups of $u$ at $z_0 \in \Gamma_u$ are either of the form \eqref{eq: halfspace} or \eqref{eq: polynomial}. This dichotomy partitions the free boundary, thanks to the monotonicity formula of Proposition \ref{prop: monotonicityformula}, which implies that if $u_0$ is a given blow-up at $(0,0,0) \in \Gamma_u$, then the full sequence $\mathcal{E}(r, z_0; u)$ converges to either $\omega$ or $2\omega$. This precludes the possibility of two blow-up sequences at a given free boundary point converging to global homogeneous solutions of a different form. 
\end{proof}
As has already been discussed, the balanced energy $\mathcal{E}$ is upper-semicontinuous on $\Gamma_u$ and takes values in $\{\omega, 2\omega\}$. Therefore, the set $\{z_0 \in \Gamma_u \ : \
 \mathcal{E}(z_0) = \omega\}$ is relatively open in $\Gamma_u$. We refer to such points as \textit{low energy points}, and their complement by \textit{high energy points}. The following corollary is immediate. 
\begin{cor}
    \label{cor: regularopen}
    Let $u\in \mathcal{P}_1(M)$ solve \eqref{eq: formulation2}. Then $\Gamma_u^{\text{reg}}$ is relatively open in $\Gamma_u$, and consists precisely of the \textit{low energy points}.
\end{cor}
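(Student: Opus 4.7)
The plan is to verify two things: first, that $\Gamma_u^{\text{reg}}$ equals the set of low energy points, and second, that upper semi-continuity of the balanced energy $\mathcal{E}$, combined with the dichotomy $\mathcal{E} \in \{\omega, 2\omega\}$, forces the low energy set to be relatively open in $\Gamma_u$. Both ingredients have essentially been assembled in the discussion preceding the corollary, so the proof should be short.

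For the first identification, I would invoke Proposition \ref{prop: fbclassification}: at any $z_0 \in \Gamma_u^{\text{reg}}$, every blow-up $u_0$ is a half-space solution of the form \eqref{eq: halfspace}, and by Proposition \ref{prop: caffrecap} (the last bullet) such a homogeneous global solution satisfies $\mathcal{I}(1,u_0) = \omega$. Since $\mathcal{E}(z_0;u) = \mathcal{I}(1,u_0)$ for any blow-up $u_0$ at $z_0$, this gives $\mathcal{E}(z_0) = \omega$. Conversely, at $z_0 \in \Gamma_u^{\text{sing}}$, every blow-up is a quadratic polynomial of the form \eqref{eq: polynomial}, and Proposition \ref{prop: caffrecap} identifies its energy as $2\omega$. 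Hence $\Gamma_u^{\text{reg}} = \{z_0 \in \Gamma_u : \mathcal{E}(z_0) = \omega\}$.

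For openness, the key input is the upper semi-continuity of $\mathcal{E}$ on $\Gamma_u$, which I would establish as follows. Fix a cutoff $\psi \in C_c^\infty(B_1 \times B_1)$ as in Proposition \ref{prop: monotonicityformula}, and for $z_0 \in \Gamma_u \cap Q_{1/2}$ define $f^{z_0} \coloneqq (u\circ L_{z_0})\psi$, where $L_{z_0}(z) = z_0 \circ z$. By Proposition \ref{prop: monotonicityformula}, the map $r \mapsto \mathcal{I}(1, f_r^{z_0}) + Cr^2 = \mathcal{E}(r,z_0;u) + Cr^2$ is monotone non-decreasing in $r$, and by the discussion preceding the corollary, its limit as $r \to 0^+$ equals $\mathcal{E}(z_0;u)$. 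Now for each fixed $r>0$, the map $z_0 \mapsto \mathcal{E}(r,z_0;u)$ is continuous on $\Gamma_u \cap Q_{1/2}$, since the integrand in \eqref{eq: Iformula} is a bounded function of $u$ and its first $v$-derivative integrated against a fixed smooth kernel supported away from $t=0$, and $u$ is continuous with locally bounded $\nabla_v u$ by Theorem \ref{thm: optimalreg}. Thus $\mathcal{E}(z_0) = \inf_{r>0}\bigl(\mathcal{E}(r,z_0;u) + Cr^2\bigr)$ is the infimum of a family of continuous functions, hence upper semi-continuous.

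Finally, since $\mathcal{E}$ takes only the values $\omega$ and $2\omega$, upper semi-continuity implies that $\{z_0 \in \Gamma_u : \mathcal{E}(z_0) < 2\omega\} = \{z_0 \in \Gamma_u : \mathcal{E}(z_0) = \omega\} = \Gamma_u^{\text{reg}}$ is relatively open in $\Gamma_u$. There is really no obstacle here beyond bookkeeping: the monotonicity formula of Proposition \ref{prop: monotonicityformula} does all the heavy lifting, and the two-value dichotomy from Proposition \ref{prop: caffrecap} converts semi-continuity into openness for free. The only point to be slightly careful about is that the energy $\mathcal{E}(z_0;u)$ is well-defined independently of the cutoff $\psi$, which was already observed in the paragraph introducing $\mathcal{E}$.
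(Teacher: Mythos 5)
Your proof is correct and follows essentially the same route as the paper: the identification of $\Gamma_u^{\text{reg}}$ with the low energy points via Propositions \ref{prop: fbclassification} and \ref{prop: caffrecap}, and openness from upper semi-continuity of $\mathcal{E}$ (as the decreasing limit, hence infimum, of the continuous functions $\mathcal{E}(r,\cdot)+Cr^2$) together with the two-value dichotomy. The paper treats this as immediate from the preceding discussion; your write-up merely makes explicit the continuity of $z_0\mapsto\mathcal{E}(r,z_0;u)$ for fixed $r$, which is a fine addition.
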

There is actually a third characterization of the free boundary according to Lebesgue density. This characterization will be useful later on.
\begin{prop}
\label{prop: FBdensityclassification}
    Suppose that $u$ solves \eqref{eq: formulation2} and $z_0 \in \Gamma_u$ satisfies the density condition
    \begin{equation}
        \label{eq: lebesguedensitycondition}
    \lim_{r \to 0} \frac{|\Lambda_u \cap Q_r(z_0)|}{|Q_r(z_0)|}>0\end{equation}
    Then $z_0 \in \Gamma_u^{\text{reg}}$. On the other hand, if \eqref{eq: lebesguedensitycondition} fails, that is, if $$\lim_{r \to 0} \frac{|\Lambda_u \cap Q_r(z_0)|}{|Q_r(z_0)|} =0,$$
    then $z_0 \in \Gamma_u^{\text{sing}}.$

    Consequently, we have the three following equivalent characterizations of the regular free boundary points:
    \begin{align} \Gamma_u^{\text{reg}} &= \{z_0 \in \Gamma_u \ : \ \text{ there exists a half-space blow-up of the form \eqref{eq: halfspace}}\}\\ 
    &= \{z_0 \in \Gamma_u \ : \ \mathcal{E}(z_0) = \omega\}\\
    &= \left\{z_0 \in \Gamma_u \ : \ \limsup_{r\to 0} \frac{|\Lambda_u \cap Q_r(z_0)|}{|Q_r(z_0)|}>0\right\}.
    \end{align}
\end{prop}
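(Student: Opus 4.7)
The plan is to reduce the density question to a statement about blow-ups, exploiting the classification already established in Proposition \ref{prop: blow-upindx} and Proposition \ref{prop: fbclassification}. Since $z \mapsto z_0 \circ S_r(z)$ sends $Q_1$ bijectively onto $Q_r(z_0)$ with Jacobian $r^{4n+2}$, and $\{u_r^{z_0}=0\}$ is the preimage of $\Lambda_u\cap Q_r(z_0)$ under this map, one has
$$\frac{|\Lambda_u \cap Q_r(z_0)|}{|Q_r|} \;=\; \frac{|\Lambda_{u_r^{z_0}} \cap Q_1|}{|Q_1|}.$$
Along any sequence $r_k \to 0$ for which $u_{r_k}^{z_0} \to u_0$ locally in $C^{1,\alpha}_\ell$, it therefore suffices to show $|\Lambda_{u_{r_k}^{z_0}} \cap Q_1| \To |\Lambda_{u_0} \cap Q_1|$ and to evaluate the right-hand side in each of the two possible cases for $u_0$ from Definition \ref{def: blow-ups}.

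The convergence in measure is the technical heart. For the upper bound, any compact $K \subset \{u_0 > 0\}$ has $u_0 \ge c>0$ on $K$, so uniform convergence forces $u_{r_k}^{z_0} > 0$ on $K$ for $k$ large; hence $\limsup_k |\Lambda_{u_{r_k}^{z_0}} \cap Q_1| \le |\Lambda_{u_0} \cap Q_1|$. For the lower bound, fix an open $V$ compactly contained in the interior of $\Lambda_{u_0}$. Then $u_0 \equiv 0$ on $V$, so $\|u_{r_k}^{z_0}\|_{L^\infty(V)} \to 0$, and testing the weak formulation of \eqref{eq: formulation2} against any nonnegative $\eta \in C_c^\infty(V)$ gives
$$\int_V \chi_{\Omega_{u_{r_k}^{z_0}}}\, \eta \;=\; \int_V u_{r_k}^{z_0} \bigl(\Delta_v \eta + \partial_t \eta + v\cdot \nabla_x \eta\bigr) \To 0,$$
so $|\Omega_{u_{r_k}^{z_0}} \cap V| \to 0$, i.e.\ $|\Lambda_{u_{r_k}^{z_0}} \cap V| \to |V|$. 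Since $u_0$ satisfies the hypotheses of Corollary \ref{cor: blowupssolveq}, Lemma \ref{lem: fbmeasurezero} yields $|\Gamma_{u_0}|=0$, so $\{u_0>0\}$ and $\mathrm{int}(\Lambda_{u_0})$ jointly fill $Q_1$ up to measure zero, producing the matching lower bound.

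It remains to evaluate $|\Lambda_{u_0} \cap Q_1|$. In the half-space case $u_0 = \tfrac12(v\cdot e)_+^2$ the contact set is $(-\infty,0]\times \R^n \times \{v\cdot e \le 0\}$, occupying exactly half of $Q_1$. In the polynomial case $u_0 = mt + Av\cdot v$ with $A\succeq 0$ and $\tfrac12 \tr(A) = m+1$, the requirements $u_0 \ge 0$ on $(-\infty,0]\times \R^n$ and $(0,0,0) \in \Gamma_{u_0}$ force $m\in[-1,0]$ and $A\neq 0$. A short case check — $m<0$ gives $\Lambda_{u_0}\subset\{t=0\}$, while $m=0$ gives $\Lambda_{u_0} \subset (-\infty,0]\times \R^n \times \ker(A)$ with $\ker(A)\subsetneq \R^n$ — shows that $|\Lambda_{u_0}\cap Q_1|=0$ in either scenario.

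Combining the three steps: if $\limsup_r |\Lambda_u \cap Q_r(z_0)|/|Q_r| > 0$, extract a subsequence along which the density stays bounded away from zero; any blow-up limit has positive contact density, ruling out the polynomial case, so $z_0 \in \Gamma_u^{\text{reg}}$. Conversely, if $z_0 \in \Gamma_u^{\text{reg}}$, then Corollary \ref{cor: regularopen} together with the monotonicity formula forces every subsequential blow-up to be a half-space, so every subsequential density limit equals $1/2$ and hence the full limit exists and equals $1/2$; the singular case follows by the same dichotomy and yields the three equivalent characterizations. The main obstacle is the convergence-in-measure step, since $C^{1,\alpha}_\ell$ convergence of $u_{r_k}^{z_0}$ to $u_0$ a priori allows the rescalings to stay strictly positive on subsets of $\{u_0=0\}$; the PDE-based weak testing argument is what rules this out on the interior of the contact set and makes the whole reduction to blow-ups rigorous.
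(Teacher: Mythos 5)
Your argument is correct, and its first half (positive density forces a blow-up with $|\Lambda_{u_0}\cap Q_1|>0$, hence a half-space) is exactly the paper's argument, phrased via compact exhaustion of $\{u_0>0\}$ instead of $\limsup_j \Lambda_{u_{r_j}^{z_0}}\subset \Lambda_{u_0}$ plus reverse Fatou. Where you diverge is the second half. The paper tests the weak formulation globally on $Q_1$ and uses $|\Lambda_{u_{r_j}^{z_0}}\cap Q_1|\to 0$ to conclude $\mathcal{H}u_0=1$ on all of $Q_1$, then invokes convexity of the $t$-sections of $\Lambda_{u_0}$ and homogeneity (Proposition \ref{prop: caffrecap}) to rule out the half-space. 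You instead test locally on open sets compactly contained in $\mathrm{int}(\Lambda_{u_0})$ to get the matching lower bound $\liminf_k|\Lambda_{u_{r_k}^{z_0}}\cap Q_1|\ge |\mathrm{int}(\Lambda_{u_0})\cap Q_1|$, which immediately kills the half-space (whose contact set fills half of $Q_1$) and, as a bonus, shows the density limit actually exists and equals $1/2$ at regular points and $0$ at singular points — slightly more than the statement asks, and it cleanly reconciles the $\lim$ in \eqref{eq: lebesguedensitycondition} with the $\limsup$ in the third characterization. The two routes are morally the same (both hinge on passing $\chi_{\Omega_{u_{r_k}}}$ to the limit through the weak formulation), but yours avoids the convexity/paraboloid argument. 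One small caveat: your appeal to Lemma \ref{lem: fbmeasurezero} to say $\Lambda_{u_0}\setminus\mathrm{int}(\Lambda_{u_0})$ is null is not quite automatic, since $\Gamma_{u_0}$ is defined as the \emph{parabolic} boundary $\partial_p\Omega_{u_0}$, which a priori could miss points of $\partial\Omega_{u_0}$ approached only from the future; but since you only ever need this for half-space blow-ups, where $\Lambda_{u_0}\setminus\mathrm{int}(\Lambda_{u_0})=\{v\cdot e=0\}$ is manifestly null, and for polynomial blow-ups, where $|\Lambda_{u_0}|=0$ outright by your case check, nothing is lost.
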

\begin{proof}
    We begin by showing that free boundary points $z_0$ for which $\Lambda_u$ has positive Lebesgue density are regular points. Let $r_j\to 0$ be any sequence for which the above limit is positive, and let $u_0$ be any blow-up along the sequence $u_{r_j}^{z_0}$. Passing to a limit in the identity
    $$\frac{|\Lambda_{u_{r_j}^{z_0}} \cap Q_1|}{|Q_1|} = \frac{|\Lambda_u \cap Q_{r_j}(z_0)|}{|Q_{r_j}(z_0)|},$$
    and using that $\limsup \Lambda_{u_{r_j}^{z_0}} \subset \Lambda_{u_0}$, we obtain that 
    $$|\Lambda_{u_0} \cap Q_1|>0.$$
    Consequently, $u_0$ cannot be a polynomial solution of the form \eqref{eq: polynomial}, and must therefore be a half-space solution of the form \eqref{eq: halfspace}. The first claim follows from Proposition \ref{prop: fbclassification}.

    To prove the second claim, we follow \cite{XFRXRO}. By assumption, $|\Lambda_{u_r^{z_0}} \cap Q_1| \to 0$. Now, take any $\eta\in C_c^\infty(Q_1)$ vanishing near $\{t=0\}$, and let $u_0$ be any blow-up of $u$ at $z_0$, attained along the sequence $r_j \to 0$. Passing to a limit in the weak formulation of \eqref{eq: formulation2},
    $$\int_{Q_1} - \nabla_v u_{r_j}^{z_0} \cdot \nabla_v \eta + \int_{Q_1} u_{r_j}^{z_0}(\partial_t \eta + v\cdot \nabla_x \eta) = \int_{Q_1} \eta \chi_{u_{r_j}^{z_0}>0}$$
    we obtain that
    $$\int_{Q_1} - \nabla_v u_0 \cdot \nabla_v \eta + \int_{Q_1} u_0 \partial_t \eta = \int_{Q_1} \eta.$$
    Here we have used the assumption, as well as the fact that $u_0$ is independent of $x$. Therefore $\mathcal{H}u_0 = 1$ on $Q_1$. Thanks to Proposition \ref{prop: caffrecap}, we find that $\Lambda_{u_0(t)}$ is a closed, convex set for each $t \in (-1,0)$. If $\Lambda_{u_0(-1/k)}  \cap B_1 = \{v \in B_1 \ : \ u_0(-1/k, v)=0\} $ had nonempty interior in $\R^n$, then the homogeneity of $u_0$ would imply that $\Lambda_{u_0}$ contained an open paraboloid, contradicting the fact that $\mathcal{H}u_0=1$ on $Q_1$. Therefore $\Lambda_{u_0(-1/k)} \cap B_1$ is contained in an $n-1$ dimensional hyperplane $H_k$. Since $\partial_tu_0 \le 0$, we deduce that $\{(t,x,v) \in \Lambda_{u_0} \cap Q_1 \ : \ t \le -1/k\} \subset \{(t,x,v) \in [-1,-1/k] \times  B_1 \times  H_k\}$. We conclude that $|\Lambda_{u_0} \cap Q_1|=0$, and therefore $u_0$ must be a polynomial solution of the form \eqref{eq: polynomial}.
\end{proof}
We remark that it is possible to prove free boundary regularity through the density condition \eqref{eq: lebesguedensitycondition}. However, this is weaker than the condition imposed in Theorem \ref{thm: freeboundaryregularity}, for we only impose that the contact set is thick enough at a single scale. Indeed, if $\delta_r(z_0;u)$ is too small for all $r \to 0$, it is easy to see that \eqref{eq: lebesguedensitycondition} fails at $z_0$. Lastly, we end with the following classification of global solutions to \eqref{eq: formulation2}.
\begin{prop}
    \label{prop: globalclassification}
    Suppose that $u \in \mathcal{P}_R(MR^2)$. Then 
    $$\|\nabla_x u\|_{L^\infty(Q_{R/2})} \le \frac{CM}{R}.$$

    Consequently, if $u \in \mathcal{P}_\infty(CM)$, then on all of $\R^- \times \R^{2n}$, there holds
    \begin{enumerate}
        \item $\nabla_xu\equiv 0$
        \item $\partial_tu \le 0$,
        \item $D^2_v u \ge 0.$
    \end{enumerate}
\end{prop}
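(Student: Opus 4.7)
The plan is to prove the first inequality by rescaling to the unit cylinder and invoking Theorem \ref{thm: optimalreg}, and then to iterate this estimate on larger and larger scales to deduce the claims about global solutions. Throughout, the key observation is that the spatial gradient $\nabla_x$ scales like $r^3$ while $u$ scales like $r^2$, so one power of $r^{-1}$ appears on the right-hand side.

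First I would pass from $u \in \mathcal{P}_R(MR^2)$ to unit scale by defining
\begin{equation}
\tilde u(t,x,v) \coloneqq \frac{u(R^2 t, R^3 x, R v)}{R^2}, \qquad (t,x,v) \in Q_1.
\end{equation}
The Galilean/kinetic invariance of $\mcL$ gives $\mcL \tilde u = \chi_{\tilde u > 0}$ in $Q_1$, the origin is still a free boundary point of $\tilde u$, and by hypothesis $\|\tilde u\|_{L^\infty(Q_1)} \le M$. Therefore $\tilde u \in \mathcal{P}_1(M)$, and Theorem \ref{thm: optimalreg} yields
\begin{equation}
\|\nabla_x \tilde u\|_{L^\infty(Q_{1/2})} \le C(1 + \|\tilde u\|_{L^\infty(Q_1)}) \le CM.
\end{equation}
Unwinding the rescaling, $\nabla_x \tilde u(t,x,v) = R\, \nabla_x u(R^2 t, R^3 x, R v)$, so
\begin{equation}
\|\nabla_x u\|_{L^\infty(Q_{R/2})} = R^{-1}\|\nabla_x \tilde u\|_{L^\infty(Q_{1/2})} \le \frac{CM}{R},
\end{equation}
which is the first claim.

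Next, suppose $u \in \mathcal{P}_\infty(CM)$. The growth bound $|u(t,x,v)| \le CM(1 + |t| + |x|^{2/3} + |v|^2)$ forces $\|u\|_{L^\infty(Q_R)} \le C'M R^2$ for every $R \ge 1$ (with a dimensional $C'$), i.e.\ $u|_{Q_R} \in \mathcal{P}_R(C'M R^2)$, while $(0,0,0) \in \Gamma_u$ by definition. Applying the first part to each such $R$ gives $\|\nabla_x u\|_{L^\infty(Q_{R/2})} \le C''M/R$, and letting $R \to \infty$ forces $\nabla_x u \equiv 0$ on $\R^- \times \R^{2n}$. This proves (1).

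Finally, because $u$ is independent of $x$, the equation $\mcL u = \chi_{u>0}$ collapses to $\mathcal{H}u = (\Delta_v - \partial_t)u = \chi_{u>0}$ on $\R^- \times \R^n$; combined with $u \ge 0$, the quadratic growth bound, and $(0,0) \in \Gamma_u$, this places $u$ in the class $\mathcal{P}^p_\infty(CM)$ of global parabolic solutions. The time-monotonicity $\partial_t u \le 0$ and velocity-convexity $D^2_v u \ge 0$ then follow immediately from the first bullet of Proposition \ref{prop: caffrecap}, giving (2) and (3). There is no genuinely hard step here once the correct scaling is set up; the only point requiring minor care is checking the growth bound for $u \in \mathcal{P}_\infty(CM)$ on $Q_R$ so as to produce an $L^\infty$ bound of the right order $R^2$ required to apply the first part uniformly in $R$.
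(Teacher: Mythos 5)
Your proof is correct and follows the same route as the paper: rescale $u\in\mathcal{P}_R(MR^2)$ to unit scale, apply Theorem \ref{thm: optimalreg}, undo the scaling to pick up the factor $R^{-1}$, then send $R\to\infty$ using the quadratic growth bound and conclude via Proposition \ref{prop: caffrecap}. The paper's proof is a one-line version of exactly this argument, so nothing further is needed.
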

\begin{proof}
    The claim follows from noting that $u_R \in \mathcal{P}_1$, applying Theorem \ref{thm: optimalreg}, and then undoing the scaling. The classification of global solutions is then immediate from Proposition \ref{prop: caffrecap}.
\end{proof}
\section{Balanced Energy and Further Classification of Blow-Ups}
\label{balanced}
In this section, we continue our analysis of the free boundary. In particular, we explore some connections between the thickness function $\delta_r(u)$ and the balanced energy $\mathcal{E}$. We work with ``almost global" solutions $u \in \mathcal{P}_R(MR^2)$ for $R>0$ large, and assume that the contact set $\Lambda_u$ has positive thickness at unit scale. If $R$ is large enough, then $u$ may be approximated by a global solution $u_0$ to the parabolic obstacle problem \eqref{eq: parabolicprob} which is monotone in $v$ near the origin for a certain cone of directions. We then deduce that the same is true of our local solution $u \in \mathcal{P}_R(MR^2)$, which involves overcoming the difficulty posed by the derivatives $\mcL$ and $\partial_{v_e}$ failing to commute. The second critical observation is that the transport derivative $Yu$ vanishes continuously near the origin. This should be seen as analogous to the fact that $\partial_t u \to 0$ near regular points in the parabolic problem, see \cite{CPS}. In proving this we perform a standard blow-up contradiction argument based on the balanced energy $\mathcal{E}$. We emphasize here that all of these critical steps strongly rely on the estimate on $\nabla_xu$ from Theorem \ref{thm: optimalreg}.

As described in the introduction, we measure the ``thickness" of the contact set $\Lambda_u$ near free boundary points $z_0 \in \Gamma_u$ with $\delta_r(u, z_0)$ as defined in \eqref{eq: densityfunction}. By construction, we have the Galilean and scaling invariance property
$$\delta_r(u,z_0) = \delta_1(u_r^{z_0},(0,0,0)) \coloneqq \delta_1(u_r^{z_0}).$$
Note that when evaluating the density at the origin, we suppress the notation. There also holds the following stability property. If $u_j \in \mathcal{P}_1(M)$ is converging in $C^{1,\alpha}_\ell(Q_1)$ to some $u_0$, then $(0,0,0) \in \Gamma_{u_0}$, and
$$\limsup_{j \to \infty} \Lambda_{u_j} \subset \Lambda_{u_0}.$$
As a consequence,
$$\delta_1(u_j) \ge \sigma \text{ for all } j \text{ implies }\delta_1(u_0) \ge \sigma.$$ 

For posterity, we record that the minimal diameter of a set $E \subset \R^{n}$ is given by 

$$\text{m.d.}(E) =\inf_{\xi \in \mathbb{S}^{n-1}} \osc_{v \in E} v \cdot \xi.$$
Let us also remark that, in the case of a function $f: (-\infty,0] \times \R^n \times \R^n \to [0,\infty)$ which is independent of $x$, there holds
$$\delta_r(f,(t_0,x_0,v_0)) =  \frac{\text{m.d.}(\Lambda_{f(t_0-r^2)} \cap B_r(v_0))}{r} \coloneqq \delta_r^p(f,(t_0,v_0)), $$
where $\delta_r^p$ is the thickness function considered in \cite{CPS}.

\begin{lem} \label{lem: approxlemma} Given $M, \sigma, \eps>0$, there exists $C=C(n)>0$ and $R_0 = R_0(n,\eps, M,\sigma)$ such that if $ u \in \mathcal{P}_R(MR^2)$ for $R \ge R_0$, and $\delta_1(u) \ge \sigma$, then there must exist a global solution $u_0 \in \mathcal{P}_\infty(CM)$ such that 
\begin{enumerate}
\item $\|u-u_0\|_{(C^0_t \cap C^0_x \cap C^1_v)(Q_1)} \le \eps;$
\item $u_0$ vanishes on a set $[-1,0] \times\R^n \times B_\rho(v_*)$ where $B_\rho(v_*) \subset B_1$ and $\rho = \frac{\sigma}{4n}$.
\item $u$ vanishes on $[-1/2,0] \times B_1 \times B_{\rho/2}(v_*)$. 
\end{enumerate}
\end{lem}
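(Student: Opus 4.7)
The plan is to argue by contradiction and compactness. Suppose the lemma fails. Then there exist sequences $R_k \to \infty$ and $u_k \in \mathcal{P}_{R_k}(MR_k^2)$ with $\delta_1(u_k) \geq \sigma$, for none of which does any global solution in $\mathcal{P}_\infty(CM)$ simultaneously satisfy (1), (2), and (3). Proposition \ref{prop: globalclassification} applied to each $u_k$ gives $\|\nabla_x u_k\|_{L^\infty(Q_{R_k/2})} \leq CM/R_k$, and rescaling Theorem \ref{thm: optimalreg} yields uniform $C^{1,1}_\ell$ control on every fixed compact subset of $\R^-\times \R^{2n}$. Arzel\`a--Ascoli plus diagonalization produces a subsequence converging in $C^{1,\alpha}_\ell$ on compacts to a global limit $u_0$ with $\nabla_x u_0 \equiv 0$. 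The distributional formulation of $\mcL u_k = \chi_{u_k>0}$ passes to the limit (using $|\Gamma_{u_k}| = 0$ from Lemma \ref{lem: fbmeasurezero} and Corollary \ref{cor: blowupssolveq}), delivering $\mathcal{H}u_0 = \chi_{u_0>0}$ globally. Quadratic growth and $(0,0,0) \in \Gamma_{u_0}$ pass to the limit, so $u_0 \in \mathcal{P}_\infty(CM)$, and conclusion (1) follows because $C^{1,\alpha}_\ell(Q_1)$ embeds in $(C^0_t \cap C^0_x \cap C^1_v)(Q_1)$.

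For (2), the upper semi-continuity $\limsup \Lambda_{u_k} \subset \Lambda_{u_0}$ under uniform convergence yields $\delta_1(u_0) \geq \sigma$, which since $u_0$ is $x$-independent reduces to $\mathrm{m.d.}(\Lambda_{u_0(-1,\cdot)} \cap B_1) \geq \sigma$. By Proposition \ref{prop: globalclassification}, $u_0$ is convex in $v$, so this contact set is a convex subset of $B_1 \subset \R^n$ of minimal diameter at least $\sigma$; a standard inscribed-ball estimate (via John's theorem) supplies $v_* \in B_1$ and $\rho = \sigma/(4n)$ with $B_\rho(v_*) \subset \Lambda_{u_0(-1,\cdot)}$. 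The monotonicity $\partial_t u_0 \leq 0$ (same proposition) then propagates the vanishing from $t=-1$ to all of $[-1,0] \times \R^n \times B_\rho(v_*)$.

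The main obstacle is conclusion (3), which demands \emph{strict} vanishing of $u_k$ on a smaller cylinder rather than mere smallness inherited from the approximation. I will extract this from the nondegeneracy estimate. Suppose, along a further subsequence, there exist $z_k \in [-1/2, 0] \times B_1 \times B_{\rho/2}(v_*)$ with $u_k(z_k) > 0$. Then \eqref{eq: nondegenpositive} applied at $z_k$ with $r = \rho/4$ gives
$$\sup_{Q_{\rho/4}(z_k)} u_k \;\geq\; u_k(z_k) + c_n(\rho/4)^2 \;>\; c_n(\rho/4)^2.$$
Crucially, the $v$-cross-section $B_{\rho/4}(v_k)$ of the oblique cylinder $Q_{\rho/4}(z_k)$ lies in $B_{3\rho/4}(v_*) \subset B_\rho(v_*)$, and since $u_0$ is $x$-independent, $u_0 \equiv 0$ on all of $Q_{\rho/4}(z_k)$. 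This cylinder is contained in a fixed compact set $K \subset \R^- \times \R^{2n}$ independent of $k$ on which $u_k \to u_0$ uniformly, contradicting the lower bound above as soon as $\|u_k - u_0\|_{L^\infty(K)} < c_n(\rho/4)^2$. Hence $u_k \equiv 0$ on $[-1/2, 0] \times B_1 \times B_{\rho/2}(v_*)$ for all $k$ large, so the pair $(u_k, u_0)$ satisfies (1)--(3) simultaneously for $k$ large, contradicting our initial assumption and completing the argument.
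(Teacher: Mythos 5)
Your proof is correct and follows essentially the same route as the paper: contradiction and compactness to an $x$-independent global parabolic solution, John's ellipsoid plus $\partial_t u_0\le 0$ for conclusion (2), and nondegeneracy to upgrade uniform smallness to exact vanishing for conclusion (3). The only (harmless) deviation is in the last step, where you invoke the nondegeneracy at positive points \eqref{eq: nondegenpositive} directly at a hypothetical point of $\Omega_{u_k}$, whereas the paper splits into the two cases $\mathcal{U}\subset\Omega_{u_j}$ (ruled out via Schauder) and $\mathcal{U}\cap\Gamma_{u_j}\neq\emptyset$ (ruled out via nondegeneracy at free boundary points); your version is a slight streamlining of the same idea.
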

\begin{proof} Assume to the contrary that we may find a sequence $R_n \to \infty$ and $u_n \in \mathcal{P}_{R_n}(MR_n^2)$ such that at least one of the three desired properties fails. In the now-standard way, we apply the regularity estimates of Theorem \ref{thm: optimalreg} to extract a subsequence $u_{n_j} \coloneqq u_j$ converging locally in $C^{0,\alpha}_{t,x} \cap C^{1,\alpha}_v$ to a global solution $u_0 \in \mathcal{P}^p_\infty(CM)$ of the parabolic obstacle problem \eqref{eq: parabolicprob}. As usual, we apply Proposition \ref{prop: caffrecap} and Proposition \ref{prop: blow-upindx} to deduce that $u_0(t,x,v) = f_0(t,v) \in \mathcal{P}_\infty^p(CM)$ is independent of $x$ and also satisfies monotonicity and convexity conditions $\partial_t f_0 \le 0$ and $ D^2_v f_0 \ge 0$ on all of $(-\infty,0] \times \R^n$. Also, it is clear that $\delta_1^p(f_0, (0,0)) = \delta_1(u_0, (0,0,0))\ge \sigma$, since $\limsup \Lambda(u_j) \subset \limsup \Lambda(u_0).$ By construction, $u_0$ satisfies the first condition for $u_j$ with $j$ large. 

From these observations, we repeat the argument of \cite{CPS} verbatim and apply John's ellipsoid lemma to deduce that $\Lambda_{f_0(-1)}$ contains a ball $B_\rho(v_*) \subset B_1$ for $\rho =\displaystyle \frac{\sigma}{4n}$ and some $|v_*| \le 1-\rho$. Then, by the monotonicity $\partial_t f_0 \le 0$, we find $[-1,0] \times B_\rho(v_*) \subset \Lambda_{f_0}$. Therefore $u_0$ satisfies the second condition, since $\Lambda_{u_0}  = \{(t,x,v) \ : \ x\in \R^n, (t,v) \in \Lambda_{f_0}\}$. 

 We now claim that 
 $$\mathcal{U} \coloneqq [-1/2] \times B_1 \times B_{\rho/2}(v_*) \subset \Lambda_{u_j}$$ for sufficiently large $j$. To that end, note that if $\mathcal{U} \cap \Omega_{u_j} \neq \emptyset$, then for large $j$ the following dichotomy holds: either $\mathcal{U} \subset \Omega_{u_j}$, or $ \mathcal{U} \cap \Gamma_{u_j} \neq \emptyset$. In the former case, we would have $\mcL u_j =1$ on $\mathcal{U}$ for all $j$ large, implying $\mcL u_0=1$ there by the Schauder estimate of Theorem \ref{thm: Schauderthm}. This would contradict the fact that $u_0$ vanishes on $\mathcal{U}$. If latter case were to hold, we would be able to apply the nondegeneracy of Proposition \ref{prop: nondegen} at a point $z_j \in \Gamma_{u_j} \cap \mathcal{U}$ to see that
 $$\sup_{Q_r(z_j)} u_j \ge cr^2 \text{ for all } 0<r<\frac12.$$
 
After passing to a subsequence, we may assume the sequence $z_j$ converges to a point $z_0 \in \overline{\mathcal{U}} \cap \Gamma_{u_0}$, which is impossible since $u_0$ vanishes on $[-1,0] \times \R^n \times B_\rho(v_*)$. Therefore there must be an $R_0(n,\eps,M,\sigma)$ such that $(1)-(3)$ hold.
\end{proof}
Let us remark that in everything that follows, $\rho$ is always used to denote $\rho = \displaystyle \frac{\sigma}{4n}$. We now turn towards proving a result on the directional monotonicity of $u$ in the velocity variable. We begin with the following simple \textit{improvement of minimum} lemma, which is the main workhorse for proving the existence of cones of monotonicity near regular points for solutions to obstacle problems. 
\begin{lem}\label{lem: improvementmin} Let $u:Q_1 \to [0,\infty)$ solve \eqref{eq: formulation2} and let $h:Q_1 \to \R$ be such that 
\begin{enumerate}
    \item $h-u \ge -\eps_0 \text{ on } \Omega_u \cap Q_1,$
    \item $h \ge 0 \text{ on } \Gamma_u$, and 
    \item $\mcL h\le 1/2 \text{ on } Q_1.$
\end{enumerate} Then if $\eps_0>0$ is sufficiently small depending only on dimension, there holds that 
$$h-u \ge 0 \text{ on }\Omega_u \cap Q_{1/2}.$$ 
\end{lem}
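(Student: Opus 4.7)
The plan is to set $w \coloneqq u - h$ and treat it as a strict subsolution of $\mcL$ on $\Omega_u$. Indeed, $\mcL u = 1$ on $\Omega_u$ together with hypothesis (3) yields $\mcL w \ge 1/2$ there, while hypothesis (2) gives $w = -h \le 0$ on $\Gamma_u$ and hypothesis (1) gives $w \le \eps_0$ on $\partial_p Q_1 \cap \Omega_u$. A naive application of the weak maximum principle to $w$ only yields $w \le \eps_0$ on $\Omega_u \cap Q_1$, which is weaker than what is needed. The extra room must come from the strict positivity of $\mcL w$, which we convert into a pointwise improvement by subtracting the same kind of barrier used in the nondegeneracy proof of Proposition~\ref{prop: nondegen}.

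For an arbitrary $z_1 \in \Omega_u \cap Q_{1/2}$, I would fix a dimensional radius (say $r = 1/2$) for which $Q_r(z_1) \subset Q_1$ and introduce the translated barrier $\tilde{\vphi}^r(z) \coloneqq \vphi^r(z_1^{-1} \circ z)$, where $\vphi^r(t,x,v) = \frac{1}{4n+2}(|v|^2 - t) + \frac{|x|^2}{4r^4}$ is exactly as in Proposition~\ref{prop: nondegen}. The Galilean-invariant computation there gives $0 \le \mcL\tilde{\vphi}^r \le 1$ throughout $Q_r(z_1)$, with $\tilde{\vphi}^r(z_1) = 0$, $\tilde{\vphi}^r \ge 0$ on $Q_r(z_1)$, and $\inf_{\partial_p Q_r(z_1)} \tilde{\vphi}^r \ge r^2/(4n+2)$. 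Setting $F \coloneqq w - \tfrac{1}{2}\tilde{\vphi}^r$, the upper bound $\mcL\tilde{\vphi}^r \le 1$ ensures that $F$ remains a subsolution on $\Omega_u \cap Q_r(z_1)$, since $\mcL F \ge 1/2 - 1/2 = 0$.

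The maximum principle applied to $F$ on $\Omega_u \cap Q_r(z_1)$ pushes its supremum onto the parabolic boundary: on $\Gamma_u \cap Q_r(z_1)$ there holds $F \le -h - \tfrac{1}{2}\tilde{\vphi}^r \le 0$, while on $\partial_p Q_r(z_1) \cap \Omega_u$ there holds $F \le \eps_0 - r^2/(2(4n+2))$. Evaluating at $z_1$ and using $\tilde{\vphi}^r(z_1) = 0$ gives $w(z_1) \le \max\{0,\, \eps_0 - 1/(8(4n+2))\}$, so the conclusion $w \le 0$ on $\Omega_u \cap Q_{1/2}$ follows whenever $\eps_0 \le 1/(8(4n+2))$, a purely dimensional threshold. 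The only technical point to watch is the verification that $r = 1/2$ yields $Q_{1/2}(z_1) \subset Q_1$ uniformly for every $z_1 \in Q_{1/2}$: the oblique $x$-geometry of the kinetic cylinders must be handled carefully, but a short calculation using $|v_1| < 1/2$ shows this poses no real obstacle, and it is ultimately this dimensional check that determines the admissible $\eps_0$.
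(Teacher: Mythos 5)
Your argument is correct and is essentially the paper's proof: the same barrier $\vphi^r$ from Proposition \ref{prop: nondegen}, subtracted with the same weight $\tfrac12$ so that the strict gap $\mcL(u-h)\ge \tfrac12$ absorbs $\mcL\tilde\vphi^r\le 1$, followed by the same maximum-principle comparison on $\Omega_u\cap Q_{1/2}(z_1)$ splitting the boundary into $\Gamma_u$ and $\partial_pQ_{1/2}(z_1)\cap\Omega_u$. The only difference is presentational (you argue directly at an arbitrary $z_1$ rather than by contradiction), and your dimensional threshold correctly accounts for the factor $r^2=1/4$ in $\inf_{\partial_pQ_r}\tilde\vphi^r\ge r^2/(4n+2)$.
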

This type of lemma is typically written under the assumption that $h$ belongs to the null space of the operator associated with the obstacle problem. See, for reference, \cite{CPS}, \cite{SUBELLIPTIC},\cite{OBSPROBREVISITED}. This is an extraneous assumption, and in fact, it suffices to bound $\mcL h$ from above for the standard argument to go through. Following those works, we will go on to set $h= \partial_{v_e} u_r$. In our setting, $\mcL(\partial_{v_e}u_r) = \partial_{x_e}u_r$, which unfortunately does not vanish. The spatial Lipschitz estimate (once again) of Theorem \ref{thm: optimalreg} along with the natural kinetic scaling are the saving graces which will allow us to deduce any kind of monotonicity of $u$ or regularity of $\Gamma_u^{\text{reg}}$.

\begin{proof}[Proof of Lemma \ref{lem: improvementmin}.] Suppose to the contrary that there exists $u,h$ satisfying the conditions and a point $(t_0,x_0,v_0)=z_0 \in Q_{1/2}$ such that $(h-u)(z_0)<0$. Let 
$$f(z) = h(z) - u(z) + \frac12 \vphi(z_0^{-1} \circ z)$$
with $\vphi$ as in Lemma \ref{prop: nondegen} with $r=1/2$. Then $f(z_0) <0$ by construction and $f \ge 0$ on $\Gamma_u \cap Q_1$. Furthermore, on $\Omega_u \cap Q_{1/2}(z_0)$ we have 
$$ \mcL f(z) = \mcL h(z) - 1 +\frac12 \mcL\vphi(z_0^{-1}\circ z) \le 0$$
thanks to the assumption $\mcL h \le \ds \frac12$ and the property $0 \le \mcL(\vphi) \le 1.$ So $f$ is a super-solution on $\Omega_u\cap Q_{1/2}(z_0)$, which, by the maximum principle, necessarily must attain its negative minimum on $\partial_pQ_{1/2}(z_0)$. Thus

$$0 \le \inf_{\partial_pQ_{1/2}(z_0) \cap \Omega_u} h-u + \frac12\vphi(z_0^{-1}\circ \cdot),$$
implying
$$-\eps_0 \le \inf_{\partial_pQ_{1/2}(z_0) \cap \Omega_u} (h-u) \le - \frac{1}{8n+4}.$$
We reach a contradiction taking any $\eps_0<\displaystyle\frac{1}{8n+4}$.
\end{proof}

The following is a useful result on global solutions to the parabolic obstacle problem from \cite{CPS}. 

\begin{thm}[Theorem $11.1$ in \cite{CPS}.]
\label{thm: caffarelliglobal} Let $f=f(t,v) \in \mathcal{P}^p_\infty(M)$ be such that $(0,0) \in \Gamma_f$ and suppose that $\Lambda_f$ contains a cylinder $[-1,0] \times B_\rho(-se_n)$ for some $\rho<s<1-\rho$. Set 
$$K(\delta, s, h) \coloneqq \{v \ : \ |v'|<\delta, -s \le v_n \le h\}$$ for any $\delta,h>0$. Then there exists $C_0 = C_0(n,M,\rho)$ such that for any direction $e \in \mathbb{S}^{n-1}$ such that $e\cdot e_n >\theta = \theta(\rho) \coloneqq \sqrt{1-\rho^2/64},$ there holds
$$C_0 \partial_{v_e}f - f \ge 0\text{ in }[-1/2,0] \times K(\rho/16, s, 1/2). $$
\end{thm}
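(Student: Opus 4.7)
The plan is to adapt the classical directional monotonicity argument for the parabolic obstacle problem, using three ingredients: the convexity $D_v^2 f \ge 0$ from Proposition~\ref{prop: caffrecap}, the fact that $\partial_{v_e} f$ is caloric on $\Omega_f$ (since $\partial_{v_e}$ commutes with $\mathcal{H} = \Delta_v - \partial_t$), and the parabolic analog of the improvement-of-minimum Lemma~\ref{lem: improvementmin} (which follows by the same maximum-principle argument after replacing kinetic cylinders by parabolic ones). \textbf{Step 1 (nonnegativity by convexity).} First I would show $\partial_{v_e} f \ge 0$ on $[-1,0] \times K(\rho/16,s,1/2)$ by a purely geometric argument. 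For $e = (e', e_n)$ with $e\cdot e_n > \theta = \sqrt{1 - \rho^2/64}$, one has $|e'| \le \sqrt{1 - \theta^2} = \rho/8$. Given $v_0 \in K(\rho/16,s,1/2)$, the choice $\tau_0 := (v_{0,n}+s)/e_n$ satisfies $\tau_0 \in [0, (3/2)/\theta] \subset [0,2]$ and produces a point $v_1 := v_0 - \tau_0 e$ with $|v_1 + s e_n| = |v_0' - \tau_0 e'| \le \rho/16 + 2(\rho/8) < \rho$. Thus $v_1$ lies in the interior of the convex contact set $\Lambda_{f(t)}$, where $\nabla_v f(t,v_1) = 0$ by $C^{1,1}_v$ regularity. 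Since $\tau \mapsto f(t, v_1 + \tau e)$ is convex and has zero derivative at $\tau = 0$, its derivative at $\tau = \tau_0$ is nonnegative, so $\partial_{v_e} f(t, v_0) \ge 0$.

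\textbf{Step 2 (quantitative lower bound via Harnack).} Next I would produce a positive lower bound $\partial_{v_e} f \ge c_1(n, M, \rho) > 0$ on an interior subregion at macroscopic distance from $\Gamma_f$. Parabolic nondegeneracy for global parabolic obstacle solutions (Proposition~\ref{prop: caffrecap}) yields a reference point $\bar z = (\bar t, \bar v) \in [-1/2, 0] \times K(\rho/16, s, 1/2)$ with $\bar v_n$ near $1/2$ where $f(\bar z) \gtrsim \rho^2$. Integrating the nonnegative derivative $\partial_{v_e} f$ from Step~1 along the segment from $\bar v$ back into $B_\rho(-se_n)$ (where $f$ vanishes) produces an intermediate point $\bar z_0 \in \Omega_f$ at which $\partial_{v_e} f(\bar z_0) \ge c_0(\rho, M) > 0$. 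The parabolic Harnack inequality for the nonnegative caloric function $\partial_{v_e} f$ on $\Omega_f$ then propagates this estimate to $\partial_{v_e} f \ge c_1(n, M, \rho)$ throughout an interior subregion of $\Omega_f \cap ([-1/2, 0] \times K(\rho/16, s, 1/2))$ lying at scale comparable to $\rho$ from $\Gamma_f$.

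\textbf{Step 3 (improvement of minimum).} Finally I would apply the parabolic analog of Lemma~\ref{lem: improvementmin} to $h := C_0 \partial_{v_e} f$. The function $h$ satisfies $\mathcal{H} h = 0 \le 1/2$; it vanishes on $\Gamma_f$ because $\nabla_v f \equiv 0$ on the free boundary by $C^{1,1}_v$ regularity; and choosing $C_0$ large enough relative to $\|f\|_{L^\infty}/c_1$ makes $h - f \ge -\eps_0$ on the portion of the parabolic boundary of the target region lying in $\Omega_f$, thanks to Steps~1 and~2 together with the $L^\infty$ bound $f \lesssim M$. The comparison argument of Lemma~\ref{lem: improvementmin} then yields $h \ge f$ throughout $[-1/2, 0] \times K(\rho/16, s, 1/2)$, which is the claim with this choice of $C_0$.

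\textbf{Main obstacle.} The principal technical difficulty is Step~2: obtaining a lower bound for $\partial_{v_e} f$ that is quantitative in $\rho$ and $M$ by chaining the parabolic Harnack inequality across a region whose boundary meets $\Gamma_f$. One must carefully select an interior subregion on which Harnack applies with constants depending only on $n, M, \rho$, and this is where all of the parameter dependence in $C_0$ enters. The geometric computation in Step~1 is elementary but explains the precise constants $\rho/16$ and $\sqrt{1 - \rho^2/64}$ appearing in the statement. Since this result is essentially verbatim \cite[Theorem~11.1]{CPS}, in practice it may simply be quoted.
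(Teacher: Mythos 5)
This statement is not proved in the paper at all: it is imported verbatim as Theorem~11.1 of \cite{CPS} and used as a black box, so there is no internal proof to compare against. Your sketch is a faithful reconstruction of the standard CPS argument (convexity of $f$ in $v$ plus the thick contact ball gives $\partial_{v_e} f \ge 0$ on the cone of directions; a Harnack chain for the nonnegative caloric function $\partial_{v_e} f$ gives a quantitative interior lower bound; the improvement-of-minimum comparison closes the gap near $\Gamma_f$), and your geometric computation in Step~1 correctly accounts for the constants $\rho/16$ and $\theta=\sqrt{1-\rho^2/64}$.

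One imprecision in Step~3 is worth flagging. The hypothesis of the improvement-of-minimum lemma is $h - f \ge -\eps_0$ on all of $\Omega_f$ intersected with the region, and this does not follow from Steps~1 and~2 together with the bare bound $f \lesssim M$: away from the Harnack subregion those ingredients only give $h - f \ge -M$. What is actually needed is the quadratic growth $f(z) \le C\,\mathrm{dist}(z,\Gamma_f)^2$ (i.e.\ the $C^{1,1}$ estimate for the parabolic obstacle problem), which makes $f \le \eps_0$ in a $\sqrt{\eps_0/C}$-neighborhood of $\Gamma_f$, so that $h - f \ge -f \ge -\eps_0$ there since $h \ge 0$ by Step~1; outside that neighborhood Step~2 and a large $C_0$ give $h - f \ge C_0 c_1 - M \ge 0$. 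With that correction the argument is complete, and as you note, in the context of this paper the result may simply be quoted.
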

We use the previous theorem to transfer monotonicity in the $v$-variable to a solution $u \in \mathcal{P}_R(MR^2)$ which is close enough to a global solution. The proof follows the outline of Theorem $13.1$ in \cite{CPS}, except we must account for the commutator $[\partial_{v_e}u, \mcL] = \partial_{x_e}u \neq 0$. Before doing so, let us quickly state an immediate generalization of Lemma \ref{lem: improvementmin}.
\begin{lem}
    \label{lem: improvementminGENERALIZE}
    Let $u$ solve \eqref{eq: formulation2} in 
    $$\mathcal{N}_\delta(E) \coloneqq \bigcup_{z \in E} Q_\delta(z)$$
    for a set $E \subset \R^- \times\R^{2n}$, and let $h$ satisfy
    \begin{enumerate}
        \item $h \ge 0 \text{ on } \mathcal{N}_\delta(E) \cap \Gamma_u$,
        \item $h-u \ge -\eps_0 \text{ in } \mathcal{N}_\delta(E)$
        \item $\mcL h \le 1/2  \text{ on } \mathcal{N}_\delta(E) \cap \Omega_u.$
    \end{enumerate}
    There exists $h-u \ge 0$ on $\mathcal{N}_{\delta/2}(E)$ if $\eps_0 = \eps_0(\delta,n)>0$ is small enough.
\end{lem}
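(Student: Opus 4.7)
The plan is to reduce the statement to Lemma \ref{lem: improvementmin} by applying the natural kinetic rescaling around each point of $E$. Fix $z_0 \in E$ and consider the functions
\[ u_{z_0,\delta}(z) \coloneqq \delta^{-2}\, u(z_0 \circ S_\delta z), \qquad h_{z_0,\delta}(z) \coloneqq \delta^{-2}\, h(z_0 \circ S_\delta z) \]
on $Q_1$; these are well-defined since $z_0 \circ S_\delta(Q_1) = Q_\delta(z_0) \subset \mathcal{N}_\delta(E)$. Combining the left-invariance of $\mcL$ under Galilean translation with its $r^2$-scaling under $S_r$, one computes $\mcL u_{z_0,\delta}(z) = (\mcL u)(z_0 \circ S_\delta z) = \chi_{u_{z_0,\delta}>0}(z)$, so $u_{z_0,\delta}$ still solves \eqref{eq: formulation2} on $Q_1$. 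The same computation preserves hypotheses (3) and (1): $\mcL h_{z_0,\delta} \le 1/2$ on $Q_1$, and $h_{z_0,\delta} \ge 0$ on $\Gamma_{u_{z_0,\delta}}$.

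The only hypothesis whose constant transforms non-trivially is the closeness condition (2). From $h - u \ge -\eps_0$ one obtains
\[ h_{z_0,\delta} - u_{z_0,\delta} \ge -\frac{\eps_0}{\delta^2} \quad \text{on } \Omega_{u_{z_0,\delta}} \cap Q_1. \]
Letting $\eps_0^*(n)$ denote the constant produced by Lemma \ref{lem: improvementmin}, I would therefore set $\eps_0(\delta, n) \coloneqq \delta^2 \eps_0^*(n)$. With this choice Lemma \ref{lem: improvementmin} applies to the pair $(u_{z_0,\delta}, h_{z_0,\delta})$ on $Q_1$ and yields $h_{z_0,\delta} - u_{z_0,\delta} \ge 0$ on $\Omega_{u_{z_0,\delta}} \cap Q_{1/2}$. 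Undoing the rescaling, $h - u \ge 0$ on $\Omega_u \cap Q_{\delta/2}(z_0)$. Since $z_0 \in E$ was arbitrary and $\mathcal{N}_{\delta/2}(E) = \bigcup_{z_0 \in E} Q_{\delta/2}(z_0)$, taking the union gives the claim.

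There is no real obstacle here, which is why the lemma is advertised as an \emph{immediate} generalization of its predecessor. The only point worth highlighting is the forced dependence $\eps_0 \sim \delta^2$, which reflects the intrinsic quadratic scaling of the obstacle equation $\mcL u = \chi_{u>0}$: the $\delta^{-2}$ prefactor required to preserve the equation under $S_\delta$ simultaneously inflates the closeness tolerance in hypothesis (2) by the same factor.
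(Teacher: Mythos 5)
Your proof is correct and is exactly the paper's argument: the paper's own proof consists of the single line ``fix $z \in E$ and the cylinder $Q_\delta(z)$, re-scale, re-center, and apply Lemma \ref{lem: improvementmin},'' and you have simply written out the rescaling, the invariance of hypotheses (1) and (3), and the resulting quadratic dependence $\eps_0 = \delta^2\eps_0^*(n)$ explicitly.
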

\begin{proof}
    Fix $z \in E$ and the cylinder $Q_\delta(z)$, and then re-scale, recenter, and apply Lemma \ref{lem: improvementmin}. 
\end{proof}
\begin{prop}
\label{prop: vmonotoneglobal}
    For every $\sigma>0$ there exists $R_0=R_0(\sigma, n,M)>0$ such that if $u \in \mathcal{P}_R(MR^2)$ for some $R \ge R_0$, and $\delta_1(u)\ge \sigma$, then, possibly after a rotation of axes, there holds $$C_0\partial_{v_e} u -u \ge 0 \text{ on } [-1/4,0] \times B_{1/2} \times K(\rho/16, s, 1/4)$$
    for every $e \in \mathbb{S}^{n-1}$ such that $e\cdot e_n >\theta= \theta(\rho) = \sqrt{1-\rho^2/64}.$ Here $C_0 =C_0(n,M,\rho)$ is as in Theorem \ref{thm: caffarelliglobal}. 

\end{prop}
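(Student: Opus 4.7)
The plan is to combine the approximation Lemma \ref{lem: approxlemma}, the parabolic monotonicity Theorem \ref{thm: caffarelliglobal}, and the improvement-of-minimum Lemma \ref{lem: improvementminGENERALIZE}. Given $\sigma, M > 0$, fix a small parameter $\eps > 0$ to be determined and apply Lemma \ref{lem: approxlemma} to obtain a threshold $R_1 = R_1(n, \eps, M, \sigma)$ such that any $u \in \mathcal{P}_R(MR^2)$ with $R \ge R_1$ and $\delta_1(u) \ge \sigma$ admits a global solution $u_0 \in \mathcal{P}_\infty(CM)$ with $\|u - u_0\|_{(C^0_{t,x} \cap C^1_v)(Q_1)} \le \eps$ and $u_0 \equiv 0$ on $[-1,0] \times \R^n \times B_\rho(v_*)$, where $\rho = \sigma/(4n)$. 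By Proposition \ref{prop: globalclassification} together with Proposition \ref{prop: caffrecap}, $u_0 = u_0(t,v)$ is independent of $x$, monotone in $t$, and convex in $v$. After a rotation of axes placing $v_* = -s e_n$, Theorem \ref{thm: caffarelliglobal} produces a constant $C_0 = C_0(n, M, \rho)$ and the parabolic monotonicity $C_0 \partial_{v_e} u_0 - u_0 \ge 0$ on $[-1/2, 0] \times K(\rho/16, s, 1/2)$ for every direction $e$ with $e \cdot e_n > \theta$.

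To transfer this to $u$, set $h := C_0 \partial_{v_e} u$, pick a small universal $\delta > 0$, and apply Lemma \ref{lem: improvementminGENERALIZE} on a neighborhood of $E := [-1/4, 0] \times B_{1/2} \times K(\rho/16, s, 1/4)$, chosen so that $\mathcal{N}_\delta(E) \subset [-1/2, 0] \times B_1 \times K(\rho/16, s, 1/2) \subset Q_1$. Hypothesis (1), that $h \ge 0$ on $\mathcal{N}_\delta(E) \cap \Gamma_u$, is automatic: at any $z \in \Gamma_u$, the nonnegative function $u$ attains an interior minimum in $v$, so the $C^{1,1}_v$ regularity from Theorem \ref{thm: optimalreg} forces $\nabla_v u(z) = 0$. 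Hypothesis (2), $h - u \ge -\eps_0$, follows by comparison with $u_0$: since $|(h - u) - (C_0 \partial_{v_e} u_0 - u_0)| \le (C_0 + 1)\eps$ on $Q_1$ by the $C^1_v$ proximity, and the parenthesized quantity is nonnegative on $\mathcal{N}_\delta(E)$ by the previous step, we fix $\eps$ so that $(C_0 + 1)\eps \le \eps_0(\delta, n)$.

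The key step is Hypothesis (3), namely $\mcL h \le 1/2$ on $\mathcal{N}_\delta(E) \cap \Omega_u$. The commutator identity $\mcL(\partial_{v_e} u) = \partial_{v_e}(\mcL u) + \partial_{x_e} u$, combined with $\mcL u \equiv 1$ on $\Omega_u$, reduces this to $\mcL h = C_0 \partial_{x_e} u$. By Proposition \ref{prop: globalclassification}, the spatial Lipschitz estimate scales to $\|\nabla_x u\|_{L^\infty(Q_{R/2})} \le CM/R$, so $|\mcL h| \le C_0 CM/R \le 1/2$ whenever $R \ge R_2 := 2 C_0 CM$. Setting $R_0 := \max(R_1, R_2)$ and invoking Lemma \ref{lem: improvementminGENERALIZE} delivers $h - u \ge 0$ on $\mathcal{N}_{\delta/2}(E) \supset E$, which is exactly the claim.

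The main obstacle, absent in the parabolic case of \cite{CPS}, is precisely the noncommutativity $[\partial_{v_e}, \mcL] = \partial_{x_e}$. In the parabolic setting $\partial_{v_e}$ commutes with the heat operator and $\mcL h$ vanishes identically on $\Omega_u$; here one must instead control a nontrivial source $C_0 \partial_{x_e} u$ against the $1/2$ slack in Lemma \ref{lem: improvementminGENERALIZE}. This is possible only thanks to the Lipschitz estimate on $\nabla_x u$ from Theorem \ref{thm: optimalreg} (and its rescaled form in Proposition \ref{prop: globalclassification}), which drives the commutator error to $O(1/R)$ on cylinders of radius proportional to $R$. Without that estimate, the monotonicity transfer step, and with it the entire free boundary regularity program, would be out of reach.
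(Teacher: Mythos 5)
Your proposal is correct and follows essentially the same route as the paper: approximate by a global parabolic solution via Lemma \ref{lem: approxlemma}, import the cone monotonicity from Theorem \ref{thm: caffarelliglobal}, and transfer it to $u$ via Lemma \ref{lem: improvementminGENERALIZE} with $h=C_0\partial_{v_e}u$, controlling the commutator term $\mcL h = C_0\partial_{x_e}u$ by the rescaled Lipschitz bound $\|\nabla_x u\|_{L^\infty(Q_{R/2})}\le CM/R$. The only cosmetic point is that the $\delta$-neighborhood of $K(\rho/16,s,1/4)$ is not contained in $K(\rho/16,s,1/2)$ in the $|v'|$ direction, so the intermediate monotonicity region should be taken slightly fatter (e.g.\ $K(\rho/8,s,1/2)$, as the paper does); this is a matter of constants and does not affect the argument.
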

\begin{proof}
    Let $\eps = \eps(\sigma,n,M)>0$ be decided later, and let $R_0 = R_0(\sigma, n,M, \eps)$ be as in Lemma \ref{lem: approxlemma}. Under the present assumptions, we may apply that lemma to obtain the existence of a solution $u_0 \in \mathcal{P}^p_\infty(M)$ to the parabolic problem such that
    \begin{equation}
        \label{eq: closeness} \|u-u_0\|_{(C^0_{t,x} \cap C^1_v)(Q_1)} <\eps
    \end{equation}
    and such that $u_0$ satisfies the assumptions of Theorem \ref{thm: caffarelliglobal} with $\rho = \displaystyle \frac{\sigma}{4n}$, perhaps after rotating the space and velocity axes if necessary. With $C_0 = C_0(\rho,n,M)$, and $e$ any unit vector such that $e \cdot e_n >\sqrt{1-\rho^2/64}$, we have that $u_0 = u_0(t,v)$ satisfies
    $$C_0 \partial_{v_e} u_0 - u_0 \ge 0 \text{ on } [-1/2,0] \times K(\rho/8,s,1/2). $$

    By \eqref{eq: closeness}, there necessarily holds
    $$C_0 \partial_{v_e}u-u \ge -C_0\eps - \eps \ge -\eps_0 \text{ on } [-1/2,0] \times B_1 \times  K(\rho/8,s,1/2)$$
    where $\eps_0=\eps_0(n)$ is as in Lemma \ref{lem: improvementminGENERALIZE}, if $\eps$ is chosen small enough. Now, since $u \in \mathcal{P}_R(MR^2)$, we have $u_R(\cdot) = R^{-2}u(S_R(\cdot)) \in \mathcal{P}_1(M)$ and thus Theorem \ref{thm: optimalreg} implies that $\|\nabla_x u_R\|_{L^\infty(Q_{1/2})} \le CM$, from which we deduce that  $$\|\nabla_x u\|_{L^\infty(Q_{R/2})} \le \frac{CM}{R}.$$

    Consequently, taking $R_0 = R_0(\sigma,n,M)$ larger if need be so that $R_0>2CC_0M$, we may apply Lemma \ref{lem: improvementminGENERALIZE} with $h =C_0\partial_{v_e}u$ to deduce that for any $u \in \mathcal{P}_R(MR^2)$ with $R \ge R_0$ and $\delta_1(u) \ge \sigma$, there holds
    $$C_0\partial_{v_e} u -u \ge 0 \text{ on } [-1/4,0] \times B_{1/2} \times K(\rho/16, s, 1/4).$$
\end{proof}

 We already observed from the upper semi-continuity of the balanced energy $\mathcal{E}$ that $\Gamma_u^{\text{reg}}$ is open. The purpose of this next lemma is to prove that free boundary points near which the contact set is thick enough are necessarily regular points, and to quantify this openness. We show that it is enough for the contact set $\Lambda_u$ to be thick enough at a single scale near $z_0 \in \Gamma_u$ in order to deduce that $\Lambda_u$ has positive Lebesgue density at every free boundary point near $z_0$. In particular, we deduce that $\Lambda_u$ is large in cylinders of infinitely small scale, even centered at nearby points. Here we are using the equivalent characterization of regular points through the  Lebesgue density of the contact set, as observed in Proposition \ref{prop: FBdensityclassification}.
\begin{lem}
\label{lem: thicknessenergy}
    Given $\sigma>0$, there exists $R_0= R_0(\sigma,n,M)>0$ such that if $u \in \mathcal{P}_R(MR^2)$ for $R \ge R_0$, and $\delta_1(u) \ge \sigma$, then 
    $$\Gamma_u \cap Q_{1/2} \subset \Gamma_u^{\text{reg}}.$$
    As a consequence, the balanced energy is such that $\mathcal{E}(z_0;u) \equiv \omega$ for $z_0 \in \Gamma_u \cap Q_{1/2}$, and $\Lambda_u$ has positive Lebesgue density at every point $z_0 \in \Gamma_u \cap Q_{1/2}$. 
\end{lem}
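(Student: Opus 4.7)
My plan is to argue by contradiction and compactness, reducing the problem to a rigidity statement for parabolic global solutions and exploiting the upper semi-continuity of the balanced energy $\mathcal{E}$.

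First, I suppose the conclusion fails. Then there exist sequences $R_j \to \infty$, $u_j \in \mathcal{P}_{R_j}(MR_j^2)$ with $\delta_1(u_j) \ge \sigma$, and free-boundary points $z_j \in \Gamma_{u_j}^{\text{sing}} \cap Q_{1/2}$, so by Proposition \ref{prop: FBdensityclassification} one has $\mathcal{E}(z_j; u_j) = 2\omega$. Lemma \ref{lem: approxlemma} applied with $\epsilon_j \to 0$ provides parabolic global solutions $w_j \in \mathcal{P}_\infty^p(CM)$ with $\|u_j - w_j\|_{(C^0_{t,x} \cap C^1_v)(Q_1)} \to 0$, which together with the uniform interior estimate $\|\nabla_x u_j\|_{L^\infty(Q_{1/2})} \le CM/R_j \to 0$ from Theorem \ref{thm: optimalreg} allows me to extract a diagonal subsequence with $u_j \to u_\infty$ locally in $C^{1,\alpha}_\ell$ and $z_j \to z_\infty \in \overline{Q_{1/2}}$, where $u_\infty \in \mathcal{P}_\infty^p(CM)$ is a parabolic global solution independent of $x$ satisfying $\delta_1^p(u_\infty) \ge \sigma$. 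The upper semi-continuity of $\mathcal{E}$ under this joint convergence, valid because $\mathcal{E}(r, z; u) + Cr^2$ is jointly continuous and decreases monotonically to $\mathcal{E}$, gives $\mathcal{E}(z_\infty; u_\infty) \ge 2\omega$, so $z_\infty$ is a singular free-boundary point of $u_\infty$.

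The desired contradiction is then a rigidity statement: no parabolic global solution $u_\infty \in \mathcal{P}_\infty^p(CM)$ with $\delta_1^p(u_\infty) \ge \sigma$ admits a singular free-boundary point in $\overline{Q_{1/2}}$. Since $u_\infty$ is convex in $v$ and non-increasing in $t$ by Proposition \ref{prop: caffrecap}, John's ellipsoid applied to $\Lambda_{u_\infty}(-1)$ combined with $t$-monotonicity of the contact set yields $\Lambda_{u_\infty}(t) \supset B_\rho(v_*)$ for all $t \in [-1, 0]$ with $\rho = \sigma/(4n)$. Applying Theorem \ref{thm: caffarelliglobal} directly to $u_\infty$ produces a cone of directional monotonicity $\partial_{v_e} u_\infty \ge 0$ on a strip around the axis $e_n = -v_*/|v_*|$. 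Inside this strip, the monotonicity combined with $u_\infty \ge 0$ and the vanishing of $u_\infty$ along the bottom of the strip forces the contact set to be a subgraph in $v_n$, which transfers a uniform lower bound on the Lebesgue density of $\Lambda_{u_\infty}$ to every free-boundary point of the strip; by Proposition \ref{prop: FBdensityclassification}, such points are regular. For free-boundary points of $u_\infty$ outside the initial strip, I bootstrap by rescaling at $z_\infty$ at a scale $r = r(n, M, \sigma) > 0$ large enough that the ball $B_\rho(v_*)$ contributes to the thickness of the rescaled function (here I use convexity of $\Lambda_{u_\infty}(t_\infty)$, which guarantees a solid cone from $v_\infty$ to $B_\rho(v_*)$ in the contact set), and re-apply the same subgraph argument with a possibly new reference axis to conclude that $z_\infty$ is regular.

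The main technical obstacle I anticipate is the bootstrap step that handles free-boundary points outside the initial strip of monotonicity. Because $K(\rho/16, s, 1/2)$ is narrow in the $v'$-directions, a generic $z_\infty \in \overline{Q_{1/2}}$ need not lie in any single strip; the iteration must keep careful control of the geometry of kinetic cylinders, of the quantitative dependence of the new thickness $\sigma' = \sigma'(n, M, \sigma)$ on the position of $z_\infty$ relative to $B_\rho(v_*)$, and of the new $R_0$ required at each step. Choosing $R_0 = R_0(\sigma, n, M)$ in the statement of the lemma large enough to accommodate all the bootstrap iterations uniformly in $z_\infty$ completes the argument, and hence also the subsequent consequences concerning the balanced energy and Lebesgue density of $\Lambda_u$ at $z_0 \in \Gamma_u \cap Q_{1/2}$.
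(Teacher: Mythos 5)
Your contradiction--compactness setup is the same as the paper's and is sound up to the point where you conclude that the limit $u_\infty$ is a global parabolic solution with a free boundary point $z_\infty\in\overline{Q_{1/2}}$ at which $\mathcal{E}(z_\infty;u_\infty)\ge 2\omega$. The divergence, and the gap, is in what you do next. The paper never needs your ``rigidity statement'' for an arbitrary singular point of a global solution. Instead it recenters the sequence at the bad points, $\tilde u_k(z)=u_k(z_k\circ z)$, so that the limit $\tilde u_0$ carries the high energy \emph{at its origin}, and then plays the blow-up against the shrink-down: the almost-monotonicity formula gives $\mathcal{I}(r,\tilde u_0)\ge 2\omega$ for every $r>0$, while the shrink-down of any global solution is homogeneous with energy at most $2\omega$, so $\lim_{R\to\infty}\mathcal{I}(R,\tilde u_0)\le 2\omega$. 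Monotonicity then forces $\mathcal{I}(\cdot,\tilde u_0)\equiv 2\omega$, so by Corollary \ref{cor: homogconst} and Proposition \ref{prop: caffrecap} the \emph{entire limit} $\tilde u_0$ is a nonzero quadratic polynomial $mt+Av\cdot v$ --- not merely its blow-up at one point. This is incompatible with item (3) of Lemma \ref{lem: approxlemma}, which forces $\tilde u_0$ to vanish on a nondegenerate translated cylinder at bounded distance from the origin. That single observation closes the proof; no classification of the free boundary of $u_\infty$ near $z_\infty$ is required.

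Your alternative route has two concrete unresolved points. First, the step ``the contact set is a subgraph in $v_n$, which transfers a uniform lower bound on the Lebesgue density of $\Lambda_{u_\infty}$ to every free-boundary point of the strip'' does not follow as stated: the subgraph property controls the time slice $\Lambda_{u_\infty}(t_0)$, but the parabolic Lebesgue density at $(t_0,v_0)$ is computed over $[t_0-r^2,t_0]\times B_r(v_0)$, and since $\partial_t u_\infty\le 0$ one only has $\Lambda_{u_\infty}(t)\subset\Lambda_{u_\infty}(t_0)$ for $t<t_0$ --- the inclusion goes the wrong way, and a priori $\Lambda_{u_\infty}(t_0-r^2)\cap B_r(v_0)$ could even be empty if $v_0$ is far from $B_\rho(v_*)$. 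You would need a quantitative statement that the $t$-sections of the contact set do not collapse as $t\uparrow t_0$, which is exactly the kind of fact the energy argument is designed to avoid proving. Second, the bootstrap for free-boundary points outside the initial strip of monotonicity is acknowledged but not carried out; as written it is not even clear what quantity is being iterated (for the global solution $u_\infty$ there is no $R_0$ left to enlarge), and no mechanism is given that terminates the iteration uniformly over $z_\infty\in\overline{Q_{1/2}}$. Until both of these are supplied, the proof is incomplete. I would recommend replacing the rigidity argument by the recentering plus shrink-down computation described above.
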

\begin{proof}
   Suppose for the sake of contradiction that there exist sequences $R_k \to \infty$, $z_k \in Q_{1/2},$ and $u_k \in \mathcal{P}_{R_k}(MR_k^2)$ satisfying $\delta_1(u_k) \ge \sigma$, but 
   $$\mathcal{E}(1/k, z_k; u_k) \ge 2\omega - \frac1k.$$

   Recenter with $\tilde{u}_k(z) \coloneqq  u_k(z_k \circ z)$. Owing to the regularity estimates in Theorem \ref{thm: optimalreg}, we may pass to a further subsequence (still denoted by $\tilde{u}_k$) converging locally in $C^{1,\alpha}_\ell(\R^- \times \R^{2n})$ to a global solution $\tilde{u}_0 \in \mathcal{P}_\infty(CM)$. The fact that $u_0$ is a global solution of the parabolic obstacle problem \eqref{eq: parabolicprob} is a consequence of Corollary \ref{cor: blowupssolveq}. Then, for any $r>0$, the almost monotonicity formula from Proposition \ref{prop: monotonicityformula} furnishes
   $$\mathcal{E}(r;\tilde{u}_0)= \lim_{k \to \infty} \mathcal{E}(r, z_k; u_k) \ge \lim_{k \to \infty} \mathcal{E}(1/k, z_k;u_k) \ge 2\omega,$$
   since eventually $1/k<r$.

   It is clear then that 
   $$\displaystyle \lim_{r \to 0} \mathcal{I}(r,\tilde{u}_0) = \lim_{r \to 0} \mathcal{E}(r, \tilde{u}_0) \ge 2\omega.$$
   Since $r \mapsto \mathcal{I}(r,u_0)$ is monotone increasing in $r$, and $\lim_{R \to \infty} \mathcal{I}(R, u_0) = 2\omega$, we deduce that $\mathcal{I}(r,u_0) \equiv 2\omega$ for all $r>0$. Consequently, Proposition \ref{prop: caffrecap} ensures that $u_0$ is a polynomial solution of the form $\tilde{u}_0 = mt + Av\cdot v$, where $m+1 = \frac12 \text{tr}(A)$. 

   On the other hand, as soon as $R_k \ge R_0$ for $R_0=R_0(n,1,M,\sigma)$ as in Lemma \ref{lem: approxlemma}, the functions $u_k$ vanish on cylinders $[-1/2] \times B_1 \times  B_{\rho/2}(v^*_k) \subset Q_1$ for some $v_k^* \in B_1$. Recall here that $\rho = \displaystyle \frac{\sigma}{4n}$. Therefore, for all $k$, the translations $\tilde{u}_k$ vanishes on a translated cylinder contained in $Q_2$. We deduce that $\tilde{u}_0$ vanishes on a nondegenerate cylinder, contradicting that $\tilde{u}_0$ is a nonzero quadratic polynomial. Therefore we may take $R_0 = R_0(\sigma,n,M)$ large so that the conclusion of the lemma holds.
\end{proof}

We now turn to proving that the material derivative $Yu$ vanishes near regular points. We follow the approach of \cite{CPS} used to show that $\partial_t u$ vanishes at regular points in the parabolic setting.

\begin{prop}
\label{prop: Yvanish}
    Fix $\sigma>0$ and assume $u \in \mathcal{P}_R(MR^2)$ for some $R\ge R_0$, where $R_0 = R_0(n,\sigma,M)$ is as in Lemma \ref{lem: thicknessenergy}. Suppose also that $\delta_1(u) \ge \sigma$. Then for all $\eps>0$, there exists $r_0 =r_0(\sigma, n,M,\eps)>0$ such that
    $$\sup_{Q_{r_0}}|Yu|<\eps.$$
\end{prop}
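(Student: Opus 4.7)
My plan is a blow-up argument by contradiction centered not at the origin but at the \emph{nearest} free boundary point to $z_k$. Suppose the conclusion fails: there exist $\eps_0>0$, $\rho_k\searrow 0$, and points $z_k\in Q_{\rho_k}$ with $|Yu(z_k)|\ge\eps_0$. Since $Yu\equiv 0$ on $\mathrm{int}(\Lambda_u)$ and $|\Gamma_u|=0$ by Lemma \ref{lem: fbmeasurezero}, I may take $z_k\in\Omega_u$. Set $r_k:=\inf\{r>0 : z_k\in Q_r(z^*)\text{ for some }z^*\in\Gamma_u\}$ and fix $z^*_k\in\Gamma_u$ with $z_k\in Q_{2r_k}(z^*_k)$. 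Since $0\in\Gamma_u$ and $z_k\in Q_{\rho_k}$, we have $r_k\le\rho_k\to 0$, so $z^*_k\to 0$. Rescale:
\[
w_k(z):=(2r_k)^{-2}u\bigl(z^*_k\circ S_{2r_k}(z)\bigr),\qquad \tilde z_k:=S_{2r_k}^{-1}\bigl((z^*_k)^{-1}\circ z_k\bigr)\in Q_1.
\]
The $C^{1,1}_\ell$ estimate of Theorem \ref{thm: polidorooptimal} at the free boundary points $z^*_k\in\Gamma_u\cap Q_{1/2}$ ensures that the $w_k$ are uniformly bounded on compact sets, with $w_k\in\mathcal{P}_{R_k}(M')$ for $R_k\to\infty$ and a uniform $M'$. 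A direct Galilean chain rule gives $Yw_k(\tilde z_k)=Yu(z_k)$, so $|Yw_k(\tilde z_k)|\ge\eps_0$. Crucially, the minimality of $r_k$ forces $\tilde z_k\notin Q_{1/2}(\tilde z^*)$ for every $\tilde z^*\in\Gamma_{w_k}$.

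Passing to a subsequence by Theorem \ref{thm: optimalreg}, $w_k\to w_\infty$ locally in $C^{1,\alpha}_\ell$ and $\tilde z_k\to\tilde z_\infty\in\overline{Q_1}$. By Corollary \ref{cor: blowupssolveq} and Proposition \ref{prop: globalclassification}, $w_\infty=w_\infty(t,v)$ is a global solution of the parabolic obstacle problem, and nondegeneracy (Proposition \ref{prop: nondegen}) combined with $w_k(0)=0$ gives $0\in\Gamma_{w_\infty}$. I next claim $w_\infty=\tfrac12(v\cdot e)_+^2$ for some $e\in\mathbb{S}^{n-1}$. Since $z^*_k\to 0$ and the origin is a regular free boundary point under the hypothesis $\delta_1(u)\ge\sigma$, Lemma \ref{lem: thicknessenergy} gives $z^*_k\in\Gamma_u^{\text{reg}}$ and $\mathcal{E}(z^*_k;u)=\omega$ for $k$ large. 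By kinetic scaling of the energy, $\mathcal{E}(r,0;w_k)=\mathcal{E}(2r\cdot r_k,z^*_k;u)$; combining the almost-monotonicity of Proposition \ref{prop: monotonicityformula} with the continuity of $z_0\mapsto\mathcal{E}(\rho_0,z_0;u)$ at any fixed positive scale $\rho_0$ yields $\mathcal{E}(r,0;w_k)\to\omega$ for every $r>0$. Proposition \ref{prop: caffrecap} then identifies $w_\infty$ as a half-space, so in particular $Yw_\infty\equiv 0$.

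The main obstacle is showing $\tilde z_\infty\in\Omega_{w_\infty}$, for then Theorem \ref{thm: Schauderthm} upgrades the convergence to $C^{2,\alpha}_\ell$ on a neighborhood of $\tilde z_\infty$ and gives $Yw_k(\tilde z_k)\to Yw_\infty(\tilde z_\infty)=0$, the desired contradiction. If instead $\tilde z_\infty\in\mathrm{int}(\Lambda_{w_\infty})$, then $w_k\to 0$ uniformly on a neighborhood $U\ni\tilde z_\infty$, while \eqref{eq: nondegenpositive} gives $\sup_{Q_\rho(\tilde z_k)}w_k\ge w_k(\tilde z_k)+c\rho^2$ for small $\rho>0$, which eventually contradicts the smallness of $w_k$ on $U$. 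If $\tilde z_\infty\in\Gamma_{w_\infty}$, a standard density argument (small neighborhoods of $\tilde z_\infty$ cannot be entirely contained in either $\Omega_{w_k}$ or $\Lambda_{w_k}$ in the limit, by Schauder and the nondegeneracy) produces $\hat z_k\in\Gamma_{w_k}$ with $\hat z_k\to\tilde z_\infty$, contradicting the built-in separation $\tilde z_k\notin Q_{1/2}(\tilde z^*)$ from Paragraph 1. This separation---engineered by the minimal-radius choice of $r_k$---is what lets me bypass the potential lack of pointwise control on $Yw_k$ near $\Gamma_{w_\infty}$, and the whole argument rests critically on Lemma \ref{lem: thicknessenergy}, hence on the $\nabla_x u$ estimate of Theorem \ref{thm: optimalreg}, to guarantee that $w_\infty$ is itself a regular blow-up.
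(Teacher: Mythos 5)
Your strategy is genuinely different from the paper's: you blow up at a nearest free boundary point and track the bad point $\tilde z_k$, whereas the paper takes a maximizing sequence for $m=\limsup Yu$, rescales at the interior points $z_j$ themselves by the radius $d_j$ of the largest cylinder $Q_{d_j}(z_j)\subset\Omega_u$, derives $\partial_t f_0\equiv m$ on all of $Q_1$ from the parabolic maximum principle, and then contradicts the half-space structure anchored at the touching point $\xi_0\in\partial_p Q_1$. Much of your setup is sound: the identification of $w_\infty$ as a half-space via the uniform convergence of the energies on $\Gamma_u$ near the origin, the exclusion of $\tilde z_\infty\in\mathrm{int}(\Lambda_{w_\infty})$ by nondegeneracy, and the Schauder upgrade in the case $\tilde z_\infty\in\Omega_{w_\infty}$ all work.

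The gap is in the case $\tilde z_\infty\in\Gamma_{w_\infty}$, and it comes from the time-asymmetry of kinetic cylinders. Since $z\in Q_r(z^*)$ forces $t\le t^*$, your infimum $r_k$ only registers free boundary points in the kinetic \emph{future} of $z_k$, and the separation $\tilde z_k\notin Q_{1/2}(\tilde z^*)$ only excludes free boundary points $\tilde z^*$ lying in the forward set $\{z^* : \tilde z_k\in Q_{1/2}(z^*)\}$; it says nothing about $\Gamma_{w_k}$ at times $t<\tilde t_k$. Consequently, producing $\hat z_k\in\Gamma_{w_k}$ with $\hat z_k\to\tilde z_\infty$ yields no contradiction whenever $\hat t_k<\tilde t_k$ --- and that is precisely what a density/nondegeneracy argument delivers, since nondegeneracy (Proposition \ref{prop: nondegen}) locates free boundary points in \emph{backward} cylinders. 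Worse, the quantity that actually governs where interior Schauder estimates apply at $\tilde z_k$ is the backward clearance $\sup\{\rho : Q_\rho(\tilde z_k)\subset\Omega_{w_k}\}$, which your $r_k$ does not bound from below; $\tilde z_k$ may sit arbitrarily close to $\Gamma_{w_k}$ in this backward sense, and then $Yw_k(\tilde z_k)\to Yw_\infty(\tilde z_\infty)$ does not follow. The case is plausibly repairable: since $\Gamma_{w_\infty}$ is a full hyperplane in $v$ extending to all $(t,x)$, one can pick $\tilde z^*\in\Gamma_{w_\infty}$ strictly in the kinetic future of $\tilde z_\infty$ with $\tilde z_\infty\in Q_{1/2-\eta}(\tilde z^*)$, approximate it by genuine points of $\partial_p\Omega_{w_k}$, and contradict the separation --- but none of that is in your write-up, and it requires care. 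The cleaner fix is to replace $r_k$ by the backward quantity $d_k=\sup\{r : Q_r(z_k)\subset\Omega_u\}$ and follow the paper's maximum-principle route, which never needs the limit of the tracked point to land in the open positivity set.
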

\begin{proof}
    First of all, the thickness condition $\delta_1(u) \ge \sigma$ implies that $\Gamma_u \cap Q_{1/2}$ consists of only low energy points, using the previous lemma. Now, for all $r>0$, the functions $\mathcal{E}_r$ given by $z \mapsto \mathcal{E}(r,z; u)$ are continuous on $\Gamma_u \cap \overline{Q_{1/3}}$ and converge pointwise to the balanced energy $\mathcal{E}(z) \equiv \omega$, seeing as every point in $z\in \Gamma_u \cap \overline{Q_{1/3}}$ is low energy. Since $\mathcal{E}(z,r) + Cr^2$ is monotone increasing for some $C=C(M,n)$ uniform in $z \in \Gamma_u \cap Q_{1/2}$, we may apply Dini's monotone convergence theorem to find that the convergence $\mathcal{E}_r \to \omega$ is uniform on $\overline{Q_{1/3}}$. As a consequence, for any sequences $r_j \to 0$ and $z_j \in \Gamma_u \cap Q_{1/3}$ converging to $(0,0,0)$, there holds
    $$\mathcal{E}(r_j, z_j;u) \to \omega.$$

   Now, let   $$m \coloneqq \limsup_{r \to 0} \{Yu(z) \ : \ z \in \Omega_u \cap Q_r \}.$$

Of course, by the Lipschitz estimates of Theorem \ref{thm: optimalreg}, $|m|<\infty$. We will show that $m \le 0$. To that end, suppose for the sake of contradiction that $m>0$ and $z_j \in \Omega_{u_j} \cap Q_{\frac{1}{2j}}$ be a maximizing sequence. Setting $$d_j \coloneqq  \sup\{r>0 \ : \ Q_r(z_j) \subset \Omega_{u_j}\},$$
there necessarily exists a sequence $\tilde{z}_j \in \Gamma_u \cap \partial_p Q_{d_j}(z_j).$ By passing to a subsequence if necessary, we may assume that
$$u_j(z) \coloneqq \frac{u(z_j \circ S_{d_j}(z))}{d_j^2} \to u_0,$$
with the convergence happening in $(C^{0,\alpha}_{t,x} \cap C^{1,\alpha}_v)(\mathcal{K})$ on every compactly contained subset $$\mathcal{K} \subset \limsup_{j \to \infty} Q_{d_j^{-1}}(z_j^{-1}).$$

Now, $u_0$ inherits the nonnegativity and $C^{0,1}_{t,x} \cap C^{1,1}_v$ local regularity of the $u_j$, and by the same argument as in Proposition \ref{prop: blow-upindx}, we find that $\mcL u_0 =\chi_{u_0>0}$ on the domain on which it is defined. In particular, $\mcL u_0=1$ on $Q_1$. Our first concern is showing that $u_0$ is a global solution independent of $x$. To that end, we must show that $d_j \to 0$. Indeed, passing to another sequence if necessary, assume for the sake of contradiction that $d_j \to d_0>0$. Since $z_j \to (0,0,0)$, we would have that $Q_{d_0} \setminus \{t=0\} \subset \Omega_{u}$, contradicting the fact that the contact set $\Lambda_u$ must satisfy the Lebesgue density condition \eqref{eq: lebesguedensitycondition} at the origin. We are using here the equivalent characterizations of regular points observed in Proposition \ref{prop: FBdensityclassification}.

With this in hand, the same argument as in Proposition \ref{prop: blow-upindx} implies that $u_0$ is independent of $x$, and therefore $u_0(t,x,v) \coloneqq f_0(t,v)$ solves the parabolic obstacle problem
\eqref{eq: parabolicprob} and also satisfies the inequalities
    $$0\le f_0(t,v) \le CM(1+|v|^2 +|t|) \ \text{ for } (t,v) \in (-\infty,0]\times \R^n.$$
Now, since $\mcL(u_j)=1$ on $Q_1$ for all $j$, we apply the kinetic Schauder estimates from Theorem \ref{thm: Schauderthm} to obtain the convergence $u_j \to f_0$ in $C^{2,\alpha}_{\ell}(\overline{Q_{1-\delta}})$ for any $0<\alpha,\delta<1$, by possibly passing to a further subsequence. Accordingly, it is straightforward to see that
    $$\partial_t f_0(0,0) = Y u_0(0,0,0)=\lim_{j \to \infty} Y u_j(0,0,0) = \lim_{j\to \infty} Yu_j(z_j) =m$$
    and
    $$\partial_t f_0(t,v) = Yu_0(t,x,v) = \lim_{j \to \infty} Yu_j(z_j \circ S_{d_j}(t,x,v)) \le m$$
    for any $(t,x,v) \in Q_1$. 
The parabolic maximum principle furnishes that $\partial_t u_0 \equiv m$ on $Q_1$. The rest of the proof is verbatim the same as in Lemma $7.7$ of \cite{CPS}. For posterity, we repeat it.

Without loss of generality, assume $\xi_j = S_{d_j^{-1}}(z_j^{-1}\circ \tilde{z}_j) \to \xi_0 =(s_0,y_0,w_0) \in \partial_p Q_1$. Seeing as $\xi_j \in \Gamma_{u_j} \cap \partial_pQ_1$, the energy considerations at the beginning of the proof imply that
$$\mathcal{E}(r, \xi_0; u_0) = \lim_{j \to \infty} \mathcal{E}(r, \xi_j; u_j) = \lim_{j \to \infty} \mathcal{E}(d_jr, \tilde{z}_j; u) =\omega$$
for every $r>0$. Here we used the identity $\mathcal{E}(\rho, z_1, u_r^{z_0}) = \mathcal{E}(r\rho, z_0 \circ S_r(z_1);u)$, which holds in general. Hence, for some $e \in \mathbb{S}^{n-1}$, the classification of global homogeneous solutions as in \cite{CPS} implies that $f_0$ is a backwards homogeneous polynomial solution of the form
$$f_0(t,v)= \frac12((v-w_0)\cdot e)_+^2 \text{ in } (-\infty,s_0] \times \R^n = \R_{s_0}^- \times \R^n.$$

Recall the notation $\mathcal{Q}_1'$ for the unit parabolic cylinder, introduced in Proposition \ref{prop: parabolicnondegen}. We show the above form of $f_0$ on $\R^-_{s_0} \times \R^n$ is in contradiction with the derived property $\partial_t f_0 \equiv m\ge 0$ on $\mathcal{Q}'_1$, unless $m=0$. The contradiction if $s_0>-1$ is immediate. The next possibility is that $s_0=-1$ and $B_1 \cap \{(v-w_0)\cdot e>0\} \coloneqq E \neq\emptyset$, then $\partial_t f_0$ has a discontinuity across $E \times \{-1\} \subset \{f_0>0\}$, contradicting that $f_0$ is caloric there. Finally, if $s_0=-1$ and $u_0$ vanishes on $\{-1\} \times B_1$, we would solve the forward problem to have $f_0(t,v) = m(t+1)$ in $\mathcal{Q}'_1$. Then $\nabla_vf_0$ has a jump across $\{-1\} \times \partial E$, which again is not possible. Therefore the assumption that $m>0$ invariably leads to a conclusion. We didn't really use anything about the sign of $m$ in the above argument, and in the same way we obtain
$$\lim_{r \to 0} \inf \{Yu(z) \ : \ z \in \Omega_u \cap Q_r\} \ge 0.$$
This concludes the proof.
\end{proof}
\begin{remark}
    Under no assumptions on the free boundary, we may obtain $$\limsup_{\Omega_u \ni z \to  \Gamma_u} Yu(z)\le 0$$ by the same argument. The proof would proceed in the same way, extracting a subsequence converging to a global solution $u_0$ of the parabolic obstacle problem that obeys $\partial_t u_0 = m$, with $m$ as in the above proof. Then the first statement in Proposition \ref{prop: caffrecap} implies $m\le 0$. 
\end{remark}
\begin{remark}
    It does not seem straightforward to prove results about $\partial_t u$ or $\nabla_x u$ vanishing continuously on $\Gamma_u$ using the same methods. As mentioned in the introduction, information on $\partial_t u$ is lost by left-action of the Galilean group:
    $$\partial_t u_r^{z_0}(z) = (\partial_t + v_0\cdot \nabla_x)u(z_0 \circ S_r(z)).$$
    On the other hand, information on $\nabla_x u$ is lost due to the scaling
    $$\nabla_x u_r^{z_0}(z) = r\nabla_x u(z_0 \circ S_r(z)).$$ We really use here the Galilean and scaling invariance of the material derivative $Y= \partial_t + v\cdot \nabla_x$. 
\end{remark}
\begin{remark}
    We may choose $R_0 = R_0(n,\sigma,M)$ such that Proposition \ref{prop: Yvanish} and Proposition \ref{prop: vmonotoneglobal} hold simultaneously. 
\end{remark}
We conclude with a proof of the fact that blow-ups at regular points are unique. We do not make any assumptions on the thickness of the contact set. The proof follows along the lines of the proof of Proposition \ref{prop: vmonotoneglobal}. 
\begin{prop}
    Suppose that $u\in \mathcal{P}_1(M)$ and that $(0,0,0) \in \Gamma_u^{\text{reg}}$, as defined in Proposition \ref{prop: fbclassification}. Then there exists a unique blow-up $u_0$ of the form \eqref{eq: halfspace}.
\end{prop}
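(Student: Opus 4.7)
The plan is to argue by contradiction in the spirit of Proposition \ref{prop: vmonotoneglobal}. Suppose that $u$ admits two distinct blow-ups $u_0^i = \tfrac{1}{2}(v \cdot e_i)_+^2$ at the origin along sequences $r_j \to 0$ and $s_k \to 0$, with $e_1 \neq e_2$; every subsequential blow-up must have this half-space form since, at a regular point, the balanced energy equals $\omega$, which by Proposition \ref{prop: caffrecap} characterizes half-space solutions among homogeneous global solutions.

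I would first verify that the set $E \subset \mathbb{S}^{n-1}$ of blow-up directions is closed and connected. Closedness is a compactness statement that follows from the uniform bounds of Theorem \ref{thm: optimalreg}, while connectedness follows from the continuity of $r \mapsto u_r$ into $C^{1,\alpha}_\ell(Q_R)$ for each fixed $R > 0$, combined with the classical fact that the cluster set of a continuous curve at an endpoint, once contained in a compact subset of a Hausdorff space, is connected. In particular, if $E$ contains more than one point it must contain a nontrivial arc of $\mathbb{S}^{n-1}$.

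Next, for any $e_0 \in E$ realized by a sequence $r_j \to 0$ with $u_{r_j} \to \tfrac{1}{2}(v \cdot e_0)_+^2$, the minimal diameter of the contact set of the half-space at unit scale equals $1$, so upper semicontinuity of $\delta_1$ under $C^{1,\alpha}_\ell$ convergence gives $\delta_1(u_{r_j}) \to 1$. Since $u_{r_j} \in \mathcal{P}_{1/r_j}(M/r_j^2)$ with $1/r_j \to \infty$, Proposition \ref{prop: vmonotoneglobal} applies for $j$ large with $\sigma$ chosen close to $1$, yielding, after a rotation aligning the distinguished axis with $e_0$, the directional monotonicity
\begin{equation}
C_0 \partial_{v_e} u_{r_j} - u_{r_j} \ge 0
\end{equation}
on a fixed kinetic neighborhood $\mathcal{N}$ of the origin, for every $e$ in a cone around $e_0$. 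By scale invariance of monotonicity and undoing the rescaling, $\partial_{v_e} u \ge 0$ on $S_{r_j}(\mathcal{N})$. Rescaling this estimate at a smaller scale $s_k$ and choosing $j = j(k)$ so that $s_k / r_{j(k)} \to 0$, the region of validity for $\partial_{v_e} u_{s_k} \ge 0$ exhausts $\mathbb{R}^- \times \mathbb{R}^{2n}$ as $k \to \infty$. Passing to the $C^{1,\alpha}_\ell$ limit along the competing sequence gives $\partial_{v_e} u_0^2 \ge 0$ globally, which for $u_0^2 = \tfrac{1}{2}(v \cdot e_1)_+^2$ forces $e_1 \cdot e \ge 0$ for every $e$ in the cone around $e_0$, hence $e_1$ lies in a fixed spherical cap around $e_0$.

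The main obstacle is promoting this single-cone constraint, which only confines $e_1$ to a neighborhood of $e_0$, to full equality $e_0 = e_1$. My plan is to combine the constraint with the connectedness of $E$: applying the preceding argument simultaneously at every $e_0 \in E$ yields a family of cap-type constraints, and a connected subset of $\mathbb{S}^{n-1}$ that satisfies every such constraint centered at each of its own points must collapse to a single point, much as in the uniqueness of blow-ups at regular points for the parabolic obstacle problem \cite{CPS}. A complementary route, which avoids the iteration, is to quantitatively sharpen Proposition \ref{prop: vmonotoneglobal} so that the opening angle of the monotonicity cone approaches a full hemisphere in the limit $\sigma \to 1$; then the very first passage forces $e_1 = e_0$ and uniqueness of the blow-up follows immediately.
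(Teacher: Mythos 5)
There is a genuine gap, and it sits exactly where you flagged it: neither of your two proposed routes to upgrade the single-cone constraint to $e_1=e_0$ is carried out, and as sketched neither works. For route (a): the constraint you derive from a blow-up direction $e_0$ is $e_1\cdot e\ge 0$ for all $e$ in the \emph{narrow} cone $\mathcal{C}(e_0,\theta)$ with $\theta=\sqrt{1-\rho^2/64}$ close to $1$; the set of $e_1$ satisfying this is the polar dual, a cap of angular radius $\pi/2-\arccos\theta$ around $e_0$, i.e.\ nearly a full hemisphere. A short connected arc $E$ satisfies every such constraint centered at each of its own points, so connectedness plus these constraints does not collapse $E$. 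For route (b): the opening of the monotonicity cone in Proposition \ref{prop: vmonotoneglobal} is inherited from Theorem \ref{thm: caffarelliglobal} via John's ellipsoid, which forces $\rho=\sigma/(4n)\le 1/(4n)$ and hence $\theta\ge\sqrt{1-1/(1024 n^2)}$ no matter how close $\sigma$ is to $1$; the cone never approaches a hemisphere, so the "very first passage" cannot force equality. (A secondary imprecision: to apply Proposition \ref{prop: vmonotoneglobal} to $u_{r_j}$ you need a \emph{lower} bound on $\delta_1(u_{r_j})$ for large $j$; the stability $\limsup\Lambda_{u_j}\subset\Lambda_{u_0}$ goes the wrong way for this, and you need the nondegeneracy-based trapping lemma to conclude that closeness to a half-space forces the contact set of $u_{r_j}$ itself to be thick.)

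The paper closes the gap by a different mechanism that bypasses the thickness machinery entirely. Since one blow-up is \emph{exactly} $u_0=\frac12(v_n)_+^2$, the inequality $\frac1\delta\partial_{v_e}u_0-u_0\ge0$ holds on $Q_1$ for \emph{every} $e$ with $e\cdot e_n>\delta$ and \emph{every} $\delta>0$; by $C^{1,\alpha}_\ell$ closeness of $u_{r_\delta}$ to $u_0$ and the smallness of $\nabla_x u_{r_\delta}$, Lemma \ref{lem: improvementmin} applies directly with $h=\frac1\delta\partial_{v_e}u_{r_\delta}$ and yields $\partial_{v_e}u\ge0$ on $Q_{r_\delta/2}$ for all such $e$. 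Any competing blow-up $\frac12(v\cdot\nu)_+^2$ then inherits $\partial_{v_e}(\cdot)\ge0$ globally for every $e$ with $e\cdot e_n>\delta$, and letting $\delta\to0$ gives the constraint over the full closed hemisphere $\{e\cdot e_n\ge0\}$, which forces $\nu=e_n$. In short: the widening cone must come from comparing against the explicit half-space profile via the improvement-of-minimum lemma, not from the $\sigma$-dependent cone of Proposition \ref{prop: vmonotoneglobal}. Your first two paragraphs (setup, and transfer of directional monotonicity between scales along $S_{r_j}(\mathcal{N})$) are sound and consistent with the paper's strategy; it is only the final step that needs to be replaced by this argument.
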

\begin{proof}
    After rotating if necessary, we may assume that at least one blow-up of $u$ is given by $u_0(t,x,v) = \frac12(v_n)_+^2.$ Now, for any direction $e \in \mathbb{S}^{n-1}$ with $e \cdot e_n>\delta$, one has
    $$\frac1\delta \partial_{v_e} u_0 -u_0 \ge 0 \text{ on } Q_1.$$

Consequently, owing to the spatial Lipschitz estimate of Theorem \ref{thm: optimalreg}, and the $C^{1,\alpha}_\ell(Q_1)$ convergence of $u_{r_j} \to u_0$ for some sequence $r_j \to 0$,  there exists an $r_\delta>0$ small such that $\|\nabla_x u_{r_\delta}\|_{L^\infty(Q_1)} \le \delta/2$, and 
$$\frac1\delta \partial_{v_e} u_{r_\delta} -u_{r_\delta} \ge -\eps_0 \text{ on } Q_1 \text{ for any direction }e \in \mathbb{S}^{n-1} \text{ with }e\cdot e_n >\delta.$$

Here $\eps_0$ as in Lemma \ref{lem: improvementmin}. Arguing in the same way as in Proposition \ref{prop: vmonotoneglobal}, we deduce $$\frac1\delta \partial_{v_e} u_{r_\delta} \ge u_{r_\delta} \text{ on } Q_{1/2} \text{ for any direction }e \in \mathbb{S}^{n-1} \text{ with }e\cdot e_n >\delta.$$

Consequently, $\partial_{v_e}u \ge 0 \text{ on } Q_{r_\delta/2}$ for any direction $e\cdot e_n >\delta$. Now, by the classification of blow-ups in Proposition \ref{prop: fbclassification}, the only other possible blow-ups of $u$ at the origin are of the form $u_1 =\frac12(v\cdot \nu)_+^2$ for some $\nu \neq e_n$. This dichotomy arises from the monotonicity formula in Proposition \ref{prop: monotonicityformula}. By the preceding analysis, $u_1$ necessarily would satisfy
$$\partial_{v_e} u_1 \ge 0 \text{ on } \R^- \times \R^{2n}, \text{ for any } e\cdot e_n \ge 0.$$

This is only possible if $\nu=e_n$.
\end{proof}
\section{H{\"o}lder Regularity of the Free Boundary}
\label{FBregularitysec}
This final section is dedicated to the proofs of Theorem \ref{thm: freeboundaryregularity} and Proposition \ref{prop: graphdiff}. We continue to work with solutions $u \in \mathcal{P}_R(MR^2)$ which are ``close" to global solutions, as in Lemma \ref{lem: approxlemma}. The strategy is to first establish directional monotonicity in the $v$-variable, and then to establish elliptic nondegeneracy of $u$ on $(t,x)$-sections of the free boundary as a consequence of the material derivative $Yu$ vanishing continuously at regular points of $\Gamma_u$. With the monotonicity and nondegeneracy in $v$, we find a quadratic \textit{pointwise} lower bound for $u$ on $v$-cones centered at free boundary points. From there, the result follows from the regularity of $u$. We emphasize again that, while we are referring to Theorem \ref{thm: freeboundaryregularity} as a H{\"o}lder regularity result with the Euclidean distance in mind, it also implies that $\Gamma_u$ is \textit{Lipschitz} with respect to the kinetic distance.

We begin with the following lemma which is an immediate consequence of Proposition \ref{prop: Yvanish}. Essentially, the fact that $Yu$ can be made arbitrarily small on a neighborhood of the free boundary implies elliptic nondegeneracy on $(t,x)$-sections of $\Gamma_u$. Notice that this lemma is Galilean and scaling invariant, as  $Yu_r^{z_0}(z) = (Yu)(z_0 \circ S_r(z))$. 
\begin{lem} \label{lem: ellipticnondegen}
For every $\sigma>0$ there exists $R_0 = R_0(\sigma,n,M)>0$ and $r_0(\sigma,n,M)>0$ such that the following holds: if $u \in \mathcal{P}_R(MR^2)$ for some $R \ge R_0$, and $\delta_1(u) \ge \sigma$, then 
$$\sup_{z \in Q_{2r_0}} |Yu| \le \frac12$$

and consequently,
$$\Delta_v u \ge \frac12 \text{ on }\Omega_u \cap  Q_{2r_0}.$$
Therefore $u$ enjoys \textit{elliptic} nondegeneracy at every free boundary point $z_0=(t_0,x_0,v_0) \in \Gamma_u \cap Q_{r_0}$, in the sense that
\begin{equation}
\label{eq: vnondegen}
\sup_{w \in B_\lambda} u(t_0,x_0, v_0 + w) \ge c_n \lambda^2 \text{ for all } 0<\lambda\le r_0
\end{equation}
for some dimensional constant $c_n>0$.
\end{lem}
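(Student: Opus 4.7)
The lemma splits into three claims, the first two of which are essentially free. For the first, I would apply Proposition~\ref{prop: Yvanish} with $\eps = 1/2$ to produce constants $R_0(\sigma, n, M)$ and $\tilde r_0(\sigma, n, M)$ such that $\sup_{Q_{\tilde r_0}}|Yu|<1/2$; taking $r_0 = \tilde r_0/2$ then gives the stated bound on $Q_{2r_0}$. The second claim is immediate from the first: on $\Omega_u \cap Q_{2r_0}$ one has $\mcL u = 1$, i.e.\ $\Delta_v u = 1 + Yu$, and so $\Delta_v u \ge 1/2$ there.

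For the third claim I plan to run the classical maximum-principle proof of elliptic nondegeneracy on each $v$-slice, using the estimate $\Delta_v u \ge 1/2$ as the input. Fix $z_0 = (t_0, x_0, v_0) \in \Gamma_u \cap Q_{r_0}$ and $0 < \lambda \le r_0$. Since $z_0$ lies on the free boundary there is a sequence $z_\eta = (t_\eta, x_\eta, v_\eta) \in \Omega_u \cap Q_{r_0}$ with $z_\eta \to z_0$. I would then test against
\begin{equation}
g_\eta(w) \coloneqq u(t_\eta, x_\eta, v_\eta + w) - u(z_\eta) - \frac{|w|^2}{4n}, \qquad w \in B_\lambda,
\end{equation}
which is subharmonic in $w$ on the positivity set $V_\eta = \{w \in B_\lambda : u(t_\eta, x_\eta, v_\eta + w) > 0\}$, satisfies $g_\eta(0) = 0$, and is strictly negative on $\partial V_\eta \cap B_\lambda \subset \{u(t_\eta,x_\eta,v_\eta+\cdot) = 0\}$. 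The maximum principle applied on the connected component of $V_\eta$ containing the origin then forces its (nonnegative) maximum to be attained on $\partial B_\lambda$, giving
\begin{equation}
\sup_{|w|=\lambda} u(t_\eta, x_\eta, v_\eta + w) \ge u(z_\eta) + \frac{\lambda^2}{4n}.
\end{equation}
Letting $\eta \to 0$ and using continuity of $u$ yields the desired lower bound with $c_n = 1/(4n)$.

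The only point to watch is that the slice $\{(t_\eta, x_\eta, v_\eta + w) : w\in B_\lambda\}$ stays inside $Q_{2r_0}$ so that $\Delta_v u \ge 1/2$ applies pointwise (on the positivity set $u$ is classically smooth by interior regularity for the Kolmogorov equation $\mcL u = 1$); this is precisely why $\lambda$ is restricted to $\lambda \le r_0$. I expect no serious obstacle, since all the new content is already packaged into Proposition~\ref{prop: Yvanish}, and once $\Delta_v u \ge 1/2$ is known on the positivity set, the velocity-direction argument reduces to the standard elliptic nondegeneracy of Caffarelli.
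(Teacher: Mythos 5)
Your proposal is correct and follows essentially the same route as the paper: the first claim is read off from Proposition \ref{prop: Yvanish}, the second from the equation $\Delta_v u = 1 + Yu$ on $\Omega_u$, and the elliptic nondegeneracy is obtained by the same barrier $|w|^2/(4n)$ on $v$-slices through nearby points of $\Omega_u$, followed by a limit to the free boundary point. The paper's proof is identical in substance, down to the constant $c_n = 1/(4n)$.
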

\begin{proof}
The first statement is proven in Proposition \ref{prop: Yvanish}, and the second statement follows from the equation. For the elliptic nondegeneracy, fix a free boundary point $z_0=(t_0,x_0,v_0) \in \Gamma_u \cap Q_{r_0}$. For any $\delta>0$, there exists $z_\delta = (t_\delta, x_\delta, v_\delta) \in \Omega_u \cap Q_\delta(z_0).$ Now, define $f=f(v): B_{r_0-\delta} \to \R$ by 
$$f(v) \coloneqq u(t_\delta,x_\delta, v_\delta +v) - u(t_\delta, x_\delta, v_\delta) - \frac{1}{4n}|v|^2.$$

We claim that $f$ is subharmonic on the open set $\{v \in B_{r_0-\delta} \ : \ u(t_\delta,x_\delta, v_\delta+v)>0\}$. Indeed, on that set, our previous observations imply that 
$$\Delta_v f(v) = (\Delta_v - Yu)(t_\delta,x_\delta,v_\delta+v) + Yu(t_\delta,x_\delta,v_\delta+v) - \frac12 \ge 0.$$

Seeing as $f(0)=0$ and $f<0$ on $\{v \ : \ u(t_\delta,x_\delta,v_\delta+v)=0\}$, the maximum principle ensures that for any $0<\lambda<r_0-\delta $, there holds
$$0 \le \max_{\{u(t_\delta,x_\delta,v_\delta+\cdot)>0\}\cap B_\lambda} f = \max_{\{u(t_\delta,x_\delta,v_\delta+\cdot)>0\} \cap \partial B_\lambda} f.$$

Accordingly,
$$\max_{B_\lambda} u(t_\delta,x_\delta, v_\delta + v) \ge \frac{1}{4n}\lambda^2.$$

Sending $\delta \to 0$ and $(t_\delta, x_\delta, v_\delta) \to (t_0,x_0,v_0)$, we conclude. 
\end{proof}

The following is a very simple geometric lemma. Essentially it says that if we take a point $v= (v',v_n)$ in a cone centered at the origin in $\R^n$, and consider a cone emanating backwards from $v$ in the $v_n$ direction with slightly wider opening, the latter cone will contain a ball around the origin of radius comparable to $v_n$. Let us introduce the cones

\begin{equation}
    \label{eq: cone}
   \mathcal{C}(e, \theta)\coloneqq  \left\{v \in B_1 \ : \ \frac{v}{|v|} \cdot e >\theta\right\}, \ \ \text{ where } 0<\theta <1,\text{ and } e \in \mathbb{S}^{n-1}.
\end{equation}

For cones centered at points other than the origin, we write $v_0 + \mathcal{C}(e,\theta)$ to denote the cone of the same opening, orientation, and slant length centered at $v_0$. Likewise, $v_0 - \mathcal{C}(e, \theta)$ denotes the backwards cone. 
 
\begin{lem} \label{lem: cone}
    Fix $\delta, \theta \in (0,1)$ and $v = (v', v_n) \in\displaystyle \mathcal{C}\left(e_n, \theta \right)$. Then there exists $c=c(\delta)$ such that for any $w \in B_{cv_n}$, the decomposition $v = se + w$ holds some $s\ge 0$ and some direction $e \in \mathbb{S}^{n-1} \cap \displaystyle \mathcal{C}\left(e_n, (1-\delta)\theta \right)$. In other words, for any $v \displaystyle\in \mathcal{C}\left(e_n, \theta\right)$, one has
    $$B_{cv_n} \subset v - \mathcal{C}\left(e_n,(1-\delta)\theta\right).$$
\end{lem}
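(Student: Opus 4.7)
The plan is to reduce the claimed inclusion $B_{cv_n} \subset v - \mathcal{C}(e_n, (1-\delta)\theta)$ to a single angle estimate on the vector $v - w$. Specifically, for $w \in B_{cv_n}$ I will set $s \coloneqq |v - w|$ and $e \coloneqq (v - w)/|v-w|$; then the decomposition $v = se + w$ holds trivially with $s \geq 0$, and the membership $e \in \mathbb{S}^{n-1} \cap \mathcal{C}(e_n, (1-\delta)\theta)$ reduces to the single scalar inequality $e \cdot e_n > (1-\delta)\theta$. Verifying this inequality, and simultaneously confirming $v - w \neq 0$ so that $e$ is well-defined, is the entire content of the lemma.

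First, I would translate the hypothesis $v \in \mathcal{C}(e_n, \theta)$ into its quantitative form $v_n = v \cdot e_n > \theta|v|$, which in particular gives the Euclidean bound $|v| < v_n/\theta$. This converts the directional assumption into a control of $|v|$ by $v_n$, which is the scale that sets the size of the allowed perturbation $w$.

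Next, I would bound the numerator and denominator of $e\cdot e_n$ separately. The lower bound $(v-w)\cdot e_n \geq v_n - |w| \geq (1-c)v_n$ follows directly from $|w| \leq c v_n$ and simultaneously shows $v - w \neq 0$ provided $c < 1$. The upper bound $|v-w| \leq |v| + |w| \leq v_n(1/\theta + c)$ uses the previous paragraph. Combining, one obtains $e \cdot e_n \geq \tfrac{(1-c)\theta}{1 + c\theta}$.

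Finally, I would choose $c = c(\delta)$ so that $\tfrac{1-c}{1 + c\theta} \geq 1-\delta$; a short rearrangement reduces this to $\delta(1 + c\theta) \geq c(1 + \theta)$. Since $1+\theta \leq 2$ and $1 + c\theta \geq 1$ for $\theta \in (0,1)$, the crude choice $c \leq \delta/2$ suffices uniformly in $\theta$. There is no real obstacle; the whole argument is elementary and the only point worth emphasizing is precisely this $\theta$-uniformity, which the bound $c(1+\theta)/(1+c\theta) \leq 2c$ makes transparent and which matches the statement $c = c(\delta)$.
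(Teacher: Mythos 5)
Your proposal is correct and follows essentially the same argument as the paper: set $e=(v-w)/|v-w|$, bound the numerator $(v-w)\cdot e_n$ from below and $|v-w|$ from above by the triangle inequality, and solve for $c$. The only (immaterial) difference is that you normalize by $v_n$ via $|v|<v_n/\theta$ while the paper normalizes by $|v|$, yielding the condition $c\le\delta/2$ in place of the paper's $c<\delta/(2-\delta)$; both are uniform in $\theta$ as required.
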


\begin{proof}
    Let $v_n>0$, otherwise there's nothing to prove. Fix $c=c(\delta)>0$ to be decided. We show that any $w \in B_{cv_n}$ may be connected to $v$ through some direction $e$ with $e \cdot e_n > (1-\delta)\theta$. To do so, we have to show that
    \begin{equation}
    \label{eq: conestuff}
        \frac{v-w}{|v-w|} \cdot e_n > (1-\delta)\theta
    \end{equation}
    Well,
    $$(v-w) \cdot e_n = v_n -w_n > (1-c)v_n > (1-c)\theta |v|,$$
    and
    $$|v-w| \le |v|+|w| \le (1+c)|v|.$$

    Evidently, the quotient in \eqref{eq: conestuff} is larger than $(1-\delta)\theta$ provided that
    $$\frac{(1-c)}{1+c}  > 1-\delta$$
    which holds as soon as 
    $$c< \displaystyle \frac{\delta}{2-\delta}.$$

\end{proof}

The previous steps combined imply a uniform pointwise lower bound on cones centered at regular points. Notice the same argument applies to the elliptic and parabolic problems.
\begin{prop}\label{prop: quadraticlowerbd}
    Given $\sigma>0$ small, there exists $r_0(\sigma,n,M)>0$ and $R_0(\sigma,n,M)>0$ such that if $u \in\mathcal{P}_R(MR^2)$ for $R \ge R_0$ and $\delta_1(u) \ge \sigma$, then for some $e \in \mathbb{S}^{n-1}$, $\psi = \psi(\sigma,n) \in (0,1)$ and $c=c(n,\sigma)>0$, there holds at any free boundary point $z_0 \in \Gamma_u \cap Q_{r_0/2}$ the point-wise lower bound
    \begin{equation}
    \label{eq: vlowerbd}
    u(t_0,x_0, v_0 + v) \ge c(v\cdot e)_+^2
    \end{equation}
    for $v$ belonging to the conical section $B_{r_0/2} \cap \mathcal{C}(e, \psi).$
\end{prop}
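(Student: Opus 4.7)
The plan is to combine the directional monotonicity of Proposition~\ref{prop: vmonotoneglobal} with the elliptic nondegeneracy on $(t,x)$-sections from Lemma~\ref{lem: ellipticnondegen}, bridged by the geometric Lemma~\ref{lem: cone}. After the rotation of axes furnished by Proposition~\ref{prop: vmonotoneglobal}, I take $e = e_n$ to be the axis of the monotonicity cone, so that $C_0 \partial_{v_{e'}} u \ge u \ge 0$ -- in particular $\partial_{v_{e'}} u \ge 0$ -- on $[-1/4,0] \times B_{1/2} \times K(\rho/16, s_*, 1/4)$ for every direction $e' \in \mathbb{S}^{n-1}$ with $e' \cdot e_n > \theta_0 \coloneqq \sqrt{1-\rho^2/64}$, where $s_*$ denotes the parameter $s$ appearing in that proposition. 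I fix $\psi \in (\theta_0, 1)$ and then $\delta \in (0,1)$ small enough that $(1-\delta)\psi > \theta_0$; both depend only on $\rho$, hence only on $n$ and $\sigma$. Finally, I choose $R_0, r_0$ depending on $\sigma, n, M$ so that Proposition~\ref{prop: vmonotoneglobal} and Lemma~\ref{lem: ellipticnondegen} both apply, and moreover $r_0 \ll \rho$, so that for every $z_0 = (t_0,x_0,v_0) \in Q_{r_0/2}$ the translate $v_0 + B_{r_0}$ is contained in the monotonicity region $K(\rho/16, s_*, 1/4)$ and $(t_0, x_0) \in [-1/4,0] \times B_{1/2}$.

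For such a point $z_0 \in \Gamma_u \cap Q_{r_0/2}$ and any $v \in B_{r_0/2} \cap \mathcal{C}(e_n, \psi)$, set $v_n \coloneqq v\cdot e_n$, so that $v_n > \psi|v| > 0$, and let $\lambda \coloneqq c_1 v_n$, where $c_1 = c_1(\delta) \in (0, c(\delta))$ is strictly smaller than the constant $c(\delta)$ from Lemma~\ref{lem: cone}. Elliptic nondegeneracy at $z_0$ (Lemma~\ref{lem: ellipticnondegen}) furnishes a point $w_0 \in B_\lambda$ with
$$u(t_0, x_0, v_0 + w_0) \ge c_n \lambda^2 = c_n c_1^2 \, v_n^2.$$
Since $|w_0| < c(\delta) v_n$, Lemma~\ref{lem: cone} decomposes $v = w_0 + s \, e'$ for some $s \ge 0$ and some direction $e'$ with $e' \cdot e_n > (1-\delta)\psi > \theta_0$, i.e., $e'$ lies in the monotonicity cone. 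By the choice of $r_0$, the entire segment $\{(t_0, x_0, v_0 + w_0 + \tau e') : \tau \in [0,s]\}$ lies inside the monotonicity region, so Proposition~\ref{prop: vmonotoneglobal} implies $\partial_{v_{e'}} u \ge 0$ along it. Integrating from $\tau = 0$ to $\tau = s$ then yields
$$u(t_0, x_0, v_0 + v) \ge u(t_0, x_0, v_0 + w_0) \ge c_n c_1^2 \, v_n^2 = c\,(v \cdot e)_+^2,$$
with $c = c_n c_1^2 = c(n,\sigma)$, which is the desired estimate.

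The main obstacle is bookkeeping rather than conceptual: I must verify that the segment from $v_0 + w_0$ to $v_0 + v$ remains in the monotonicity region $K(\rho/16, s_*, 1/4)$ and that $(t_0, x_0)$ remains in $[-1/4, 0] \times B_{1/2}$. Both are handled uniformly by shrinking $r_0$ at the outset so that all the velocities in play lie in a ball of radius $r_0 \ll \rho/16$ around the origin of velocity space. The argument itself is then a clean three-step assembly of already-established ingredients: nondegeneracy to locate a point of quadratic positivity in a ball of radius $\lambda \sim v_n$ around $v_0$, the geometric lemma to connect that point to the target via a direction in the monotonicity cone, and the directional monotonicity to transfer the quadratic lower bound.
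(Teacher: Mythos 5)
Your proposal is correct and follows essentially the same route as the paper's proof: elliptic nondegeneracy (Lemma \ref{lem: ellipticnondegen}) to locate a point of quadratic positivity in a ball of radius comparable to $v\cdot e_n$ around $v_0$, the geometric Lemma \ref{lem: cone} to connect that point to $v$ along a direction in the monotonicity cone, and Proposition \ref{prop: vmonotoneglobal} to transfer the bound. The only (harmless) difference is that you keep $\psi$ and $\delta$ as general parameters with $(1-\delta)\psi>\theta_0$, whereas the paper fixes $\psi=(1+\theta)/2$; your explicit bookkeeping that the connecting segment stays in the monotonicity region is a welcome clarification.
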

\begin{proof}
Recall that we may take $R_0 = R_0(\sigma,n,M)$ large so that Proposition \ref{prop: vmonotoneglobal} and Lemma \ref{lem: ellipticnondegen} hold simultaneously. Rotate axes if necessary to ensure that $u$ is as in Proposition \ref{prop: vmonotoneglobal}. We will show a pointwise lower bound on a cone in the $v_n$ direction. With $\rho = \displaystyle \frac{\sigma}{4n}$ as usual, Proposition \ref{prop: vmonotoneglobal} guarantees the existence of $s \in (\rho, 1-\rho)$ such that
$$\partial_{v_e}u \ge 0 \text{ on } [-1/4,0] \times B_1 \times K(\rho/16, s, 1/4)$$
for any $e \in \mathbb{S}^{n-1} \cap \mathcal{C}(e_n, \theta)$, where $\theta = \theta(\rho) = \sqrt{1-\rho^2/64}$. Note that the dependence of $\theta$ is actually on $\sigma$ and $n$. Now, as in Lemma \ref{lem: thicknessenergy}, we have that 
$$[-1/4,0] \times B_{1/2} \times K(\rho/16, s, 1/4) \supset Q_{c_n\sigma},$$
for some dimensional constant $c_n>0$, provided $\sigma$ is not arbitrarily large. 

Applying Proposition \ref{prop: vmonotoneglobal} and Lemma \ref{lem: ellipticnondegen}, there exists $r_0 = r_0(\sigma,n,M)$ such that there holds simultaneously the directional monotonicity 
\begin{equation}
    \label{eq: vmonotonefinal}
    \partial_{v_e}u \ge 0 \text{ on } Q_{r_0} \text{ for any } e \in \mathbb{S}^{n-1} \cap \mathcal{C}(e_n, \theta)
\end{equation} as well as the elliptic non-degeneracy
\begin{equation}\sup_{w \in B_\lambda} u(t_0,x_0, v_0+w) \ge c_n \lambda^2 \text{ for all } z_0 \in \Gamma_u \cap Q_{r_0}, 0<\lambda<r_0.
\label{eq: ellipticnondegenfinal}
\end{equation}

Now, fix a free boundary point $(t_0,x_0,v_0) \in \Gamma_u \cap Q_{r_0/2}$, and take an arbitrary velocity $v=(v',v_n) \in B_{r_0/2} \cap \mathcal{C}\left(e_n, \frac{1+\theta}{2}\right)$. Thanks to Lemma \ref{lem: cone}, there exists $c=c(\sigma,n)$ such that for any $w \in B_{c v_n}$, we may write 
\begin{equation}
    \label{eq: conerepresentation}
    v = w+ se
\end{equation}
for some $s\ge 0,$ and $e \in \mathbb{S}^{n-1} \cap \mathcal{C}(e_n, \theta)$. Owing to the elliptic non-degeneracy \eqref{eq: ellipticnondegenfinal}, we may take $w \in B_{c_\sigma v_n}$ such that
$$u(t_0,x_0,v_0+w) =\tilde{c} v_n^2,$$

where $\tilde{c} = \tilde{c}(\sigma,n)$ still depends only on $n$ and $\sigma$. Then the decomposition \eqref{eq: conerepresentation} and the directional monotonicity \eqref{eq: vmonotonefinal} imply that  
$$u(t_0,x_0,v_0+v) \ge \tilde{c} v_n^2$$
for any $v \in B_{r_0/2} \cap \mathcal{C}\left(e_n,\frac{1+\theta}{2}\right),$
as desired. The claim follows with $\displaystyle \psi = \frac{1+\theta}{2}.$  Note that we have assumed $v_n>0$, for otherwise there is nothing to prove. 

\end{proof}
We may now turn to our main regularity result for the free boundary. The proof amounts to combining the point-wise quadratic lower bound on $v-$cones from the previous lemma with the Lipschitz estimates from Theorem \ref{thm: optimalreg}. 
\begin{prop}
\label{prop: graph}
Given $\sigma>0$, there exists $r_0(\sigma,n,M)>0$ and $R(\sigma,n,M)>0$ such that if $u \in \mathcal{P}_R(MR^2)$ for $R \ge R_0$, and $\delta_1(u) \ge \sigma$, then there exists a function $f \in C^{0,1/2}_{t,x} \cap C^{0,1}_v$ with $\|f\|_{C^{0,1/2}_{t,x} \cap C^{0,1}_v} \le C(M,n,\sigma)$ such that, perhaps after a rotation, there holds
$$\Gamma_u \cap Q_{r_0/2} = \{\{t,x,v) \in Q_{r_0/2} \ : \ v_n = f(t,x,v')\}$$
and
$$\Lambda_u \cap Q_{r_0/2} = \{(t,x,v) \in Q_{r_0/2} \ : \ v_n < f(t,x,v')\}.$$
\end{prop}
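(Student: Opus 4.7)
The plan is to combine the pointwise quadratic lower bound of Proposition \ref{prop: quadraticlowerbd} with the Lipschitz estimates of Theorem \ref{thm: optimalreg} in a trapping argument. First I would choose $R_0$ and $r_0$ so that Proposition \ref{prop: vmonotoneglobal}, Lemma \ref{lem: ellipticnondegen} and Proposition \ref{prop: quadraticlowerbd} apply simultaneously, and rotate so that the cone direction is $e_n$. Then at every $z_0 = (t_0,x_0,v_0) \in \Gamma_u \cap Q_{r_0/2}$ I would have both the directional monotonicity
$$\partial_{v_e} u \geq 0 \text{ on } Q_{r_0} \text{ for every } e \in \mathbb{S}^{n-1} \text{ with } e \cdot e_n > \theta,$$
and the pointwise lower bound
$$u(t_0,x_0,v_0 + w) \geq c(w \cdot e_n)_+^2 \text{ for } w \in B_{r_0/2} \cap \mathcal{C}(e_n,\psi),$$
with $c, \theta, \psi$ depending only on $n,\sigma$. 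Combining $\partial_{v_n} u \geq 0$ with the lower bound in the pure $e_n$ direction, the set $\{v_n : u(t,x,v',v_n)=0\}$ at each $(t,x,v')$ in the projection of $Q_{r_0/2}$ is a (possibly empty) lower interval, so I may define $f(t,x,v') \coloneqq \inf\{v_n : u(t,x,v',v_n) > 0\}$ and obtain the representations $\Gamma_u \cap Q_{r_0/2} = \{v_n = f\}$ and $\Lambda_u \cap Q_{r_0/2} = \{v_n < f\}$.

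For the Lipschitz regularity of $f$ in $v'$ I would take two free boundary points $z_i = (t,x,v_i',f_i)$, $i=1,2$, sharing the same $(t,x)$. The displacement $w \coloneqq (v_1'-v_2',f_1-f_2) \in \R^n$ satisfies $(w/|w|) \cdot e_n > \psi$ exactly when $|f_1 - f_2| > (\psi/\sqrt{1-\psi^2})|v_1'-v_2'|$. If this were to hold for the appropriately signed pair, the pointwise lower bound at the corresponding $z_i$ would force $u$ to be strictly positive at the other free boundary point, contradicting $u(z_{3-i}) = 0$. Hence $|f_1-f_2| \leq C_\psi |v_1'-v_2'|$.

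For the $C^{0,1/2}$ regularity in $(t,x)$ I would fix a reference point $z_0 = (t_0,x_0,v_0',v_{0,n}) \in \Gamma_u \cap Q_{r_0/2}$, write $\rho \coloneqq |t-t_0|+|x-x_0|$, and trap $f(t,x,v_0') - v_{0,n}$ between $\pm C'\sqrt{\rho}$. Using the estimate $\|\nabla_x u\|_\infty + \|\partial_t u\|_\infty \leq CM$ from Theorem \ref{thm: optimalreg}, the upper bound follows from
$$u(t,x,v_0',v_{0,n}+s) \geq u(t_0,x_0,v_0',v_{0,n}+s) - C\rho \geq c s^2 - C\rho,$$
which is positive whenever $s \geq \sqrt{C\rho/c}$. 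The lower bound is symmetric: applying the pointwise quadratic lower bound at the free boundary point $(t,x,v_0',f(t,x,v_0'))$ in the pure $e_n$ direction and using the Lipschitz estimate in the opposite sense gives
$$c(v_{0,n} - f(t,x,v_0'))_+^2 \leq u(t,x,v_0',v_{0,n}) \leq u(z_0) + C\rho = C\rho.$$
Together with the $v'$-Lipschitz estimate and a triangulation, this produces the required joint $C^{0,1/2}_{t,x} \cap C^{0,1}_v$ control with the desired constant $C(M,n,\sigma)$.

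The heavy lifting has in fact already been done by the time we reach this proposition: the main obstacle is the pointwise quadratic lower bound of Proposition \ref{prop: quadraticlowerbd}, which itself relied on controlling the commutator $[\partial_{v_e},\mcL]u = \partial_{x_e}u$ through the new $\|\nabla_x u\|_{L^\infty}$ bound of Theorem \ref{thm: optimalreg}. Once the pointwise lower bound and the directional monotonicity are available, the remaining trapping argument is essentially routine: the $1/2$ exponent in $(t,x)$ is the natural square root appearing when quadratic growth of $u$ in $v_n$ is balanced against Lipschitz growth in $(t,x)$, and the Lipschitz regularity in $v'$ is a direct geometric consequence of the opening of the cone in the pointwise lower bound.
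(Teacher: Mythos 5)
Your proposal assembles exactly the same ingredients as the paper's proof -- the pointwise quadratic lower bound of Proposition \ref{prop: quadraticlowerbd}, the directional monotonicity \eqref{eq: vmonotonefinal}, and the $\partial_t,\nabla_x$ bounds of Theorem \ref{thm: optimalreg} -- and the mechanism producing the exponents is identical. The organization differs: the paper traps $\Gamma_u$ between a forward H{\"o}lder cone $\mathcal{A}_{r_0}\subset\Omega_u$ and a backward cone $-\mathcal{A}_{r_0}\subset\Lambda_u$ at each free boundary point, whereas you define the graph function $f$ first and prove the H{\"o}lder estimates on it by pairwise comparison. Your upper bound on $f$ is literally the paper's forward-cone containment, and your $v'$-Lipschitz argument is the contrapositive of the same containment.

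There is one genuine (though fixable) gap, and it sits precisely where the paper's argument does extra work. Your lower bound $f(t,x,v_0')\ge v_{0,n}-C\sqrt{\rho}$ applies Proposition \ref{prop: quadraticlowerbd} ``at the free boundary point $(t,x,v_0',f(t,x,v_0'))$,'' but at that stage nothing guarantees such a point exists: a priori $u$ could be strictly positive on the entire $v_n$-slice over $(t,x,v_0')$, in which case the infimum defining $f$ is not attained at a contact point and the lower bound cannot be invoked there. The paper's trichotomy for the backward cone ($-\mathcal{A}_{r_0}$ is connected, cannot lie in $\Omega_u$ because monotonicity plus $(0,0,0)\in\Gamma_u$ forces $u(0,0,\cdot)\equiv 0$ on the backward conical section, and cannot meet $\Gamma_u$ because the forward cone at such a point would contain the origin) is designed exactly to rule this scenario out without presupposing a free boundary point over $(t,x,v_0')$. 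Within your framework the cleanest repair is to note that the pointwise bound of Proposition \ref{prop: quadraticlowerbd} really holds at points of $\Omega_u$, not only of $\Gamma_u$ (the elliptic nondegeneracy in Lemma \ref{lem: ellipticnondegen} is proved by passing through interior points $z_\delta\in\Omega_u$, where one in fact gets $\sup_{B_\lambda}u \ge u(z_\delta)+c\lambda^2$): then any positivity point $(t,x,v_0',v_n^*)$ with $v_n^*<v_{0,n}-s$ forces $u(t,x,v_0)\gtrsim s^2$ via the cone monotonicity, contradicting $u(t,x,v_0)\le CM\rho$ for $s\gg\sqrt{\rho}$. With that amendment (and the routine halving of radii so that all displacements stay inside the cylinder where \eqref{eq: vmonotonefinal} and \eqref{eq: vlowerbd} are valid), your argument is complete.
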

\begin{proof}
Let $\psi, \theta, \rho, r_0$, and $R_0$ be as in the last proposition. Let us first remark that $r_0\le c_n\sigma$, and since we are primarily interested in $\sigma$ small, we assume $r_0 \in (0,1)$. We also assume $R \ge R_0 \ge 1$. After rotating, there is no loss in generality in assuming that $u$ is as in the proposition as well. Using the Lipschitz estimates of Theorem \ref{thm: optimalreg}, there exists a dimensional constant $C>0$ such that
$$\|\partial_t u\|_{L^\infty(Q_1)} + \|\nabla_x u\|_{L^\infty(Q_1)} \le CM.$$

This estimate is independent of $R>1$. Consequently, there exists $a=a(M,n, \sigma)>0$ such that the H{\"o}lder cone
\begin{equation}
    \label{eq:holdercone}
\mathcal{A}_{r_0} \coloneqq \left\{(t,x,v) \in Q_{r_0/2} \ : \ a(|t|+|x|)^{1/2} + |v|< \frac{v_n}{\psi}\right\}
\end{equation}
is contained in the set $\Omega_u$. Indeed, for any $(t,x,v) \in \mathcal{A}_{r_0}$, the previous Proposition implies that for some $c = c(n,\sigma)>0$, there holds
\begin{align} u(t,x,v) &\ge u(0,0,v) - |u(t,x,v) - u(0,0,v)| \ge cv_n^2 - CM(|t|+|x|)\\
& \ge \left(c- \frac{CM}{a^2\psi^2}\right)v_n^2>0
\end{align}
provided we take $a = a(M,n,\sigma)>0$ large enough. 
Finally, we claim that $u\equiv 0$ on the ``backwards" cone (relative to the $v_n$ direction) given by 
    $$- \mathcal{A}_{r_0} \coloneqq \left\{(t,x,v) \in Q_{r_0/2}\ : \ a(|t| + |x|)^{1/2} + |v| < -\frac{v_n}{\psi}\right\}.$$

     To that end, there are three possibilities: 
     \begin{enumerate}
         \item $-\mathcal{A}_{r_0} \subset \Omega_u=\{u>0\}$, or
         \item $-\mathcal{A}_{r_0} \cap \Gamma_u \neq \emptyset$, or
          \item $-\mathcal{A}_{r_0} \subset \Lambda_u = \{u=0\}$.
     \end{enumerate}
     
     Thanks to the monotonicity condition \eqref{eq: vmonotonefinal}, and the fact that $(0,0,0) \in \Gamma_u$, we find that $u(0,0,v) \equiv 0$ on the backwards conical section $B_{r_0/2} \cap \mathcal{C}\left(-e_n, \theta \right)$. Therefore $-\mathcal{A}_{r_0}$ cannot be contained in $\Omega_u$. This rules out the first possibility. Next, if there were to exist $(t_0,x_0,v_0) = z_0 \in - \mathcal{A}_{r_0} \cap \Gamma_u$, the same argument as in the first part of the proof would imply that $u>0$ on 
    $$\mathcal{A}_{r_0}(z_0) \coloneqq \left\{z \in Q_{r_0/2} \ : \ a(|t-t_0| + |x-x_0|)^{1/2} + |v-v_0| < \frac{(v-v_0)\cdot e_n}{\psi}\right\}.$$

    This would follow in the same way by the pointwise lower bound \eqref{eq: vlowerbd} and the monotonicity \eqref{eq: vmonotonefinal}. It is immediate to verify that $(0,0,0) \in \mathcal{A}_{r_0}(z_0)$ if $z_0 \in -\mathcal{A}_{r_0}$. This leads to the contradiction that $u(0,0,0)>0$, which cannot be if $(0,0,0) \in \Gamma_u$. 
 We are left with the conclusion that $u\equiv 0$ on the backwards cone $-\mathcal{A}_{r_0}$.  Repeating this argument at any $z_0 \in \Gamma_u \cap Q_{r_0/2}$, we conclude thanks to the monotonicity of $u$ in the $v_n$ direction that $\Gamma_u \cap Q_{r_0/2}$ is the graph of a function $f = f(t,x,v')$ such that  
    \begin{align} \Omega_u \cap Q_{r_0/2} &= \{(t,x,v) \in Q_{r_0/2} \ : \ v_n > f(t,x,v')\\
    \Gamma_u \cap Q_{r_0/2} & = \{(t,x,v) \in Q_{r_0/2} \ : \ v_n = f(t,x,v')\}
    \end{align}
    where $\|f\|_{C^{0,1/2}_{t,x} \cap C^{0,1}_v} \le C(M,n,\sigma).$

\end{proof}
The proof of our main free boundary regularity result, Theorem \ref{thm: freeboundaryregularity}, follows in the standard way. 
\begin{proof}[Proof of Theorem \ref{thm: freeboundaryregularity}.]
Theorem \ref{thm: freeboundaryregularity} follows by scaling. The map $\sigma \mapsto R_0(\sigma)$ may be assumed monotone increasing and continuous, with $R_0 \to \infty$ as $\sigma \to 0$. By scaling, the map $\sigma \mapsto  \frac{1}{R_0(\sigma)}$ is such that if $u\in \mathcal{P}_1(M)$ and $\delta_{1/R_0}(u) \ge \sigma$, then $\Gamma_u \cap Q_{c/R_0}$ satisfies the conclusion of the theorem for some $c = c(n,M,\sigma).$ Inverting the monotone, continuous map $\sigma \mapsto \frac{1}{R_0}$, we obtain for solutions $u \in \mathcal{P}_1(M)$ a modulus of continuity $\sigma=\sigma(r)$ such that $\delta_r(u) \ge \sigma(r)$ implies $\Gamma_u \cap Q_{cr}$ is $C^{0,1/2}_{t,x} \cap C^{0,1}_v$ regular, for some $c=c(n,M)$. 
\end{proof}

We now turn to proving that under the present assumptions, the graph $\Gamma_u$ is differentiable with respect to the kinetic distance. The notion of differentiability with respect to the kinetic distance is a natural analogue to the notion of differentiability in Euclidean space.
\begin{defn}
    We say a function $g= g(t,x,v)$ is differentiable with respect to the kinetic distance at a point $z_0=(t_0,x_0,v_0)$ if there exists $a \in \R, b \in \R^n$ such that the first order polynomial $p(t,x,v) = a + b\cdot (v-v_0)$ satisfies
        $$\lim_{r \to 0} \frac{\|g-p\|_{L^\infty(Q_r(z_0))}}{r} =0.$$
\end{defn}
The polynomial $p$ in the above definition is first-order with respect to the kinetic scaling. See the introduction and also \cite{IMBERTMOUHOT} for a far more in-depth discussion of kinetic Taylor polynomials.

To prove our claim, one approach we could take is use Lemma \ref{lem: improvementmin} and the arguments in Proposition \ref{prop: vmonotoneglobal} to show that, in the setting of the previous proposition, there exists a radius $r_\delta = r(\sigma, M,n,\delta)>0$ and a constant $a=a(\sigma, M,n,\delta)>0$ such that the forward cone
$$\mathcal{A}_{r_\delta} \coloneqq  \{(t,x,v) \in Q_{r_\delta/2} \ : \ a(|t| + |x|)^{1/2} + |v| < \delta^{-1} v_n\}$$
is contained in $\Omega_u$ and, defined as in the proof of Theorem \ref{thm: freeboundaryregularity}, the backwards cone $-\mathcal{A}_{r_\delta}$ is contained in $\Lambda_u$. This would then imply that
$$f(t,x,v') \lesssim \delta(|t|^{1/2} + |x|^{1/2} + |v'|)$$
as $(t,x,v') \to (0,0,0)$, which is more than enough to conclude.
However, in the interest of proving the second claim in Proposition \ref{prop: graphdiff}, we follow a different route. The following is a standard lemma, which says that if $u$ is close to a half-space solution, then $\Gamma_u$ is trapped between two parallel planes which are constant in $(t,x)$.
\begin{lem}
Let $u \in \mathcal{P}_1(M)$ and suppose that
    $$\sup_{(t,x,v) \in Q_1} |u(t,x,v) - \frac12(v_n)_+^2| \le \eps.$$
Then $$\{v_n>\sqrt{2\eps}\}\cap Q_1 \subset \Omega_u = \{u>0\},$$
and 
$$\{v_n < -\tilde{c}_n\sqrt{\eps}\} \cap Q_{1/2} \subset \Lambda_u = \{u=0\}$$
for some dimensional constant $\tilde{c}_n>0$.
\end{lem}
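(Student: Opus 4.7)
The first inclusion is an immediate pointwise consequence of the closeness hypothesis: if $v_n > \sqrt{2\eps}$, then $\tfrac12 (v_n)_+^2 > \eps$, and the triangle inequality gives $u(t,x,v) \ge \tfrac12 (v_n)_+^2 - \eps > 0$. So the substantive content is the one-sided trapping of the contact set $\Lambda_u$ from below.

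For the second inclusion my plan is a standard barrier/nondegeneracy contradiction, relying critically on the positive-point nondegeneracy estimate \eqref{eq: nondegenpositive} from Proposition \ref{prop: nondegen}. Suppose, toward a contradiction, that there is a point $z_0 = (t_0,x_0,v_0) \in Q_{1/2}$ with $v_{0,n} < -\tilde{c}_n\sqrt{\eps}$ and $u(z_0) > 0$. For any kinetic cylinder $Q_r(z_0) \subset Q_1$, \eqref{eq: nondegenpositive} yields
\begin{equation}
\sup_{Q_r(z_0)} u \;\ge\; u(z_0) + c_n r^2 \;\ge\; c_n r^2,
\end{equation}
where $c_n = 1/(4n+2)$ is the dimensional constant from the nondegeneracy proof. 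On the other hand, every $(t,x,v) \in Q_r(z_0)$ has $v_n \le v_{0,n} + r$, so if $r < \tilde{c}_n\sqrt{\eps}$ then $v_n < 0$ throughout $Q_r(z_0)$, hence $\tfrac12(v_n)_+^2 \equiv 0$ there. The closeness hypothesis then forces $\sup_{Q_r(z_0)} u \le \eps$, and combining the two estimates gives $c_n r^2 \le \eps$.

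To close the argument I fix $\tilde{c}_n$ large enough relative to $c_n$ (concretely $\tilde{c}_n = 2/\sqrt{c_n}$ works) so that the admissible window for $r$ is non-empty: choosing, e.g., $r = \tfrac{3}{2}\sqrt{\eps/c_n}$ yields $r < \tilde{c}_n\sqrt{\eps}$ while $c_n r^2 = \tfrac{9}{4}\eps > \eps$, the desired contradiction. The conclusion is vacuous unless $\tilde{c}_n\sqrt{\eps} < 1/2$, in which case $r < 1/2$ and $z_0 \in Q_{1/2}$ guarantees $Q_r(z_0) \subset Q_1$, so the application of \eqref{eq: nondegenpositive} is legitimate. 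I do not anticipate any real obstacle: the only point of care is selecting $\tilde{c}_n$ in terms of the dimensional constant from the nondegeneracy lemma so that the radius window is non-empty, which is an elementary algebraic matching and accounts for the dependence of $\tilde{c}_n$ on dimension.
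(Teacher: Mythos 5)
Your proof is correct and follows essentially the same route as the paper's: the first inclusion is read off pointwise from the closeness hypothesis, and the second is the standard nondegeneracy argument via \eqref{eq: nondegenpositive} applied on a cylinder $Q_r(z_0)$ small enough that $(v_n)_+\equiv 0$ there, forcing $c_n r^2\le\eps$. The only cosmetic difference is that the paper takes $r=-v_n^0$ directly and concludes $\tilde c_n=c_n^{-1/2}$, whereas you select $r$ from a nonempty window; both yield the same dimensional constant up to a factor.
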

\begin{proof}
    This is the same as in \cite{PSUBOOK}. The first statement is immediate and the latter statement is an immediate corollary of the non-degeneracy of $u$. Indeed, take any $z^0 = (t^0, x^0, v^0) \in Q_{1/2}$ with $v_n^0 <0$ and $u(z^0)>0$. Setting $r=-v_n^0>0$, by non-degeneracy we have
    $$c_nr^2 < c_n r^2 + u(z^0) \le \sup_{Q_r(z^0)} u \le \eps$$
    with the second inequality coming from \eqref{eq: nondegenpositive} and the final inequality coming from the fact that $(v_n)_+ =0$ on $Q_r(z^0)$. The assumption $u(z^0)$ forces $r=-v_n^0>0$ to not be too large. 
\end{proof}
We use the previous lemma to prove an improvement of flatness statement for the free boundary, which, when combined with Proposition \ref{prop: graph}, immediately implies the stated differentiability property and the remainder of Proposition \ref{prop: graphdiff}.
\begin{lem}
    Let $u \in \mathcal{P}_1(M)$ be such that $\delta_r(\rho) \ge \sigma(\rho)$ for some $\rho>0$, where $\sigma$ is as in Theorem \ref{thm: freeboundaryregularity}. Then there exists a modulus of continuity $\omega: (0,\infty) \to (0,\infty)$ such that $\omega(r) \to 0$ as $r\to 0$, and a direction $e \in \mathbb{S}^{n-1}$ such that 
    $$\Gamma_u \cap Q_r \subset \{(t,x,v) \in Q_r \ : \ |v\cdot e| <\omega(r)r\}.$$
\end{lem}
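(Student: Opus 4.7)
The plan is to obtain the flatness as a quantitative consequence of the uniqueness of blow-ups at regular points, combined with the preceding trapping lemma.

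\textbf{Step 1: Identify the regular point and the normal direction.} By the Galilean/scaling invariance $\delta_\rho(u) = \delta_1(u_\rho)$ and the definition of $\sigma$ from Theorem \ref{thm: freeboundaryregularity}, the hypothesis $\delta_\rho(u) \ge \sigma(\rho)$ puts $u_\rho$ in the regime of Lemma \ref{lem: thicknessenergy}, and hence $(0,0,0) \in \Gamma_u^{\text{reg}}$. By the uniqueness of blow-ups at regular points (proved at the end of Section \ref{balanced}), there is a single direction $e \in \mathbb{S}^{n-1}$ such that the rescalings $u_r(t,x,v) = r^{-2}u(r^2t, r^3x, rv)$ converge, locally in $C^{1,\alpha}_\ell$, to the half-space solution $u_0(v) = \tfrac{1}{2}(v\cdot e)_+^2$. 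In particular the quantity
$$\eta(r) \coloneqq \|u_r - u_0\|_{L^\infty(Q_1)}$$
satisfies $\eta(r) \to 0$ as $r \to 0$, and $u_r \in \mathcal{P}_1(1)$ once $\eta(r) < 1/2$.

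\textbf{Step 2: Apply the previous lemma at each scale.} For every small $r$, the previous lemma applied to $u_r$ produces dimensional constants such that
$$\Gamma_{u_r} \cap Q_{1/2} \subset \{v \in B_1 : -\tilde{c}_n \sqrt{\eta(r)} \le v\cdot e \le \sqrt{2\eta(r)}\}.$$
Using that $(t,x,v) \in \Gamma_{u_r}$ iff $(r^2t, r^3x, rv) \in \Gamma_u$, this trapping pulls back to
$$\Gamma_u \cap Q_{r/2} \subset \{(t,x,v) : |v\cdot e| \le C_n \sqrt{\eta(r)}\cdot r\}.$$
Setting $\omega(r/2) \coloneqq C_n\sqrt{\eta(r)}$ gives the claimed modulus, since $\omega(s) \to 0$ as $s \to 0$.

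\textbf{Main obstacle.} The uniqueness of blow-ups only provides qualitative convergence, so the modulus $\eta(\cdot)$ is a priori tied to the specific $u$. To upgrade $\omega$ to a modulus that is universal across the class $\{u \in \mathcal{P}_1(M) : \delta_\rho(u) \ge \sigma(\rho)\}$, one would run a standard compactness argument: any sequence of solutions $u^k$ in this class with $\eta_k(r_k) \ge \alpha > 0$ at radii $r_k \to 0$ can, via Theorem \ref{thm: optimalreg} and Theorem \ref{thm: Schauderthm}, be passed to a subsequential limit $u^\star \in \mathcal{P}_1(M)$ still satisfying the thickness condition (by the stability property $\limsup \Lambda_{u^k} \subset \Lambda_{u^\star}$). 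The blow-up analysis of Section \ref{blowups} then forces the rescalings of $u^\star$ to converge to a half-space solution, contradicting the lower bound on $\eta_k(r_k)$. This compactness step is the only nontrivial ingredient; everything else is a direct translation of the $L^\infty$ closeness to flatness via the lemma preceding the statement.
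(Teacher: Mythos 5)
Your proposal is correct and is essentially the paper's own argument: both rest on the convergence of the rescalings $u_r$ to the unique half-space blow-up at the regular point together with the preceding trapping lemma, the only difference being that the paper phrases it as a contradiction argument while you quantify the closeness directly via $\eta(r)$. Your concluding worry about universality is not needed here, since the statement only asks for a modulus $\omega$ associated to the given solution $u$ (as in the paper's proof), not one uniform over the class.
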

\begin{proof}
The thickness assumption allows us to assume without loss of generality that the unique blow-up of $u$ at the origin is $u_0(t,x,v)= \frac12(v \cdot e_n)_+^2.$ Suppose for the sake of contradiction that there exists $\delta>0$ and a sequence $r_j \to 0$ such that the statement 
$$\Gamma_u \cap Q_{r_j} \subset \{|v_n| < \delta r_j\}$$
fails. But $u_{r_j} \to u_0$ in $C^{1,\alpha}_\ell(Q_1)$, and according to the previous lemma, for any $\eps>0$ it holds for $j$ large that
$$\Gamma_{u_{r_j}} \subset Q_{1/2} \subset \{|v_n| < \tilde{c}_n \sqrt{\eps}\}.$$
Taking $\tilde{c}_n \sqrt{\eps}<\delta$ we arrive at a contradiction.
\end{proof}
We now prove our second result on free boundary regularity, Proposition \ref{prop: graphdiff}. 
\begin{proof}[Proof of Proposition \ref{prop: graphdiff}.]
    There is no loss of generality in assuming that near the origin, $\Gamma_u$ is the graph in the $v_n$ direction  of a function $f \in C^{0,1/2}_{t,x} \cap C^{0,1}_v$. With the previous lemma in hand, the differentiability of $f$ at the origin with respect to the kinetic distance is immediate. The real work is already done, since we know that $\Gamma_u\cap Q_{r_0} = \{(t,x,v',v_n) \in Q_{r_0/2} \ : \ v_n = f(t,x,v')\}$, and it follows that
$$\frac{f(t,x,v')}{\|(t,x,v')\|} \lesssim \omega(\|(t,x,v')\|)$$
with $\|\cdot\|$ as in section \ref{backgroundonkinetic}.

As for the corkscrew condition, it is proven in Lemma \ref{prop: porosity} that there exists $\kappa(n,M)>0$ such that for every $r>0$, and $z_0 \in \Gamma_u$, there exists $z_r \in Q_r(z_0)$ such that $Q_{\kappa r}(z_r) \subset \Omega_u \cap Q_r(z_0)$. And as a consequence of the previous lemma, we deduce that $\Lambda_u \cap Q_r(z_0)\supset Q_{\kappa r}(\tilde{z}_r)$ for some $\tilde{z}_r \in Q_r(z_0)$ for all $r>0$ small. We remark that the existence of cylinders of comparable radius on both sides of $\Gamma_u \cap Q_r(z_0)$ is essentially due to the nondegeneracy \eqref{eq: nondegen}. It is not implied by the $C^{0,1/2}_{t,x} \cap C^{0,1}_v$ local regularity of $\Gamma_u$.
\end{proof}
We remark that the regularity of $f$ is stronger than that which we expect to be required for a possible future Boundary Harnack principle for the operator $\mcL$. Additionally, the corkscrew condition obtained above is often utilized in the theory of Boundary Harnack principles for non-tangentially accessible domains. 

\bibliographystyle{abbrv}
\bibliography{cite}

\end{document}maim